\newtheorem{theorem}{Theorem}[section]
\newtheorem{lemma}[theorem]{Lemma}
\newtheorem{proposition}[theorem]{Proposition}
\newtheorem{corollary}[theorem]{Corollary}
\newtheorem{question}[theorem]{Question}
\newtheorem{remark}[theorem]{Remark}
\newtheorem{definition}[theorem]{Definition}
\newcommand{\RR}{\mathbb R}
\newcommand{\PP}{\mathbb P}
\newcommand{\QQ}{\mathbb Q}
\newcommand{\CC}{\mathbb C}
\newcommand{\Eff}{\operatorname{Eff}}
\newcommand{\Nef}{\operatorname{Nef}}
\newcommand{\Pic}{\operatorname{Pic}}
\newcommand{\Bs}{\operatorname{Bs}}
\newcommand{\arrow}{\rightarrow}
\newcommand{\vdim}{\operatorname{vdim}}
\newcommand{\edim}{\operatorname{edim}}
\title{Log Fano blowups of mixed products of projective spaces and their effective cones}
\date{}
\author[1]{Tim Grange\thanks{Current address: School of Mathematical Sciences, University of Nottingham. Email: \href{mailto:tim.grange@nottingham.ac.uk}{\nolinkurl{tim.grange@nottingham.ac.uk}}}}
\author[2]{Elisa Postinghel\thanks{\href{mailto:elisa.postinghel@unitn.it}{\tt elisa.postinghel@unitn.it}}}
\author[1]{Artie Prendergast-Smith\thanks{\href{mailto:a.prendergast-smith@lboro.ac.uk}{\tt a.prendergast-smith@lboro.ac.uk}}}
  \affil[1]{Department of Mathematical Sciences, Loughborough University}
  \affil[2]{Dipartimento di Matematica, Universit\`a degli Studi di Trento}
\newcommand{\subjclass}[2][1991]{%
  \let\@oldtitle\@title%
  \gdef\@title{\@oldtitle\footnotetext{#1 \emph{Mathematics Subject Classification.} #2}}%
}
\newcommand{\keywords}[1]{%
  \let\@@oldtitle\@title%
  \gdef\@title{\@@oldtitle\footnotetext{\emph{Key words and phrases.} #1.}}%
}
\begin{document}

\subjclass[2020]{Primary: 14C20. Secondary: 14J45, 14J70}

\keywords{effective cones, base locus lemmas, log Fano varieties, Mori dream spaces, unexpected hypersurfaces}

\maketitle

\begin{abstract}
We compute the cones of effective divisors on blowups of $\PP^1 \times \PP^2$ and $\PP^1 \times \PP^3$ in up to 6 points. We also show that all these varieties are log Fano, giving a conceptual explanation for the fact that all the cones we compute are rational polyhedral.
\end{abstract}

Cones of divisors are combinatorial objects that encode key geometric information about algebraic varieties. Understanding the structure of these cones is therefore a basic problem in algebraic geometry. There are important theorems describing the structure of these cones for important classes of varieties: for example, Fano varieties have rational polyhedral nef cones by the Cone Theorem, and rational polyhedral effective cones by Birkar--Cascini--Hacon--McKernan \cite{BCHM}.

In general, however, these cones can be difficult to understand, even for simple varieties such as blowups of sets of points in projective space or products of projective spaces. Mukai \cite{Mukai05} and Castravet--Tevelev \cite{CT06} proved that the blowup of $\left(\PP^n\right)^r$ in a set of $s$ points in very general position is a Mori dream space, and in particular has rational polyhedral effective cone, if and only if
\begin{align*}
  \frac{1}{r+1}+\frac{1}{s-n-1}+\frac{1}{n+1}>1.
\end{align*}
These blowups are highly symmetric from the point of view of divisor theory: there is a naturally-defined Weyl group action on the Picard group, and this is the key ingredient in describing the effective cones and Cox ring when they are finitely generated.

In this paper, we focus instead on certain ``asymmetric'' varieties of blowup type: namely, blowups of $\PP^1 \times \PP^2$ and $\PP^1 \times \PP^3$ in sets of up to 6 points in general position. The presence of factors of different dimensions means that the Picard groups of these varieties are less symmetric than in the previous examples, so we use different ideas to understand divisors. Our method combines the following techniques: {\it induction}, that is, pulling back divisors from lower blowups to get information about higher blowups; {\it restriction}, that is, obtaining necessary conditions for effectivity by restriction to suitable subvarieties whose cone of effective divisors is understood; and {\it base locus lemmas}, showing that divisors violating a numerical inequality must contain a certain fixed divisor as a component.

This method, that we shall refer to as the \emph{cone method}, is outlined in Section \ref{effconemethod}.  Our main results, contained in Sections \ref{section-dim3}-\ref{section-dim4}, give explicit descriptions of the cones of effective divisors on these varieties and describe the geometry of the generating classes. For each such cone, we will give a list of extremal rays and a list of inequalities cutting it out: the latter corresponds to giving extremal rays of the corresponding cone of moving curves. Along the way, we also compute the effective cones of divisors of some threefolds given as blowups of $\PP^3$ along a line and up to six points in general position (Section \ref{section-dim3-extra}).

More conceptually, one can also ask when varieties of the above kind are log Fano. For blowups of $\PP^2$, being Fano is equivalent to being a Mori dream space and for these cases the Cox rings were described by Batyrev--Popov in \cite{BP04}. More recently, Araujo--Massarenti \cite{AM16} and Lesieutre--Park \cite{LP17} proved that the same holds in higher dimension, namely that blowups of $\PP^n$ or of products of the form $\left(\PP^n\right)^m$ in points in very general position are log Fano if and only if they are Mori dream spaces.

The same questions are open for mixed products, i.e. for blowups of $\PP^{n_1}\times\cdots\times\PP^{n_m}$ with not all $n_k$ equal. For blowups of $\PP^1 \times \PP^2$ and of $\PP^1 \times \PP^3$ in up to 6 points in general position, we show in Section \ref{weak-log} that all of these varieties are log Fano, and therefore Mori dream spaces. We also use the results of Mukai and Castravet--Tevelev to deduce that the blowup of $\PP^1 \times \PP^n$ in sufficiently many points is not a Mori dream space; Theorem \ref{theorem-summary} summarises what we know in this direction. This leaves a small number of open cases in each dimension, which we collect in Questions \ref{question1}--\ref{question3}.


{\bf Acknowledgements:}  
The second author is a member of INdAM-GNSAGA and she was partially supported by the EPSRC grant EP/S004130/1. The authors thank Hamid Abban, Izzet Coskun, and Hendrik S\"{u}{\ss} for valuable suggestions and corrections.

\section{Preliminaries}\label{preliminaries}
We work throughout over the complex numbers $\CC$. 

For a variety $X$, we write $N^1(X)$ to denote the group of Cartier divisors on $X$ modulo numerical equivalence, tensored with $\RR$. This is a finite-dimensional real vector space whose dimension is called the {\it Picard rank} of $X$, and denoted $\rho(X)$. Dually, we write $N_1(X)$ for the group of $1$-cycles on $X$ modulo numerical equivalence, tensored with $\RR$. When $X$ is smooth, intersection of divisors and curves gives a perfect pairing $N^1(X) \times N_1(X) \arrow \RR$. Where appropriate, we will use the same symbol to denote a divisor and its class in $N^1(X)$, or a curve and its class in $N_1(X)$. We write $\Eff(X)$ to denote the cone in $N^1(X)$ spanned by classes of effective divisors; in general this cone need not be open or closed, but in all our examples it will be rational polyhedral, in particular closed.

The main objects of interest in this paper will be blowups of products of two projective spaces in a collection of points in general position. A statement holds for a collection of points in {\it general position}, respectively {\it very general position}, if it holds when the corresponding element in the Hilbert scheme of $s$ points of $\PP^m \times \PP^n$ lies in the complement of a proper Zariski closed subset, respectively in the complement of a countable union of Zariski closed subsets. For convenience we fix the following notation:
\begin{itemize}
\item $X_{m,n,s}$: the blowup of $\PP^m \times \PP^n$ in a set of $s$ points in general position $\left\{p_1,\ldots,p_s\right\}$;
\item $\pi_m, \, \pi_n$: the natural morphisms $X_{m,n,s} \arrow \PP^m$ and $X_{m,n,s} \arrow \PP^n$ respectively; by abuse of notation, we will use the same symbols to denote the corresponding morphisms $\PP^m \times \PP^n \arrow \PP^m$ and $\PP^m \times \PP^n \arrow \PP^n$;
  \item $H_1, \, H_2$: the pullbacks of the hyperplane classes on $\PP^m$ and $\PP^n$ via $\pi_m$ and $\pi_n$ respectively;
\item $l_1, \, l_2$: the classes of a line contained in a $\PP^m$-fibre of $\pi_n$, respectively a line contained in a $\PP^n$-fibre of $\pi_m$;
  \item $E_i, \, e_i$: the exceptional divisor of the blowup of the point $p_i$, respectively the class of a line contained in $E_i$.
\end{itemize}

The following proposition records the intersection numbers we need. All statements are straightforward consequences of general results about intersection theory of blowups; a reference is \cite[Proposition 13.12]{EH}.
\begin{proposition}\label{intersection-table}
  Let $X_{m,n,s}$ be as above. Then
  \begin{enumerate}
  \item[(a)] The vector spaces $N^1(X_{m,n,s})$ and $N_1(X_{m,n,s})$ have the following bases:
  \begin{align*}
    N^1(X_{m,n,s}) &= \langle H_1, H_2, E_1, \ldots, E_s \rangle, \\
    N_1(X_{m,n,s}) &= \langle l_1, l_2, e_1, \ldots, e_s \rangle.
  \end{align*}
\item[(b)] We have the following intersection numbers among divisors:
  \begin{align*}
    H_1^m \cdot H_2^n &=1,\\
    H_1^p \cdot H_2^{m+n-p} &=0 \quad \text{ for } p \neq m, \\
    H_1^p \cdot E_i^{m+n-p} = H_2^p \cdot E_i^{m+n-p} &=0 \ \text{ for all } p >0, \ i=1,\ldots,s,\\
    E_i^{m+n} &= (-1)^{m+n-1}.
  \end{align*}
\item[(c)] We have the following intersection numbers between divisors and curves:
\begin{align*}
  H_i \cdot l_j &= \delta_{ij},\\
  H_i \cdot e_j &= 0 \quad \text{ for } i=1,2, \, j=1,\ldots,s,\\
  E_i \cdot e_j &= -\delta_{ij}.
\end{align*}
  \end{enumerate}
\end{proposition}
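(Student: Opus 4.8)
The plan is to reduce every assertion to two standard inputs: the intersection theory of the product $Y=\PP^m\times\PP^n$ itself, and the behaviour of intersection numbers under the blowup of a smooth point of a smooth variety, as recorded in \cite[Proposition 13.12]{EH}. Since the $s$ points are distinct, the exceptional divisors $E_1,\dots,E_s$ are pairwise disjoint, so throughout one may analyse the contribution of each $E_i$ in isolation and treat the classes pulled back from $Y$ separately. First I would dispose of part (a): on $Y$ one has $N^1(Y)=\langle H_1,H_2\rangle$ and $N_1(Y)=\langle l_1,l_2\rangle$ by the standard description of Picard groups of products of projective spaces, and blowing up a single smooth point of a smooth $(m+n)$-fold enlarges $N^1$ by the class of the new exceptional divisor and $N_1$ by the class of a line in it. Iterating over the $s$ points yields the two claimed bases, whose linear independence will in any case be confirmed a posteriori by the nondegeneracy of the pairing computed in (c).

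For the product-level entries of (b), write $H_j=\pi^*\bar H_j$ with $\bar H_j$ a hyperplane pullback on $Y$. Then $H_1^m\cdot H_2^n$ counts the points cut out by $m$ general members of $|H_1|$ and $n$ general members of $|H_2|$, namely the single point (point of $\PP^m$)$\,\times\,$(point of $\PP^n$); and $H_1^p\cdot H_2^{m+n-p}$ vanishes for $p\neq m$ since $H_1^{m+1}=0$ kills the cases $p>m$ and, symmetrically, $H_2^{n+1}=0$ kills the cases $p<m$.

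The entries involving $E_i$ all rest on one observation: $H_j|_{E_i}=0$ in $N^1(E_i)$, because $H_j=\pi^*\bar H_j$ restricts on $E_i$ to the pullback of $\bar H_j|_{p_i}$, which is trivial as $p_i$ is a point. Using $E_i\iso\PP^{m+n-1}$ with $\mathcal O_{E_i}(E_i)\iso\mathcal O(-1)$ and the projection formula, any product $H_j^p\cdot E_i^{m+n-p}$ with $1\le p$ and $m+n-p\ge 1$ reduces to an integral over $E_i$ of a positive power of $H_j|_{E_i}=0$, hence vanishes; the remaining case $p=m+n$ is the product-level vanishing $H_j^{m+n}=0$ already noted. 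The self-intersection is then $E_i^{m+n}=\big(\mathcal O_{E_i}(E_i)\big)^{m+n-1}=(-1)^{m+n-1}$ evaluated on $E_i$.

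Finally, part (c) follows from the geometric descriptions of the curve classes together with the same restriction principle. A line $l_1$ lies in a fibre of $\pi_n$, so $H_1\cdot l_1=1$ and $H_2\cdot l_1=0$, and symmetrically for $l_2$, giving $H_i\cdot l_j=\delta_{ij}$; the line $e_j\subset E_j$ is contracted by $\pi$, so $H_i\cdot e_j=0$ by the projection formula; for $i\neq j$ the disjointness of $E_i$ and $E_j$ gives $E_i\cdot e_j=0$, while $E_i\cdot e_i=\deg\mathcal O_{E_i}(E_i)|_{e_i}=-1$. Together with the immediate equalities $E_i\cdot l_j=0$, these assemble into a block-diagonal Gram matrix $\operatorname{diag}(I_2,-I_s)$ between the two spanning sets, which is nondegenerate, simultaneously verifying that the pairing is perfect and that the given classes are bases. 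I do not expect a genuine obstacle here: the only point requiring care is the bookkeeping that $H_j$ restricts trivially to each exceptional divisor, from which all the ``mixed'' vanishings in (b) and (c) follow uniformly, everything else being a direct specialization of the cited blowup formula.
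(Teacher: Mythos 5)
Your proposal is correct and follows essentially the same route as the paper, which offers no written argument beyond citing the standard intersection theory of blowups at smooth points (\cite[Proposition 13.12]{EH}); you have simply written out in full the reductions that citation is meant to cover, namely the intersection ring of $\PP^m\times\PP^n$, the disjointness of the $E_i$, the triviality of $H_j|_{E_i}$, and $\mathcal O_{E_i}(E_i)\iso\mathcal O_{\PP^{m+n-1}}(-1)$. All the individual computations (including the endpoint cases $p=m+n$ in part (b) and the nondegeneracy argument confirming the bases in part (a)) check out.
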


In particular this allows us to determine the numerical classes of curves on blowups, as follows.
\begin{corollary}\label{corollary-curveclass} Let $C$ be the proper transform on $X_{m,n,s}$ of a curve of bidegree $(d_1,d_2)$ in $\PP^m \times \PP^n$ with multiplicity $m_i$ at the point $p_i$. Then the class of $C$ in $N_1(X_{m,n,s})$ is
  \begin{align*}
    d_1l_1 + d_2l_2 - \sum_{i=1}^s m_i e_i.
  \end{align*}
\end{corollary}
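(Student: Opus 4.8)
The plan is to use the perfect pairing between $N^1(X_{m,n,s})$ and $N_1(X_{m,n,s})$ (valid since $X_{m,n,s}$ is smooth) to read off the coordinates of $[C]$ in the basis $\langle l_1,l_2,e_1,\dots,e_s\rangle$ of Proposition \ref{intersection-table}(a). Write
\[
[C] = a_1 l_1 + a_2 l_2 + \sum_{i=1}^s b_i e_i .
\]
Together with the intersection numbers in Proposition \ref{intersection-table}(c), and the observation that the fibre lines $l_1,l_2$ may be chosen to avoid the finitely many blown-up points (so that $E_i\cdot l_j=0$), the classes $H_1,H_2,E_1,\dots,E_s$ form a basis of $N^1$ that is dual to the chosen basis of $N_1$ up to signs: pairing with $H_j$ isolates $a_j$, while pairing with $E_i$ isolates $-b_i$. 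Concretely, $H_1\cdot[C]=a_1$, $H_2\cdot[C]=a_2$ and $E_i\cdot[C]=-b_i$, so it suffices to compute these three intersection numbers geometrically.

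For the first two, let $\beta\colon X_{m,n,s}\arrow \PP^m\times\PP^n$ denote the blowup morphism, let $\bar C$ be the curve of which $C$ is the proper transform, and recall that $H_1,H_2$ are pullbacks under $\beta$ of the two hyperplane classes $A_1,A_2$ on $\PP^m\times\PP^n$. By the projection formula, $H_j\cdot[C]=(\beta^* A_j)\cdot[C]=A_j\cdot\beta_*[C]$. Since $\beta$ is birational and an isomorphism away from the exceptional locus, $\beta_*[C]=[\bar C]$, whence $H_1\cdot[C]=A_1\cdot[\bar C]=d_1$ and $H_2\cdot[C]=A_2\cdot[\bar C]=d_2$ by the definition of bidegree. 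Thus $a_1=d_1$ and $a_2=d_2$.

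The main point, and the only step that is not pure linear algebra, is the computation $E_i\cdot[C]=m_i$, giving $b_i=-m_i$. This is the standard fact that, for the blowup of a smooth variety at a point $p_i$, the intersection of the exceptional divisor with the proper transform of a curve equals the multiplicity of the curve at $p_i$. I expect this to be the real content of the argument: it requires comparing the proper transform $C$ with the total transform $\beta^*\bar C$, whose difference is supported on the exceptional divisors with coefficients recording the multiplicities $m_i$, and then intersecting against $E_i$ using Proposition \ref{intersection-table}(c). Once this local multiplicity computation is in hand, substituting $a_1=d_1$, $a_2=d_2$ and $b_i=-m_i$ into the expression for $[C]$ yields the claimed formula $d_1 l_1 + d_2 l_2 - \sum_{i=1}^s m_i e_i$.
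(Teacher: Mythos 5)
Your proposal is correct and follows essentially the same route as the paper: the paper's proof simply records the intersection numbers $C\cdot H_i = d_i$ and $C\cdot E_j = m_j$ and invokes the perfect pairing together with the dual bases of Proposition \ref{intersection-table}(a) and (c). The only difference is that you additionally justify those intersection numbers (projection formula for $H_i$, the standard multiplicity fact for $E_j$, and the vanishing $E_i\cdot l_j=0$), which the paper treats as standard facts about blowups.
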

  \begin{proof}
    We have the intersection numbers
    \begin{align*}
      C \cdot H_i &= l_i \quad  \text{ for } i=1,2,\\
      C \cdot E_j &= m_j \quad \text{ for } j=1,\dots, s.
    \end{align*}
    Since the intersection pairing on $X_{m,n,s}$ is perfect, the given formula then follows from Proposition \ref{intersection-table} (a) and (c).
  \end{proof}
In particular we deduce the following formula, which we use in Section \ref{weak-log}. For any divisor $D$ on $X_{m,n,s}$, the class of $D$ can be written in the form
\begin{equation}\label{general divisor}
  d_1H_1+d_2H_2 - \sum_{i=1}^s m_i E_i,
\end{equation}
for some integers $d_1, \, d_2, \, m_1, \ldots, m_s$. 
\begin{corollary}\label{corollary-topselfint}
  On the variety $X_{m,n,s}$ consider a divisor $D$ with class \eqref{general divisor}.
  Then the top self-intersection number of $D$ is given by
  \begin{align*}
    D^{m+n} &= d_1^m d_2^n {m+n \choose n} -\sum_{i=1}^s m_i^{m+n}.
  \end{align*}
\end{corollary}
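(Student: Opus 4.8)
The plan is to expand $D^{m+n}$ directly via the multinomial theorem and then discard every monomial that vanishes by Proposition \ref{intersection-table}(b). Writing $D = d_1 H_1 + d_2 H_2 - \sum_i m_i E_i$, the expansion is a sum of terms
$$\frac{(m+n)!}{a! \, b! \, c_1! \cdots c_s!} (d_1 H_1)^a (d_2 H_2)^b \prod_{i=1}^s (-m_i E_i)^{c_i}, \qquad a + b + \sum_i c_i = m+n.$$
The strategy is to show that only two families of monomials survive: the single pure term $H_1^m H_2^n$, and the pure exceptional terms $E_i^{m+n}$, one for each $i$.

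First I would eliminate all mixed monomials. A monomial involving two distinct exceptional classes $E_i, E_j$ vanishes because distinct exceptional divisors are disjoint, and a monomial involving some $E_i$ together with $H_1$ or $H_2$ vanishes because $H_1, H_2$ are pullbacks of hyperplane classes that, by general position, may be represented by divisors missing the centre $p_i$ and hence disjoint from $E_i$. Both facts are instances of the intersection theory of blowups recorded in \cite[Proposition 13.12]{EH}, and the special cases $H_1^p \cdot E_i^{m+n-p} = H_2^p \cdot E_i^{m+n-p} = 0$ already appear in Proposition \ref{intersection-table}(b). This step is the only delicate point, since the table as stated lists the monomials $H_j^p E_i^{m+n-p}$ but not the fully mixed products $H_1^a H_2^b E_i^c$ with all exponents positive, nor products of two distinct exceptional classes; I expect confirming that every such discarded monomial genuinely vanishes to be the main (though minor) obstacle, and the cleanest route is the geometric disjointness argument above rather than attempting to read each case off the table.

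After this reduction, among the surviving pure $H$-monomials $H_1^a H_2^b$ with $a + b = m+n$, Proposition \ref{intersection-table}(b) shows that all vanish except $H_1^m H_2^n = 1$, whose coefficient in the expansion is $\frac{(m+n)!}{m! \, n!} d_1^m d_2^n = \binom{m+n}{n} d_1^m d_2^n$. For each pure exceptional monomial the multinomial coefficient is $1$ and the scalar is $(-m_i)^{m+n}$; using $E_i^{m+n} = (-1)^{m+n-1}$ gives the contribution $(-1)^{m+n} m_i^{m+n} \cdot (-1)^{m+n-1} = -m_i^{m+n}$. Summing the two families yields $D^{m+n} = \binom{m+n}{n} d_1^m d_2^n - \sum_{i=1}^s m_i^{m+n}$, as claimed.
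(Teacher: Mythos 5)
Your proof is correct and coincides with the paper's (implicit) argument: the paper states this corollary without proof, as an immediate consequence of Proposition \ref{intersection-table}(b) via exactly the multinomial expansion you describe, in which only the monomial $H_1^m H_2^n$ (with coefficient $\binom{m+n}{n} d_1^m d_2^n$) and the pure powers $E_i^{m+n}$ survive. Your additional care in justifying the vanishing of the fully mixed monomials not explicitly listed in the table --- via disjointness of suitably chosen representatives (a hyperplane preimage missing $p_i$, and the pairwise disjointness of the $E_i$) --- correctly fills the small gap that the paper leaves to the reader.
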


\subsection*{Virtual dimension and expected dimension}

Given a divisor $D$ on $X_{m,n,s}$, we can give a lower bound for the dimension of the linear system $|D|$, obtained by a simple parameter count. 

\begin{definition}\label{virtual dimension} 
Let $D$ be a divisor on $X_{m,n,s}$ with class \eqref{general divisor}.
The \emph{virtual dimension} of the linear system $|D|$ 
is the integer
$$
\vdim|D|={{m+d_1}\choose m}{{n+d_2}\choose n}-\sum_{i=1}^s{{m+n+m_i-1}\choose {m+n}}-1.$$
The \emph{expected dimension} of $|D|$ is
$$
\edim|D|=\max\{-1,\vdim|D|\}.
$$
\end{definition}

\begin{lemma}\label{vdim lower bound}
We have $\dim|D|\ge\edim|D|$.
\end{lemma}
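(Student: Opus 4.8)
The plan is to realise global sections of $\mathcal{O}_{X_{m,n,s}}(D)$ as bihomogeneous forms on $\PP^m\times\PP^n$ vanishing to prescribed order at the points $p_i$, and then to bound the dimension of this space from below by an elementary rank--nullity argument. Throughout I assume $d_1,d_2\ge 0$, since otherwise $\mathcal{O}(d_1,d_2)$ has no sections, $\vdim|D|<0$, $\edim|D|=-1$, and the inequality $\dim|D|\ge-1$ holds by the usual convention for the empty linear system.

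First I would identify the space of sections. Writing $\sigma\colon X_{m,n,s}\to\PP^m\times\PP^n$ for the blowup, we have $\sigma^*\mathcal{O}(d_1,d_2)=\mathcal{O}(d_1H_1+d_2H_2)$, so that $\mathcal{O}(D)=\sigma^*\mathcal{O}(d_1,d_2)\otimes\mathcal{O}(-\sum_i m_iE_i)$; twisting down by $-\sum_i m_iE_i$ imposes vanishing to order $m_i$ along each exceptional divisor $E_i$. Since the $p_i$ are distinct smooth points, this gives a natural isomorphism between $H^0(X_{m,n,s},\mathcal{O}(D))$ and the subspace $V\subseteq H^0(\PP^m\times\PP^n,\mathcal{O}(d_1,d_2))$ of forms $f$ with $\operatorname{mult}_{p_i}(f)\ge m_i$ for every $i$.

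Second I would carry out the count. The ambient space of bihomogeneous forms of bidegree $(d_1,d_2)$ has dimension $\binom{m+d_1}{m}\binom{n+d_2}{n}$. The subspace $V$ is the kernel of the jet-evaluation map $f\mapsto\bigl(j^{m_i-1}_{p_i}f\bigr)_{i=1,\dots,s}$ sending $f$ to its collection of $(m_i-1)$-jets at the points $p_i$. The space of $(m_i-1)$-jets at a point of the $(m+n)$-dimensional variety $\PP^m\times\PP^n$ has dimension equal to the number of monomials of degree at most $m_i-1$ in $m+n$ local variables, which by the hockey-stick identity is exactly $\binom{m+n+m_i-1}{m+n}$; hence the target of the evaluation map has dimension $\sum_{i=1}^s\binom{m+n+m_i-1}{m+n}$.

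By rank--nullity, $\dim V\ge\binom{m+d_1}{m}\binom{n+d_2}{n}-\sum_{i=1}^s\binom{m+n+m_i-1}{m+n}$, and since $\dim|D|=\dim V-1$ this gives $\dim|D|\ge\vdim|D|$. Combined with the trivial bound $\dim|D|\ge-1$, we conclude $\dim|D|\ge\max\{-1,\vdim|D|\}=\edim|D|$, as required. There is no serious obstacle here: the entire content is the one-directional inequality that the imposed conditions cannot exceed the naive count (the kernel can only be larger, never smaller), together with the standard dictionary between twisting down by $E_i$ and prescribing multiplicity $m_i$ at $p_i$. The only point requiring a little care is the degenerate case of negative bidegree, which I dispose of at the outset.
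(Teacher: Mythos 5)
Your proof is correct and is essentially the paper's own argument: both identify sections of $O(D)$ with bihomogeneous forms of bidegree $(d_1,d_2)$ subject to at most $\sum_{i}\binom{m+n+m_i-1}{m+n}$ linear conditions (your jet-evaluation/rank--nullity formulation is just a more explicit phrasing of the paper's ``imposes linear conditions'' count), so the dimension of the linear system can fall short of the naive parameter count by at most that amount. The only difference is bookkeeping: the paper first reduces to the case $m_i\ge 0$ by splitting off the fixed components $(-m_i)E_i$, whereas you treat only negative $d_i$ explicitly and leave the case $m_i\le 0$ implicit --- which is harmless, since there the multiplicity condition is vacuous and the corresponding binomial term is zero under the standard convention, but it would be worth one sentence to say so, as your jet map $j^{m_i-1}_{p_i}$ is not literally defined for $m_i\le 0$.
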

\begin{proof}
If $d_i<0$, $\vdim|D|<0$ and $|D|$ is empty, so the statement holds. If $m_i\le0$ then $\vdim|D|=\vdim|D+m_iE_i|$
and
$|D|=(-m_i)E_i+|D+m_iE_i|$, so the statement holds for $D$ if it holds for all divisors with $m_i \geq 0$. 

Assume now that $d_1,d_2,m_i\ge0$ for every $1\le i\le s$.
Notice that a bidegree $(d_1,d_2)$ hypersurface of $\PP^m\times\PP^n$ is the zero locus of a bihomogeneous polynomial $F_{(d_1,d_2)}$ in $n+m+2$ variables that depends on ${{m+d_1}\choose m}{{n+d_2}\choose n}$ parameters. For such a hypersurface, the passage through a point with multiplicity $m_i$ corresponds to the vanishing of all partial derivatives of $F_{(d_1,d_2)}$ of order $m_i-1$ and therefore it imposes  ${{m+n+m_i-1}\choose {m+n}}$ linear conditions. Since elements of the linear system $|D|$ are in $1-1$ correspondence with the elements of the projectivisation of the vector space of such polynomials, we can conclude.
\end{proof}
Later we will use the following description of the virtual dimension. This lemma is well known to experts but we are not aware of a reference.
\begin{lemma}\label{vdimeuler}
  For $D$ a divisor of the form \eqref{general divisor} with $m_i \geq 0$ for all $i$, the following relation between virtual dimension and Euler characteristic holds:
  \begin{align*}
    \vdim|D| &= \chi(X,O_X(D))-1.
  \end{align*}
\end{lemma}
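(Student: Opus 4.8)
The plan is to exploit the additivity of the Euler characteristic in short exact sequences, reducing the computation on $X = X_{m,n,s}$ to one on $\PP^m\times\PP^n$, where the answer comes from the Künneth formula. Write $N = m+n$, let $L = O(d_1,d_2)$ be the bidegree $(d_1,d_2)$ line bundle on $\PP^m\times\PP^n$, and let $\pi\colon X \to \PP^m\times\PP^n$ be the blowup, so that $O_X(D) = \pi^*L \otimes O_X(-\sum_i m_i E_i)$. Since $\pi$ is the blowup of a smooth variety at smooth points we have $\pi_* O_X = O_{\PP^m\times\PP^n}$ and $R^j\pi_* O_X = 0$ for $j>0$; by the projection formula the same vanishing holds after twisting by $\pi^* L$, so the Leray spectral sequence gives $\chi(X, \pi^* L) = \chi(\PP^m\times\PP^n, L)$. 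The Künneth formula then yields $\chi(\PP^m\times\PP^n,L) = \chi(\PP^m, O(d_1))\,\chi(\PP^n, O(d_2)) = \binom{m+d_1}{m}\binom{n+d_2}{n}$, where I use that these binomial expressions compute $\chi$ of line bundles on projective space for \emph{every} integer degree (interpreting the binomial coefficients as polynomials), not merely for effective ones.

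It then remains to account for the exceptional divisors, and for this I would peel them off one at a time. For any divisor $G$ on $X$ whose coefficient along $E_i$ equals $-c$ with $c \ge 1$, the ideal-sheaf sequence of $E_i$ twisted by $O_X(G+E_i)$ reads
\begin{equation*}
0 \to O_X(G) \to O_X(G+E_i) \to O_{E_i}(G+E_i) \to 0,
\end{equation*}
so additivity gives $\chi(X,G+E_i) - \chi(X,G) = \chi(E_i, O_{E_i}(G+E_i))$. Here I would use the two geometric inputs that $E_i \cong \PP^{N-1}$ and that $O_X(E_i)|_{E_i} \cong O_{\PP^{N-1}}(-1)$ (the normal bundle of the exceptional divisor), together with the fact that $H_1,H_2$ and the $E_j$ for $j\neq i$ all restrict to zero on $E_i$. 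Hence $(G+E_i)|_{E_i} \cong O_{\PP^{N-1}}(c-1)$ and the correction term equals $\chi(\PP^{N-1}, O(c-1)) = \binom{N+c-2}{N-1}$, which depends only on $c$ and not on the other coefficients of $G$.

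Starting from $O_X(D)$ and repeatedly adding $E_i$ until its coefficient reaches $0$, then doing this for each $i$ in turn, the corrections telescope and are mutually independent, giving
\begin{equation*}
\chi(X,\pi^*L) - \chi(X, O_X(D)) = \sum_{i=1}^s \sum_{c=1}^{m_i}\binom{N+c-2}{N-1} = \sum_{i=1}^s \binom{N+m_i-1}{N},
\end{equation*}
where the inner sum is evaluated by the hockey-stick identity $\sum_{j=0}^{m_i-1}\binom{N-1+j}{N-1} = \binom{N+m_i-1}{N}$. Combining the two displays with $N = m+n$ produces $\chi(X,O_X(D)) = \binom{m+d_1}{m}\binom{n+d_2}{n} - \sum_{i=1}^s \binom{m+n+m_i-1}{m+n}$, which is exactly $\vdim|D| + 1$.

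The computation is essentially routine once this machinery is in place, so I do not expect a serious obstacle; the points that need care are purely bookkeeping. Specifically, I would make sure that the restriction $(G+E_i)|_{E_i}$ is computed with the correct sign coming from the normal bundle $O_{\PP^{N-1}}(-1)$, that the Künneth formula is applied with the polynomial (rather than merely effective) interpretation of $\chi$ on projective space, and that the hypothesis $m_i \ge 0$ is used precisely to guarantee that peeling increases each exceptional coefficient toward $0$, so that the hockey-stick summation applies. The independence of the corrections for different $i$ rests on the fact that distinct exceptional divisors are disjoint, so that restricting to $E_i$ annihilates every $E_j$ with $j \neq i$.
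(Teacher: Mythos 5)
Your proof is correct and is essentially the same as the paper's: both arguments peel off the exceptional divisors one unit of multiplicity at a time using the ideal-sheaf sequence of $E_i$ twisted by $O_X(G+E_i)$, additivity of $\chi$, and the computation $\chi\left(E_i, O_{\PP^{m+n-1}}(c-1)\right) = \binom{m+n+c-2}{m+n-1}$, reducing to the multiplicity-free case of a bidegree $(d_1,d_2)$ class. The only differences are organizational — you telescope and sum the corrections with the hockey-stick identity where the paper runs an induction on $\sum_i m_i$ and applies Pascal's rule at each step, and you spell out the Leray/K\"unneth computation of $\chi(X,\pi^*L)$ that the paper's base case leaves implicit.
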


\begin{proof}
  Let $D$ be a divisor with class $d_1H_1+d_2H_2-\sum_i m_iE_i$ where $d_1,d_2$, and the $m_i$ are all nonnegative integers. We will prove the claim by induction on the natural number $M=\sum_i m_i$.

  If $M=0$ then $D=d_1H_1+d_2H_2$ and both sides of the claimed equality equal
  \begin{align*}
    {m+d_1 \choose m} {n + d_2 \choose n}-1.
  \end{align*}
  Now assume the equality holds for $M=k-1$; we will prove it for $M=k$. Let $D$ be a divisor of the form \eqref{general divisor} with $M=k$. Assume without loss of generality that $m_1>0$, and define $D^\prime = D+E_1$: note that the divisor $D^\prime$ has $M=k-1$, so by our induction hypothesis we have $\vdim|D^\prime|=\chi(X,O_X(D^\prime))-1$.

  On one hand we have
  \begin{align*}
    \vdim|D^\prime|-\vdim|D| &= {m+n+m_1-1 \choose m+n} - {m+n+m_1-2 \choose m+n} \\
    &=   {m+n+m_1-2 \choose m+n-1 }.
  \end{align*}
  On the other hand, we can twist the ideal sheaf sequence for $E_1$ by $O_X(D^\prime)$ to get the exact sequence
  \begin{align*}
    0 \arrow O_X(D) \arrow O_X(D^\prime) \arrow O_{E_1}(D^\prime|_{E_1}) \arrow 0
  \end{align*}
  which gives
  \begin{align*}
    \chi(X,O_X(D^\prime)) - \chi(E_1, O_{E_1}(D^\prime|_{E_1})) &=  \chi(X,O_X(D)).
  \end{align*}
  Now $E_1 \cong \PP^{m+n-1}$ and $D^\prime|_{E_1} \cong O_{\PP^{m+n-1}}(m_1-1)$ which implies
  \begin{align*}
    \chi(E_1, O_{E_1}(D^\prime|_{E_1}))
    &= {m+n+m_1-2 \choose m+n-1}.
  \end{align*}
Using these expressions and the induction hypothesis we get
  \begin{align*}
    \vdim|D| &= \vdim|D^\prime| -   {m+n+m_1-2 \choose m+n-1 }\\
    &= \chi(X,O_X(D^\prime))-1 -   {m+n+m_1-2 \choose m+n-1 }\\
    &= \chi(X,O_X(D^\prime))-1-\chi(E_1, O_{E_1}(D^\prime|_{E_1}))\\
    &= \chi(X,O_X(D))-1.    
  \end{align*}
\end{proof}

\subsection*{Base locus lemmas}
An important ingredient in our computations will be base locus lemmas showing that effective divisor classes which violate certain numerical inequalities must contain a certain fixed divisor in their base locus. The following lemma gives a convenient way to prove several of these results.

\begin{lemma}\label{lemma-baselocusgeneral}
  Let $X$ be a smooth projective variety. Let $C \in N_1(X)$ be a curve class, and let $F$ be an irreducible reduced divisor on $X$ such that irreducible curves with class $C$ cover a Zariski-dense subset of $F$. Let $f=F \cdot C$, and assume that $f<0$.

  If $D$ is an effective divisor class on $X$ such that
  \begin{align*}
    D \cdot C = d <0
  \end{align*}
  then the divisor $F$ is contained in the base locus $\Bs(D)$ with multiplicity at least $\left \lceil{d/f} \right \rceil$. 
\end{lemma}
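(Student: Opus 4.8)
The plan is to argue directly with an arbitrary effective representative of the class $D$, converting the negativity $f < 0$ together with the covering property of the curves into a lower bound on the multiplicity of $F$. The underlying principle is the elementary fact that an effective divisor meets an irreducible curve non-negatively unless the curve is contained in the divisor; the hypothesis $f = F \cdot C < 0$ is exactly what forces curves of class $C$ to lie inside $F$, and this is the feature I want to exploit.

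Concretely, I would fix an effective divisor $D_0$ with class $D$ and write $D_0 = k F + G$, where $k = \operatorname{mult}_F(D_0) \geq 0$ and $G$ is an effective divisor not containing $F$ as a component. Since $G$ does not contain $F$, the intersection $\operatorname{Supp}(G) \cap F$ is a proper closed subset of $F$. Using the hypothesis that irreducible curves of class $C$ cover a Zariski-dense subset of $F$, I would then select one such irreducible curve $C_0 \subseteq F$ with $[C_0] = C$ that is \emph{not} contained in $\operatorname{Supp}(G)$. Such a curve must exist, for otherwise all the covering curves would lie in the proper closed subset $\operatorname{Supp}(G) \cap F$, contradicting density.

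With this $C_0$ in hand, I compute the intersection number $D \cdot C$ in two ways. On one side $D_0 \cdot C_0 = D \cdot C = d$, since $C_0$ has class $C$; on the other side, expanding the decomposition,
\begin{equation*}
  d = D_0 \cdot C_0 = k\,(F \cdot C_0) + (G \cdot C_0) = k f + (G \cdot C_0).
\end{equation*}
Because $C_0 \not\subseteq \operatorname{Supp}(G)$ and $G$ is effective, we have $G \cdot C_0 \geq 0$, whence $d \geq k f$. Dividing by $f$ and reversing the inequality, since $f < 0$, gives $k \geq d/f$, and as $k$ is an integer this forces $k \geq \lceil d/f \rceil$. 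Since $D_0$ was an arbitrary effective divisor in the class $D$, every effective representative contains $F$ with multiplicity at least $\lceil d/f \rceil$, which is precisely the assertion that $F \subseteq \Bs(D)$ with that multiplicity.

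The one genuinely delicate step is the selection of the curve $C_0$: one must guarantee that the covering family really produces an irreducible curve of the correct class lying in $F$ but avoiding the unwanted part $G$. This is exactly where the precise hypothesis — that the curves cover a \emph{dense}, rather than merely nonempty, subset of $F$ — is needed, ensuring the family cannot be swept entirely into the proper closed locus $\operatorname{Supp}(G) \cap F$. Everything else is the bookkeeping of the sign reversal induced by $f < 0$.
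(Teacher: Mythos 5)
Your proof is correct, and it rests on exactly the same two facts as the paper's: an effective divisor has non-negative intersection with any irreducible curve not contained in its support, and the Zariski-density of the covering curves prevents them from all lying in a proper closed subset of $F$ (here, $\operatorname{Supp}(G)\cap F$). The paper organizes the argument iteratively --- show $F \subset \Bs(D)$, replace $D$ by $D-F$, and repeat as long as $d-kf<0$ --- whereas you decompose a fixed effective representative as $D_0 = kF+G$ once and extract $k \geq \left\lceil d/f \right\rceil$ from a single intersection computation; this is just an unrolled version of the same argument, and if anything your write-up makes explicit the curve-selection step that the paper's terse proof leaves implicit.
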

\begin{proof}
Since $D \cdot C<0$, every curve in the class $C$ must be contained in every effective divisor in the class $D$, so $F \subset \Bs(D)$. Now replace $D$ with $D-F$ and continue: the same argument applies to $D-kF$ as long as $d-kf<0$.
\end{proof}
In particular we immediately deduce base locus lemmas for two kinds of fixed divisors on our varieties:
\begin{lemma}\label{lemma-baselocusexceptionals} On the variety $X_{m,n,s}$ consider an effective divisor $D$ with class
$
    d_1H_1+d_2H_2 - \sum_{i=1}^s m_i E_i
$
  Then:
  \begin{enumerate}
  \item[(a)] for each $i$, the exceptional divisor $E_i$ is contained in $\Bs(D)$ with multiplicity at least $\operatorname{max}(0,-m_i)$;
 \item[(b)] in the case $m=1$, the unique effective divisor $F_i$ with class $H_1-E_i$ is contained in $\Bs(D)$ with multiplicity at least $\operatorname{max}(0,m_i-d_2)$.
  \end{enumerate}
  \end{lemma}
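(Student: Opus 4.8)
The plan is to obtain both statements as immediate consequences of Lemma \ref{lemma-baselocusgeneral}, by choosing in each case an appropriate fixed divisor $F$ and a covering curve class $C$, and then reading off the two relevant intersection numbers from Proposition \ref{intersection-table} together with Corollary \ref{corollary-curveclass}. In both cases the numerical inputs are routine, so the substance of the argument is checking that the geometric hypotheses of Lemma \ref{lemma-baselocusgeneral} are met.

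For part (a) I would take $F=E_i$ and $C=e_i$. The exceptional divisor $E_i\iso\PP^{m+n-1}$ is irreducible and reduced, and the lines in the class $e_i$ sweep out all of $E_i$, so the covering hypothesis holds. From Proposition \ref{intersection-table}(c) one has $f=E_i\cdot e_i=-1$ and $D\cdot e_i=m_i$. Hence when $m_i<0$ the hypothesis $D\cdot C<0$ holds with $d=m_i$, and Lemma \ref{lemma-baselocusgeneral} produces $E_i$ in $\Bs(D)$ with multiplicity at least $\lceil m_i/(-1)\rceil=-m_i$; when $m_i\ge 0$ the asserted bound $\max(0,-m_i)=0$ is vacuous.

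For part (b), where $m=1$, I would take $F=F_i$, the proper transform of the fibre of $\pi_1$ through $p_i$ (which has class $H_1-E_i$), and $C=l_2-e_i$, the class of the proper transform of a line in that fibre passing through $p_i$. Since $m=1$ the fibre is a copy of $\PP^n$, so $F_i$ is irreducible and reduced; moreover the lines through $p_i$ cover $\PP^n$, so the proper transforms of such lines, whose class is $l_2-e_i$ by Corollary \ref{corollary-curveclass}, cover a Zariski-dense subset of $F_i$. Using $E_i\cdot l_2=0$, $H_1\cdot l_2=0$, $H_1\cdot e_i=0$ and $E_i\cdot e_i=-1$ one computes $f=(H_1-E_i)\cdot(l_2-e_i)=-1$, while $D\cdot(l_2-e_i)=d_2-m_i$. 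Thus when $m_i>d_2$ the hypothesis holds with $d=d_2-m_i<0$, and Lemma \ref{lemma-baselocusgeneral} gives $F_i$ in $\Bs(D)$ with multiplicity at least $\lceil(d_2-m_i)/(-1)\rceil=m_i-d_2$; otherwise the bound is again vacuous.

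The only non-formal points, and hence where I would focus attention, are the two hypotheses of Lemma \ref{lemma-baselocusgeneral}: the irreducibility and reducedness of $F$, and the assertion that the chosen curves cover a dense subset of $F$. Both are standard once the geometry is pinned down, namely that $E_i$ is a projective space covered by its lines and that $F_i$ is a blown-up $\PP^n$ covered by proper transforms of lines through the blown-up point; I do not expect any genuine obstacle beyond stating these covering facts cleanly.
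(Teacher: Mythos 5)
Your proposal is correct and follows essentially the same route as the paper's proof: both parts apply Lemma \ref{lemma-baselocusgeneral} with exactly the curve classes $e_i$ and $l_2-e_i$, the same covering observations, and the same intersection numbers $E_i\cdot e_i=-1$ and $(H_1-E_i)\cdot(l_2-e_i)=-1$. The only difference is that you spell out the intersection computations and the multiplicity bounds in more detail than the paper does.
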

\begin{proof} To prove $(a)$, apply Lemma \ref{lemma-baselocusgeneral} with $C=e_i$, the class of a line in $E_i$. Note that $E_i \cdot e_i = -1$.

  To prove $(b)$, apply Lemma \ref{lemma-baselocusgeneral} with $C=l_2-e_i$. This is the class of the proper transform on $X_{1,n,s}$ of any line in a fibre of $\PP^1 \times \PP^n \arrow \PP^1$ which passes through $p_i$. These lines cover the fibre, hence their proper transforms cover the proper transform of the fibre. Note that $(H_1-E_i)\cdot(l_2-e_i)=-1$.  
\end{proof}

\subsection*{Effective cones of del Pezzo surfaces}

In this subsection we record the effective cones of various del Pezzo surfaces. These cones are described in standard references such as \cite{manin}. For this paper, it will be convenient to view these surfaces as blowups of $\PP^1 \times \PP^1$ rather than of $\PP^2$. In our notation, $X_{1,1,s}$ denotes a del Pezzo surface of degree $8-s$. 

\begin{proposition}\label{proposition-x115} 
The effective cone $\Eff(X_{1,1,5})$ is given by the generators below left and the inequalities below right.

\begin{minipage}[t]{0.4\textwidth}
 
\begin{align*}
\rowcolors{2}{white}{gray!15}
\begin{array}{ccccccc}
H_1 & H_2 & E_1 & E_2 & E_3 & E_4 & E_5\\
\hline\hline
0 & 0 & 1 & 0 & 0 & 0 & 0\\
1 & 0 & -1 & 0 & 0 & 0 & 0\\
1 & 1 & -1 & -1 & -1 & 0 & 0\\
2 & 1 & -1 & -1 & -1 & -1 & -1\\
\end{array}
\end{align*}
\end{minipage}\quad
\begin{minipage}[t]{0.4\textwidth}

\begin{align*}
\rowcolors{2}{white}{gray!15}
\begin{array}{ccccccc}
d_1 & d_2 & m_1 & m_2 & m_3 & m_4 & m_5\\
\hline\hline
1 & 0 & 0 & 0 & 0 & 0 & 0\\
1 & 1 & 1 & 0 & 0 & 0 & 0\\
1 & 1 & 1 & 1 & 0 & 0 & 0\\
1 & 2 & 1 & 1 & 1 & 0 & 0\\
1 & 2 & 1 & 1 & 1 & 1 & 0\\
1 & 3 & 1 & 1 & 1 & 1 & 1\\
2 & 2 & 2 & 1 & 1 & 1 & 0\\
2 & 2 & 2 & 1 & 1 & 1 & 1\\
2 & 3 & 2 & 2 & 1 & 1 & 1\\
3 & 3 & 2 & 2 & 2 & 2 & 1\\
\end{array}
\end{align*} 
\end{minipage}
\end{proposition}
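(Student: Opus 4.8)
The plan is to recognise $X_{1,1,5}$ as a classical del Pezzo surface and read off its effective cone from its negative curves. Since the five points are in general position, $X_{1,1,5}$ is a smooth del Pezzo surface of degree $(-K_X)^2 = 8-5 = 3$, with anticanonical class $-K_X = 2H_1 + 2H_2 - \sum_{i=1}^5 E_i$, as one computes from the intersection numbers of Proposition~\ref{intersection-table}. For a del Pezzo surface the effective cone coincides with the cone of curves $\overline{NE}(X)$ and is rational polyhedral, generated by the classes of the $(-1)$-curves, i.e. the irreducible curves $C$ with $C^2 = K_X \cdot C = -1$ (see \cite{manin}); general position guarantees there are no $(-2)$-curves, so these are exactly the extremal rays. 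The decisive step is therefore to enumerate the $(-1)$-curves in this $\PP^1 \times \PP^1$ model.

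Writing a class as $C = d_1 H_1 + d_2 H_2 - \sum_i m_i E_i$, Proposition~\ref{intersection-table} turns the $(-1)$-conditions into the two equations
\begin{align*}
  2 d_1 d_2 - \sum_{i=1}^5 m_i^2 = -1, \qquad 2(d_1 + d_2) - \sum_{i=1}^5 m_i = 1.
\end{align*}
I would bound $d_1, d_2$ and the multiplicities so that only finitely many candidate solutions remain, and check that the effective ones are precisely the $27$ lines on the cubic surface. These fall into four orbits under the group $G = S_5 \times \ZZ/2$ generated by permutations of the five points and the involution exchanging $H_1 \leftrightarrow H_2$ (induced by the factor swap on $\PP^1 \times \PP^1$, which preserves the intersection form and $K_X$, hence $\Eff$): the five classes $E_i$; the ten classes $H_k - E_i$ with $k \in \{1,2\}$; the ten classes $H_1 + H_2 - E_i - E_j - E_k$; and the two classes $2H_1 + H_2 - \sum_i E_i$ and $H_1 + 2H_2 - \sum_i E_i$. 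One representative from each orbit recovers exactly the four rows of the left-hand table.

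For the right-hand facet description, I read each row $(\alpha_1, \alpha_2, \mu_1, \ldots, \mu_5)$ as a curve class $\gamma = \alpha_1 l_1 + \alpha_2 l_2 - \sum_i \mu_i e_i$, so that by Proposition~\ref{intersection-table}(c) the associated inequality on $D = d_1 H_1 + d_2 H_2 - \sum_i m_i E_i$ is $D \cdot \gamma = \alpha_1 d_1 + \alpha_2 d_2 - \sum_i \mu_i m_i \ge 0$. I would first verify that each such $\gamma$ pairs nonnegatively with all four generators, so every listed inequality holds on $\Eff(X_{1,1,5})$; the $G$-symmetry cuts this down to finitely many pairings. Since both the generating set and the inequality set are finite and $G$-stable, identifying the two cones is then a finite convex-duality check: one shows that the cone cut out by the full $G$-orbit of the ten inequalities has no extremal ray outside the four orbits above, equivalently that each generator lies on a collection of the listed facets whose normals span $N^1(X_{1,1,5})$.

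The step I expect to be the main obstacle is completeness rather than validity: proving that the two equations have no further effective solutions beyond the $27$ listed, and dually that the ten rows exhaust the facets of $\Eff(X_{1,1,5})$ up to the action of $G$. Both are finite verifications once the degrees and multiplicities are bounded, and both are underwritten by the classical geometry of the $27$ lines on a cubic surface; the real labour is the $G$-orbit bookkeeping needed to certify that the four generators and ten inequalities form complete (not merely valid) lists.
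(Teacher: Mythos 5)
Your route is essentially the classical one that the paper itself invokes only by citation: the paper gives no proof of this proposition, recording the cone from standard references such as \cite{manin}. Your identification of $X_{1,1,5}$ as a degree-$3$ del Pezzo surface, the characterisation of the generators as the $27$ classes with $C^2=K_X\cdot C=-1$ (no $(-2)$-classes by generality), the enumeration $5+10+10+2=27$ in the $\PP^1\times\PP^1$ model, and the reading of each row of the right-hand table as the curve class $\alpha_1 l_1+\alpha_2 l_2-\sum_i\beta_i e_i$ are all correct. One small repair to your validity step: you must check nonnegativity of the pairing on orbit representatives of \emph{pairs}, i.e.\ each of the ten curve classes against all $27$ lines (or, dually, each of the four generator representatives against all $99$ inequalities in the full $G$-orbit), not merely the ten rows against the four displayed generators, since the pairing is not constant on $G$-orbits.

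The genuine gap is in your completeness step, where the two formulations you call equivalent are not. Let $C_1$ be the cone spanned by the $27$ lines and $C_2$ the cone cut out by the $G$-orbit of the ten inequalities; validity gives $C_1\subseteq C_2$. Checking that each generator lies on listed facets whose normals have rank $\rho-1=6$ (not ``span $N^1(X_{1,1,5})$'': the normals through a nonzero class $v$ all annihilate $v$, so they span at most the hyperplane $v^{\perp}$, and rank $7$ would force $v=0$) proves only that each of the $27$ lines spans an extremal ray of $C_2$; it does not exclude extremal rays of $C_2$ outside your four orbits, which is precisely what completeness asserts. This is a real failure mode, not a technicality: the cone over a triangle whose vertices are three vertices of a square is strictly contained in the cone over that square, yet all three of its generators are extremal in the larger cone. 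To close the argument you must do one of the following genuinely stronger finite computations: (i) enumerate the extremal rays of $C_2$ from its $99$ defining inequalities (a double-description computation) and check each lies in one of the four orbits, whence $C_2\subseteq C_1$ because a pointed polyhedral cone is spanned by its extremal rays; or (ii) enumerate the facets of $C_1$ from its $27$ generators and check that every facet normal lies in the $G$-orbit of the ten listed rows, whence $C_2\subseteq C_1$ because $C_2$ then satisfies every facet inequality of $C_1$. Either check is finite but is a ray/facet enumeration rather than incidence bookkeeping; it is exactly the step the paper elsewhere delegates to Normaliz \cite{normaliz} (compare the files {\tt X036-gens} cited before Theorem \ref{theorem-x036}).
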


\begin{proposition}\label{proposition-x116} 
The effective cone $\Eff(X_{1,1,6})$ is given by the generators below left and the inequalities below right.

\begin{minipage}[t]{0.4\textwidth}
 
  \begin{align*}
\rowcolors{2}{white}{gray!15}
\begin{array}{cccccccc}
H_1 & H_2 & E_1 & E_2 & E_3 & E_4 & E_5 & E_6\\
\hline\hline
0 & 0 & 1 & 0 & 0 & 0 & 0 & 0\\
1 & 0 & -1 & 0 & 0 & 0 & 0 & 0\\
1 & 1 & -1 & -1 & -1 & 0 & 0 & 0\\
2 & 1 & -1 & -1 & -1 & -1 & -1 & 0\\
2 & 2 & -2 & -1 & -1 & -1 & -1 & -1\\
\end{array}
\end{align*} 
\end{minipage}\quad
\begin{minipage}[t]{0.4\textwidth}

\begin{align*}
\rowcolors{2}{white}{gray!15}
\begin{array}{cccccccc}
d_1 & d_2 & m_1 & m_2 & m_3 & m_4 & m_5 & m_6\\
\hline\hline
1 & 0 & 0 & 0 & 0 & 0 & 0 & 0\\
1 & 1 & 1 & 0 & 0 & 0 & 0 & 0\\
1 & 1 & 1 & 1 & 0 & 0 & 0 & 0\\
1 & 2 & 1 & 1 & 1 & 0 & 0 & 0\\
1 & 2 & 1 & 1 & 1 & 1 & 0 & 0\\
1 & 3 & 1 & 1 & 1 & 1 & 1 & 0\\
1 & 3 & 1 & 1 & 1 & 1 & 1 & 1\\
2 & 2 & 2 & 1 & 1 & 1 & 0 & 0\\
2 & 2 & 2 & 1 & 1 & 1 & 1 & 0\\
2 & 3 & 2 & 2 & 1 & 1 & 1 & 0\\
2 & 3 & 2 & 2 & 1 & 1 & 1 & 1\\
2 & 4 & 2 & 2 & 2 & 1 & 1 & 1\\
3 & 3 & 2 & 2 & 2 & 2 & 1 & 0\\
3 & 3 & 2 & 2 & 2 & 2 & 1 & 1\\
3 & 3 & 3 & 2 & 1 & 1 & 1 & 1\\
3 & 4 & 2 & 2 & 2 & 2 & 2 & 2\\
3 & 4 & 3 & 2 & 2 & 2 & 1 & 1\\
3 & 5 & 3 & 2 & 2 & 2 & 2 & 2\\
4 & 4 & 3 & 3 & 2 & 2 & 2 & 1\\
4 & 5 & 3 & 3 & 3 & 2 & 2 & 2\\
5 & 5 & 3 & 3 & 3 & 3 & 3 & 2\\
\end{array}
\end{align*}
\end{minipage}
\end{proposition}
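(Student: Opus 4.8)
The plan is to recognise $X_{1,1,6}$ as a del Pezzo surface and to read off its effective cone from the classical theory of such surfaces. Since $K_{\PP^1\times\PP^1}=-2H_1-2H_2$, the blowup formula gives $-K_{X_{1,1,6}}=2H_1+2H_2-\sum_{i=1}^6 E_i$, and Corollary \ref{corollary-topselfint} yields $(-K)^2=2\cdot 2\cdot 2-6=2$. For six points in general position $-K_{X_{1,1,6}}$ is ample---equivalently, $X_{1,1,6}$ is the blowup of $\PP^2$ at seven general points---so $X_{1,1,6}$ is a del Pezzo surface of degree $2$. Because $X_{1,1,6}$ is a surface, effective divisors and effective $1$-cycles coincide, so $\Eff(X_{1,1,6})=\overline{NE}(X_{1,1,6})$, and since $-K$ is ample the Cone Theorem identifies this cone with the convex hull of the finitely many $(-1)$-curves; this is the classical description recorded in \cite{manin}.

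The next step is to enumerate the $(-1)$-curves. Writing a class as $C=d_1H_1+d_2H_2+\sum_i c_iE_i$ with $c_i\in\ZZ$, the conditions $C^2=-1$ and $-K_{X_{1,1,6}}\cdot C=1$ become, using Proposition \ref{intersection-table},
\[
  2d_1d_2-\sum_i c_i^2=-1,\qquad 2d_1+2d_2+\sum_i c_i=1.
\]
Solving this finite Diophantine system gives exactly $56$ classes, which fall into five orbits under the natural action of $S_6\times\ZZ/2$ permuting $p_1,\dots,p_6$ and swapping the two $\PP^1$ factors. Representatives are the five classes in the left-hand table, with orbit sizes $6+12+20+12+6=56$; hence these five classes generate $\Eff(X_{1,1,6})$.

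Finally, the inequalities cutting out $\Eff(X_{1,1,6})$ come from dualising. As $\Eff(X_{1,1,6})=\overline{NE}(X_{1,1,6})$, its dual is $\Nef(X_{1,1,6})$, whose extremal rays are precisely the facet normals of the effective cone; under the identification $N^1=N_1$ these are the extremal moving-curve classes listed on the right. I would verify that each of the $21$ listed classes is nef---that is, pairs nonnegatively with all $56$ generators---and that together they cut out exactly the cone spanned by those generators; by $S_6\times\ZZ/2$-equivariance it suffices to pair one representative of each inequality-orbit against one representative of each of the five generator-orbits. The main obstacle is this dualisation: confirming that the $21$ inequalities are complete and irredundant is an $8$-dimensional polyhedral computation, best organised by exploiting the $S_6\times\ZZ/2$ symmetry (or the full Weyl group $W(E_7)$ acting on $\Pic X_{1,1,6}$) to reduce it to orbit representatives rather than enumerating all facets by hand.
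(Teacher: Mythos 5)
Your proposal is correct and takes essentially the same route as the paper: the paper simply records this cone from the classical theory of del Pezzo surfaces (citing Manin), and your argument — identifying $X_{1,1,6}$ as a degree-$2$ del Pezzo surface, generating $\Eff(X_{1,1,6})=\overline{NE}(X_{1,1,6})$ by the $56$ $(-1)$-curves via the Cone Theorem, and obtaining the right-hand table by dualising to the nef cone — is precisely the content of that citation. Your orbit decomposition $6+12+20+12+6=56$ and your reduction of the final duality check to $S_6\times\ZZ/2$-orbit representatives are consistent with the paper's conventions (tables read up to permutation, dual descriptions verified by a finite polyhedral computation).
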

\noindent {\bf Writing cones } Let us spell out the conventions we use in writing effective cones in the tables in Propositions \ref{proposition-x115} and \ref{proposition-x116} above, and subsequently in this paper. Each effective cone we compute is described in two equivalent ways: by a list of generating classes, and by a list of defining inequalities.

For the table giving the list of generators, a row of the table of the form $(a_1 \ a_2 \ b_1 \cdots b_n)$ corresponds to a generator $a_1H_1+a_2H_2 + \sum_i b_i E_i$ of the effective cone.  For example, in the tables of Proposition \ref{proposition-x116} above, Row 3 of the left table corresponds to the generator \begin{align*}
  H_1+H_2-E_1-E_2-E_3
\end{align*}
For the tables giving the list of defining inequalities, a row with entries $(\alpha_1 \ \alpha_2 \ \beta_1 \cdots \beta_n)$ corresponds to an inequality $\alpha_1 d_1 + \alpha_2 d_2 \geq \sum_i \beta_ i m_i$ which must be satisfied by an effective divisor written in the form (\ref{general divisor}). So for example in Proposition \ref{proposition-x116} above, Row 3 of the right table above corresponds to an inequality
\begin{align*}
  d_1+d_2 \geq m_1+m_2 
\end{align*} 
A key point to stress is that these lists should be read {\bf up to permutation}: for a given generator written in the list, all generators obtained by suitable permutations are also included in the list, and similarly for inequalities. So for example, as well as the class and inequality written above, the lists of generators and inequalities respectively for $\Eff(X_{1,1,6})$ also include all of the following:
\begin{align*}
  H_1+H_2-E_i-E_j-E_k,\\
  d_1+d_2\ge m_i+m_j,
\end{align*}
where $i, \, j, \, k$ are distinct indices in the set $\{1,\ldots,6\}$.
  Finally, if $m=n$ such as in Propositions \ref{proposition-x115} and \ref{proposition-x116},
also $H_1,H_2$ (resp. $d_1,d_2$)  can be swapped in the list of generators (resp. inequalities).

\section{The cone method}\label{effconemethod}

Let $X_{m,n,s}$ be the blowup of $\PP^n\times\PP^m$ in $s$ points in general position. In this section, we outline a method that aims to determine $\Eff(X_{m,n,s})$ from several pieces of information: knowledge of the effective cone $\Eff(X_{m,n,s-1})$, base locus lemmas for fixed divisors, and the effective cones of certain subvarieties. In later sections, this method will be applied recursively to determine the effective cones of $X_{1,n,s}$ for $n=2, \, 3$ and $s \leq 6$.  
 
Using the notation of Section \ref{preliminaries}, we write a divisor $D$ in $X_{m,n,s}$ as $$
D=d_1H_1+d_2H_2-\sum_{i=1}^sm_iE_i,$$
where $d_1,d_2,m_1,\dots,m_s$ are integers. 

The method consists of the following steps. 
\begin{enumerate}[leftmargin=+.65in]
\item[\textbf{Step 1}] First of all, notice that $D+m_iE_i$ can be thought of as a divisor on $X_{m,n,s-1}$.
Assume that 
we know the inequalities
cutting out the effective cone of $\Eff(X_{m,n,s-1})$. These inequalities 
form a set of necessary and sufficient conditions,  in  the variables $d_1,d_2$, $m_1\dots,m_{i-1},m_{i+1},\dots,m_{s}$, for $D+m_iE_i$ to be effective on $X_{m,n,s-1}$, and a set of necessary conditions for $D$ to be effective on $X_{m,n,s}$, thanks to the obvious relation $D+m_iE_i\ge D$.
Therefore, letting $i$ vary between $1$ and $s$, we obtain a list of inequalities in $d_1,d_2,m_1,\dots,m_{s}$ giving rise to a cone that contains $\Eff(X_{m,n,s})$.
\item[\textbf{Step 2}] 
Secondly, for an extremal ray of the cone obtained in \textbf{Step 1} that is spanned by a fixed divisor $F$, we will compute a \emph{base locus lemma} for $F$, namely we will give an integer $K_F(D)$, depending on $F$ and on $D$, such that, if positive,  $F$ is contained in the base locus of $D$ at least $K_F(D)$ times.  Then we shall add the inequality $K_F(D)\le0$ to the list of  \textbf{Step 1}, giving rise to a smaller cone. Notice that all effective divisors that are excluded by the new inequality must contain $F$ as a component.
\item[\textbf{Step 3}] 
Finally, we compute the ray generators of the cone determined in \textbf{Step 2}, potentially identifying  new extremal rays.
If we can prove that the latter are all effective, then we add to the list the rays generated by the fixed divisors $F$ excluded in \textbf{Step 2}, and we obtain a complete list of generators of $\Eff(X_{m,n,s})$.

\item[\textbf{Step 4}] 
When the output of \textbf{Step 3} has some non effective extremal rays,  we need to refine the starting cone by adding additional necessary conditions for  the effectivity of a general divisor $D$. The key idea here is that if $M$ is a movable divisor on $X_{m,n,s}$ and if $|D|_M|=\emptyset$, then $|D|=\emptyset$ too. Therefore the inequalities describing the effective cone of the divisor $M$ will yield necessary conditions for $D$ to be effective. These conditions will be added to those from  \textbf{Step 1}. This will have the effect of shrinking the starting cone, and we run the procedure again. 
\end{enumerate}
 
Computationally, we implement this method for a given variety using the software package Normaliz \cite{normaliz}. For each variety $X_{m,n,s}$ that we consider, our set of supplementary files contains 4 Normaliz files with filenames of the form {\tt Xmns-*.*} and with the following contents:
\begin{itemize}
\item in the input file {\tt Xmns-ineqs.in} we collect the inequalities obtained in \textbf{Step 1}, \textbf{Step 2} and \textbf{Step 4};
\item the output file {\tt Xmns-ineqs.out} then computes, among other things, a list of generators of the restricted cone cut out by all these inequalities;
\item in the input file {\tt Xmns-gens.in} we collect the generators of the restricted cone from the previous step, together with all known fixed divisor classes on $X_{m,n,s}$, following \textbf{Step 3};
  \item finally, the output file {\tt Xmns-gens.out} computes the output of \textbf{Step 3}, giving both a list of generators and a list of defining inequalities.  
\end{itemize}

In this article we will apply this method to compute the effective cones of the blowup of $X_{1,m,s}$, with $m=2,3$, $s\le 6$, but we believe this can be applied in more general settings.

\section{Blowups of $\PP^1 \times \PP^2$}\label{section-dim3}


In this section we consider the threefolds $X_{1,2,s}$, the blowup of $\PP^1\times\PP^2$ in $s$ points in general position.

\subsection*{1 or 2 points}

The blowup of $\PP^1\times\PP^2$ in one or two points is a smooth toric threefold, therefore the cones of divisors are generated by the boundary divisors.

\begin{theorem}\label{Eff 122}
The effective cone of $X_{1,2,1}$ is given by the list of generators below left and inequalities below right:

\begin{minipage}[t]{0.4\textwidth}
  \begin{align*}
\rowcolors{2}{white}{gray!15}
\begin{array}{ccc}
H_1 & H_2 & E_1  \\
\hline\hline
0 & 0 & 1  \\
1 & 0 & -1  \\
\end{array}
  \end{align*}
  \end{minipage}\quad
  \begin{minipage}[t]{0.4\textwidth}
    \begin{align*}
\rowcolors{2}{white}{gray!15}
\begin{array}{ccc}
d_1 & d_2 & m_1  \\
\hline\hline
1 & 0 & 0  \\
0 & 1 & 0  \\
1 & 1 & 1  \\
\end{array}
  \end{align*}
  \ 
\end{minipage}

The effective cone  of $X_{1,2,2}$ is given by the list of generators below left and inequalities below right:

\begin{minipage}[t]{0.4\textwidth}
  \begin{align*}
\rowcolors{2}{white}{gray!15}
\begin{array}{cccc}
H_1 & H_2 & E_1 & E_2 \\
\hline\hline
0 & 0 & 1 & 0 \\
1 & 0 & -1 & 0 \\
0 & 1 & -1 & -1 \\
\end{array}
  \end{align*}
  \end{minipage}\quad
  \begin{minipage}[t]{0.4\textwidth}
    \begin{align*}
\rowcolors{2}{white}{gray!15}
\begin{array}{cccc}
d_1 & d_2 & m_1 & m_2 \\
\hline\hline
1 & 0 & 0 & 0 \\
0 & 1 & 0 & 0 \\
1 & 1 & 1 & 0 \\
1 & 2 & 1 & 1 \\
\end{array}
  \end{align*}
\end{minipage}
\end{theorem}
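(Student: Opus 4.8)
The plan is to exploit the toric structure already announced before the statement. First I would reduce to a fixed toric model. The subgroup $\mathrm{PGL}_2 \times \mathrm{PGL}_3$ of $\mathrm{Aut}(\PP^1 \times \PP^2)$ acts transitively on $\PP^1 \times \PP^2$, and on pairs of points whose images under the two projections are distinct; the latter is the open dense condition defining general position for two points. Hence a single point, and a general pair of points, can be moved by an automorphism to torus-fixed points of $\PP^1 \times \PP^2$ (say $(0,e_0)$ and $(\infty,e_1)$ in suitable coordinates). Since $\Eff$ is a projective invariant, it suffices to compute it after this normalisation. The blowup of a smooth projective toric variety at torus-fixed points is the star subdivision of its fan at the corresponding maximal cones, so $X_{1,2,1}$ and $X_{1,2,2}$ are smooth projective toric threefolds.

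Next I would apply the standard fact that on a complete toric variety the effective cone is spanned by the classes of the torus-invariant prime (boundary) divisors, since the Cox ring is the polynomial ring on these and every effective class is represented by a monomial in them. It then remains to list the boundary divisors and their classes. The boundary of $\PP^1 \times \PP^2$ consists of two divisors of class $H_1$ and three of class $H_2$, and a torus-fixed point lies on exactly one of the former and two of the latter. Under blowup, the proper transform of a boundary divisor through $p_i$ has its class reduced by $E_i$ (the point being a smooth point of that divisor), divisors missing the blown-up points keep their class, and the $E_i$ are themselves boundary divisors. Carrying this out gives the boundary classes $E_1,\, H_1,\, H_1-E_1,\, H_2,\, H_2-E_1$ on $X_{1,2,1}$, and $E_1,\, E_2,\, H_1-E_1,\, H_1-E_2,\, H_2-E_1,\, H_2-E_2,\, H_2-E_1-E_2$ on $X_{1,2,2}$, the last class being the proper transform of the unique boundary divisor through both points.

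I would then read off the extremal rays, discarding every boundary class that is a positive sum of others: on $X_{1,2,1}$ the relations $H_1=(H_1-E_1)+E_1$ and $H_2=(H_2-E_1)+E_1$ leave $E_1,\, H_1-E_1,\, H_2-E_1$, while on $X_{1,2,2}$ the relations $H_2-E_i=(H_2-E_1-E_2)+E_j$ leave $E_1,\, E_2,\, H_1-E_1,\, H_1-E_2,\, H_2-E_1-E_2$, recovering the generating sets of the statement up to permutation of the $E_i$. For the dual description I would pass to the cone of moving curves: each inequality $\alpha_1 d_1+\alpha_2 d_2 \ge \sum_i \beta_i m_i$ is the condition $D\cdot C\ge 0$ for a moving curve class $C$, using the pairing $D\cdot C=d_1c_1+d_2c_2-\sum_i m_i k_i$ from Proposition \ref{intersection-table}. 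For instance $d_1\ge 0$ and $d_2\ge 0$ come from the fibre classes $l_1,l_2$, and $d_1+d_2\ge m_1$ from the class $l_1+l_2-e_1$ of a $(1,1)$-curve through $p_1$. Verifying that each listed functional is nonnegative on every generator and vanishes on enough of them to define a facet is a routine finite check in ranks $3$ and $4$, which can also be confirmed with Normaliz as in Section \ref{effconemethod}.

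The only step needing real care is the reduction in the first paragraph: one must check that the general-position hypothesis genuinely permits the normalisation to torus-fixed points. For one point this is immediate from transitivity. For two points it follows because the locus of pairs with distinct images under both projections is open, dense and invariant, so a general pair is projectively equivalent to the toric pair $\{(0,e_0),(\infty,e_1)\}$; no semicontinuity argument is required, as we move the general configuration to the toric one by an honest isomorphism. Everything after this reduction is elementary bookkeeping with the fan and its boundary divisors.
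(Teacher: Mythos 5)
Your proposal is correct and is essentially the paper's own argument: the paper likewise observes that after moving the one or two general points to torus-fixed points the blowups are smooth toric threefolds, invokes the fact that the effective cone of a toric variety is spanned by the boundary divisor classes, and leaves the dualisation to the reader; you merely make explicit the transitivity reduction, the Cox-ring fact, and the boundary bookkeeping.

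One substantive remark, though. Your computation for $X_{1,2,1}$ yields three extremal rays, $E_1$, $H_1-E_1$ and $H_2-E_1$, and this is the correct answer: $\Eff(X_{1,2,1})$ is full-dimensional in the rank-$3$ space $N^1(X_{1,2,1})$, so it cannot have only two rays, and the printed inequalities $d_1\ge 0$, $d_2\ge 0$, $d_1+d_2\ge m_1$ dualise to exactly your three generators. The theorem's left-hand table for $X_{1,2,1}$, however, lists only $E_1$ and $H_1-E_1$; it is missing the row $(0,\,1,\,-1)$ corresponding to $H_2-E_1$, and no permutation convention supplies it since $m\neq n$ here. Consequently your closing claim that you ``recover the generating sets of the statement'' is not literally true for $X_{1,2,1}$: what you recover is the correct cone, which exposes a misprint in the statement rather than matching it. You should flag this discrepancy explicitly instead of asserting agreement; for $X_{1,2,2}$ your lists do coincide with the paper's.
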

\begin{proof}
The two tables on the left hand side contain the list of boundary divisors:  they span the extremal rays of the effective cone. The lists on the right hand side are computed by duality, we leave the details to the reader. \end{proof}

\begin{remark}\label{fixed}
All extremal rays of the effective cone of divisors of $X_{1,2,2}$ are fixed.  Exceptional divisors and divisors $H_1-E_i$ are fixed, cf. Lemma \ref{lemma-baselocusexceptionals}.
Finally,  consider the image of $p_i,p_j$ under the projection onto the second factor: $H_2-E_i-E_j$ is the pre-image of the line determined by these points and is therefore a fixed divisor isomorphic to the blowup of $\PP^1\times\PP^1$ at two  points in general position.
\end{remark}

\subsubsection*{Base locus lemmata for degree one divisors}\label{base locus degree 1}
In Lemma \ref{lemma-baselocusexceptionals}(b), we computed a lower bound for the multiplicity of containment of the fixed divisor $H_1-E_i$ in the base locus of an effective divisor. We now do the same for the fixed fivisor $H_2-E_i-E_j$.

\begin{lemma}\label{bsh2-ei-ej}
Let $D=d_1E_1+d_2E_2-\sum_{i=1}^s m_i E_i$ be an effective divisor, with $s\ge 2$.  The divisor $H_2-E_i-E_j$ is contained in $\Bs(D)$ with multiplicity at least $\max\{0,m_i+m_j-d_1-d_2\}$.
\end{lemma}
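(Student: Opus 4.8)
The plan is to apply the general base locus lemma (Lemma \ref{lemma-baselocusgeneral}) to the fixed divisor $F = H_2 - E_i - E_j$ together with a suitable covering curve class $C$. As recorded in Remark \ref{fixed}, $F$ is the proper transform of $\pi_n^{-1}(\ell)$, where $\ell \subset \PP^2$ is the line through the images $\pi_n(p_i), \pi_n(p_j)$; it is irreducible and reduced, and abstractly isomorphic to the blowup of $\PP^1 \times \ell \cong \PP^1 \times \PP^1$ at the two points $p_i, p_j$. This already supplies the irreducible reduced divisor required by Lemma \ref{lemma-baselocusgeneral}, so the task reduces to producing the curve class and checking the two numerical hypotheses.

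The key step is to take $C = l_1 + l_2 - e_i - e_j$, the class of the proper transform of a curve of bidegree $(1,1)$ on $\PP^1 \times \ell$ passing through $p_i$ and $p_j$. The essential geometric point, and the part that needs genuine justification, is that irreducible curves in this class cover a dense subset of $F$. This holds because the complete linear system of bidegree $(1,1)$ curves on $\PP^1 \times \PP^1$ is three-dimensional, so imposing passage through the two points $p_i, p_j$ leaves at least a pencil; since two distinct $(1,1)$-curves meet in exactly $(1,1)\cdot(1,1)=2$ points, the base locus of this pencil is precisely $\{p_i,p_j\}$. After blowing up these points, the proper transforms form a base-point-free pencil on $F$, whose general member is irreducible by Bertini. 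Hence these curves sweep out $F$, verifying the covering hypothesis.

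The remaining steps are routine intersection computations via Proposition \ref{intersection-table}. I would compute
\[
f = F \cdot C = H_2\cdot C - E_i\cdot C - E_j\cdot C = 1 - 1 - 1 = -1,
\]
which is negative as required, and likewise $D \cdot C = d_1 + d_2 - m_i - m_j$. Lemma \ref{lemma-baselocusgeneral} then gives, whenever $D\cdot C < 0$, that $F$ is contained in $\Bs(D)$ with multiplicity at least $\lceil (D\cdot C)/f\rceil = m_i + m_j - d_1 - d_2$; together with the trivial bound when $D\cdot C \geq 0$, this yields the stated $\max\{0,\, m_i + m_j - d_1 - d_2\}$.

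I expect the only real obstacle to be the covering claim for the $(1,1)$-pencil on $F$; once that is established, the statement follows immediately from the intersection numbers and the general base locus lemma already set up in the preliminaries.
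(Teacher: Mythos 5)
Your proof is correct and takes essentially the same approach as the paper: both apply Lemma \ref{lemma-baselocusgeneral} to the irreducible divisor $F = H_2 - E_i - E_j$ (identified via Remark \ref{fixed} as a blowup of $\PP^1\times\PP^1$ in two points) with the covering curve class $C = l_1 + l_2 - e_i - e_j$, using the intersection numbers $F\cdot C = -1$ and $D\cdot C = d_1 + d_2 - m_i - m_j$. The only difference is that you spell out why irreducible curves of class $C$ sweep out $F$ (the base-point-free pencil of $(1,1)$-curves through $p_i,p_j$), a fact the paper simply asserts.
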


\begin{proof}

Without loss of generality, we may assume that $i=1,j=2$. 
Under the assumption that $m_i\ge 0$ for every $i=1,\dots,s$, it is enough to prove the statement for divisors on $X_{1,2,2}$, namely for $D=d_1H_1+d_2H_2-m_1E_1-m_2E_2$.
If $m_1+m_2-d_1-d_2\le0$ the statement is trivial, therefore we will assume that $m_1+m_2-d_1-d_2>0$.

Recall from Remark \ref{fixed}, that the divisor $F=H_2-E_1-E_2$ is isomorphic to $X_{1,1,2}$ and its Picard group is generated by the classes $l_1,l_2$ and $e_1,e_2$ (see Proposition \ref{intersection-table}). The surface is swept out by the irreducible curves with class $C=l_1+l_2-e_1-e_2$. We conclude using Lemma \ref{lemma-baselocusgeneral}, noticing that
\begin{align*}
F\cdot C& =-1\\ 
D\cdot C& =d_1+d_2-m_1-m_2.
\end{align*}
\end{proof}

\subsection*{3 points}

 The first interesting case is $s=3$ and it can be computed following the procedure outlined in Section \ref{effconemethod}.

\begin{theorem}\label{Eff 123}
The effective cone $\Eff(X_{1,2,3})$ is given by the list of generators below left and inequalities below right:

\begin{minipage}[t]{0.4\textwidth}
  \begin{align*}
\rowcolors{2}{white}{gray!15}
\begin{array}{ccccc}
H_1 & H_2 & E_1 & E_2 & E_3\\
\hline\hline
0 & 0 & 1 & 0 & 0\\
1 & 0 & -1 & 0 & 0\\
0 & 1 & -1 & -1 & 0\\
\end{array}
  \end{align*}
  \end{minipage}\quad
  \begin{minipage}[t]{0.4\textwidth}
    \begin{align*}
\rowcolors{2}{white}{gray!15}
\begin{array}{ccccc}
d_1 & d_2 & m_1 & m_2 & m_3\\
\hline\hline
1 & 0 & 0 & 0 & 0\\
0 & 1 & 0 & 0 & 0\\
1 & 1 & 1 & 0 & 0\\
1 & 2 & 1 & 1 & 0\\
1 & 2 & 1 & 1 & 1\\
\end{array}
  \end{align*}
\end{minipage}
\end{theorem}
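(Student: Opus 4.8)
The plan is to establish the two inclusions $\mathcal{P}\subseteq\Eff(X_{1,2,3})$ and $\Eff(X_{1,2,3})\subseteq\mathcal{P}$, where $\mathcal{P}$ denotes the cone described by the two tables, following the cone method of Section \ref{effconemethod}. The inclusion $\mathcal{P}\subseteq\Eff$ is the easy one: it suffices to check that each listed generator is effective. The $E_i$ are the exceptional divisors; $H_1-E_i$ is the proper transform of the fibre of the projection to $\PP^1$ through $p_i$; and $H_2-E_i-E_j$ is the proper transform of the preimage under the projection to $\PP^2$ of the line through the images of $p_i$ and $p_j$. All three are effective (indeed fixed, by Lemmas \ref{lemma-baselocusexceptionals} and \ref{bsh2-ei-ej}), hence so are their nonnegative combinations.

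For the reverse inclusion I would first run Step 1. Pushing an effective divisor $D=d_1H_1+d_2H_2-\sum m_iE_i$ forward along the blow-downs $X_{1,2,3}\to X_{1,2,2}\to X_{1,2,1}$ yields effective classes on the lower blow-ups, which by Theorem \ref{Eff 122} must satisfy the inequalities recorded there; letting the contracted points vary produces the necessary conditions $d_1\ge 0$, $d_2\ge 0$, $d_1+d_2\ge m_i$, and $d_1+2d_2\ge m_i+m_j$. These cut out a cone $\mathcal{C}_1\supseteq\Eff$ that is strictly too large: for example $2H_2-2E_1-2E_2-2E_3$ spans an extremal ray of $\mathcal{C}_1$ but is not effective, as subtracting the copies of $H_2-E_1-E_2$ forced into its base locus by Lemma \ref{bsh2-ei-ej} leaves $-2E_3$.

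To shrink $\mathcal{C}_1$ I would apply Step 2 using the base locus lemmas for the three families of fixed divisors: Lemma \ref{lemma-baselocusexceptionals}(a),(b) for $E_i$ and $H_1-E_i$, and Lemma \ref{bsh2-ei-ej} for $H_2-E_i-E_j$. These say that any effective $D$ violating $m_i\ge 0$, $d_2\ge m_i$, or $d_1+d_2\ge m_i+m_j$ respectively must contain the corresponding fixed divisor in $\Bs(D)$. Imposing these stronger inequalities defines a smaller cone $\mathcal{C}_2$; note that $d_1+d_2\ge m_i+m_j$ together with $d_2\ge m_k$ already forces the extra relation $d_1+2d_2\ge m_1+m_2+m_3$. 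Step 3 is then to compute the extremal rays of $\mathcal{C}_2$ and verify that each is effective; concretely each is a nonnegative combination of the fixed divisors, for instance $2H_2-E_1-E_2-E_3=(H_2-E_1-E_2)+(H_2-E_1-E_3)+E_1$. Granting this, a peeling argument finishes the proof: given effective $D\in\mathcal{C}_1$, whenever it violates one of the base locus inequalities we subtract the guaranteed fixed divisor and continue---this terminates since intersection with an ample class strictly drops---so $D$ lies in the cone generated by $\mathcal{C}_2$ and the fixed divisors, namely $\mathcal{P}$. Dualising the combined generating set, which is the Normaliz computation, returns the stated inequalities, among them the new facet $d_1+2d_2\ge m_1+m_2+m_3$.

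The step I expect to be the main obstacle is Step 3, together with correctly bookkeeping the two kinds of inequalities involved. One must keep the base locus inequalities used for peeling (such as $d_1+d_2\ge m_i+m_j$) sharply distinct from the weaker facet inequalities appearing in the final description (such as $d_1+2d_2\ge m_i+m_j$): the former are genuinely stronger and fail on the fixed divisors themselves, which is exactly why those divisors must be reinstated as generators in Step 3. The crux is to confirm that $\mathcal{C}_2$ has no extremal ray that fails to be effective, for this is what guarantees the procedure closes after Step 2---so that no appeal to Step 4 is needed---and produces precisely the inequality $d_1+2d_2\ge m_1+m_2+m_3$ rather than a spurious one.
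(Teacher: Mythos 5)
Your proposal is correct and takes essentially the same approach as the paper's proof: Step 1 inequalities pulled back from Theorem \ref{Eff 122}, Step 2 base-locus inequalities for $E_i$, $H_1-E_i$, $H_2-E_i-E_j$ from Lemmas \ref{lemma-baselocusexceptionals} and \ref{bsh2-ei-ej}, then reinstating these fixed divisors as generators and checking that every extremal ray of the resulting cone is effective, with the polyhedral computations delegated to Normaliz. The only difference is that you make explicit the peeling/termination argument and the non-effectivity of a Step 1 extremal ray, both of which the paper leaves implicit in its general description of the cone method in Section \ref{effconemethod}.
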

\begin{proof}

We apply the method of Section \ref{effconemethod} (\textbf{Step 1}-\textbf{2}) with the following inputs:
  \begin{itemize}
   \item the pullback via the blowdown morphism $X_{1,2,3} \arrow X_{1,2,2}$ of the inequalities from Theorem \ref{Eff 122} cutting out the cone $\Eff(X_{1,2,2})$;
  \item the base locus inequalities corresponding to the fixed divisors $E_i,H_1-E_i, H_2-E_i-E_j$ from Lemmas \ref{lemma-baselocusexceptionals} and \ref{bsh2-ei-ej}.
    \end{itemize}
 The output is the cone of divisors cut out by the  inequalities below right, and whose extremal rays,  computed using the Normaliz files  \texttt{X123-ineqs}, are spanned by the divisors in the table below left:

\begin{minipage}[t]{0.4\textwidth}
  \begin{align*}
\rowcolors{2}{white}{gray!15}
\begin{array}{ccccc}
H_1 & H_2 & E_1 & E_2 & E_3 \\
\hline\hline
0 & 1 & 0 & 0 & 0 \\
0 & 1 & -1 & 0 & 0 \\
0 & 2 & -1 & -1 & -1 \\
1 & 0 & 0 & 0 & 0 \\
1 & 1 & -1 & -1 & 0 \\
1 & 1 & -1 & -1 & -1 \\
\end{array}
  \end{align*}
  \end{minipage}\quad
  \begin{minipage}[t]{0.4\textwidth}
    \begin{align*}
\rowcolors{2}{white}{gray!15}
\begin{array}{ccccc}
d_1 & d_2 & m_1 & m_2 & m_3 \\
\hline\hline
1 & 0 & 0 & 0 & 0 \\
0 & 1 & 0 & 0 & 0 \\
1 & 1 & 1 & 0 & 0 \\
1 & 2 & 1 & 1 & 0 \\
1 & 2 & 1 & 1 & 1 \\
0 & 0 & -1 & 0 & 0 \\
0 & 1 & 1 & 0 & 0 \\
1 & 1 & 1 & 1 & 0 \\
\end{array} 
\end{align*}
\ 
\end{minipage}

The irreducible effective classes excluded by these inequalities are precisely $E_i$ and $H_1-E_i$ and $H_2-E_i-E_j$, for all choices of $i,j$ with $i\ne j$.
Following \textbf{Step 3}, we add these classes to the  cone obtained above and we compute the extremal rays of the resulting cone: the files \texttt{X123-gens} compute the list of extremal rays and inequalities displayed in the statement of the theorem. All of them are represented by effective classes and this allows us to conclude that this is the effective cone $\Eff(X_{1,2,3})$ as claimed.
\end{proof}

We notice that the $X_{1,2,3}$ behaves like the toric cases, namely there is no new generator of the effective cone that did not already appear on $X_{1,2,2}$.

\begin{remark} The effective cone of $X_{1,n,n+1}$ follows as an application of work of Hausen and S{\"u}{\ss} \cite{HS10}, where the authors proved more general results on the Cox rings of algebraic varieties with torus actions. In particular, the action of $(\CC^\ast)^n$ on $\PP^n$ extends to a complexity-1 action on $X_{1,n,n+1}$, and \cite[Theorem 1.3]{HS10} gives an explicit description of the generators of the Cox ring, and hence the effective cone, in terms of this action.
\end{remark}

\subsection*{4 points}

We will follow the procedure outlined in Section \ref{effconemethod} in order to describe the cone of effective divisors of $X_{1,2,4}$. 

\begin{theorem}\label{Eff 124}
The effective cone $\Eff(X_{1,2,4})$ is given by the list of generators below left and inequalities below right:

\begin{minipage}[t]{0.4\textwidth}
  \begin{align*}
\rowcolors{2}{white}{gray!15}
\begin{array}{cccccc}
H_1 & H_2 & E_1 & E_2 & E_3 & E_4\\
\hline\hline
0 & 0 & 1 & 0 & 0 & 0\\
1 & 0 & -1 & 0 & 0 & 0\\
0 & 1 & -1 & -1 & 0 & 0\\
1 & 1 & -1 & -1 & -1 & -1\\
\end{array}
  \end{align*}
  \end{minipage}\quad
  \begin{minipage}[t]{0.4\textwidth}
    \begin{align*}
    \rowcolors{2}{white}{gray!15}
\begin{array}{cccccc}
d_1 & d_2 & m_1 & m_2 & m_3 & m_4\\
\hline\hline
1 & 0 & 0 & 0 & 0 & 0\\
0 & 1 & 0 & 0 & 0 & 0\\
1 & 1 & 1 & 0 & 0 & 0\\
1 & 2 & 1 & 1 & 0 & 0\\
1 & 2 & 1 & 1 & 1 & 0\\
1 & 3 & 1 & 1 & 1 & 1\\
2 & 2 & 1 & 1 & 1 & 1\\
2 & 3 & 2 & 1 & 1 & 1\\
\end{array}
\end{align*}

\end{minipage}
\end{theorem}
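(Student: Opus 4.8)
The plan is to run the \emph{cone method} of Section~\ref{effconemethod}, feeding in the description of $\Eff(X_{1,2,3})$ from Theorem~\ref{Eff 123}. For \textbf{Step 1}, for each $i\in\{1,\dots,4\}$ I view $D+m_iE_i=d_1H_1+d_2H_2-\sum_{j\ne i}m_jE_j$ as (the pullback of) a divisor on $X_{1,2,3}$; since $D+m_iE_i\ge D$, effectivity of $D$ forces $D+m_iE_i$ to be effective on $X_{1,2,3}$ and hence to satisfy the inequalities of Theorem~\ref{Eff 123}. Letting $i$ range and reading up to permutation, this gives the necessary conditions $d_1\ge0$, $d_2\ge0$, $d_1+d_2\ge m_i$, $d_1+2d_2\ge m_i+m_j$ and $d_1+2d_2\ge m_i+m_j+m_k$ for all distinct indices. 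For \textbf{Step 2} I adjoin the base locus inequalities attached to the fixed divisors $E_i$, $H_1-E_i$ and $H_2-E_i-E_j$ coming from Lemmas~\ref{lemma-baselocusexceptionals} and~\ref{bsh2-ei-ej}, namely $m_i\ge0$, $d_2\ge m_i$ and $d_1+d_2\ge m_i+m_j$.

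For \textbf{Step 3} I would hand these inequalities to Normaliz and compute the extremal rays of the cut-out cone. I expect to recover the three families of fixed divisors above together with exactly one new ray, $H_1+H_2-E_1-E_2-E_3-E_4$, the proper transform of a bidegree $(1,1)$ hypersurface through the four points. In contrast to the fixed generators this class moves: Definition~\ref{virtual dimension} gives $\vdim|H_1+H_2-E_1-E_2-E_3-E_4|=1$, so Lemma~\ref{vdim lower bound} already shows it is effective. Adjoining the fixed divisors to this generator list and dualising (again via Normaliz) should then reproduce both columns of the theorem.

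The step I expect to be the main obstacle is completeness: proving that the three facets involving all four points — notably $2d_1+3d_2\ge 2m_1+m_2+m_3+m_4$, as well as $2d_1+2d_2\ge\sum_i m_i$ and $d_1+3d_2\ge\sum_i m_i$ — are genuinely necessary for effectivity. None of these follows from the \textbf{Step 1} inequalities alone: the class $3H_2-2\sum_iE_i$ satisfies every pulled-back inequality but violates the first two, and $3H_1-\sum_iE_i$ violates the third, both classes being non-effective. The point is that these facets already build in the base locus lemmas. For $2d_1+3d_2\ge 2m_1+m_2+m_3+m_4$ and effective $D$: if $m_1+m_2\le d_1+d_2$ one adds this to the triple inequality $d_1+2d_2\ge m_1+m_3+m_4$; otherwise Lemma~\ref{bsh2-ei-ej} writes $D=k(H_2-E_1-E_2)+D'$ with $k=m_1+m_2-d_1-d_2>0$ and $D'$ effective, and the triple inequality applied to $D'$ returns the same bound. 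Peeling off $H_2-E_i-E_j$ in this way, and $H_1-E_i$ via Lemma~\ref{lemma-baselocusexceptionals}, handles the remaining two facets. Turning this base-locus reduction into a systematic argument — exhibiting every effective class as a sum of fixed divisors plus a class of the \textbf{Step 2} cone — is what promotes the Normaliz output to a proof, and in particular confirms that no \textbf{Step 4} refinement is needed here.
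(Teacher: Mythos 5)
Your proposal is correct and follows the paper's own proof essentially verbatim: the same \textbf{Step 1} pullback of the inequalities from Theorem \ref{Eff 123}, the same \textbf{Step 2} base-locus inequalities for $E_i$, $H_1-E_i$, $H_2-E_i-E_j$ from Lemmas \ref{lemma-baselocusexceptionals} and \ref{bsh2-ei-ej}, the same Normaliz pipeline of adjoining the fixed classes and dualising, and the same effectivity check $\vdim|H_1+H_2-\sum_{i=1}^4 E_i|=1$ via Lemma \ref{vdim lower bound}; your closing paragraph simply makes explicit the peeling argument that the paper delegates to the general description of the cone method in Section \ref{effconemethod}. One minor slip that does not affect the argument: the fixed divisors violate the \textbf{Step 2} inequalities ($E_i$ has $m_i=-1$, $H_1-E_i$ has $m_i>d_2$, $H_2-E_i-E_j$ has $m_i+m_j>d_1+d_2$), so they cannot appear among the extremal rays of the cut-out cone itself — they only re-enter once you adjoin them in \textbf{Step 3}.
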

\begin{proof}
We apply the method of Section \ref{effconemethod} with the following inputs:
  \begin{itemize}
  \item the pullback via the blowdown morphism $X_{1,2,4} \arrow X_{1,2,3}$ of the inequalities from Theorem \ref{Eff 123} cutting out the cone $\Eff(X_{1,2,3})$;
  \item the base locus inequalities corresponding to the fixed divisors $E_i,H_1-E_i, H_2-E_i-E_j$ from Lemmas \ref{lemma-baselocusexceptionals} and \ref{bsh2-ei-ej}.
    \end{itemize}
A minimal set of generators of the so obtained cone is computed using the Normaliz files \texttt{X124-ineqs}.    
The irreducible effective classes excluded by these inequalities are precisely $E_i$ and $H_1-E_i$ and $H_2-E_i-E_j$.
We now add these classes to the  cone obtained above and we compute the extremal rays of the resulting cone using the files \texttt{X124-gens}. As a result, we obtain the lists of extremal rays and inequalities displayed in the statement of the theorem. 
Observe that the divisor $H_1+H_2-\sum_{i=1}^4E_i$ is effective: this follows from Lemma \ref{vdim lower bound} since its virtual dimension is $2 \cdot 3 - 4 -1 =1$. Therefore all extremal rays are represented by effective classes, proving that this is the effective cone of $X_{1,2,4}$.
\end{proof}

This is the first interesting case that does not behave in a toric manner. Indeed we see that the new generator $H_1+H_2-\sum_{i=1}^4E_i$, that is not inherited from $X_{1,2,s}$, with $s\le 3$, appears.

\subsection*{5 points}

We will follow the procedure outlined in Section \ref{effconemethod} in order to describe the cone of effective divisors of $X_{1,2,5}$. We start by proving a general statement which we will use now in the case $n=2$ and, in Section \ref{section-dim4}, in the case $n=3$.

\begin{lemma}\label{lemma-divisor1} A smooth divisor of bidegree $(1,1)$ in $\PP^1 \times \PP^n$ is isomorphic to the blowup of $\PP^n$ in a linear space of codimension 2.

 More generally, given a set of $s$  points in general position in $\PP^1 \times \PP^n$, let $V_s$ be a divisor of bidegree $(1,1)$ passing through all the points, and $\widetilde{V}_s$ its proper transform on $X_{1,n,s}$. Then $\widetilde{V}_s$ is isomorphic to the blowup of $\PP^n$ in a linear space of codimension 2 and $s$ points. 
\end{lemma}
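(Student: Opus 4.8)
The plan is to prove the first assertion by realizing a smooth $(1,1)$ divisor as the closure of the graph of a linear projection, and then to obtain the general statement by transporting the blowup of the $s$ points across the resulting isomorphism.

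Fix coordinates $[y_0:y_1]$ on $\PP^1$ and $[x_0:\cdots:x_n]$ on $\PP^n$, so that a bidegree $(1,1)$ form reads $F=y_0L_0(x)+y_1L_1(x)$ with $L_0,L_1$ linear in $x$. First I would note that if $L_0,L_1$ were proportional then $F$ would factor and $V=\{F=0\}$ would be singular; thus smoothness forces $L_0,L_1$ to be linearly independent, and $\Lambda:=\{L_0=L_1=0\}$ is a linear subspace of codimension $2$. I would then identify $V$ with the closure $\overline{\Gamma_\phi}\subseteq\PP^n\times\PP^1$ of the graph of the linear projection $\phi:\PP^n\dashrightarrow\PP^1$, $x\mapsto[L_1(x):-L_0(x)]$, whose indeterminacy locus is exactly $\Lambda$: indeed $\overline{\Gamma_\phi}\subseteq V$ by the defining equation $y_0L_0+y_1L_1=0$, and both are irreducible of dimension $n$, so they coincide. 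Since $\phi$ is given by the linear system of hyperplanes through the smooth center $\Lambda$, blowing up $\Lambda$ resolves $\phi$ and identifies $\operatorname{Bl}_\Lambda\PP^n$ with the graph closure; hence $V\cong\operatorname{Bl}_\Lambda\PP^n$.

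For the general statement, generality of the points lets me take $V_s$ to be a smooth $(1,1)$ divisor through $p_1,\ldots,p_s$ (existence is assumed; smoothness of a general such member follows from Bertini, the $p_i$ being smooth points of it). By the first part, $\pi_n$ restricts to an isomorphism $V_s\xrightarrow{\sim}\operatorname{Bl}_\Lambda\PP^n$ over $\PP^n$; set $q_i:=\pi_n(p_i)$, which are, by generality of the $p_i$, distinct points of $\PP^n$ lying off $\Lambda$ and in general position. Next I would use that $X_{1,n,s}\to\PP^1\times\PP^n$ is the blowup at the points $p_i$, which are smooth points of the smooth divisor $V_s$, so that the proper transform $\widetilde{V}_s$ is canonically the blowup of $V_s$ at $p_1,\ldots,p_s$. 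Combining with the isomorphism above, and using that the $q_i$ avoid the exceptional divisor over $\Lambda$ so the centers are disjoint, I obtain
$$\widetilde{V}_s\cong\operatorname{Bl}_{p_1,\ldots,p_s}V_s\cong\operatorname{Bl}_{q_1,\ldots,q_s}\operatorname{Bl}_\Lambda\PP^n,$$
namely the blowup of $\PP^n$ in the codimension-$2$ linear space $\Lambda$ and the $s$ points $q_i$, as claimed.

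The crux is the identification in the first part, that the graph closure of a linear projection from a codimension-$2$ center is the blowup of that center; although classical, it relies on $\Lambda$ being a smooth (linear) center and on $V$ being irreducible of the correct dimension, both of which come from the smoothness of $V$. The remaining ingredients — smoothness of the chosen $V_s$, the generality of the $q_i$, and the fact that the ambient point blowup induces the blowup of the smooth divisor $V_s$ on its strict transform — are standard facts about blowups and cause no difficulty.
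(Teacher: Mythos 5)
Your proof is correct, but the key step is established by a genuinely different mechanism than in the paper. Both arguments begin identically: write $V=\{y_0L_0+y_1L_1=0\}$, observe that smoothness forces $L_0,L_1$ to be independent, and focus on the projection to $\PP^n$ with its codimension-two linear locus $\Lambda=\{L_0=L_1=0\}$. The paper then proceeds via the universal property of blowing up: the projection $\pi\colon V\to\PP^n$ is an isomorphism away from $\Lambda$, the fibres over $\Lambda$ are $\PP^1$'s, so $\pi^{-1}(\Lambda)$ is a Cartier divisor and $\pi$ factors through a morphism $V\to\operatorname{Bl}_\Lambda(\PP^n)$; this morphism is then shown to be an isomorphism because both varieties are smooth of Picard number $2$ and Zariski's Main Theorem applies. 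You instead exhibit $V$ as the closure of the graph of $\phi=[L_1:-L_0]$ (containment plus irreducibility and equal dimension) and invoke the classical identification of that graph closure with $\operatorname{Bl}_\Lambda\PP^n$. Your route avoids Zariski's Main Theorem and the Picard-rank argument entirely, at the cost of relying on the standard Proj/graph-closure description of the blowup of a codimension-two linear center; the paper's route needs no explicit model of the blowup, only its universal property. One small remark: your equality $V=\overline{\Gamma_\phi}$ silently uses irreducibility of $V$, but this does follow from the independence of $L_0,L_1$ you established, since a bidegree $(1,1)$ form can only factor into forms of bidegrees $(1,0)$ and $(0,1)$, which would force proportionality--so there is no gap. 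For the second statement the paper simply says the proper transform is the blowup of $V_s$ in the $s$ points and concludes; your additional observations (smoothness of a general $V_s$ through general points, and the fact that the images $q_i$ avoid $\Lambda$ so the blowup centers are disjoint) make explicit precisely the generality assumptions the paper leaves implicit.
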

\begin{proof} The proper transform $\widetilde{V}_s$ is isomorphic to the blowup of $V_s$ in $s$ points, so the second statement follows directly from the first.
  
  To prove the first statement, let $V$ be a smooth divisor of bidegree $(1,1)$ on $\PP^1 \times \PP^n$. We can choose coordinates $[u,v]$ on $\PP^1$ so that $V$ is defined by a bihomogeneous equation
  \begin{align*}
    u L_1 + v L_2 &= 0,
  \end{align*}
  where and $L_1$ and $L_2$ are independent linear forms in $x_0,\ldots,x_n$. The projection map $\pi \colon V \arrow \PP^n$ is an isomorphism outside the locus
  \begin{align*}
    \left\{ [x_0,\ldots,x_n] \in \PP^n \mid L_1(x_0,\ldots,x_n) = L_2(x_0,\ldots,x_n) =0 \right\},
  \end{align*}
  which is a codimension 2 linear space $\Lambda \subset \PP^n$. Over each point of $\Lambda$ the fibre of $\pi$ is $\PP^1$, so $\pi^{-1}(\Lambda)$ is a (Cartier) divisor in $V$, and therefore $\pi$ factors through the blowup $\operatorname{Bl}_\Lambda(\PP^n) \arrow \PP^n$. Since both varieties are smooth of Picard number 2, Zariski's Main Theorem shows that $V \arrow \operatorname{Bl}_\Lambda(\PP^n)$ is an isomorphism.
\end{proof}

\begin{proposition}\label{new generators 5 points}
The divisors $H_1+H_2-\sum_{i=1}^5E_i$ and $2H_2-\sum_{i=1}^5E_i$ are effective and fixed on $X_{1,2,5}$ and they are both isomorphic to a degree $3$ del Pezzo surface of type $X_{1,1,5}$. 
\end{proposition}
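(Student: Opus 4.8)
The plan is to treat each of the two classes by exhibiting its unique effective representative geometrically, proving effectivity and fixedness along the way, and then reading off the isomorphism type from that description. I would first record that both virtual dimensions vanish: with $m=1$, $n=2$, $s=5$, Definition \ref{virtual dimension} gives $\vdim|H_1+H_2-\sum_{i=1}^5 E_i|=\binom{2}{1}\binom{3}{2}-5\binom{3}{3}-1=0$ and $\vdim|2H_2-\sum_{i=1}^5 E_i|=\binom{1}{1}\binom{4}{2}-5\binom{3}{3}-1=0$, so both classes are effective by Lemma \ref{vdim lower bound}. It then remains to show that each linear system consists of a single divisor, which gives fixedness, and to identify the resulting surface.

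For $2H_2-\sum_{i=1}^5 E_i$ I would argue directly. Let $q_i\in\PP^2$ be the image of $p_i$ under the projection $\pi_2$ to the $\PP^2$ factor. Pushing an effective divisor in this class down to $\PP^1\times\PP^2$ via the blowdown yields a divisor of bidegree $(0,2)$ through $p_1,\dots,p_5$, that is, the preimage $\pi_2^{-1}(C)$ of a conic $C\subset\PP^2$ through $q_1,\dots,q_5$; conversely every such preimage gives a member of the class. Since five points in general position lie on a unique conic, the system contains exactly one divisor, so the class is fixed. As $C\cong\PP^1$, we have $\pi_2^{-1}(C)\cong\PP^1\times C\cong\PP^1\times\PP^1$, and it contains each $p_i$ because $q_i\in C$; hence its proper transform is the blowup of $\PP^1\times\PP^1$ in the five points, a surface of type $X_{1,1,5}$, which is a degree $3$ del Pezzo.

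For $H_1+H_2-\sum_{i=1}^5 E_i$ the analogous argument identifies effective divisors in the class with bidegree $(1,1)$ forms on $\PP^1\times\PP^2$ vanishing at $p_1,\dots,p_5$. This space of forms is $6$-dimensional, and five general points impose five independent conditions, leaving a one-dimensional space; so the system is again a single divisor $V_5$ and the class is fixed. Its proper transform $\widetilde{V}_5$ is then identified via Lemma \ref{lemma-divisor1} (with $n=2$, $s=5$) as the blowup of $\PP^2$ in a codimension-$2$ linear space, namely a point, together with the five points. This is the blowup of $\PP^2$ in six points, once more a degree $3$ del Pezzo surface, which can be presented as $X_{1,1,5}$.

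The one point requiring genuine care, in both cases, is the transfer of general position. The conclusion that the surface is a \emph{smooth} degree $3$ del Pezzo, rather than a weak del Pezzo or something more degenerate, requires that the six points in $\PP^2$ (respectively the five points in $\PP^1\times\PP^1$) be themselves in general position, and this is precisely the input that forces the relevant conditions to be independent and hence $\dim|D|=0$. I would deduce this from the general position of $p_1,\dots,p_5$ in $\PP^1\times\PP^2$, checking that $\pi_2$ and the isomorphism of Lemma \ref{lemma-divisor1} carry a general configuration to a general one. Once that is in place, the effectivity and the identification of the ambient surface are formal.
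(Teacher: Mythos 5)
Your proposal is correct and follows essentially the same route as the paper: effectivity from $\vdim=0$ via Lemma \ref{vdim lower bound}, fixedness from five general points imposing independent conditions on the bidegree $(1,1)$ and $(0,2)$ systems (your ``unique conic through five general points'' is exactly this statement in the second case), and the isomorphism type from Lemma \ref{lemma-divisor1} for the $(1,1)$ class and from the preimage of a smooth conic being $\PP^1\times\PP^1$ for the $(0,2)$ class. Your closing remark about transferring general position of the points to $\PP^2$ (resp.\ to $\PP^1\times\PP^1$) is a point the paper leaves implicit, so flagging it is a small improvement rather than a deviation.
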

\begin{proof}
Both divisors have virtual dimension $0$, hence the first statement holds by Lemma \ref{vdim lower bound}.
One can show that the divisors are fixed by direct computation: it is indeed enough to show that $5$ randomly chosen points impose $5$ independent conditions to the linear system of surfaces in $\PP^1\times\PP^2$ of bidegree $(1,1)$ and $(0,2)$ respectively.

For the divisor $H_1+H_2-\sum_{i=1}^5 E_i$, the second statement follows from Lemma \ref{lemma-divisor1}. The divisor $2H_2-\sum_{i=1}^5E_i$ is the proper transform of a surface $S \subset \PP^1 \times \PP^2$ which is the preimage of a smooth conic in $\PP^2$. Since $S$ is isomorphic to $\PP^1 \times \PP^1$ the result follows. 
\end{proof}

We are now ready to prove our main statement.

\begin{theorem}\label{Eff 125}
The effective cone $\Eff(X_{1,2,5})$ is given by the list of generators below left and inequalities below right:

\begin{minipage}[t]{0.4\textwidth}
  \begin{align*}
\rowcolors{2}{white}{gray!15}
\begin{array}{ccccccc}
H_1 & H_2 & E_1 & E_2 & E_3 & E_4 & E_5\\
\hline\hline
0 & 0 & 1 & 0 & 0 & 0 & 0\\
0 & 1 & -1 & -1 & 0 & 0 & 0\\
1 & 0 & -1 & 0 & 0 & 0 & 0\\
1 & 1 & -1 & -1 & -1 & -1 & -1\\
0 & 2 & -1 & -1 & -1 & -1 & -1\\
\end{array}
  \end{align*}
  \end{minipage}\quad
  \begin{minipage}[t]{0.4\textwidth}
    \begin{align*}
    \rowcolors{2}{white}{gray!15}
\begin{array}{ccccccc}
d_1 & d_2 & m_1 & m_2 & m_3 & m_4 & m_5\\
\hline\hline
1 & 0 & 0 & 0 & 0 & 0 & 0\\
0 & 1 & 0 & 0 & 0 & 0 & 0\\
1 & 1 & 1 & 0 & 0 & 0 & 0\\
1 & 2 & 1 & 1 & 0 & 0 & 0\\
1 & 2 & 1 & 1 & 1 & 0 & 0\\
1 & 3 & 1 & 1 & 1 & 1 & 0\\
1 & 4 & 1 & 1 & 1 & 1 & 1\\
2 & 2 & 1 & 1 & 1 & 1 & 0\\
2 & 3 & 2 & 1 & 1 & 1 & 0\\
3 & 3 & 2 & 1 & 1 & 1 & 1\\
3 & 4 & 3 & 1 & 1 & 1 & 1\\
\end{array}
\end{align*}

\end{minipage}
\end{theorem}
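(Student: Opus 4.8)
The plan is to follow the cone method of Section~\ref{effconemethod}, the essential new feature being the treatment of the two del Pezzo surfaces appearing in Proposition~\ref{new generators 5 points}.

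I would begin with \textbf{Steps 1--2}: pull back via the blowdown $X_{1,2,5} \to X_{1,2,4}$ the inequalities of Theorem~\ref{Eff 124} defining $\Eff(X_{1,2,4})$, and adjoin the base locus inequalities for the fixed divisors $E_i$, $H_1 - E_i$, and $H_2 - E_i - E_j$ supplied by Lemmas~\ref{lemma-baselocusexceptionals} and~\ref{bsh2-ei-ej}. Running Normaliz on these (files \texttt{X125-ineqs}) produces a preliminary cone together with its extremal rays. In contrast to the cases $s \le 4$, I expect the inequalities involving all five points---namely $d_1 + 4d_2 \ge \sum_{i=1}^5 m_i$, together with $3d_1 + 3d_2 \ge 2m_1 + m_2 + m_3 + m_4 + m_5$ and $3d_1 + 4d_2 \ge 3m_1 + m_2 + m_3 + m_4 + m_5$---not to be produced at this stage, since the base locus lemmas available so far each involve at most two of the points. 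Hence the preliminary cone will be strictly larger than $\Eff(X_{1,2,5})$.

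The heart of the argument is to supply base locus lemmas for the two new fixed divisors $V = H_1 + H_2 - \sum_{i=1}^5 E_i$ and $W = 2H_2 - \sum_{i=1}^5 E_i$. By Proposition~\ref{new generators 5 points} both are irreducible, effective, and isomorphic to the degree $3$ del Pezzo surface $X_{1,1,5}$, whose effective cone is recorded in Proposition~\ref{proposition-x115}. The mechanism is restriction: for an effective $D$, either $V \subset \Bs(D)$---so that $D - V$ is again effective and one iterates---or $D|_V$ is an effective class on $V \cong X_{1,1,5}$ and hence satisfies all the inequalities of $\Eff(X_{1,1,5})$; transcribing these through the restriction map $N^1(X_{1,2,5}) \to N^1(V)$ yields necessary numerical conditions in $d_1, d_2, m_1, \ldots, m_5$, and likewise for $W$. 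Equivalently, these are the base locus lemmas obtained by applying Lemma~\ref{lemma-baselocusgeneral} to the covering curve families on $V$ and $W$ that meet them negatively. I would then adjoin these conditions to the list from \textbf{Steps 1--2}, recompute with Normaliz (files \texttt{X125-gens}), and add back $E_i$, $H_1 - E_i$, $H_2 - E_i - E_j$, $V$, and $W$ as generators in the spirit of \textbf{Step 3}.

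To conclude, it suffices to verify that every extremal ray of the resulting cone is represented by an effective class: the rays $E_i$, $H_1 - E_i$, and $H_2 - E_i - E_j$ are the familiar fixed divisors, while $V$ and $W$ are effective by Proposition~\ref{new generators 5 points}. Since the cone contains $\Eff(X_{1,2,5})$ (each of its inequalities being a necessary condition for effectivity) and is contained in it (each of its generators being effective), the two coincide. The main obstacle will be making the restriction to $V$ and $W$ explicit: one must fix the isomorphisms $V, W \cong X_{1,1,5}$ at the level of Picard lattices, determining how $H_1$, $H_2$, and the $E_i$ restrict to the standard generators of $N^1(X_{1,1,5})$, and then correctly transcribe the ten defining inequalities of Proposition~\ref{proposition-x115}, read up to permutation, into inequalities in $d_1, d_2, m_1, \ldots, m_5$. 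The description of $V$ as a blowup of $\PP^2$ from Lemma~\ref{lemma-divisor1}, of $W$ as a blowup of $\PP^1 \times \PP^1$, and the intersection numbers of Proposition~\ref{intersection-table}, should render these identifications and the resulting conditions computable.
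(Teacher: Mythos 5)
Your proposal would prove the theorem, but it follows the heavier route that the paper only needs for the next case, $\Eff(X_{1,2,6})$, and the prediction motivating the extra work is false. The paper's proof of Theorem \ref{Eff 125} stops at \textbf{Steps 1--3}: the pullbacks of the inequalities of Theorem \ref{Eff 124}, together with the base locus inequalities of Lemmas \ref{lemma-baselocusexceptionals} and \ref{bsh2-ei-ej}, already cut out a cone which, once the classes $E_i$, $H_1-E_i$, $H_2-E_i-E_j$ are added back, has exactly the extremal rays in the statement; the only geometric input beyond the Normaliz computation is that the two new rays $H_1+H_2-\sum_{i=1}^5E_i$ and $2H_2-\sum_{i=1}^5E_i$ are effective, which is Proposition \ref{new generators 5 points}. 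Your claim that the three facets involving all five points ``cannot be produced at this stage'' confuses facets of the output cone with input inequalities: a facet need only be a \emph{consequence} of the inputs, and all three are, namely
\begin{align*}
\bigl(d_1+3d_2 \ge m_1+m_2+m_3+m_4\bigr)+\bigl(d_2\ge m_5\bigr) &\ \Longrightarrow\ d_1+4d_2\ge \textstyle\sum_{i=1}^5 m_i,\\
\bigl(2d_1+2d_2 \ge m_1+m_2+m_3+m_4\bigr)+\bigl(d_1+d_2\ge m_1+m_5\bigr) &\ \Longrightarrow\ 3d_1+3d_2\ge 2m_1+m_2+\cdots+m_5,\\
\bigl(2d_1+3d_2 \ge 2m_1+m_2+m_3+m_4\bigr)+\bigl(d_1+d_2\ge m_1+m_5\bigr) &\ \Longrightarrow\ 3d_1+4d_2\ge 3m_1+m_2+\cdots+m_5,
\end{align*}
where in each line the first inequality is pulled back from $\Eff(X_{1,2,4})$ and the second is the base locus inequality for $H_1-E_5$, respectively $H_2-E_1-E_5$. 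So no base locus lemmas for the bidegree $(1,1)$ and $(0,2)$ surfaces are needed for $s=5$.

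That said, your route is sound and lands on the same cone: adjoining further valid dichotomy-type inequalities for $V=H_1+H_2-\sum_{i=1}^5E_i$ and $W=2H_2-\sum_{i=1}^5E_i$ only shrinks the intermediate cone, and re-adding $V$ and $W$ as generators restores exactly what those inequalities exclude. The lemmas you propose are precisely Lemmas \ref{baselocus 1,1} and \ref{baselocus 0,2}, which the paper proves immediately \emph{after} this theorem because they are needed for Theorem \ref{Eff 126}; the restriction maps you identify as the main obstacle are computed there ($H_1|_V=h-e$, $H_2|_V=h$ with $V$ viewed as a blowup of $\PP^2$ in six points, and $H_1|_W=h_1$, $H_2|_W=2h_2$). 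So your approach buys reusable lemmas at the price of work that is unnecessary here. One caution on your final step: the containment of $\Eff(X_{1,2,5})$ in your computed cone does \emph{not} hold because ``each of its inequalities is a necessary condition for effectivity'' --- the base locus and restriction-to-$V$ inequalities are not necessary conditions (the effective class $E_1$ violates the adjoined inequality $m_1\ge 0$). It follows instead from the iteration you describe earlier: every effective class is the sum of a class satisfying all adjoined inequalities and a nonnegative combination of the fixed divisors, which are among your generators.
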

\begin{proof}

We apply the method of Section \ref{effconemethod} with the following inputs:
  \begin{itemize}
  \item the pullback via the morphism $X_{1,2,5} \arrow X_{1,2,4}$ of the inequalities from Theorem \ref{Eff 124} cutting out the cone $\Eff(X_{1,2,4})$;
  \item the base locus inequalities corresponding to the fixed divisors $E_i,H_1-E_i, H_2-E_i-E_j$ from Lemmas \ref{lemma-baselocusexceptionals} and \ref{bsh2-ei-ej},
    \end{itemize}
    using the Normaliz files  \texttt{X125-ineqs}.
 We then add the span of the classes $E_i,H_1-E_i, H_2-E_i-E_j$ to the so obtained cone,
using  the file \texttt{X125-gens}: we obtain the lists of extremal rays and inequalities displayed in the statement of the theorem. 

To conclude, we observe that the divisor $H_1+H_2-\sum_{i=1}^5 E_i$, and $2H_2-\sum_{i=1}^5 E_i$ are effective by Proposition \ref{new generators 5 points}, and that all other divisors are effective by Theorem \ref{Eff 124}. Therefore we can conclude that this is the list of extremal rays of the effective cone of $X_{1,2,5}$.
\end{proof}

\subsubsection*{Base locus lemmata for degree two divisors}\label{base locus degree 2}

Although we have already computed $\Eff(X_{1,2,5})$, there are additional base locus lemmas for divisors on this space that will be essential in computing the next case, namely $\Eff(X_{1,2,6})$. In this subsection we prove these base locus lemmas, for the fixed divisors $H_1+H_2-\sum_{j=1}^5E_{i_j}$ and $2H_2-\sum_{j=1}^5E_{i_j}$ on $X_{1,2,s}$, for $s\ge 5$. 

\begin{lemma}\label{baselocus 1,1}
Fix $s\ge 5$ and consider the threefold $X_{1,2,s}$.
For $s=5$, the divisor $H_1+H_2-\sum_{i=1}^5E_{i}$ is contained in the base locus of $D=d_1H_1+d_2H_2-\sum_{i=1}^5m_iE_i$ with multiplicity at least $$\max\left\{0,\sum_{j=1}^5m_{i_j}-d_1-3d_2, 
\left\lceil \sum_{j=1}^5m_{i_j}-\frac{3d_1+5d_2}{2}\right\rceil \right\}.$$
For $s>5$, the divisor $H_1+H_2-\sum_{j=1}^5E_{i_j}$ is contained in the base locus of $D=d_1H_1+d_2H_2-\sum_{i=1}^sm_iE_i$, for $1\le i_1<\cdots<i_5<i_6\le s$, with multiplicity at least $$\max\left\{0,\sum_{j=1}^5m_{i_j}-d_1-3d_2, 
\left\lceil \sum_{j=1}^5m_{i_j}-\frac{3d_1+5d_2}{2} \right\rceil,
\left\lceil \frac{2\sum_{j=1}^5m_{i_j}+m_{i_6}-4d_1+5d_2}{3}\right\rceil \right\}.$$

\end{lemma}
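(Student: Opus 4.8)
The engine for every bound will be Lemma \ref{lemma-baselocusgeneral}, applied to curves sweeping out the fixed surface $F := H_1 + H_2 - \sum_{j=1}^5 E_{i_j}$. First I would make the two reductions already used in Lemma \ref{bsh2-ei-ej}: since each $E_i$ is fixed I may assume every $m_i \geq 0$, and since each stated bound involves only $d_1,d_2$ and $m_{i_1},\dots,m_{i_6}$, it suffices to prove the first two bounds on $X_{1,2,5}$ and the third on $X_{1,2,6}$, the general case following by pullback. By Proposition \ref{new generators 5 points} and Lemma \ref{lemma-divisor1}, $F$ is the cubic del Pezzo surface $X_{1,1,5}$, which I would present as $\operatorname{Bl}_6 \PP^2$ with line class $h$ and exceptional classes $\epsilon_0$ (over the codimension-$2$ centre) and $\epsilon_1,\dots,\epsilon_5$ (over $p_{i_1},\dots,p_{i_5}$). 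The restriction map is then pinned down by $H_1|_F = h-\epsilon_0$, $H_2|_F = h$, $E_{i_j}|_F = \epsilon_j$, and $E_i|_F = 0$ for every other $i$; in particular $N_{F/X} = \mathcal{O}_F(2h - \sum_{k=0}^5 \epsilon_k)$, so a curve $\gamma = \alpha h - \sum_k \beta_k \epsilon_k$ on $F$ has $F \cdot \gamma = 2\alpha - \sum_{k=0}^5 \beta_k$ and ambient class $(\alpha-\beta_0)l_1 + \alpha l_2 - \sum_{j=1}^5 \beta_j e_{i_j}$.

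For the first two bounds I would exhibit two covering families on $F$ of negative $N_{F/X}$-degree and feed them into Lemma \ref{lemma-baselocusgeneral}. The first is the conic-bundle fibre $\gamma_1 = 3h - 2\epsilon_0 - \sum_{j=1}^5 \epsilon_j$, of ambient class $C_1 = l_1 + 3l_2 - \sum_{j=1}^5 e_{i_j}$: here $\gamma_1^2 = 0$ and $\gamma_1 \cdot (-K_F) = 2$, so the general member is an irreducible rational curve and these fibres cover $F$, while $F \cdot C_1 = -1$ and $D \cdot C_1 = d_1 + 3d_2 - \sum_j m_{i_j}$, giving the bound $\sum_j m_{i_j} - d_1 - 3d_2$. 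The second is $\gamma_2 = 5h - 2\sum_{k=0}^5 \epsilon_k$, of ambient class $C_2 = 3l_1 + 5l_2 - 2\sum_{j=1}^5 e_{i_j}$: one checks $\gamma_2^2 = 1$ and that $\gamma_2$ meets each of the $27$ lines nonnegatively, so $\gamma_2$ is big and nef, hence base-point-free on the del Pezzo surface $F$ with irreducible general member covering $F$; since $F \cdot C_2 = -2$ and $D \cdot C_2 = 3d_1 + 5d_2 - 2\sum_j m_{i_j}$, Lemma \ref{lemma-baselocusgeneral} yields $\lceil \sum_j m_{i_j} - \tfrac{3d_1+5d_2}{2}\rceil$. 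Together with the trivial bound $0$ this settles the case $s=5$.

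The third bound is the genuinely new ingredient, and I expect it to be the main obstacle. The difficulty is structural: the point $p_{i_6}$ does not lie on $F$, so $E_{i_6}$ is disjoint from $F$ and every curve contained in $F$ meets $E_{i_6}$ trivially; consequently the multiplicity $m_{i_6}$ occurring in the third bound cannot be produced by any single covering family on $F$ through Lemma \ref{lemma-baselocusgeneral}, as the computation of ambient classes above makes explicit. The plan is therefore to bring in the sixth point by a restriction-and-iteration step: after peeling off from $D$ the copies of $F$ already guaranteed by the first two bounds, restrict the residual class to an auxiliary fixed surface through $p_{i_6}$ whose effective cone is controlled by the del Pezzo computations of Proposition \ref{proposition-x115}, and read off the additional vanishing along $F$ forced by effectivity there. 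Carrying this out amounts to identifying the correct auxiliary surface and curve class so that the resulting ceiling matches the stated third bound exactly, and verifying that the families chosen are genuinely covering with irreducible general member; this combinatorial bookkeeping, rather than any single new idea, is where I expect the real work to lie.
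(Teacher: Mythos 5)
Your case $s=5$ is correct and is essentially the paper's own argument: the same restriction data $H_1|_F=h-\epsilon_0$, $H_2|_F=h$, $E_{i_j}|_F=\epsilon_j$, and the same two sweeping families $3h-2\epsilon_0-\sum_{j}\epsilon_j$ and $5h-2\sum_{k=0}^5\epsilon_k$ with $F\cdot C_1=-1$, $F\cdot C_2=-2$, fed into Lemma \ref{lemma-baselocusgeneral}. Your verification that these families are genuinely covering (conic-bundle fibre, respectively nef and big hence basepoint-free with irreducible general member) is in fact more careful than the paper, which asserts the sweeping property without proof.

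However, the third bound -- the only part of the lemma not already delivered by the $s=5$ mechanism -- is not proved. You correctly diagnose the obstruction ($E_{i_6}$ is disjoint from $F$, so no covering family \emph{on} $F$ can detect $m_{i_6}$), but the ``restriction-and-iteration'' plan is left entirely open: no auxiliary surface is identified, no curve class is produced, and nothing is shown to yield the ceiling $\left\lceil \bigl(2\sum_j m_{i_j}+m_{i_6}-4d_1-5d_2\bigr)/3 \right\rceil$ (note the displayed statement's ``$-4d_1+5d_2$'' is a sign typo; the proof produces $-4d_1-5d_2$). The paper's resolution is infinitesimal rather than by restriction to a second surface: the fibral line of class $l_1-e_{i_6}$ through $p_{i_6}$ meets $F$ transversally in a single point $q$, and intersecting two general divisors of class $H_2-E_{i_6}$ shows that when $m_{i_6}>d_1$ every effective $D$ has multiplicity at least $m_{i_6}-d_1$ along that whole line, in particular at $q$. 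One then blows up $q$, turning $F$ into a degree-$2$ del Pezzo surface, on which the covering family $5h-2\epsilon_0-2\sum_j\epsilon_j-\epsilon_q$ meets the strict transform of $F$ in $-3$ and meets the strict transform of $D$ in at most $3d_1+5d_2-2\sum_j m_{i_j}-(m_{i_6}-d_1)$; Lemma \ref{lemma-baselocusgeneral} then gives exactly the denominator-$3$ ceiling. Finally the complementary case $m_{i_6}\le d_1$ must be disposed of by an elementary inequality showing the third bound is then dominated by the first two -- a case split absent from your plan. Without these ingredients the $s>5$ half of the lemma remains unproven.
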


\begin{proof} 

We first prove the statement for
divisors on $X_{1,2,5}$, namely for $D=d_1H_1+d_2H_2-\sum_{i=1}^5m_iE_i$.

It follows from  Proposition \ref{new generators 5 points} that $\tilde{S}_1=H_1+H_2-\sum_{i=1}^5E_i$ is a degree $3$ del Pezzo surface, the blowup of $\PP^2$ in $6$  points in general position, with Picard group generated by the class of a line $h$ and the class of $6$ exceptional divisors $e,e_i=E_i|_{\tilde{S}_1}$, $i=1,\dots,5$. 
Now, for $i=1,2$, write $({H_i}|_{\tilde{S}_1})=a_ih-b_ie$; since both are effective and irreducible, we can assume $a_i\ge b_i\ge 0$. The following equations determine the values of $a_1,b_1,a_2,b_2$:
$0={H_1}^2|_{\tilde{S}_1}=a_1^2-b_1^2$, $1={H_2}^2|_{\tilde{S}_1}=a_2^2-b_2^2$, $0={H_1H_2}|_{\tilde{S}_1}=a_1a_2-b_1b_2$. This yields:
\begin{align*}
{H_1}|_{\tilde{S}_1}& =h-e,\\ 
{H_2}|_{\tilde{S}_1}&=h.
\end{align*}
The restriction is $D|_{\tilde{S}_1}=(d_1+d_2)h-d_1e-\sum_{i=1}^5m_ie_i$. 
In order to prove the statement, we consider two moving curve classes on $\tilde{S}_1$, and for each of them we apply Lemma \ref{lemma-baselocusgeneral} to obtain the claimed base locus multiplicity.
\begin{itemize}
\item
Consider first of all class $C_1=3h-2e-\sum_{i=1}^5e_i$, whose irreducible representatives sweep out $\tilde{S}_1$.
Notice  that 
\begin{align*}
\tilde{S}_1\cdot C_1 &= \left(H_1+H_2-\sum_{i=1}^5E_i\right)\cdot C_1= \left((h-e)+h-\sum_{i=1}^5e_i\right)\cdot C_1=-1\\
D|_{\tilde{S}_1}\cdot C_1& =d_1+3d_2-\sum_{i=1}^5m_i,
\end{align*}
so we can conclude by applying Lemma \ref{lemma-baselocusgeneral} that the integer $\sum_{j=1}^5m_{i_j}-d_1-3d_2$ is a lower bound for the multiplicity of containment of $\tilde{S}_1$ in the base locus of $D$.
\item
Secondly, consider the curve class $C_2=5h-2e-2\sum_{i=1}^5e_i$, whose irreducible representatives sweep out $\tilde{S}_1$. We conclude observing that 
\begin{align*}
\tilde{S}_1\cdot C_2 &= \left((h-e)+h-\sum_{i=1}^5e_i\right)\cdot C_2=-2\\
D|_{\tilde{S}_1}\cdot C_2& =3d_1+5d_2-2\sum_{i=1}^5m_i.
\end{align*}
and applying Lemma \ref{lemma-baselocusgeneral}. we obtain that $\left\lceil \frac{2\sum_{i=1}^5m_i-3d_1-5d_2}{2} \right\rceil$ is a lower  bound for the multiplicity of containment of $\tilde{S}_1$ in the base locus of $D$.
\end{itemize}
This completes the proof of the first statement.

Under the assumption that $m_i\ge 0$ for every $i=1,\dots,s$, it is enough to prove the second statement for 
divisors on $X_{1,2,6}$. Without loss of generality we may assume that the fixed bidegree $(1,1)$ surface is based at the first five points: $\tilde{S}_1=H_1+H_2-\sum_{i=1}^5E_i$. Consider the fixed fibral curve $l_1-e_6$, that intersects $\tilde{S}_1$ transversally in a point $q$. Since the points $p_1,\dots,p_6$ are in general position in $\mathbb{P}^1\times\mathbb{P}^2$, then so are the points $p_1,\dots,p_5,q$ in the del Pezzo surface ${S}_1$ of class $H_1+H_2$, whose blow-up is $\tilde{S}_1$. Let us consider the blow-up $\tilde{X}_{1,2,6}$  of $X_{1,2,6}$ at $q$ with exceptional divisor $E_q$, and the induced blow-up of $\tilde{S}_1$, that we will denote by $\tilde{\tilde{S}}_1$. The latter is a degree-$2$ del Pezzo surface, whose Picard group is generated by $e_q=E_q|_{\tilde{\tilde{S}}_1}$ and by $h,e_1,\dots,e_5$ that, abusing notation, are the pull-backs of the corresponding classes on $\tilde{S}_1$. 

Now, consider a divisor $D$ with class $d_1H_1+d_2H_2-\sum_{i=1}^6m_iE_i$ on $X_{1,2,6}$. If $P_1$ and $P_2$ are general divisors of class $H_2-E_6$, then we compute
\begin{align*}
  (D_{|P_1}) \cdot ((P_2)_{|P_1}) &= D \cdot P_1 \cdot P_2 \\
  &=D \cdot (l_1-e_6)\\
  &=d_1-m_6.
\end{align*}
This means that if $m_6>d_1$, then the curve $(P_2)_{|P_1}=l_1-e_6$ is contained in $D_{|P_1}$ at least $m_6-d_1$ times. Therefore $D_{|P_1}$ has multiplicity at least $m_6-d_1$ at every point of  $l_1-e_6$, and hence so too does $D$. So if $\tilde{D}$ is the strict transform of $D$, then 
$$\tilde{D}|_{\tilde{\tilde{S}}_1}=(d_1+d_2)h-d_1e-\sum_{i=1}^5m_ie_i-m_qe_q,$$ with $m_q\ge m_6-d_1$. 
We will show that $\left\lceil \frac{2\sum_{i=1}^5m_i+m_6-4d_1-5d_2}{3}\right\rceil$ is a lower bound for the multiplicity of containment of $\tilde{\tilde{S}}_1$ in the base locus of $\tilde{D}$.
In order to do so, we consider the curve class $C_3=5h-2e-2\sum_{i=1}^5e_i-e_q$, whose irreducible representatives sweep out $\tilde{\tilde{S}}_1$. 
We have
\begin{align*}
\tilde{\tilde{S}}_1\cdot C_3 &= \left((h-e)+h-\sum_{i=1}^5e_i-e_q\right)\cdot C_3=-3\\
\tilde{D}|_{\tilde{\tilde{S}}_1}\cdot C_3& \le3d_1+5d_2-2\sum_{i=1}^5m_i-(m_6-d_1).
\end{align*}
Using Lemma \ref{lemma-baselocusgeneral} we obtain that $\tilde{\tilde{S}}_1$ is in the base locus of $\tilde{D}$, and hence ${\tilde{S}}_1$ is in the base locus of ${D}$, at least $\left \lceil \frac{2\sum_{i=1}^5m_i-m_6-4d_1-5d_2}{3} \right\rceil$ times for divisors with $m_6>d_1$.

In order to conclude, we need to consider the case $m_6\le d_1$. We claim that under this assumption the following holds:
$$
\frac{2\sum_{i=1}^5m_i+m_6-4d_1-5d_2}{3}
\le 
\max\left\{0,\sum_{j=1}^5m_{i_j}-d_1-3d_2, \frac{2
\sum_{i=1}^5m_{i}-3d_1-5d_2}{2}\right\}.
$$
In order to prove the claim, we consider two cases.
First of all, let $2\sum_{j=1}^5m_{i_j}-3d_1-5d_2\le 0$. In this case 
$$
2\sum_{i=1}^5m_i+m_6-4d_1-5d_2=
\left(2\sum_{j=1}^5m_{i_j}-3d_1-5d_2\right)+(m_6-d_1)\le0
$$
and the claim holds.
Otherwise, if $2\sum_{j=1}^5m_{i_j}-3d_1-5d_2\ge0$, then
\begin{align*}
&3\left(2\sum_{j=1}^5m_{i_j}-3d_1-5d_2\right)
-2\left(2\sum_{i=1}^5m_{i_j}+m_6-4d_1-5d_2\right)\\
=&\left(2\sum_{j=1}^5m_{i_j}-3d_1-5d_2\right)+2(d_1-m_6)\\ \ge&0,
\end{align*}
and the claim holds in this case too.

 \end{proof}

\begin{lemma}\label{baselocus 0,2}
The divisor $2H_2-\sum_{j=1}^5E_{i_j}$ is contained in the base locus of $D=d_1H_1+d_2H_2-\sum_{i=1}^sm_iE_i$, for $1\le i_1<\cdots<i_5\le s$, with multiplicity at least $\max\{0,\sum_{j=1}^5m_{i_j}- 3d_1-2d_2\}$.
\end{lemma}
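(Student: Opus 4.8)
The plan is to mimic the restriction-to-a-del-Pezzo argument of Lemma \ref{baselocus 1,1}, but now using the surface $\tilde S_2 = 2H_2 - \sum_{i=1}^5 E_i$ together with a single moving curve. Assuming $m_i \ge 0$ for all $i$ and relabelling so that $\{i_1,\dots,i_5\} = \{1,\dots,5\}$, I would first reduce to $X_{1,2,5}$: if $\pi\colon X_{1,2,s}\to X_{1,2,5}$ contracts the exceptional divisors over the remaining points, then $\tilde S_2 = \pi^\ast \bar S_2$, the pushforward $\pi_\ast D$ is effective whenever $D$ is, and subtracting the effective classes $\sum_{i>5} m_i E_i$ cannot decrease the multiplicity of containment along $\tilde S_2$. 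So it suffices to bound this multiplicity for $D = d_1 H_1 + d_2 H_2 - \sum_{i=1}^5 m_i E_i$ on $X_{1,2,5}$. By Proposition \ref{new generators 5 points}, $\tilde S_2$ is the proper transform of the preimage of a smooth conic, hence a degree-$3$ del Pezzo surface isomorphic to the blowup of $\PP^1\times\PP^1$ at the five points; I would use the basis of its Picard group given by the ruling classes $f,g$ and the exceptional classes $e_1,\dots,e_5$, with $f^2 = g^2 = 0$, $f\cdot g = 1$ and $e_i\cdot e_j = -\delta_{ij}$.

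Next I would compute the restrictions of the generators of $N^1(X_{1,2,5})$ to $\tilde S_2$. Since $H_1$ cuts $\tilde S_2$ along a fibre of one ruling, and a line of $\PP^2$ meets the conic in two points, one expects
\begin{align*}
  H_1|_{\tilde S_2} = f, \qquad H_2|_{\tilde S_2} = 2g, \qquad E_i|_{\tilde S_2} = e_i,
\end{align*}
and these are pinned down by checking $H_1^2|_{\tilde S_2} = H_2^2|_{\tilde S_2} = 0$ and $H_1\cdot H_2|_{\tilde S_2} = 2$ against Proposition \ref{intersection-table}. It follows that
\begin{align*}
  D|_{\tilde S_2} = d_1 f + 2 d_2 g - \sum_{i=1}^5 m_i e_i, \qquad \tilde S_2|_{\tilde S_2} = 4g - \sum_{i=1}^5 e_i.
\end{align*}

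The key step is to produce a moving curve class on $\tilde S_2$ calibrated to the target inequality. I would take $C = f + 3g - \sum_{i=1}^5 e_i$, the proper transform of the two-dimensional family of bidegree-$(1,3)$ curves through the five points. A direct computation with the intersection numbers above gives
\begin{align*}
  \tilde S_2 \cdot C = -1, \qquad D\cdot C = D|_{\tilde S_2}\cdot C = 3d_1 + 2d_2 - \sum_{i=1}^5 m_i.
\end{align*}
Applying Lemma \ref{lemma-baselocusgeneral} with $F = \tilde S_2$, $f = \tilde S_2\cdot C = -1$ and $d = D\cdot C$, whenever $d < 0$ the divisor $\tilde S_2$ lies in $\Bs(D)$ with multiplicity at least $\lceil d/f\rceil = \sum_{i=1}^5 m_i - 3d_1 - 2d_2$, which together with the trivial bound yields $\max\{0, \sum_{j=1}^5 m_{i_j} - 3d_1 - 2d_2\}$, as claimed.

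The main obstacle is verifying the hypothesis of Lemma \ref{lemma-baselocusgeneral} that irreducible curves of class $C$ sweep out a dense subset of $\tilde S_2$. The cleanest route is to show $C$ is nef by checking $C\cdot\Gamma \ge 0$ against each of the $27$ $(-1)$-curves of the cubic surface (for instance $C\cdot e_i = 1$, $C\cdot(f-e_i) = 2$, $C\cdot(g-e_i) = 0$, $C\cdot(f+g-e_i-e_j-e_k) = 1$); since moreover $C^2 = 1 > 0$ and $-K\cdot C = 3$, Riemann--Roch gives $\dim|C| = 2$, and a nef class on a del Pezzo surface is base-point-free, so a general member is a smooth irreducible curve and these members cover $\tilde S_2$. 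Establishing this nefness and base-point-freeness cleanly, rather than by an ad hoc construction of the sweeping family, is the one point requiring care; everything else is a bookkeeping calculation with the intersection table.
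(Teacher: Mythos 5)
Your proposal is correct and follows essentially the same route as the paper's proof: the same reduction to $X_{1,2,5}$, the same identification of the restrictions $H_1|_{\tilde S_2}$ and $H_2|_{\tilde S_2}$ (the paper pins down the ambiguity $(1,2)$ vs.\ $(2,1)$ using adjunction rather than your geometric ruling argument, but both work), the same curve class $C$ with $\tilde S_2 \cdot C = -1$ and $D\cdot C = 3d_1+2d_2-\sum m_i$, and the same application of Lemma \ref{lemma-baselocusgeneral}. The only real difference is a supporting detail: where the paper justifies that irreducible curves of class $C$ sweep out $\tilde S_2$ by citing the moving-curve (inequality) description in Proposition \ref{proposition-x115}, you verify it self-containedly via nefness against the $(-1)$-curves, Riemann--Roch, and basepoint-freeness of nef classes on del Pezzo surfaces, which is a sound and slightly more explicit substitute.
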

\begin{proof}
Under the assumption that $m_i\ge 0$ for every $i=1,\dots,s$, it is enough to prove the statement for divisors on $X_{1,2,5}$, namely for $D=d_1H_1+d_2H_2-\sum_{i=1}^5m_iE_i$.
We will assume that $\sum_{i=1}^5m_i-3d_1-2d_2>0$, otherwise the claim is trivial.

It follows from  Proposition \ref{new generators 5 points} that $\tilde{S}_2=2H_2-\sum_{i=1}^5E_i$ is a $X_{1,1,5}$, with Picard group generated by the two rulings $h_1,h_2$ and by $5$ exceptional curves $e_i=E_i|_{\tilde{S}_2}$, $i=1,\dots,5$. 
Now, for $i=1,2$, write ${H_i}|_{\tilde{S}_2}=a_ih_1+b_ih_2$; with $a_i, b_i\ge 0$. Since ${H_i}^2|_{S_2}=0$, for $i=1,2$, and ${H_1H_2}|_{S_2}=2$, we obtain that one of the following sets of relations holds: $a_ib_j=2, a_j=b_i=0$ for either $(i,j)=(1,2)$ or $(i,j)=(2,1)$. Without loss of generality, we may choose the first set.  From the equality  $(2H_1+H_2)|_{\tilde{S}_2}=2h_1+2h_2$, cf. the proof of Proposition \ref{new generators 5 points}, we obtain that $a_1=1,b_2=2$. This yields:
\begin{align*}
{H_1}|_{\tilde{S}_2}& =h_1,\\ 
{H_2}|_{\tilde{S}_2}&=2h_2.
\end{align*}
The restriction is $D|_{\tilde{S}_2}=d_1h_1+2d_2h_2-\sum_{i=1}^5m_ie_i$. Consider the moving curve class $C=h_1+3h_2-\sum_{i=1}^5e_i$, whose irreducible representatives sweep out $\tilde{S}_2$ (cf. Proposition \ref{proposition-x115}).
Since
\begin{align*}
\tilde{S}_2\cdot C &= \left(2H_2-\sum_{i=1}^5E_i\right)\cdot C= \left(4h_2-\sum_{i=1}^5e_i\right)\cdot C=-1\\
D|_{\tilde{S}_2}\cdot C& =3d_1+2d_2-\sum_{i=1}^5m_i,
\end{align*}
we can conclude applying Lemma \ref{lemma-baselocusgeneral}.
\end{proof}


\subsection*{6 points}
In this subsection we will compute our final example in dimension 3, the effective cone of $X_{1,2,6}$. To do so, we first identify a final set of fixed divisors and give the associated base locus inequality. 

\begin{lemma}\label{baselocus 1,4}
  The  divisor class $H_1+4H_2-3E_1-2\sum_{i=2}^6E_i-E_j$ is effective and fixed. The unique effective divisor $F$ in this class is irreducible. It is contained in the base locus of $D=d_1H_1+d_2H_2-\sum_{i=1}^6m_iE_i$ with multiplicity at least $\max\{0, 2m_1+\sum_{i=2}^6m_i-3d_1-3d_2\}$.
\end{lemma}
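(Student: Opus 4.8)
The plan is to reproduce the template of Lemmas~\ref{baselocus 1,1} and~\ref{baselocus 0,2}: once $F$ is shown to be an irreducible fixed divisor, the base locus estimate is obtained by feeding a curve class that sweeps out $F$ into Lemma~\ref{lemma-baselocusgeneral}. The essential difference from the earlier fixed divisors $\tilde S_1,\tilde S_2$ of Proposition~\ref{new generators 5 points} is that those were already effective by the naive parameter count of Lemma~\ref{vdim lower bound}, whereas here one computes $\vdim|F|=-1$ from Definition~\ref{virtual dimension}. Hence $F$ is an \emph{unexpected} divisor and its effectivity is not visible from the virtual dimension; securing it is the crux of the argument.

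First I would establish that $F$ is effective, fixed and irreducible. Realising $F$ as the proper transform of a bidegree $(1,4)$ surface in $\PP^1\times\PP^2$ with a triple point at $p_1$ and double points at the remaining five points $p_2,\dots,p_6$, the passage through these points imposes $\binom{5}{3}+5\binom{4}{3}=10+20=30$ conditions on the $30$-dimensional space $H^0(\PP^1\times\PP^2,O(1,4))$. Effectivity is therefore the assertion that, for points in general position, these conditions fail to be independent; I would prove this by producing the defining equation explicitly (for instance by realising $F$ as the surface swept out by the curves $C$ introduced below, or by a residuation through the six points) and then read off from the same rank computation that the space of sections is exactly one-dimensional, which gives both that the class is fixed and that its unique member is reduced. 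For irreducibility I would rule out any splitting $F=A+B$ into nonzero effective classes by listing the possible components of bidegree at most $(1,4)$, namely $E_i$, $H_1-E_i$, $H_2-E_i-E_j$, and the bidegree $(1,1)$ and $(0,2)$ fixed surfaces of Proposition~\ref{new generators 5 points}, and checking that no sum of these reproduces simultaneously the bidegree and the multiplicity profile of $F$ while keeping every summand effective.

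With $F$ established, I would detect it in the base locus using the curve class
\[
  C=3l_1+3l_2-2e_1-\sum_{i=2}^{6}e_i,
\]
the class of a bidegree $(3,3)$ curve with a node at $p_1$ passing simply through $p_2,\dots,p_6$, whose irreducible representatives I claim cover a dense subset of $F$. From Proposition~\ref{intersection-table} one computes
\[
  F\cdot C=-1,\qquad D\cdot C=3d_1+3d_2-2m_1-\sum_{i=2}^{6}m_i.
\]
Since $F\cdot C=-1<0$ and the curves of class $C$ sweep out $F$, Lemma~\ref{lemma-baselocusgeneral} yields that $F$ is contained in $\Bs(D)$ with multiplicity at least $\lceil (D\cdot C)/(F\cdot C)\rceil=-(D\cdot C)=2m_1+\sum_{i=2}^{6}m_i-3d_1-3d_2$, which combined with the trivial lower bound $0$ is precisely the claimed quantity. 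As in the earlier base locus lemmas, we may assume throughout that $m_i\ge0$ for all $i$, the case of some negative $m_i$ being reduced to this one by peeling off copies of the corresponding $E_i$ via Lemma~\ref{lemma-baselocusexceptionals}.

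The main obstacle is the effectivity of $F$. Because $\vdim|F|<0$, this is a genuine unexpectedness phenomenon and no formal parameter count will deliver it; the real work lies in producing the surface concretely (or identifying it with an already-understood rational surface) and, hand in hand with that, in verifying the covering hypothesis of Lemma~\ref{lemma-baselocusgeneral}, namely that the bidegree $(3,3)$ curves of class $C$ genuinely dominate $F$. Once these two geometric inputs are in place, the remaining intersection-theoretic computation is immediate and the estimate follows.
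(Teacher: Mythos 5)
Your skeleton coincides with the paper's: the curve class $C=3l_1+3l_2-2e_1-\sum_{i=2}^6 e_i$, the computations $F\cdot C=-1$ and $D\cdot C=3d_1+3d_2-2m_1-\sum_{i=2}^6 m_i$, and the appeal to Lemma \ref{lemma-baselocusgeneral} are exactly what the paper does, and that part is correct. But the three substantive inputs (effectivity of $F$, irreducibility of its members, and the covering of $F$ by irreducible curves of class $C$) are left open or approached in ways that would fail. The most serious omission is effectivity. The paper's proof rests on one concrete observation your sketch never reaches: a bihomogeneous form of bidegree $(1,4)$ has degree $1$ in $x_0,x_1$, so $\partial^2 f/\partial x_1^2\equiv 0$ identically, and hence the triple point at $p_1$ imposes only $9$ conditions rather than the $10$ counted in Definition \ref{virtual dimension}; this gives $\dim|H_1+4H_2-3E_1|>\vdim|H_1+4H_2-3E_1|$, and therefore $\dim|F|\ge\vdim|F|+1=0$. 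Your proposed substitutes (``produce the defining equation explicitly'', ``realise $F$ as the surface swept out by the curves $C$'', ``residuation'') are placeholders, not arguments; in particular the swept-surface route would require proving that the union of those curves is two-dimensional and computing its full class (both degrees and all six multiplicities), which is harder than the conditions count. The paper's ordering avoids this entirely: effectivity and irreducibility come first, after which the swept divisor is automatically the unique member of $|F|$, since $F\cdot C<0$ forces every irreducible curve of class $C$ into any member. Note also that even an exact count $\dim|F|=0$ would give fixedness but not, by itself, reducedness or irreducibility of the member.

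The irreducibility argument you propose is moreover circular. At this stage of the induction $\Eff(X_{1,2,6})$ is unknown, so you cannot assert that every component of a member of $|F|$ lies in your short list $E_i$, $H_1-E_i$, $H_2-E_i-E_j$, and the $(1,1)$, $(0,2)$ fixed surfaces: you would also have to exclude decompositions such as $\left(H_1+3H_2-3E_1-2E_2-\cdots-2E_5-E_6\right)+\left(H_2-E_6\right)$, and non-effectivity of such residual classes cannot be deduced from a negative virtual dimension --- the divisor $F$ itself shows that implication is false. The paper instead uses $F\cdot l_1=1$ to split any member as $F_1+F_2$ with $F_1$ the unique component dominating $\PP^2$ and $F_2$ contracted by $\pi_2$ (hence built from exceptional divisors and preimages of plane curves, which are classifiable without knowing $\Eff(X_{1,2,6})$), then uses a moving curve of class $3l_1+5l_2-2\sum_j e_{i_j}$ together with Lemma \ref{baselocus 1,1} to force $F_1=H_1+H_2-\sum_{k=1}^5E_{i_k}$ when $F_2\ne0$, and finally checks that the residual $3H_2-2E_{i_1}-2E_{i_2}-E_{i_3}-\cdots-E_{i_6}$ is not effective. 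Likewise the covering hypothesis you merely ``claim'' is genuine work: the paper constructs the sweeping curves from the pencil of nodal cubics $\Gamma\subset\PP^2$ through the projected points, the normalisation map $\nu\colon\PP^1\times\PP^1\to\PP^1\times\Gamma$, and an irreducible curve of bidegree $(3,1)$ through the seven preimage points. In short, your write-up is a correct outline of the routine intersection-theoretic part, with the three points where the lemma is actually hard left unproved.
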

\begin{proof}
 Consider the divisor class $F=H_1+4H_2-3E_1-2\sum_{i=2}^6E_i$. The formula for virtual dimension in Defintion \ref{virtual dimension} gives $\vdim|F|=-1$, suggesting that $F$ has no global sections.

 However, consider the linear system $|G|$ corresponding to the divisor class $G=H_1+4H_2-3E_1$. We claim that $\dim|G|>\vdim|G|$, and therefore $\dim|F|>\vdim|F|$ also, so that we can conclude that $\dim|F|\ge0$. In order to prove the claim, we assign coordinates $x_0,x_1;y_0,y_1,y_2$ to $\PP^1;\PP^2$ and we consider  the generic degree $(1,4)$ form 
\begin{align*}
f(x_0,x_1,y_0,y_1,y_2)=&\ x_0(a_{0000}y_0^4+a_{0001}y_0^3y_1+a_{0002}y_0^3y_2+\cdots+a_{2222}y_2^4)+\\
& \ x_1(b_{0000}y_0^4+b_{0001}y_0^3y_1+b_{0002}y_0^3y_2+\cdots+b_{2222}y_2^4).
\end{align*}
Imposing a triple point at  $p_1=([1:0],[1:0:0])$ corresponds to imposing that the second order derivatives of $f$ vanish when evaluated at $(1,0,1,0,0)$. However, since our form has degree 1 in the variables $x_0,x_1$, the second derivative $\partial^2 f/\partial x_1^2$ is automatically zero, and so the triple point at $p_1$ imposes 9 conditions rather than 10 as in the formula of Definition \ref{virtual dimension}. Therefore $\dim|G|>\vdim |G|$ as claimed. 


Next we claim that any effective divisor $F$ in the class $H_1+4H_2-3E_1-\sum_{i=2}^6 E_i$ is irreducible. To see this, note that since $F \cdot l_1=1$, the divisor $F$ contains a unique irreducible component which dominates $\PP^2$. Write $F=F_1+F_2$ where $F_1$ is the unique irreducible component which dominates $\PP^2$, and $F_2$ is a sum of irreducible components contracted by $\pi_2 \colon X \arrow \PP^2$. The class of $F_2$ must be a sum of classes of the form $E_i$, $H_2-E_i-E_j$, and $2H_2-\sum_{j=1}^5 E_{i_j}$. Assuming $F_2$ is nonzero, it is then easy to find a curve class $C$ of the form $3l_1+5l_2-\sum_{j=1}^5 E_{i_j}$ such that $F_2 \cdot C>F \cdot C$, and hence $F_1 \cdot C<0$. By Lemma \ref{baselocus 1,1} this implies that $F_1$ is a divisor of the form $H_1+H_2-\sum_{k=1}^5 E_{i_k}$, but for any such divisor we find that $F-F_1$ has the form $3H_2-2E_{i_1}-2E_{i_2}-E_{i_3}-\cdots-E_{i_6}$. Since there is no cubic curve in $\PP^2$ with 2 double points and passing through 4 other general points, $F-F_1$ is not effective. So we must have that $F_2=0$, hence $F$ is irreducible as required.


 To prove that $F$ is fixed and the claimed multiplicity of containment holds, we apply Lemma \ref{lemma-baselocusgeneral} with the curve class $C=3l_1+3l_2-2e_1-\sum_{i=2}^6 e_i$. We compute
 \begin{align*}
 \left(H_1+4H_2-3E_1 - 2\sum_i E_i \right) \cdot C &= 15 -6-10 =  -1. 
 \end{align*}
 Therefore it suffices to prove that irreducible curves of class $C$ sweep out a divisor on $X_{1,2,6}$, which must then be $F$.
 
 Let $\Gamma$ be an irreducible cubic curve in $\PP^2$ with a node at $\pi(p_1)$ and passing through the points $\pi(p_i)$ for $i=2,\ldots,6$. There is a 1-parameter family of such curves, sweeping out $\PP^2$. Now let
 \begin{align*}
   \nu \colon \PP^1 \times \PP^1 \arrow \PP^1 \times \Gamma \subset \PP^1 \subset \PP^2
 \end{align*}
 be the product of the identity map with the normalisation of $\Gamma$. Under this map, a curve of bidegree $(a,b)$ in $\PP^1 \times \PP^1$ maps to a curve of bidegree $(a,3b)$ in $\PP^1 \times \PP^2$. The preimages of the 6 points $p_1,\ldots,p_6$ give us 7 points $q_1,\tilde{q}_1,\ldots,q_6$ in $\PP^1 \times \PP^1$ such that $\nu(q_1)=\nu(\tilde{q}_1)=p_1$ and $\nu(q_i)=p_i$ for $i=2,\ldots,6$. Counting parameters, we see that there is a curve $C'$ of bidegree $(3,1)$ through these 7 points. Moreover, such a curve must be irreducible, since by generality a curve of bidegree $(a,0)$ can pass through at most $a$ points and a curve of bidegree $(0,1)$ can pass through at most 2 points $q_0$ and $\tilde{q}_0$. The image of $C'$ in $\PP^1 \times \PP^2$ is then an irreducible curve of bidegree $(3,3)$ through the 6 points $p_1,\ldots,p_6$, and since the map $\nu$ identifies the $\PP^1$-rulings through $q_0$ and $\tilde{q}_0$, the image $C$ has a node at $p_1$. The proper transform of $C'$ on $X_{1,2,6}$ is then an irreducible curve in the class $C$. By construction $\pi_2(C)=\Gamma$, so as we vary $\Gamma$ to sweep out $\PP^2$ these irreducible curves of class $C$ then sweep out a divisor on $X_{1,2,6}$. \end{proof}

\begin{remark}\label{Laface-Moraga0}
Laface and Moraga \cite{LM16} introduced the  \emph{fibre-expected dimension} for linear systems of divisors on blowups of $(\PP^1)^n$, providing a lower bound for the dimension that improves the one given by the virtual dimension. This takes into account that (the proper transforms after blowing up the points of) certain fibres of the natural projections $(\PP^1)^n\to (\PP^1)^r$ are  contained with multiplicity in the base locus of the divisors, whenever the multiplicities are large enough with  respect to the degrees. In algebraic terms, this corresponds to observing that certain partial derivatives of the multidegree polynomials corresponding to the linear systems vanish identically, therefore they should not give a contribution to the dimension count.

Their approach can be used verbatim in the case of blowups of $\PP^1\times\PP^n$, where each fixed line $C_i=l_1-e_i$ is contained at least $-D\cdot C_i$ times in the base locus of $D$ and, if $D\cdot C_i\ge2$,  this gives a contribution to the dimension count. 

In particular the argument  proposed in the proof of Lemma \ref{baselocus 1,4} to show that the divisor $H_1+4H_2-3E_1$ on $X_{1,2,s}$ has dimension strictly larger than expected is a generalisation of Laface's and Moraga's idea.
\end{remark}

To compute the effective cone of $X_{1,2,6}$, we require one more restriction on the classes of effective divisors. To obtain this, consider a movable divisor $M$ given as the proper transform of a general bidegree $(2,1)$ surface $S$ in $\PP^1\times\PP^2$ containing all six points. The following lemma  describes the geometry of $M$. 
  \begin{lemma}\label{lemma-bideg21}
    A smooth surface $S$ of bidegree $(2,1)$ in $\PP^1 \times \PP^2$ is isomorphic to $\PP^1 \times \PP^1$. 
  \end{lemma}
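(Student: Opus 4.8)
The plan is to follow the strategy of Lemma \ref{lemma-divisor1}: use one of the two projections to realise $S$ as a geometrically ruled surface over $\PP^1$, and then pin down its isomorphism type by an explicit parametrisation.

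First I would examine the projection $\pi_1 \colon S \to \PP^1$. Writing $S$ as the zero locus of a bihomogeneous form of bidegree $(2,1)$,
\[
  u^2 L_0 + uv L_1 + v^2 L_2 = 0,
\]
where $[u,v]$ are coordinates on $\PP^1$ and $L_0, L_1, L_2$ are linear forms on $\PP^2$, I observe that for each fixed $[u,v]$ the fibre of $\pi_1$ is cut out by the single linear form $u^2 L_0 + uv L_1 + v^2 L_2$, hence is a line $\cong \PP^1$. Thus $\pi_1$ exhibits $S$ as a $\PP^1$-bundle over $\PP^1$, so $S$ is a Hirzebruch surface $\mathbb{F}_n$ for some $n \geq 0$, and it remains to show $n = 0$.

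Next I would check that smoothness of $S$ forces $L_0, L_1, L_2$ to be linearly independent: if they were dependent the form would degenerate, and a direct inspection of the partial derivatives rules out the resulting singular point (or contracted line). Having normalised coordinates so that $L_i = x_i$, the equation becomes $u^2 x_0 + uv x_1 + v^2 x_2 = 0$, and since $(v,-u,0)$ and $(0,v,-u)$ span the space of solutions of this linear equation for every $[u,v] \neq 0$, I can write down the explicit parametrisation
\[
  \phi \colon \PP^1 \times \PP^1 \to S, \qquad ([u,v],[s,t]) \mapsto \bigl([u,v],\, [\,sv,\ -su+tv,\ -tu\,]\bigr).
\]
One checks that the three coordinates of the second factor never vanish simultaneously, so $\phi$ is a morphism, and that it is bijective onto $S$; being a bijective morphism of smooth projective surfaces, $\phi$ is an isomorphism by Zariski's Main Theorem, giving $S \cong \PP^1 \times \PP^1$.

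I expect the main obstacle to be the identification of $n$, that is, ruling out the surfaces $\mathbb{F}_n$ with $n > 0$. Purely numerical data do not suffice: from $K_S = (K_{\PP^1 \times \PP^2} + S)|_S = -2 H_2|_S$ one gets $K_S^2 = 8$, while the intersection numbers $(H_1|_S)^2 = 0$, $(H_1|_S)(H_2|_S) = 1$, $(H_2|_S)^2 = 2$ (computed from Proposition \ref{intersection-table}) only force $n$ to be even. The genuinely geometric input — equivalently, that the class $H_2|_S - H_1|_S$ moves and supplies a second ruling transverse to the fibres of $\pi_1$ — is exactly what $\phi$ provides, so the care lies in verifying that $\phi$ is everywhere defined and bijective.
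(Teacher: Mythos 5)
Your reduction of the problem to ruling out $\mathbb{F}_n$ with $n>0$ even correctly locates the difficulty, but the step you propose to close it --- that smoothness of $S$ forces $L_0,L_1,L_2$ to be linearly independent --- is false, and the failure is fatal rather than repairable. Take $L_0=L_2=x$ and $L_1=y$, i.e.
\begin{align*}
F \;=\; u^2x+uvy+v^2x\;=\;x(u^2+v^2)+y\,uv .
\end{align*}
Among the five partial derivatives are $F_x=u^2+v^2$ and $F_y=uv$, which have no common zero on $\PP^1$; hence all partials never vanish simultaneously and $S=\{F=0\}$ is smooth, even though $L_0,L_1,L_2$ span only a pencil. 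This smooth $S$ contains the line $\ell=\PP^1\times\{[0,0,1]\}$, which is contracted by the projection to $\PP^2$, so $H_2\cdot\ell=0$; adjunction gives $K_S=-2H_2|_S$, hence $K_S\cdot\ell=0$ and $\ell^2=2g(\ell)-2-K_S\cdot\ell=-2$. Since every irreducible curve on $\PP^1\times\PP^1$ has nonnegative self-intersection, $S\not\iso\PP^1\times\PP^1$: indeed $S$ is a $\PP^1$-bundle over $\PP^1$ (no fibre of $\pi_1$ degenerates, because $u^2+v^2$ and $uv$ are coprime) carrying a $(-2)$-curve, so $S\iso\mathbb{F}_2$. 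This realises precisely the even-$n$ loophole your numerical computation left open. So no inspection of partial derivatives can rescue the independence claim: the lemma as literally stated is false, and the correct statement is a dichotomy --- $S\iso\PP^1\times\PP^1$ when the $L_i$ are independent (there your explicit parametrisation is correct and complete), and $S\iso\mathbb{F}_2$ when the $L_i$ span a pencil for which the two binary quadratics $Q_1,Q_2$ in $F=xQ_1(u,v)+yQ_2(u,v)$ are coprime.

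You are in good company: the paper's own proof stumbles at exactly the same point, by a different route. It argues via adjunction that $-K_S=2H_2|_S$, reduces ampleness of $-K_S$ to finiteness of the projection $S\arrow\PP^2$, and concludes from the classification of degree-$8$ del Pezzo surfaces with $2$-divisible anticanonical class; but finiteness of $S\arrow\PP^2$ is equivalent to the $\Lambda_i$ having no common zero, and the paper's argument for this normalises a dependent triple to $\Lambda_1=x$, $\Lambda_2=x+y$, $\Lambda_3=y$ --- a normal form that the triple $(x,y,x)$ above does not admit, since coordinate changes preserve the relation $\Lambda_1=\Lambda_3$. Both your argument and the paper's become correct under the extra hypothesis that the $L_i$ are linearly independent, which holds for a general surface of bidegree $(2,1)$, and a general such surface is all that is used in Corollary \ref{mov(2,1)}. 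So the honest repair is to insert ``general'' into the statement (or record the $\PP^1\times\PP^1$ versus $\mathbb{F}_2$ dichotomy); with that hypothesis your explicit parametrisation gives a cleaner and more elementary proof than the paper's del Pezzo classification argument.
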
 
  \begin{proof}
Using adjunction we see that $-K_S=2{H_2}|_{S}$, so $-K_S$ is 2-divisible in $\Pic(S)$, and we also find that $\left(-K_S \right)^2=8$. So if we can prove that $-K_S$ is ample, then $S$ is del Pezzo of degree 8 with $-K_S$ 2-divisible, hence it must be $\PP^1 \times \PP^1$.
   
To prove that $-K_S$ is ample, it is enough to prove that, for a general $S$ as in the statement the projection $S \arrow \PP^2$ is finite. 
    Let $[u,v]$ and $[x,y,z]$ be homogeneous coordinates on $\PP^1$ and $\PP^2$ respectively. Then $S$ is given by an equation
    \begin{align*}
F(u,v,x,y,z) = u^2\Lambda_1+uv\Lambda_2+v^2\Lambda_3 &=0,
    \end{align*}
    where the $\Lambda_i$ are linear forms in $x, \, y, \, z$. For a point $p \in \PP^2$, the fibre of $S \arrow \PP^2$ over $p$ is infinite if and only if $\Lambda_i(p)=0$ for $i=1, \, 2, \, 3$. We claim that if the surface $S$ is nonsingular, the set of such $p$ is empty, and therefore $S \arrow \PP^2$ is finite as required.

    To prove the claim, suppose for contradiction that the $\Lambda_i$ are linear forms in $x, \, y, \, z$ with a common zero $p \in \PP^2$. We will prove that $S$ must have a singular point.

    By changing coordinates in $\PP^2$ we can assume
    \begin{align*}
      \Lambda_1 = x, \quad \Lambda_2 &= x+y, \quad \Lambda_3 = y,
    \end{align*}
    so that $S$ is defined by the equation
    \begin{align*}
     F(u,v,x,y,z) = u^2x+v^2y+uv(x+y) &= 0.
    \end{align*}
    Note that $S$ contains the line
    \begin{align*}
          \left\{ \left([u,v],[0,0,1]\right) \mid [u,v] \in \PP^1 \right\}.
    \end{align*}
    Computing partial derivatives, we find
 \begin{align*}
   \frac{\partial F}{\partial u} = 2ux+v(x+y), &\quad \quad 
   \frac{\partial F}{\partial v} = 2vy + u(x+y),\\
    \frac{\partial F}{\partial x} = u^2+uv, \quad \quad
&    \frac{\partial F}{\partial y} = v^2+uv, \quad \quad
     \frac{\partial F}{\partial z} =0,
 \end{align*}
and therefore the surface $S$ is singular at the point $\left([1,-1],[0,0,1]\right)$, as required.
  \end{proof}

\begin{corollary} \label{mov(2,1)}
The divisor class $2H_1+H_2-\sum_{i=1}^6E_i$ is movable on $X_{1,2,6}$. Moreover the generic element $M$ of its linear system is isomorphic to $X_{1,1,6}$.
\end{corollary}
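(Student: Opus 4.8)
The plan is to realise the general member of the linear system $|D|$, where $D=2H_1+H_2-\sum_{i=1}^6 E_i$, as the proper transform of a general \emph{smooth} bidegree $(2,1)$ surface through the six points, and then to apply Lemma \ref{lemma-bideg21}. As a first step, Definition \ref{virtual dimension} gives
\begin{align*}
\vdim|D|=\binom{3}{1}\binom{3}{2}-6\binom{3}{3}-1=9-6-1=2,
\end{align*}
so by Lemma \ref{vdim lower bound} the system $|D|$ is nonempty of dimension at least $2$. The space of bidegree $(2,1)$ forms on $\PP^1\times\PP^2$ has dimension $9$, and six general points impose six independent conditions on it, so the bidegree $(2,1)$ surfaces through $p_1,\dots,p_6$ are parametrised by a $\PP^2$; their proper transforms are precisely the members of $|D|$, and since the six points are simple each such proper transform has class exactly $D$.

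The crux is to show that the general such surface $S$ is smooth, so that Lemma \ref{lemma-bideg21} applies. Writing $S$ as $u^2\Lambda_1+uv\Lambda_2+v^2\Lambda_3=0$ with the $\Lambda_i$ linear in $x,y,z$, the computation in the proof of Lemma \ref{lemma-bideg21} shows that $S$ is singular precisely when $\Lambda_1,\Lambda_2,\Lambda_3$ have a common zero in $\PP^2$, so the smooth surfaces form a dense open $U$ in the $\PP^8$ of $(2,1)$ forms. I expect the only genuine obstacle to be checking that the six point conditions do not force $S$ into the singular locus, and I would settle this by a dimension count on the incidence variety $I_U=\{(S,p_1,\dots,p_6)\mid S\in U,\ p_i\in S\}$. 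The projection $I_U\arrow U$ has fibre $S^6$ of dimension $12$, so $\dim I_U=8+12=20$; on the other hand the fibre of $I_U\arrow(\PP^1\times\PP^2)^6$ over six general points is contained in the $\PP^2$ of all $(2,1)$ surfaces through them, and so has dimension at most $2$. By the fibre dimension theorem the image therefore has dimension at least $20-2=18$, hence is dense, so six general points lie on a smooth $(2,1)$ surface; by openness of smoothness the general member of $|D|$ is then smooth. Lemma \ref{lemma-bideg21} now identifies this general $S$ with $\PP^1\times\PP^1$.

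Both assertions follow. The general member $M$ is the blowup of $S\cong\PP^1\times\PP^1$ in the six points $p_1,\dots,p_6$, which are in general position on $S$ because they are general in $\PP^1\times\PP^2$; hence $M\cong X_{1,1,6}$, which is the second statement. Since $\PP^1\times\PP^1$ is irreducible, so is $M$, and because $\dim|D|\ge 2>0$ the system genuinely moves. Consequently $|D|$ can have no fixed component: any fixed divisor would be contained in the irreducible general member $M$ and hence equal to it, contradicting $\dim|D|>0$. Therefore the base locus of $|D|$ has codimension at least two, and $D$ is movable.
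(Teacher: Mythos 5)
Your reduction of everything to the claim that six general points of $\PP^1\times\PP^2$ lie on a \emph{smooth} bidegree $(2,1)$ surface is the right crux, but the dimension count you give for it is circular, and this is a genuine gap. The fibre dimension theorem bounds $\dim$ of the (closure of the) image from below by $\dim I_U$ minus the dimension of a \emph{nonempty} fibre, i.e.\ a fibre over a point already known to lie in the image. You instead bound the fibre of $I_U\arrow(\PP^1\times\PP^2)^6$ over ``six general points'' of the \emph{target}. Whether a general $6$-tuple lies in the image of $I_U$ is exactly what is being proved: if the image were contained in a proper closed subset, the fibres over general tuples would be empty, your bound ``dimension at most $2$'' would hold vacuously, and nothing about $\dim(\mathrm{image})$ would follow. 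Moreover, over the special tuples that \emph{are} known to lie in the image, the six point-conditions need not be independent (e.g.\ tuples with collisions), so those fibres can have dimension up to $7$; the inequality $\dim(\mathrm{image})\ge 20-2$ really does require exhibiting a point of the image whose fibre is small, and that is the ingredient missing from your argument.

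The repair is short. It suffices to produce one smooth $(2,1)$ surface $S$ carrying six points that impose independent conditions on $H^0(O_{\PP^1\times\PP^2}(2,1))$: then that tuple lies in the image of $I_U$ with fibre of dimension $2$, and your count closes. For instance, $(2H_1+H_2)|_S$ is of type $(3,1)$ on $S\cong\PP^1\times\PP^1$, and the restriction map $H^0(O_{\PP^1\times\PP^2}(2,1))\arrow H^0(O_{\PP^1\times\PP^1}(3,1))$ is surjective with kernel spanned by the equation of $S$ (indeed $9-1=8=h^0(O_{\PP^1\times\PP^1}(3,1))$); since $6\le 8$, six general points of $S$ impose independent conditions on $|O_{\PP^1\times\PP^1}(3,1)|$ and hence on $|O(2,1)|$. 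It is worth noting that the paper's own proof of movability sidesteps all of this: it writes $M=(H_1-E_j)+\bigl(H_1+H_2-\sum_{i\ne j}E_i\bigr)$ for each $j=1,\dots,6$, six decompositions into fixed divisors with no common summand, so $|M|$ can have no fixed part --- no smoothness or irreducibility of the general member is needed. Your alternative deduction of movability (irreducible general member plus $\dim|D|\ge 2$ forces empty fixed part) is correct once irreducibility is in hand, and your instinct to actually prove the smoothness statement is sound: the paper invokes Lemma \ref{lemma-bideg21} for the isomorphism $M\cong X_{1,1,6}$ with that smoothness left implicit.
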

\begin{proof}
  By Lemma \ref{lemma-bideg21} we see that a general such $M$ is the the blowup of $\PP^1 \times \PP^1$ in 6 points, in other words it is isomorphic to $X_{1,1,6}$.
  
For every $j=1,\dots,6$, we have
\[
M=(H_1-E_j)+\left(H_1+H_2-\sum_{i=1}^6E_i+E_j\right).
\]
Since $M$ is linearly equivalent to a sum of fixed divisors in multiple ways, and there is no common summand in these decompositions, we can conclude that $M$ is movable.
\end{proof}

 
\begin{theorem}\label{Eff 126}
The effective cone $\Eff(X_{1,2,6})$ is given by the list of generators below left and inequalities below right:

\begin{minipage}[t]{0.4\textwidth}
  \begin{align*}
\rowcolors{2}{white}{gray!15}
\begin{array}{cccccccc}
H_1 & H_2 & E_1 & E_2 & E_3 & E_4 & E_5 & E_6\\
\hline\hline
0 & 0 & 1 & 0 & 0 & 0 & 0 & 0\\
0 & 1 & -1 & -1 & 0 & 0 & 0 & 0\\
1 & 0 & -1 & 0 & 0 & 0 & 0 & 0\\
1 & 1 & -1 & -1 & -1 & -1 & -1 & 0\\
0 & 2 & -1 & -1 & -1 & -1 & -1 & 0\\
1 & 4 & -3 & -2 & -2 & -2 & -2 & -2
\end{array}
  \end{align*}
  \end{minipage}\quad
  \begin{minipage}[t]{0.4\textwidth}
    \begin{align*}
    \rowcolors{2}{white}{gray!15}
\begin{array}{cccccccc}
d_1 & d_2 & m_1 & m_2 & m_3 & m_4 & m_5 & m_6\\
\hline\hline
0 & 1 & 0 & 0 & 0 & 0 & 0 & 0\\
1 & 0 & 0 & 0 & 0 & 0 & 0 & 0\\
1 & 1 & 1 & 0 & 0 & 0 & 0 & 0\\
1 & 2 & 1 & 1 & 0 & 0 & 0 & 0\\
1 & 2 & 1 & 1 & 1 & 0 & 0 & 0\\
1 & 3 & 1 & 1 & 1 & 1 & 0 & 0\\
1 & 4 & 1 & 1 & 1 & 1 & 1 & 0\\
1 & 4 & 1 & 1 & 1 & 1 & 1 & 1\\
2 & 2 & 1 & 1 & 1 & 1 & 0 & 0\\
2 & 3 & 2 & 1 & 1 & 1 & 0 & 0\\
3 & 3 & 2 & 1 & 1 & 1 & 1 & 0\\
3 & 4 & 3 & 1 & 1 & 1 & 1 & 0\\
3 & 4 & 3 & 1 & 1 & 1 & 1 & 1\\
3 & 6 & 3 & 3 & 1 & 1 & 1 & 1\\
4 & 3 & 2 & 1 & 1 & 1 & 1 & 1\\
4 & 4 & 2 & 2 & 2 & 1 & 1 & 1\\
5 & 5 & 3 & 2 & 2 & 2 & 2 & 1\\
6 & 5 & 2 & 2 & 2 & 2 & 2 & 2\\
6 & 6 & 4 & 2 & 2 & 2 & 2 & 1\\
7 & 6 & 3 & 3 & 2 & 2 & 2 & 2\\
7 & 7 & 3 & 3 & 3 & 3 & 2 & 2\\
7 & 8 & 3 & 3 & 3 & 3 & 3 & 3\\
9 & 10 & 5 & 5 & 3 & 3 & 3 & 3\\
10 & 10 & 4 & 4 & 4 & 4 & 4 & 3
\end{array}
\end{align*}
\end{minipage}
\end{theorem}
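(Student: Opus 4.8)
The plan is to apply the cone method of Section \ref{effconemethod} in full. Unlike the cases $X_{1,2,s}$ with $s \leq 5$, which closed using only \textbf{Step 1}--\textbf{Step 3}, I expect $X_{1,2,6}$ to be the first case where the cone produced by \textbf{Step 1}--\textbf{Step 2} is strictly larger than the effective cone and where the refinement of \textbf{Step 4} becomes indispensable.

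For \textbf{Step 1}, I would pull back via the blowdown $X_{1,2,6} \to X_{1,2,5}$ the inequalities of Theorem \ref{Eff 125}, obtaining necessary conditions in $d_1, d_2, m_1, \ldots, m_6$. For \textbf{Step 2}, I would add the base locus inequalities attached to every fixed divisor found so far: the exceptional divisors $E_i$ and the classes $H_1 - E_i$ and $H_2 - E_i - E_j$ from Lemmas \ref{lemma-baselocusexceptionals} and \ref{bsh2-ei-ej}; the degree-two fixed divisors $H_1 + H_2 - \sum_{j=1}^5 E_{i_j}$ and $2H_2 - \sum_{j=1}^5 E_{i_j}$ from Lemmas \ref{baselocus 1,1} and \ref{baselocus 0,2}; and, crucially, the new fixed divisor $H_1 + 4H_2 - 3E_1 - 2\sum_{i=2}^6 E_i$ whose effectivity and irreducibility were established in Lemma \ref{baselocus 1,4}. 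These are collected in the file \texttt{X126-ineqs}.

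The decisive ingredient is \textbf{Step 4}. I expect the cone cut out by \textbf{Step 1}--\textbf{Step 2} to retain some non-effective extremal rays, and to eliminate them I would restrict to the movable divisor $M$ of class $2H_1 + H_2 - \sum_{i=1}^6 E_i$, which by Corollary \ref{mov(2,1)} is isomorphic to $X_{1,1,6}$ and whose effective cone is recorded in Proposition \ref{proposition-x116}. Since $M$ is movable, $D|_M$ is effective whenever $D$ is, so the inequalities defining $\Eff(X_{1,1,6})$ pull back to necessary conditions on $X_{1,2,6}$. Concretely, using that $M \to S$ is the blowup of the bidegree $(2,1)$ surface $S \cong \PP^1 \times \PP^1$ of Lemma \ref{lemma-bideg21}, I would compute the restriction map by intersection theory: writing $h_1, h_2, e_i$ for the ruling and exceptional classes on $M$, the identities $(H_1|_S)^2 = 0$, $(H_2|_S)^2 = 2$ and $(H_1 \cdot H_2)|_S = 1$ force $H_1|_M = h_1$, $H_2|_M = h_1 + h_2$, and clearly $E_i|_M = e_i$, so that $D|_M = (d_1 + d_2)h_1 + d_2 h_2 - \sum_i m_i e_i$. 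Substituting this into the inequalities of Proposition \ref{proposition-x116} yields the extra linear conditions to be appended to the list.

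Finally, for \textbf{Step 3} I would feed the generators of the refined cone, together with all known fixed divisor classes, into \texttt{X126-gens} and read off the extremal rays and defining inequalities in the statement. It then remains to verify that every extremal ray is effective: the rays $E_1$, $H_1 - E_1$, $H_2 - E_1 - E_2$, $H_1 + H_2 - \sum_{i=1}^5 E_i$ and $2H_2 - \sum_{i=1}^5 E_i$ are the fixed divisors already treated, while the last generator $H_1 + 4H_2 - 3E_1 - 2\sum_{i=2}^6 E_i$ is effective by Lemma \ref{baselocus 1,4}. I expect the main obstacle to be \textbf{Step 4}: pinning down the restriction map to $M$ correctly, and confirming that the conditions it contributes, together with the base locus inequality for the $(1,4)$-divisor, suffice to trim the \textbf{Step 1}--\textbf{Step 2} cone down to exactly $\Eff(X_{1,2,6})$ with no spurious non-effective ray surviving.
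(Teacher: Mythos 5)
Your proposal follows the paper's proof in all essential respects: the same inputs (inequalities pulled back from $\Eff(X_{1,2,5})$, base locus inequalities for $E_i$, $H_1-E_i$, $H_2-E_i-E_j$, $H_1+H_2-\sum_{j=1}^5E_{i_j}$, $2H_2-\sum_{j=1}^5E_{i_j}$ and the $(1,4)$-class from Lemma \ref{baselocus 1,4}), the same \textbf{Step 4} restriction to the movable divisor $M=2H_1+H_2-\sum_{i=1}^6E_i\cong X_{1,1,6}$ with the identical restriction map $H_1|_M=h_1$, $H_2|_M=h_1+h_2$, $E_i|_M=e_i$, and the same Normaliz computation plus effectivity check of the six extremal rays. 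The only cosmetic difference is that you derive $H_2|_M=h_1+h_2$ purely from intersection numbers where the paper invokes adjunction; both are valid.
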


\begin{proof}
Let $M=2H_1+H_2-\sum_{i=1}^6E_i\cong X_{1,1,6}$ be the movable divisor of Corollary \ref{mov(2,1)}.
Let $h_1,h_2,e_2,\dots,e_6$ be the generators of the Picard group of $M$. 
The intersection table on $M$ is given by the following.
Set $H_1\vert_M=ah_1+bh_2$, for $a,b\ge 0$. From $(H_1H_2)|_M=H_1H_2(2H_1+H_2)=1$, we obtain $a+b=1$. From $H_1^2|_M=H_1H_2(2H_1+H_2)=0$, we obtain $2ab=0$. Therefore either $(a,b)=(1,0)$ or $(a,b)=(0,1)$. We obtain:
\begin{align*}
H_1\vert_M&=h_i,\\
H_2\vert_M&=h_1+h_2,\\
E_i\vert_M&=e_i, i=1,\dots,6,
\end{align*}
where the second equality is a consequence of the adjunction formula.

Applying Proposition \ref{proposition-x116} to $D\vert_M$, we obtain the following necessary conditions for the effectivity of the divisors $D$ on $X_{1,2,6}$:

\begin{minipage}[t]{1\textwidth}
  \begin{align*}
\rowcolors{2}{white}{gray!15}
\begin{array}{cccccccc}
d_1 & d_2 & m_1 & m_2 & m_3 & m_4 & m_5 & m_6\\
\hline\hline
1 & 3 & 1 & 1 & 1 & 1 & 0 & 0\\
1 & 4 & 1 & 1 & 1 & 1 & 1 & 1\\
2 & 5 & 2 & 2 & 1 & 1 & 1 & 1\\
3 & 6 & 2 & 2 & 2 & 2 & 1 & 1\\
3 & 6 & 3 & 2 & 1 & 1 & 1 & 1\\
3 & 7 & 3 & 2 & 2 & 2 & 1 & 1\\
3 & 7 & 2 & 2 & 2 & 2 & 2 & 2\\
3 & 8 & 3 & 2 & 2 & 2 & 2 & 2\\
4 & 8 & 3 & 3 & 2 & 2 & 2 & 1\\
4 & 9 & 3 & 3 & 3 & 2 & 2 & 2\\
5 & 10 & 3 & 3 & 3 & 3 & 3 & 2\\
\end{array}
\end{align*}
\
\end{minipage}


We now apply the method of Section \ref{effconemethod} with the following inputs:
  \begin{itemize}
     \item the above inequalities coming from the effectivity of $D|_M$;
      \item the pullback via the morphism $X_{1,2,6} \arrow X_{1,2,5}$ of the inequalities from Theorem \ref{Eff 125} cutting out the cone $\Eff(X_{1,2,5})$;
\item the base locus inequalities corresponding to the fixed divisors $E_i$, $H_1-E_i$, $H_2-E_i-E_j$ from Lemmas \ref{lemma-baselocusexceptionals} and \ref{bsh2-ei-ej};
\item the base locus inequalities corresponding to the fixed divisors
  $H_1+H_2-\sum_{j=1}^5E_{i_j}$ and $2H_2-\sum_{j=1}^5E_{i_j}$ from Lemmas \ref{baselocus 1,1} and \ref{baselocus 0,2}.
  \item the base locus inequalities corresponding to the fixed divisors $H_1+4H_2-2\sum_jE_j - E_i$ from Lemma \ref{baselocus 1,4}.
  \end{itemize}
The resulting cone is computed with the files  \texttt{X126-ineqs}. To the list of extremal rays, we add the rays spanned by the fixed divisors $E_i$, $H_1-E_i$, $H_2-E_i-E_j$, $H_1+H_2-\sum_{j=1}^5E_{i_j}$, $2H_2-\sum_{j=1}^5E_{i_j}$, and $H_1+4H_2-2\sum_{j=1}^6 E_j -E_i$, for all index permutations. A minimal set of generators and inqualities are obtained using the Normaliz files \texttt{X126-gens}. The former is given by the list of divisors on the left hand side of the statement of this theorem. Since they are all effective, they span the effective cone of $X_{1,2,6}$. 
\end{proof}

\section{Effective divisors on some threefolds}\label{section-dim3-extra}

In this section we compute the effective cones of certain threefolds obtained by blowing up either $\PP^3$ or $\PP^1 \times \PP^2$ in a line and a set of points. These results will be used as inputs in the computations of Section \ref{section-dim4}, when we use our method to determine the cones $\Eff(X_{1,3,s})$ for $s \leq 6$. 

We start with the following result, which is a special case of \cite[Theorem 5.1]{BDP16}. In the statement below $\mathcal E_i$ denotes the exceptional divisor of the blowup of a point in $\PP^3$. The Normaliz files {\tt X036-gens} show that the cone spanned by the generators in the left-hand table is indeed cut out by the inequalities in the right-hand table.

\begin{theorem}\label{theorem-x036}
  Let $X_{0,3,6}$ denote the blowup of $\PP^3$ in a set of 6  points in general position. The effective cone $\Eff(X_{0,3,6})$ is given  by the list of generators below left and inequalities below right:
    
\begin{minipage}[t]{0.4\textwidth}
  \begin{align*}
    \rowcolors{2}{white}{gray!15}
\begin{array}{ccccccc}
H & \mathcal E_1 & \mathcal E_2 & \mathcal E_3 & \mathcal E_4 & \mathcal E_5 & \mathcal E_6\\ 
\hline\hline
0 & 1 & 0 & 0 & 0 & 0 & 0\\
1 & -1 & -1 & -1 & 0 & 0 & 0\\
2 & -2 & -1 & -1 & -1 & -1 & -1
\end{array}
  \end{align*}
  \end{minipage}\quad
  \begin{minipage}[t]{0.4\textwidth}
    \begin{align*}
      \rowcolors{2}{white}{gray!15}
\begin{array}{ccccccc}
d & m_1 & m_2 & m_3 & m_4 & m_5 & m_6\\
\hline\hline
1 & 0 & 0 & 0 & 0 & 0 & 0\\
1 & 1 & 0 & 0 & 0 & 0 & 0\\
3 & 1 & 1 & 1 & 1 & 0 & 0\\
3 & 1 & 1 & 1 & 1 & 1 & 0\\
5 & 2 & 2 & 1 & 1 & 1 & 1\\
7 & 2 & 2 & 2 & 2 & 2 & 2
\end{array}
    \end{align*}
  \end{minipage}
\end{theorem}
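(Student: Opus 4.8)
The plan is to prove the two inclusions $C \subseteq \Eff(X_{0,3,6}) \subseteq C'$, where $C$ is the cone spanned by the three permutation-orbits of generators in the left-hand table and $C'$ is the cone cut out by the six permutation-orbits of inequalities in the right-hand table. Since the files {\tt X036-gens} already certify the purely combinatorial duality $C = C'$, these two inclusions force $\Eff(X_{0,3,6}) = C = C'$.

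For the inclusion $C \subseteq \Eff(X_{0,3,6})$ it suffices, by convexity of $\Eff$, to check that each of the three generating classes (up to permuting the six points) is effective. The classes $\mathcal{E}_i$ are effective by construction, and $H - \mathcal{E}_i - \mathcal{E}_j - \mathcal{E}_k$ is the proper transform of the unique plane through three of the points. For $2H - 2\mathcal{E}_1 - \sum_{i=2}^6 \mathcal{E}_i$ I would either compute $\vdim = \binom{5}{3} - \binom{4}{3} - 5\binom{3}{3} - 1 = 0$ and invoke Lemma \ref{vdim lower bound}, or construct the divisor directly as the quadric with a node at $p_1$ passing through $p_2,\dots,p_6$ (a parameter count on the $9$-dimensional space of quadrics, using that a node imposes four conditions once Euler's relation is taken into account, shows such a quadric exists and is unique). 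Either way all three orbits are effective.

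The substantive direction is $\Eff(X_{0,3,6}) \subseteq C'$, i.e.\ that each defining inequality holds for every effective class. Writing $\ell$ for the class of a line, the six inequalities are dual, via Corollary \ref{corollary-curveclass}, to the curve classes $\ell$, $\ell - e_1$, $3\ell - e_1 - e_2 - e_3 - e_4$, $3\ell - \sum_{i=1}^5 e_i$, $5\ell - 2e_1 - 2e_2 - e_3 - e_4 - e_5 - e_6$, and $7\ell - 2\sum_{i=1}^6 e_i$. For each I would exhibit a family of irreducible curves in that class sweeping out a dense subset of $X_{0,3,6}$: then a member of the family avoids the support of any given effective $D$, so $D \cdot C \geq 0$ (the degenerate, base-point-free case of Lemma \ref{lemma-baselocusgeneral}), which is exactly the stated inequality. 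The families for $\ell$ (general lines), $\ell - e_1$ (lines through a point), and the twisted cubics $3\ell - \sum e_i$ through four and five general points are standard and visibly cover $X_{0,3,6}$.

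The main obstacle is realizing the two high-degree classes $5\ell - 2e_1 - 2e_2 - e_3 - \cdots - e_6$ and $7\ell - 2\sum e_i$ by irreducible covering families of (necessarily rational) curves. The cleanest route I would take is to propagate the family of general lines through the pseudo-automorphisms of $X_{0,3,6}$ induced by the standard Cremona transformations based at four of the six points: such a transformation acts on $N_1(X_{0,3,6})$ by $\ell \mapsto 3\ell - e_i - e_j - e_k - e_l$ and carries covering families to covering families, so suitable words in these Cremonas together with the point-permutations transport the line family to covering families in each of the six required classes. Tracking this $N_1$-action carefully, and checking that the base points of the Cremonas do not destroy irreducibility, is where the real work lies; alternatively one invokes \cite[Theorem 5.1]{BDP16} directly, of which the statement is the case $n=3$, $s = n+3 = 6$. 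With both inclusions in hand, the Normaliz identity $C = C'$ closes the argument.
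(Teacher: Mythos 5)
Your proposal is correct, and in its fallback form it coincides with what the paper actually does: the paper gives no self-contained proof of this theorem, but simply observes that it is a special case of \cite[Theorem 5.1]{BDP16}, with the Normaliz files {\tt X036-gens} certifying that the cone spanned by the left-hand table is cut out by the right-hand inequalities --- exactly your identity $C=C'$. Your primary route is genuinely different and more self-contained. The effectivity of the generators is fine as you state it (for $2H-2\mathcal E_1-\sum_{i\geq 2}\mathcal E_i$ both of your arguments work; the paper itself uses the projection-from-the-vertex uniqueness argument later, in Lemma \ref{lemma-baselocusineqsyl6}). Your Cremona propagation does hit the required classes: writing $\operatorname{Cr}_{ijkl}$ for the Cremona based at $p_i,p_j,p_k,p_l$, which acts on curve classes by $l\mapsto 3l-e_i-e_j-e_k-e_l$ and $e_i\mapsto 2l-e_j-e_k-e_l$, one checks that $\operatorname{Cr}_{1234}\circ\operatorname{Cr}_{3456}$ sends $l$ to $5l-e_1-e_2-2e_3-2e_4-e_5-e_6$, and a further $\operatorname{Cr}_{1256}$ gives $7l-2\sum_i e_i$, so three Cremonas suffice. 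The step you flag as ``the real work'' is, however, a genuine obligation: you must verify that these Cremonas carry the six general points to six points still in general position, and that the general member of the transported family avoids the indeterminacy loci (the blown-up points and the lines joining the four centres), so that it remains irreducible with the predicted class. A cleaner packaging of that step: each composite is a pseudo-automorphism onto the blowup of another general configuration, pseudo-automorphisms preserve pseudo-effective cones of divisors, and hence their adjoints carry the dual cone of moving curve classes to the dual cone, which yields the inequalities without tracking individual curves. In terms of trade-offs, the paper's citation is short and outsources all the geometry to \cite{BDP16}; your sketch makes visible where the inequalities come from (a finite Weyl-type group action, which is also the mechanism underlying \cite{BDP16}), at the cost of the genericity/irreducibility verification --- which your citation fallback closes in any case, so the argument stands.
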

By blowing down one of the exceptional divisors we get the corresponding result for 5 points:
\begin{corollary}\label{corollary-x035}
  The effective cone $\Eff(X_{0,3,5})$ is given by  the list of generators below left and inequalities below right:

\begin{minipage}[t]{0.4\textwidth}
  \begin{align*}
    \rowcolors{2}{white}{gray!15}
\begin{array}{cccccc}
H & \mathcal E_1 & \mathcal E_2 & \mathcal E_3 & \mathcal E_4 & \mathcal E_5 \\ 
\hline\hline
0 & 1 & 0 & 0 & 0 & 0\\
1 & -1 & -1 & -1 & 0 & 0\\
\end{array}
  \end{align*}
  \end{minipage}\quad
  \begin{minipage}[t]{0.4\textwidth}
    \begin{align*}
      \rowcolors{2}{white}{gray!15}
\begin{array}{cccccc}
d & m_1 & m_2 & m_3 & m_4 & m_5\\
\hline\hline
1 & 0 & 0 & 0 & 0 & 0 \\
1 & 1 & 0 & 0 & 0 & 0 \\
3 & 1 & 1 & 1 & 1 & 0 \\
3 & 1 & 1 & 1 & 1 & 1
\end{array} 
    \end{align*}
  \end{minipage}
\end{corollary}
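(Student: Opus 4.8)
The plan is to realise $X_{0,3,6}$ as the blowup of $X_{0,3,5}$ at the sixth point, and to transport the effective cone across the resulting blowdown morphism $\phi \colon X_{0,3,6} \arrow X_{0,3,5}$ contracting $\mathcal{E}_6$. In terms of the bases of Proposition \ref{intersection-table}, the pullback $\phi^*$ sends $H \mapsto H$ and $\mathcal{E}_i \mapsto \mathcal{E}_i$ for $i \le 5$, while the pushforward $\phi_*$ does the same and sends $\mathcal{E}_6 \mapsto 0$; thus in coordinates $\phi_*$ is the projection $(d, m_1, \ldots, m_6) \mapsto (d, m_1, \ldots, m_5)$ forgetting the last coordinate.

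First I would establish the identity $\Eff(X_{0,3,5}) = \phi_* \Eff(X_{0,3,6})$. For the inclusion $\subseteq$, the total transform $\phi^* D$ of an effective divisor $D$ on $X_{0,3,5}$ is effective on $X_{0,3,6}$, and $\phi_* \phi^* D = D$ since $\phi$ is birational. For the reverse inclusion, the pushforward of an effective divisor on $X_{0,3,6}$ is effective on $X_{0,3,5}$. Since both maps are linear and $\Eff(X_{0,3,6})$ is rational polyhedral by Theorem \ref{theorem-x036}, these statements pass to the cones spanned by effective classes, giving the claimed equality (the image of a polyhedral cone under a linear map is again polyhedral, hence closed).

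Next I would compute $\phi_*$ of the generators listed in Theorem \ref{theorem-x036}, read up to permutation of the six points. The classes $\mathcal{E}_i$ with $i \le 5$ are preserved, $\mathcal{E}_6$ maps to $0$, and the plane classes $H - \mathcal{E}_i - \mathcal{E}_j - \mathcal{E}_k$ map either to themselves or (when the triple involves the index $6$) to a two-point class $H - \mathcal{E}_i - \mathcal{E}_j$. The quadric generators $2H - 2\mathcal{E}_i - \sum_{l \ne i} \mathcal{E}_l$ map to $2H - 2\mathcal{E}_i - \sum_{l \le 5,\, l \ne i} \mathcal{E}_l$ when $i \le 5$, and to $2H - \sum_{l=1}^5 \mathcal{E}_l$ when $i = 6$. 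The key observation is that each of these last three families is a nonnegative combination of the classes $\mathcal{E}_i$ and $H - \mathcal{E}_i - \mathcal{E}_j - \mathcal{E}_k$: for instance $H - \mathcal{E}_i - \mathcal{E}_j = (H - \mathcal{E}_i - \mathcal{E}_j - \mathcal{E}_k) + \mathcal{E}_k$, one has $2H - 2\mathcal{E}_i - \sum_{l \le 5,\, l \ne i} \mathcal{E}_l = (H - \mathcal{E}_i - \mathcal{E}_a - \mathcal{E}_b) + (H - \mathcal{E}_i - \mathcal{E}_c - \mathcal{E}_d)$ with $\{a,b,c,d\} = \{1,\ldots,5\} \setminus \{i\}$, and $2H - \sum_{l=1}^5 \mathcal{E}_l = (H - \mathcal{E}_1 - \mathcal{E}_2 - \mathcal{E}_3) + (H - \mathcal{E}_3 - \mathcal{E}_4 - \mathcal{E}_5) + \mathcal{E}_3$. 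Hence none of these is extremal, and the minimal generating set of $\phi_* \Eff(X_{0,3,6})$ consists precisely of the $\mathcal{E}_i$ for $i \le 5$ and the plane classes $H - \mathcal{E}_i - \mathcal{E}_j - \mathcal{E}_k$, matching the left-hand table.

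Finally, with the generators in hand I would obtain the defining inequalities on the right by duality, exactly as for Theorem \ref{theorem-x036}; concretely this is the Normaliz verification recorded in {\tt X036-gens} applied to the five-point cone. I do not expect a serious obstacle: the only content beyond the formal properties of the blowdown is the short linear-algebra check above showing that the images of the quadric (and two-point plane) classes are redundant, which is exactly what causes $\Eff(X_{0,3,5})$ to have fewer extremal rays than $\Eff(X_{0,3,6})$.
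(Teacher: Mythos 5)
Your proposal is correct and is essentially the paper's own argument: the paper deduces the 5-point case from Theorem \ref{theorem-x036} precisely by blowing down one exceptional divisor, which is the pushforward identity $\Eff(X_{0,3,5})=\phi_*\Eff(X_{0,3,6})$ that you establish. Your explicit decompositions showing that the images of the quadric classes and two-point plane classes are redundant, and the duality step for the inequalities, correctly fill in the details the paper leaves implicit.
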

We will also need to consider certain threefolds which arise naturally as divisors in $\PP^1 \times \PP^3$; these threefolds can be described as blowups of $\PP^3$ in a line and a set of points. The following result helps in determining their effective cones.
\begin{proposition}\label{proposition-smallmodification}
Let $s \geq 0$. Let $Y_{L, \, s+1}$ denote the blowup of $\PP^3$ in a line and $s+1$ points in general position. Let $Z_{L, \, s}$ denote the blowup of $\PP^1 \times \PP^2$ in a line contained in a fibre of $\pi_1 \colon \PP^1 \times \PP^2 \arrow \PP^1$ and $s$ points in general postion. Then there is a small modification $\varphi \colon Y_{L,\, s+1} \dashrightarrow Z_{L,\,s}$. 
\end{proposition}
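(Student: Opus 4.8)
The plan is to produce $\varphi$ from an explicit \emph{double projection}. Choose homogeneous coordinates $[x_0:x_1:x_2:x_3]$ on $\PP^3$ so that the line is $L=\{x_0=x_1=0\}$ and one of the $s+1$ points is $p=[1:0:0:0]$; this is permissible because in general position $p\notin L$. Define
\[
\phi \colon \PP^3 \rat \PP^1 \times \PP^2, \qquad [x_0:x_1:x_2:x_3] \mapsto \bigl([x_0:x_1],\,[x_1:x_2:x_3]\bigr),
\]
the product of the projection from $L$ with the projection from $p$. A direct computation gives the inverse $\psi([u_0:u_1],[v_1:v_2:v_3])=[u_0v_1:u_1v_1:u_1v_2:u_1v_3]$, so $\phi$ is birational. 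Its indeterminacy locus is $L\sqcup\{p\}$, while that of $\psi$ is the single line $\ell=\{[1:0]\}\times\{v_1=0\}$, which lies in the fibre $\pi_1^{-1}([1:0])$. Since $\phi$ is an isomorphism away from these loci and carries general points to general points, it matches the data defining $Y_{L,\,s+1}$ and $Z_{L,\,s}$: the line $L$, the centre $p$, and the remaining $s$ general points $q_i$ correspond to the line $\ell$ and the $s$ general points $\phi(q_i)$.

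Next I would lift $\phi$ to a birational map $\varphi \colon Y_{L,\,s+1} \rat Z_{L,\,s}$ and prove it is a small modification by showing that neither $\varphi$ nor $\varphi^{-1}$ contracts a divisor. The bookkeeping is governed by the induced correspondence of strict transforms: writing $H$ for the hyperplane class and $E_L,E_p,E_{q_i}$ for the exceptional divisors on $Y_{L,\,s+1}$, and $H_1,H_2,E_\ell,F_i$ for the standard classes on $Z_{L,\,s}$, one finds $H-E_L\leftrightarrow H_1$, $H-E_p\leftrightarrow H_2$, and $E_{q_i}\leftrightarrow F_i$. The crucial class is the strict transform $H-E_L-E_p$ of the unique plane $P=\{x_1=0\}$ containing both $L$ and $p$: I expect this to correspond exactly to the exceptional divisor $E_\ell$ over $\ell$, so that $P$ is \emph{not} contracted by $\varphi$ but matched to a divisor. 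Equivalently $E_L\leftrightarrow H_2-E_\ell$ and $E_p\leftrightarrow H_1-E_\ell$, the strict transforms of the fibral hypersurfaces $\{v_1=0\}$ and $\pi_1^{-1}([1:0])$.

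To make this precise I would resolve the indeterminacy directly. Blowing up $L$ turns the first projection into a morphism and blowing up $p$ turns the second into a morphism, so $\phi$ lifts to a morphism $g\colon \operatorname{Bl}_{L\sqcup p}\PP^3 \arrow \PP^1\times\PP^2$. The heart of the argument is to show that $g$ is an isomorphism over $(\PP^1\times\PP^2)\setminus\ell$ and that its only positive-dimensional fibres occur over $\ell$, where $g$ contracts precisely the strict transform $\widetilde P\iso\mathbb{F}_1$ of $P$ along its ruling $\widetilde P\arrow\ell$. By the standard description of a birational morphism of smooth threefolds contracting an irreducible divisor to a smooth curve, $g$ is then the blowup of $\ell$, so $\widetilde P=E_\ell$ and the case $s=0$ is an isomorphism. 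For general $s$ one blows up the $s$ matched general points $q_i$ and $\phi(q_i)$, which lie away from the exceptional loci over $L$ and $\ell$; the divisor dictionary above is unchanged, so no prime divisor is contracted by $\varphi$ or by $\varphi^{-1}$, i.e. $\varphi$ is an isomorphism in codimension one.

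The step I expect to be the main obstacle is this local analysis over the two lines. Concretely, in the affine charts around $L$ and around $p$ afforded by the coordinates above one must check that $\widetilde P$ is the unique $g$-exceptional divisor, that each fibre of $g$ over a point of $\ell$ is a single $\PP^1$ (the ruling line of $\widetilde P$) meeting $E_L$ and $E_p$ in one point each, and — running the symmetric analysis for $\psi$ — that the exceptional divisor $E_\ell$ maps back to $H-E_L-E_p$ rather than being contracted. Once these fibre computations are in place, the absence of contracted divisors in both directions is exactly the statement that $\varphi$ is a small modification, and the induced isomorphism $N^1(Y_{L,\,s+1})\iso N^1(Z_{L,\,s})$ identifying pseudoeffective cones is what feeds into the later computation of $\Eff(X_{1,3,s})$.
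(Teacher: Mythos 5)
Your approach is correct in outline but genuinely different from the paper's. The paper reduces to $s=0$ and argues torically: $Y_{L,1}$ and $Z_{L,0}$ are both toric, their fans have the same rays, and a toric birational map can only contract torus-invariant divisors, so neither $\varphi$ nor $\varphi^{-1}$ contracts a divisor --- no local analysis is needed at all. Your double projection is, in suitable coordinates, exactly the torus-equivariant map underlying that argument, but you propose to verify its properties by explicit resolution of indeterminacy instead of fan combinatorics. What your route buys: the divisor dictionary you write down ($H-E_L\leftrightarrow H_1$, $H-E_p\leftrightarrow H_2$, $H-E_L-E_p\leftrightarrow E_\ell$, hence $E_L\leftrightarrow H_2-E_\ell$ and $E_p\leftrightarrow H_1-E_\ell$) is precisely the matrix $\varphi_\ast$ of (\ref{formula-pushforward}), so the lemma following the proposition comes for free; and, once completed, your argument yields the stronger conclusion that $g$ is the blowup of $\ell$, i.e.\ that $Y_{L,1}\iso Z_{L,0}$ biregularly (this is consistent with the paper's proof: there the two fans in fact share the same maximal cones, not merely the same rays). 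What the paper's route buys is brevity: comparing six rays replaces all of your fibre computations.

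Two points must be repaired or completed before your sketch is a proof. First, the sentence ``$\phi$ is an isomorphism away from these loci'' is false: $\phi$ is defined on $\PP^3\setminus(L\sqcup\{p\})$ but is not injective there, since it contracts the plane $P=\{x_1=0\}$ onto $\ell$ --- as your own later fibre analysis shows. The statement you actually need in order to match the blowup data is that $\phi$ restricts to an isomorphism $\{x_1\neq 0\}\arrow\{u_1v_1\neq 0\}$, and that the $s$ general points lie in this open set. Second, the deferred step ``$g$ is then the blowup of $\ell$'' needs a genuine argument or citation: either Mori's classification of $K$-negative extremal divisorial contractions of smooth threefolds (here the contracted ruling $r$ of $\widetilde P$ satisfies $K_{Y}\cdot r=-1$ and the relative Picard rank of $g$ is $1$, so the contraction is of the type that blows up a smooth curve in a smooth threefold), or, more in the spirit of this paper, the argument used in the proof of Lemma \ref{lemma-divisor1}: the scheme-theoretic preimage $g^{-1}(\ell)$ is a Cartier divisor, so $g$ factors through $Z_{L,0}$ by the universal property of blowing up, and the resulting birational morphism between smooth varieties of equal Picard rank is an isomorphism by Zariski's Main Theorem. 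Alternatively, for the conclusion actually stated in the proposition you may stop short of blowup recognition altogether: once the fibre analysis shows that $g$ contracts only $\widetilde P$ (onto $\ell$) and, symmetrically, that the lift of $\psi$ to $Z_{L,0}$ contracts only the strict transforms of $\{u_1=0\}$ and $\{v_1=0\}$ while mapping $E_\ell$ onto $P$, then no prime divisor is contracted in either direction, which is exactly what ``small modification'' requires.
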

\begin{proof} 
  It is enough to consider the case $s=0$. For brevity we write $Y$, respectively $Z$, instead of $Y_{L,\, 1}$ and $Z_{L, \, 0}$. In this case both $Y$ and $Z$ are toric varieties, and we find the necessary $\varphi$ by considering their fans.

  Consider the fan of $\PP^3$ with ray generators $e_1, \, e_2, \, e_3, \, -\left(e_1+e_2+e_3 \right)$. Then $Y$ is obtained by star subdivision of the cones $\langle e_1, \, -\left(e_1+e_2+e_3 \right) \rangle$ corresponding to a line, and $\langle e_2, e_3, \, -\left(e_1+e_2+e_3 \right) \rangle$ corresponding to a point. The resulting fan $\Sigma$ has rays
  \begin{align*}
    \Sigma(1) &= \left\{ e_1, \, e_2, \, e_3, -\left(e_1+e_2+e_3 \right), \, -\left(e_2+e_3 \right), \, -e_1 \right\}.
  \end{align*}
  On the other hand, the fan of $\PP^1 \times \PP^2$ has ray generators $e_1, \, -e_1, \, e_2, \, e_3, \, -\left(e_2+e_3\right)$. This fan contains a cone $\langle -e_1, -\left(e_2+e_3\right) \rangle$ corresponding to a line contracted by $\pi_1 \colon \PP^1 \times \PP^2 \arrow \PP^1$. Blowing up along this line corresponds to star subdivision of the cone above, resulting in a fan $\Sigma^\prime$ with ray generators
  \begin{align*}
    \Sigma^\prime(1) &= \left\{e_1, \, -e_1, \, e_2, \, e_3, -\left(e_2+e_3 \right), -\left(e_1+e_2+e_3 \right) \right\} \\
    &= \Sigma(1).
  \end{align*}
  Since these two fans have the same rays, the evident rational map $\varphi \colon Y \dashrightarrow Z$ and its inverse do not contract any torus-invariant divisor on either side, and therefore $\varphi$ is a small modification as required.
\end{proof}
The small modification $\varphi$ allows us to identify divisors on $Y_{L,\, s+1}$ and $Z_{L,\, s}$, as follows. On $Y_{L,\, s+1}$ let $H$ denote the hyperplane class, and let $\mathcal E_i$, respectively $\mathcal E_L$, denote the exceptional divisor over the point $p_i$, respectively the line $L$. On $Z_{L,\, s}$ let $H_1$ and $H_2$ denote the pullbacks of the hyperplane classes from $\PP^1$ and $\PP^2$, and let $E_i$, respectively $E_L$, denote the exceptional divisor over the point $q_i$, respectively the line $L$.

\begin{lemma}
  Fixing bases for $N^1(Y)$ and $N^1(Z)$ as follows:
  \begin{align*}
    N^1(Y) = \langle H, \mathcal E_L, \mathcal E_1, \ldots, \mathcal E_{s+1} \rangle \\
    N^1(Z) = \langle H_1, H_2, E_L, E_1, \ldots E_s \rangle
  \end{align*}
  the pushforward and pullback maps $\varphi_\ast \colon N^1(Y) \arrow N^1(Z)$ and $\varphi^\ast \colon N^1(Z) \arrow N^1(Y)$ are given by the following matrices:
\begin{align}\label{formula-pushforward}
  \varphi_\ast =
  \left(
  \begin{array}{ccc|ccc}
    1 & 0 & 1 &  & & \\
    1 & 1 & 0 &  & {\mathbf 0} &\\
    -1 & -1 & -1 & &&\\
    \hline
      &  &  &  &  &\\
    & {\mathbf 0} & & & I_{s \times s} & \\
    & & & & &
  \end{array}\right)  
&\quad \quad \varphi^\ast =  \left(
  \begin{array}{ccc|ccc}
    1 & 1 & 1 &  & & \\
    -1 & 0 & -1 &  & {\mathbf 0} & \\
    0 & -1 & -1 & & & \\
    \hline
      &  &  &  &  &\\
    & {\mathbf 0} & & & I_{s \times s} & \\
    & & & & &
  \end{array}\right)  
\end{align}
\end{lemma}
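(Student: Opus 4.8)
The plan is to exploit that $\varphi$ is an isomorphism in codimension one. On the open sets where $\varphi$ and $\varphi^{-1}$ are defined, whose complements have codimension $\geq 2$, the restriction maps on $N^1$ are isomorphisms, so $\varphi_\ast$ and $\varphi^\ast$ are well-defined isomorphisms sending a class to the class of its strict transform, and they are mutually inverse; hence $\varphi^\ast = (\varphi_\ast)^{-1}$ and it suffices to compute $\varphi_\ast$ and then check the two displayed matrices are inverse, a single $3 \times 3$ verification. Moreover the $s$ points $p_2,\dots,p_{s+1}$ on $Y$ and $q_1,\dots,q_s$ on $Z$ are general and lie away from the indeterminacy locus of the toric map $\varphi \colon Y_{L,1} \dashrightarrow Z_{L,0}$, so they may be chosen to correspond under $\varphi$; blowing them up extends $\varphi$ to a small modification $Y_{L,s+1}\dashrightarrow Z_{L,s}$ carrying $\mathcal E_{i+1}$ to $E_i$ for $i=1,\dots,s$. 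This accounts for the block $I_{s\times s}$ and reduces the problem to the top-left $3\times 3$ block, i.e. to the case $s=0$.

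For that case I would work entirely with the toric boundary divisors $D_\rho$ indexed by the six common rays identified in the proof of Proposition \ref{proposition-smallmodification}. Since $\varphi$ is toric and matches the boundary divisor of a common ray on the two sides, one has $\varphi_\ast(D_\rho^Y)=D_\rho^Z$ for each $\rho$, and everything reduces to expressing the geometric bases in terms of the $D_\rho$. On $Y$ the two exceptional divisors are the boundary divisors of the subdividing rays, $\mathcal E_L = D_{-(e_2+e_3)}$ and $\mathcal E_1 = D_{-e_1}$, while the strict transform of each of the four coordinate hyperplanes is $H$ minus the exceptional divisors over the centres it contains, as read off from which rays span the cones cut out for $L$ and $p_1$; this gives $D_{e_1}^Y = H-\mathcal E_L$, $D_{e_2}^Y = D_{e_3}^Y = H-\mathcal E_1$, and $D_{-(e_1+e_2+e_3)}^Y = H-\mathcal E_L-\mathcal E_1$. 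On $Z$ the same bookkeeping gives $E_L = D_{-(e_1+e_2+e_3)}$, the two fibre hyperplanes pulled back from $\PP^1$ yield $H_1 = D_{e_1}^Z$ and $H_1-E_L = D_{-e_1}^Z$, and the hyperplanes pulled back from $\PP^2$ yield $H_2 = D_{e_2}^Z=D_{e_3}^Z$ and $H_2-E_L = D_{-(e_2+e_3)}^Z$.

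Then I would transport via $\varphi_\ast(D_\rho^Y)=D_\rho^Z$: applying this to three independent boundary classes gives a linear system in $\varphi_\ast(H),\varphi_\ast(\mathcal E_L),\varphi_\ast(\mathcal E_1)$ whose solution is $\varphi_\ast(H)=H_1+H_2-E_L$, $\varphi_\ast(\mathcal E_L)=H_2-E_L$, and $\varphi_\ast(\mathcal E_1)=H_1-E_L$, exactly the columns of the stated matrix. Inverting the $3\times 3$ block (equivalently, solving the dual system $\varphi^\ast(D_\rho^Z)=D_\rho^Y$) then produces $\varphi^\ast$, and reinstating the identity block finishes the proof.

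I expect the only genuine difficulty to be the combinatorial bookkeeping of the second paragraph: correctly matching each $D_\rho$ with the right combination of $H,\mathcal E_L,\mathcal E_1$ (resp.\ $H_1,H_2,E_L$), which hinges on reading off from the two fans precisely which coordinate hyperplanes contain the centre $L$ and the point, and with multiplicity one in each case. Once this dictionary is pinned down, the remainder is a short linear-algebra computation, and the consistency of the over-determined system of six ray identifications against only three basis vectors on each side serves as a built-in check on the answer.
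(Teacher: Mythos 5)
Your proposal is correct and takes essentially the same route as the paper's proof: reduce to $s=0$, identify the divisor classes on $Y$ and $Z$ attached to the common ray generators of the two fans (the paper records exactly the identifications $\mathcal E_L \leftrightarrow H_2-E_L$ at $-(e_2+e_3)$, $\mathcal E_1 \leftrightarrow H_1-E_L$ at $-e_1$, and $H \leftrightarrow H_1+H_2-E_L$ at $e_1+(-(e_2+e_3))$), then obtain $\varphi^\ast$ as the inverse of $\varphi_\ast$. Your write-up simply makes explicit the full dictionary of all six boundary divisors and the consistency check, which the paper leaves implicit.
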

\begin{proof}
Again it suffices to prove this for $s=0$. The matrix $\varphi_*$ above is obtained by identifying classes on $Y$ and $Z$ which correspond to the same ray generators, as follows. The classes $H$ on $Y$ and $H_1+H_2-E_L$ on $Z$ both correspond to the sum $e_1+\left(-(e_2+e_3) \right)$ of ray generators. The classes $\mathcal E_L$ and $H_2-E_L$ both correspond to $-(e_2+e_3)$, which corresponds to $H_2-E_L$ on $Z$. Finally, $\mathcal E_1$ and $H_1-E_L$ both correspond to $-e_1$. The matrix $\varphi^*$ is the inverse of $\varphi_*$.  
\end{proof}

Now we will compute the effective cone of $Y_{L, \, 5}$. We start with some base locus inequalities. To state these we introduce the following notation. For $i \in \{1,\ldots,5\}$, let $\Pi(L,i)$ denote the plane in $\PP^3$ spanned by $L$ and $p_i$ For distinct $i, \, j, \, k \in \{1,\ldots,5\}$, let $\Pi(i,j,k)$ denote the plane in $\PP^3$ spanned by $p_i, \, p_j, \, p_k$. 
\begin{lemma}\label{lemma-baselocusineqsyl5}
  Let $D$ be a divisor on $Y_{L,\, 5}$ with class
  \begin{align*}
    dH - m_L \mathcal E_L - \sum_{i=1}^5 m_i \mathcal E_i.
  \end{align*}
  Then the following divisors are contained in $\Bs(D)$ with the given multiplicities:
  \begin{enumerate}
  \item[(a)] $\mathcal E_L$ with multiplicity at least $\max\{0,-m_L\}$; 
  \item[(b)] $\Pi (i,j,k)$ with multiplicity at least $\max\{0,m_i+m_j+m_k-2d\}$;
  \item[(c)] $\Pi(L,i)$ with multiplicity at least $\max\{0,m_L+m_i-d\}$.
  \end{enumerate}
\end{lemma}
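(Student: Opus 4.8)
The plan is to deduce all three statements from the general base locus criterion of Lemma \ref{lemma-baselocusgeneral}, applied on $Y_{L,\,5}$ with a suitable covering family of curves in each case. First I would fix notation for $N_1(Y_{L,\,5})$: write $\ell$ for the class of the proper transform of a general line in $\PP^3$, $e_i$ for the class of a line in the exceptional divisor $\mathcal E_i$, and $e_L$ for the class of a fibre of the ruled surface $\mathcal E_L \arrow L$. The intersection numbers I need follow from standard blowup intersection theory exactly as in Proposition \ref{intersection-table}: $H \cdot \ell = 1$, $\mathcal E_i \cdot e_j = -\delta_{ij}$, $\mathcal E_L \cdot e_L = -1$ (because $\mathcal O_{\mathcal E_L}(\mathcal E_L)$ restricts to $\mathcal O(-1)$ on each fibre), and all other products among these classes vanish.

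For $(a)$ I would take $C = e_L$. The fibres of $\mathcal E_L \arrow L$ sweep out $\mathcal E_L$, we have $\mathcal E_L \cdot e_L = -1 < 0$, and $D \cdot e_L = m_L$. Applying Lemma \ref{lemma-baselocusgeneral} with $F = \mathcal E_L$ gives that $\mathcal E_L$ lies in $\Bs(D)$ with multiplicity at least $\lceil m_L/(-1) \rceil = -m_L$ whenever $m_L < 0$, which is the claimed bound $\max\{0,-m_L\}$.

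For $(b)$, the proper transform of the plane $\Pi(i,j,k)$ has class $H - \mathcal E_i - \mathcal E_j - \mathcal E_k$: the plane is smooth, passes through the three points, and does not contain $L$, so no $\mathcal E_L$ component appears. I would sweep it out by proper transforms of the conics in $\Pi(i,j,k)$ through $p_i,p_j,p_k$; the general such conic is irreducible, they cover the plane, and a general one avoids the point $L \cap \Pi(i,j,k)$, so its class is $C = 2\ell - e_i - e_j - e_k$. A direct computation gives $\Pi(i,j,k) \cdot C = -1$ and $D \cdot C = 2d - (m_i+m_j+m_k)$, so Lemma \ref{lemma-baselocusgeneral} yields the bound $\max\{0,\,m_i+m_j+m_k-2d\}$. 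For $(c)$, the plane $\Pi(L,i)$ does contain $L$, so its proper transform has class $H - \mathcal E_L - \mathcal E_i$; I would sweep it out by the pencil of lines in $\Pi(L,i)$ through $p_i$, each of which meets $L$, giving $C = \ell - e_i - e_L$. Here $\Pi(L,i) \cdot C = -1$ and $D \cdot C = d - m_L - m_i$, and the criterion delivers $\max\{0,\,m_L+m_i-d\}$.

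There is no serious obstacle: the argument is three essentially mechanical applications of Lemma \ref{lemma-baselocusgeneral}. The only points requiring care are (i) recording the correct divisor classes of the proper transforms of the two families of planes — in particular that $\Pi(i,j,k)$ carries no $\mathcal E_L$ component while $\Pi(L,i)$ does — and (ii) verifying in each case that the chosen curves are irreducible and genuinely cover a Zariski-dense subset of the relevant divisor, so that the hypotheses of Lemma \ref{lemma-baselocusgeneral} are met. Both are routine given the general position of $L,p_1,\ldots,p_5$.
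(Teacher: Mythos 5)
Your proposal is correct and follows exactly the paper's argument: all three parts are obtained from Lemma \ref{lemma-baselocusgeneral} applied to the same three covering curve classes ($e_L$, $2\ell-e_i-e_j-e_k$, and $\ell-e_L-e_i$) with the same intersection numbers. The extra details you supply (the classes of the proper transforms of the planes and the verification that the curve families are irreducible and dominant) are points the paper leaves implicit, but the proof is the same.
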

\begin{proof}
  These all follow from Lemma \ref{lemma-baselocusgeneral} using the following curve classes:
  
  (a): the class $e_L$ of a line in $\mathcal E_L$ that is contracted by the blowdown. Note that $\mathcal E_L~\cdot~e_L~=~-1$.
  
  (b): the class $2l-e_i-e_j-e_k$ of the proper transform of a conic in $\PP^3$ passing through $p_i$, $p_j$, and $p_k$. Note that $\Pi(i,j,k) \cdot (2l-e_i-e_j-e_k)~=~-1$.
  
  (c): the class $l-e_L-e_i$ of the proper transform of a line in $\PP^3$ intersecting $L$ and passing through $p_i$. We have $\Pi(L,i) \cdot (l-e_L-e_i)~=-1$. 
\end{proof}

To obtain more bounds on the effective cone, once more we consider the restriction of divisors to a suitable movable subvariety (\textbf{Step 4} of the method outlined in Section \ref{effconemethod}). At this point it is convenient to switch from considering effective divisors on $Y_{L, \, 5}$ to effective divisors on $Z_{L, \, 4}$; by Proposition \ref{proposition-smallmodification} these can be identified. In fact we will first consider divisors on $Z_{L, \, 5}$, but the conditions we obtain will give us useful information on $Z_{L, \, 4}$. 

  There is a $1$-parameter family of surfaces of bidegree $(2,1)$ in $\PP^1 \times \PP^2$ containing the line $L$ and the points $q_1,\ldots,q_5$. A general such surface $S$ is smooth, and its proper transform on $Z_{L,S}$ is a smooth surface $\widetilde{S}$ of class
  \begin{align*}
   2H_1+H_2-E_{L} - \sum_{i=1}^5 E_i.
  \end{align*}
By Lemma \ref{lemma-bideg21} the surface $S$ is isomorphic to $\PP^1 \times \PP^1$, so $\widetilde{S}$ is isomorphic to $X_{1,1,5}$. The effective cone of $X_{1,1,5}$ was described in Proposition \ref{proposition-x115}, and by restricting we can use this information to obtain effectivity conditions for divisors on $Z_{L,\, 5}$. To do this, we first need to record the following facts about restrictions of divisors on $Z_{L, \, 5}$ to $\widetilde{S}$. Since $\widetilde{S}$ is isomorphic to $X_{1,1,5}$, we have
  \begin{align*}
    N^1(\widetilde{S}) = \langle h_1, \, h_2, e_1, \ldots, e_5 \rangle,
  \end{align*}
  where $h_1$ and $h_2$ are the classes of the two rulings of $S \cong \PP^1 \times \PP^1$ and the $e_i$ are the exceptional curves of the blowups. Let us fix notation by choosing $h_1$ to be the ruling of $S$ containing the line $L$. Then we have
  \begin{lemma}\label{lemma-restrictions}
    The restriction map $N^1(Z_{L,\, 5}) \arrow N^1(\widetilde{S})$ is given by:
    \begin{align*}
      H_1 &\mapsto h_1 \\
      H_2 & \mapsto h_1 + h_2\\
      E_L &\mapsto h_1\\
      E_i &\mapsto e_i \quad (i=1,\ldots,5).
    \end{align*}
  \end{lemma}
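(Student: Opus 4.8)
The plan is to compute the image of each generator $H_1,H_2,E_L,E_1,\dots,E_5$ of $N^1(Z_{L,5})$ under restriction to $\widetilde{S}$ separately, expressing each in the basis $h_1,h_2,e_1,\dots,e_5$ of $N^1(\widetilde{S})\iso N^1(X_{1,1,5})$. The key structural observation is that $H_1$ and $H_2$ are pulled back from $\PP^1\times\PP^2$, so their restrictions to $\widetilde{S}$ factor through the classes $H_i|_S$; since the morphism $\widetilde{S}\arrow S$ is the blowup of $S$ at the points $q_i$ (which lie on $S$), these pulled-back classes keep the names $h_1,h_2$ on $\widetilde{S}$. Thus it suffices to identify $H_1|_S$ and $H_2|_S$ in $\Pic(S)=\Pic(\PP^1\times\PP^1)$, and to treat the exceptional classes $E_i$ and $E_L$ geometrically.

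For the hyperplane classes I would use intersection theory on $\PP^1\times\PP^2$. From $[S]=2H_1+H_2$ and the intersection numbers $H_1^2=0$, $H_1H_2^2=1$, $H_2^3=0$, one computes
\begin{align*}
(H_1|_S)^2=0,\qquad (H_1|_S)\cdot(H_2|_S)=1,\qquad (H_2|_S)^2=2.
\end{align*}
Writing $H_i|_S=a_ih_1+b_ih_2$ and solving these three equations determines the two classes up to interchanging the rulings. To remove this ambiguity I would invoke the convention fixed just before the lemma, that $h_1$ is the ruling of $S$ containing $L$: since $L$ lies in a fibre of $\pi_1$, it coincides with a fibre of $\pi_1|_S\colon S\arrow\PP^1$ (both being lines), and that fibre has class $H_1|_S$; hence $[L]=H_1|_S=h_1$, and then the intersection numbers above force $H_2|_S=h_1+h_2$. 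Pulling back along $\widetilde{S}\arrow S$ gives $H_1\mapsto h_1$ and $H_2\mapsto h_1+h_2$.

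The class $E_i$ restricts to $e_i$ because, the $q_i$ being general points of $S$, the morphism $\widetilde{S}\arrow S$ is precisely the blowup of $S$ at $q_1,\dots,q_5$, and $\widetilde{S}\cap E_i$ is the exceptional curve over $q_i$. For $E_L$ I would argue that, since $S$ is smooth and contains $L$, its proper transform meets $E_L$ in a section of the $\PP^1$-bundle $E_L\arrow L$ isomorphic to $L$; under the blowdown this section maps to $L\subset S$, and since the $q_i$ lie off $L$ its class is $E_L|_{\widetilde{S}}=[L]=h_1$.

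The main obstacle is not any single computation but keeping the normalisation of the rulings consistent: one must pin down \emph{which} ruling of $S$ is $h_1$ and check that the two independent identifications of it---as the fibre class $H_1|_S$ and as the class of the curve $E_L\cap\widetilde{S}$---both agree with the stated convention. Once the chain of equalities $[L]=h_1=H_1|_S=E_L|_{\widetilde{S}}$ is established, the remaining assignments follow immediately from the intersection-number computation above.
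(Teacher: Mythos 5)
Your proposal is correct and takes essentially the same approach as the paper: both compute $H_1|_S$ and $H_2|_S$ by intersecting against $[S]=2H_1+H_2$ in $\PP^1\times\PP^2$, use the convention that $h_1$ is the ruling containing $L$ to fix the ambiguity, identify $E_L\cap\widetilde{S}$ with (the proper transform of) $L$ so that its class is $h_1$, and note that $E_i\mapsto e_i$ is immediate from the blowup structure. The only cosmetic difference is that the paper identifies $H_1|_S$ directly as a line in a $\pi_1$-fibre, hence a member of the ruling $h_1$, whereas you solve the linear system of intersection numbers first and then normalise the rulings.
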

  \begin{proof}
    The first two restrictions can be computed on $S$. Since $S$ has class $2H_1+H_2$ this restriction of $H_1$ to $S$ is a curve of class $(2H_1+H_2) \cdot H_1 = H_1 \cdot H_2$, which is a line in the ruling $h_1$. The restriction of $H_2$ to $S$ is a curve of class $(2H_1+H_2) \cdot H_2$; this is an effective class on $S$ with selfintersection $(2H_1+H_2) \cdot H_2^2=2$, so it must equal $h_1+h_2$.

    The exceptional divisor $E_L$ meets $\widetilde{S}$ in a smooth curve $C$; near $C$ the blow-down map $\widetilde{S} \arrow S$ is an isomorphism which maps $C$ to the line $L \subset S$, and therefore $C$ is a curve in the class $h_1$.

    The restrictions of the remaining exceptional divisors are clear.
  \end{proof}

We can now put together the necessary ingredients to determine the effective cone of the variety $Y_{L,5}$. 
\begin{theorem}\label{theorem-yl5}
  The effective cone $\Eff(Y_{L,5})$ is given by the list of generators below left and inequalities below right:
  
\begin{minipage}[t]{0.4\textwidth}
  \begin{align*}
    \rowcolors{2}{white}{gray!15}
\begin{array}{ccccccc}
H & \mathcal{E}_L & \mathcal E_1 & \mathcal E_2 & \mathcal E_3 & \mathcal E_4 & \mathcal E_5\\
\hline\hline
0 & 1 & 0 & 0 & 0 & 0 & 0\\
0 & 0 & 1 & 0 & 0 & 0 & 0\\
1 & -1 & -1 & 0 & 0 & 0 & 0\\
1 & 0 & -1 & -1 & -1 & 0 & 0\\
2 & -1 & -1 & -1 & -1 & -1 & -1\\
5 & -1 & -3 & -3 & -3 & -3 & -3\\
\end{array}
  \end{align*}
  \end{minipage}\quad
  \begin{minipage}[t]{0.4\textwidth}
    \begin{align*}
      \rowcolors{2}{white}{gray!15}
\begin{array}{ccccccc}
d & m_L & m_1 & m_2 & m_3 & m_4 & m_5\\
\hline\hline
1 & 0 & 0 & 0 & 0 & 0 & 0\\
1 & 1 & 0 & 0 & 0 & 0 & 0\\
1 & 0 & 1 & 0 & 0 & 0 & 0\\
2 & 1 & 1 & 1 & 0 & 0 & 0\\
3 & 0 & 1 & 1 & 1 & 1 & 0\\
3 & 0 & 1 & 1 & 1 & 1 & 1\\
3 & 2 & 1 & 1 & 1 & 0 & 0\\
3 & 2 & 1 & 1 & 1 & 1 & 0\\
4 & 2 & 2 & 1 & 1 & 1 & 1\\
4 & 3 & 1 & 1 & 1 & 1 & 1\\
5 & 3 & 2 & 2 & 1 & 1 & 1\\
6 & 3 & 2 & 2 & 2 & 2 & 1\\
\end{array}
    \end{align*}
  \end{minipage}
\end{theorem}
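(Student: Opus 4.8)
The plan is to apply the cone method of Section \ref{effconemethod}, running through \textbf{Step 1}--\textbf{Step 4} and then certifying that the resulting extremal rays are all effective. The first task is to assemble enough necessary conditions to cut out a cone containing $\Eff(Y_{L,5})$, which I would feed to Normaliz in three families. First (\textbf{Step 1}), the pullback of the defining inequalities of $\Eff(X_{0,3,5})$ from Corollary \ref{corollary-x035} under the blowdown $Y_{L,5} \arrow X_{0,3,5}$ contracting $\mathcal E_L$: for $D=dH-m_L\mathcal E_L-\sum m_i\mathcal E_i$ with $m_L\ge 0$, the class $D+m_L\mathcal E_L$ pushes down to $dH-\sum m_i\mathcal E_i$ on $X_{0,3,5}$, so effectivity of $D$ forces the Corollary \ref{corollary-x035} inequalities in the variables $d,m_1,\ldots,m_5$. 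Second (\textbf{Step 2}), the base locus inequalities of Lemma \ref{lemma-baselocusineqsyl5} for the fixed divisors $\mathcal E_L$, $\Pi(i,j,k)$ and $\Pi(L,i)$. Third (\textbf{Step 4}), the restriction conditions coming from the movable surface $\widetilde S\cong X_{1,1,5}$: using the small modification $\varphi$ of Proposition \ref{proposition-smallmodification} to pass to $Z_{L,4}$, viewing a divisor there as a divisor on $Z_{L,5}$ with $m_5=0$, and restricting to $\widetilde S$ via Lemma \ref{lemma-restrictions}, one demands that $D|_{\widetilde S}$ satisfy the inequalities of Proposition \ref{proposition-x115}.

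With this cone cut out, I would run Normaliz to list its extremal rays, then (\textbf{Step 3}) adjoin the rays spanned by the known fixed divisors above and recompute, producing the candidate lists of generators and defining inequalities in the statement. It then remains to certify that every listed generator is effective. For all but one this is immediate: $\mathcal E_L$ and the $\mathcal E_i$ are exceptional divisors; $H-\mathcal E_L-\mathcal E_i$ and $H-\mathcal E_i-\mathcal E_j-\mathcal E_k$ are the proper transforms of the planes $\Pi(L,i)$ and $\Pi(i,j,k)$; and $2H-\mathcal E_L-\sum_{i=1}^5\mathcal E_i$ is effective because the quadrics in $\PP^3$ through a line and five general points form a pencil, as a parameter count (or Lemma \ref{vdim lower bound}) shows.

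The main obstacle is the final generator $5H-\mathcal E_L-3\sum_{i=1}^5\mathcal E_i$, which is an \emph{unexpected} quintic: quintics in $\PP^3$ with five triple points depend on $56-50=6$ parameters, while containing $L$ imposes the vanishing of a binary quintic on $L$, i.e.\ $6$ conditions, so the parameter count gives $\vdim=-1$ and Lemma \ref{vdim lower bound} is useless; moreover, since the five points lie off $L$, no partial derivative vanishes identically and the dimension-jump trick of Lemma \ref{baselocus 1,4} does not apply. Effectivity is therefore equivalent to the statement that the restriction map from the six-dimensional space of quintics with triple points at $p_1,\ldots,p_5$ to $H^0(\mathcal O_L(5))\cong\CC^6$ fails to be surjective. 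To prove this rank drop I would argue geometrically through the projection $\pi_L\colon\PP^3\dashrightarrow\PP^1$ from $L$, which resolves to a $\PP^2$-bundle structure on $\operatorname{Bl}_L\PP^3$ whose fibres are the planes containing $L$: a divisor of class $5H-\mathcal E_L$ meets a general such plane in the residual plane quartic obtained after removing $L$, and in each of the five special planes $\Pi(L,i)$ this quartic is forced to acquire a triple point at $p_i$, a codimension-$6$ condition on plane quartics. Assembling the quintic as a suitable divisor in this bundle, prescribing plane quartics with the required triple points over the five special fibres, and verifying that the resulting system is nonempty is the crux of the argument; one may equally transfer to $Z_{L,4}$, where the class becomes $2H_1+4H_2-E_L-3\sum_{i=1}^4 E_i$, and exploit the fibration $\pi_1$. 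Once this unexpected quintic is shown to be effective, every extremal ray is effective and the cone produced by Normaliz is exactly $\Eff(Y_{L,5})$.
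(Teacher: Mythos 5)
Your assembly of the bounding cone reproduces the paper's proof exactly: the pullback of the inequalities of Corollary \ref{corollary-x035} under $Y_{L,5} \arrow X_{0,3,5}$, the base locus inequalities of Lemma \ref{lemma-baselocusineqsyl5} for $\mathcal E_L$, $\Pi(i,j,k)$, $\Pi(L,i)$, the restriction conditions through $Z_{L,4}$ and Lemma \ref{lemma-restrictions} to $\widetilde S \cong X_{1,1,5}$, the Normaliz step, and the effectivity of the first five generators are all as in the paper. The gap is the last generator $D = 5H - \mathcal E_L - 3\sum_{i=1}^5 \mathcal E_i$: you correctly identify it as an unexpected quintic with $\vdim|D| = -1$, correctly reformulate effectivity as a rank drop of the restriction map from the six-dimensional space of quintics with five triple points to $H^0(O_L(5)) \cong \CC^6$, and then explicitly defer the decisive point (``verifying that the resulting system is nonempty is the crux of the argument''). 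Since this is the only generator whose effectivity follows from neither a parameter count nor explicit geometry, the theorem rests entirely on this unproved claim; as written, your proposal shows only that $\Eff(Y_{L,5})$ lies in the stated cone and contains every generator except the quintic.

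Moreover, the construction you sketch would not suffice even if carried out. Prescribing the residual plane quartics to have triple points at $p_i$ over the five special fibres imposes only $6$ conditions per point, whereas multiplicity $3$ of the divisor at $p_i$ is $10$ conditions: in local coordinates $(x,y,t)$ with the fibre $\{t=0\}$ and $p_i$ the origin, a divisor $f_4(x,y) + t\,g(x,y,t)$ whose fibre quartic $f_4$ has a triple point has multiplicity $\min\{3,\,1+\operatorname{ord}(g)\}$, so the four transverse conditions (coefficients of $t$, $tx$, $ty$, $t^2$) must be imposed separately; nonemptiness of the fibrewise system you describe therefore does not yield a divisor in the class $D$. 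The paper sidesteps any direct construction: with $Q$ the proper transform of a quadric through $L$ and the five points (class $2H - \mathcal E_L - \sum_i \mathcal E_i$, effective by your pencil argument), the sequence $0 \arrow O(D-Q) \arrow O(D) \arrow O(D|_Q) \arrow 0$ together with $h^0(D-Q)=0$ (since $D-Q = 3H - 2\sum_i \mathcal E_i$ and no cubic surface is singular at five general points) and $h^1(D-Q)=0$ (since $\chi = \vdim|D-Q|+1 = 0$ by Lemma \ref{vdimeuler} and the higher cohomology vanishes) gives $H^0(O(D)) \cong H^0(O(D|_Q))$, and $D|_Q = 4h_1 + 5h_2 - 3\sum_i e_i$ is effective on $Q \cong X_{1,1,5}$ by Proposition \ref{proposition-x115}. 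To salvage your route you would have to impose the full multiplicity-$3$ conditions in the bundle picture, or exhibit the quintic directly, e.g.\ as the surface swept out by the twisted cubics incident to $L$ and passing through $p_1,\ldots,p_5$.
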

\begin{proof}
  We apply the method of Section \ref{effconemethod} with the following inputs:
  \begin{itemize}
\item the base locus inequalities from Lemma \ref{lemma-baselocusexceptionals} and Lemma \ref{lemma-baselocusineqsyl5}, corresponding to the fixed divisors $\mathcal E_i$, $\mathcal E_L$, $\Pi(i,j,k)$, and $\Pi(L,i)$;
  \item the pullback via the map $Y_{L,\, 5} \arrow X_{0,3,5}$ of the inequalities from Corollary \ref{corollary-x035} cutting out the cone $\Eff(X_{0,3,5})$;
  \item the pullback via the restriction map $N^1(Z_{L,5}) \arrow N^1(\widetilde{S}$) from Lemma \ref{lemma-restrictions} of inequalities from Lemma \ref{proposition-x115} cutting out the cone $\Eff(X_{1,1,5})$.
  \end{itemize}
Note that the last bullet point gives inequalities bounding $\Eff(Z_{L, \, 5})$. We convert these into inequalities bounding $\Eff(Y_{L,6})$ by using the dual of the map $\varphi_*$ given in (\ref{formula-pushforward}). These inequalities turn out not to involve one of the multiplicities $m_i$, and so we can view them as inequalities on  $\Eff(Y_{L,5})$.
  
The Normaliz files \texttt{YL5-ineqs} compute the cone defined by these inequalities. We then add the classes of the fixed divisors listed in the first bullet point above, using the files \texttt{YL5-gens}, to obtain the list of extremal rays and inequalities  shown in the tables above. It remains to show that all the extremal rays in the left table are effective.

The first 4 rows in the table correspond to effective divisors of type $\mathcal E_L$, $\mathcal E_i$, $\Pi(L,i)$, and $\Pi(i,j,k)$.

For Row 5, counting dimensions shows that there is a 1-parameter family of quadric surfaces in $\PP^3$ containing the line $L$ and the 5 points: containment of $L$ imposes $3$ conditions and passage through $5$ points in general position imposes $5$ conditions. The proper transform of any such quadric has class
\begin{align*}
2H-\mathcal E_L - \sum_{i=1}^5 \mathcal E_i
\end{align*}
which shows that Row 5 is effective.

It remains to deal with the final row of our table, that is to prove
that the class
      \begin{align*}
       D &= 5H-\mathcal E_L - 3 \sum_{i=1}^5 \mathcal E_i
      \end{align*}
      is effective. The virtual dimension of $|D|$ is
      \begin{align*}
        {8 \choose 3} - 6 - 5 \cdot 10 -1&=-1
      \end{align*}
so this does not follow from a simple parameter count. Instead we will prove the this class is effective by restriction to a suitable subvariety.

As already seen, there is a 1-parameter family of quadric surfaces in $\PP^3$ containing the line $L$ and the 5 points. By generality of $L$ and the points, every such quadric is smooth; let $Q$ be the proper transform on $Y_{L, \, 5}$ of any such quadric. Restricting $D$ to $Q$ gives the short exact sequence of sheaves
      \begin{align*}
        0 \arrow O(D-Q) \arrow O(D) \arrow O(D|_{Q}) \arrow 0
      \end{align*}
      and the corresponding long exact sequence of cohomology
       \begin{align*}
        0 \arrow H^0(Y,O(D-Q)) \arrow H^0(Y,O(D)) \arrow H^0(Q,O(D|_{Q})) \arrow H^1(Y,O(D-Q)) \arrow \cdots
      \end{align*}

       Now $Q$ has class $2H-\mathcal E_L - \sum_i \mathcal E_i$, so we get $D-Q=3H-2 \sum_i \mathcal E_i$ which is not effective since a cubic surface can be singular at no more than 4  points in general position. So $h^0(Y,O(D-Q))=0$.

       On the other hand, according to \cite[Remark 11.1]{SAGA2014} we have $H^i(Y,O(D-Q))=0$ for $i \geq 2$, so
       \begin{align*}
         \chi(Y,O(D-Q)) &= h^0(Y,O(D-Q)) - h^1(Y,O(D-Q)).
       \end{align*}
       Using Lemma \ref{vdimeuler} we have
       \begin{align*}
         \chi(Y,O(D-Q))&=\vdim|D-Q|+1\\
         &=20 - 5 \cdot 4 =0,
       \end{align*}
       so $h^0(Y,O(D-Q))=h^1(Y,O(D-Q))=0$.
       
       The exact sequence of cohomology above therefore gives an isomorphism 
       \begin{align*}
         H^0(Y,O(D)) &\cong H^0(Q,O(D|_{Q})).
       \end{align*}
      Proposition \ref{proposition-x115} tells us that  $D|_{Q}=4h_1+5h_2-3\sum_i E_i$ is effective, so we conclude that $D$ is effective also. 
\end{proof}
\begin{remark}
  Since the class $5H-\mathcal E_L - 3 \sum_i E_i$ is a primitive generator of an extremal ray of $\Eff(Y_{L,5})$, it must be represented by an irreducible quintic surface $S \subset \PP^3$. If $R$ is a twisted cubic incident to $L$ and passing through $p_1,\ldots,p_5$, we compute that $S \cdot R =-1$. so $R$ is contained in $D$. As we move the point $R \cap L$ these twisted cubics sweep out an irreducible surface, which must therefore equal $S$. 
\end{remark}
Using the linear map $\varphi_*$ given in (\ref{formula-pushforward}) and its dual, we can restate this result in terms of the variety $Z_{L,\, 4}$; we will use the information in this form in the next section.
\begin{corollary}\label{corollary-zl4}
The effective cone $\Eff(Z_{L,4})$ is given by the generators below left and the inequalities below right.  
  
\begin{minipage}[t]{0.4\textwidth}
  \begin{align*}
    \rowcolors{2}{white}{gray!15}
\begin{array}{ccccccc}
H_1 & H_2 & E_L & E_1 & E_2 & E_3 & E_4\\
\hline\hline
0 & 0 & 1 & 0 & 0 & 0 & 0\\
0 & 0 & 0 & 1 & 0 & 0 & 0\\
1 & 0 & 0 & -1 & 0 & 0 & 0\\
0 & 1 & -1 & 0 & 0 & 0 & 0\\
0 & 1 & 0 & -1 & -1 & 0 & 0\\
1 & 1 & -1 & -1 & -1 & -1 & 0\\
2 & 4 & -1 & -3 & -3 & -3 & -3
\end{array}
  \end{align*}
  \end{minipage}\quad
  \begin{minipage}[t]{0.4\textwidth}
    \begin{align*}
      \rowcolors{2}{white}{gray!15}
\begin{array}{ccccccc}
d_1 & d_2 & m_L & m_1 & m_2 & m_3 & m_4\\
\hline\hline
1 & 0 & 0 & 0 & 0 & 0 & 0\\
0 & 1 & 0 & 0 & 0 & 0 & 0\\
1 & 1 & 1 & 0 & 0 & 0 & 0\\
1 & 1 & 0 & 1 & 0 & 0 & 0\\
1 & 1 & 1 & 1 & 0 & 0 & 0\\
1 & 2 & 0 & 1 & 1 & 0 & 0\\
1 & 2 & 0 & 1 & 1 & 1 & 0\\
1 & 2 & 1 & 1 & 1 & 0 & 0\\
1 & 3 & 0 & 1 & 1 & 1 & 1\\
1 & 3 & 1 & 1 & 1 & 1 & 0\\
1 & 3 & 1 & 1 & 1 & 1 & 1\\
2 & 2 & 0 & 1 & 1 & 1 & 1\\
2 & 3 & 0 & 2 & 1 & 1 & 1\\
2 & 3 & 1 & 2 & 1 & 1 & 1\\
2 & 4 & 1 & 2 & 2 & 1 & 1\\
3 & 2 & 2 & 1 & 1 & 1 & 0\\
3 & 2 & 2 & 1 & 1 & 1 & 1\\
3 & 3 & 3 & 1 & 1 & 1 & 1\\
3 & 4 & 2 & 2 & 2 & 2 & 1\\
3 & 5 & 2 & 2 & 2 & 2 & 2
\end{array}
    \end{align*}
  \end{minipage}
\end{corollary}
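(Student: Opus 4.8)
The plan is to deduce the statement directly from the effective cone of $Y_{L,5}$ computed in Theorem~\ref{theorem-yl5}, transporting it across the small modification of Proposition~\ref{proposition-smallmodification}. The conceptual key is that the small modification $\varphi\colon Y_{L,5}\dashrightarrow Z_{L,4}$ (the case $s=4$) is an isomorphism in codimension one, so that neither $\varphi$ nor $\varphi^{-1}$ contracts a divisor. Consequently the pushforward $\varphi_\ast\colon N^1(Y_{L,5})\arrow N^1(Z_{L,4})$ is a linear isomorphism with inverse $\varphi^\ast$, and since the strict transform of an effective divisor across an isomorphism in codimension one is again effective, $\varphi_\ast$ restricts to a bijection of cones $\varphi_\ast(\Eff(Y_{L,5}))=\Eff(Z_{L,4})$. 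Both matrices being recorded in \eqref{formula-pushforward}, the entire argument reduces to linear algebra applied to the tables of Theorem~\ref{theorem-yl5}.

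First I would transport the generators. Applying $\varphi_\ast$ to each of the six extremal classes of Theorem~\ref{theorem-yl5} produces extremal classes of $\Eff(Z_{L,4})$, since a linear isomorphism of cones carries extremal rays to extremal rays. For instance one computes $\varphi_\ast(\mathcal E_L)=H_2-E_L$, $\varphi_\ast(H-\mathcal E_L-\mathcal E_1)=E_L$, and $\varphi_\ast(5H-\mathcal E_L-3\sum_{i=1}^5\mathcal E_i)=2H_1+4H_2-E_L-3\sum_{i=1}^4E_i$, matching rows of the left-hand table. The delicate point—and the one I expect to be the main obstacle—is the bookkeeping of the symmetry conventions. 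The classes in Theorem~\ref{theorem-yl5} are read modulo the $S_5$ permuting $\mathcal E_1,\dots,\mathcal E_5$, but $\varphi_\ast$ breaks this symmetry, sending $\mathcal E_1\mapsto H_1-E_L$ while $\mathcal E_i\mapsto E_{i-1}$ for $i\ge 2$. Hence a single $S_5$-orbit on $Y_{L,5}$ generally splits into several $S_4$-orbits on $Z_{L,4}$ according to whether the index $1$ is involved; for example the orbit of $H-\mathcal E_i-\mathcal E_j-\mathcal E_k$ splits into the classes $H_2-E_a-E_b$ (when $1\in\{i,j,k\}$) and $H_1+H_2-E_L-E_a-E_b-E_c$ (when $1\notin\{i,j,k\}$). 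Enumerating these split orbits carefully is exactly what yields the complete list of generators, and it is the step most prone to omitting a ray.

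Finally I would transport the defining inequalities. Each inequality in Theorem~\ref{theorem-yl5} is the pairing against a moving-curve class on $Y_{L,5}$, and these transport to $N_1(Z_{L,4})$ by the dual of $\varphi^\ast=\varphi_\ast^{-1}$. Concretely, if $\mu$ is a facet functional of $\Eff(Y_{L,5})$ then $(\varphi^\ast)^{\mathsf T}\mu$ is the corresponding facet functional of $\Eff(Z_{L,4})$, since $u\in\varphi_\ast(\Eff(Y_{L,5}))$ if and only if $\langle\mu,\varphi^\ast u\rangle\ge 0$. Substituting the matrix from \eqref{formula-pushforward} and again matching permutation orbits as above reproduces the right-hand table, completing the identification of $\Eff(Z_{L,4})$. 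As with the generators, all the genuine work lies in reconciling the broken symmetry on the two sides; the underlying fact that the two descriptions agree is forced by the cone isomorphism $\varphi_\ast$, so once the orbit bookkeeping is done correctly the equality of both tables with the pushforward is automatic.
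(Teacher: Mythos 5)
Your strategy is the same as the paper's: the paper's entire proof of Corollary \ref{corollary-zl4} consists of the remark preceding it, namely that Theorem \ref{theorem-yl5} can be restated on $Z_{L,4}$ via the matrices of (\ref{formula-pushforward}), and your justification of why this is legitimate (a small modification is an isomorphism in codimension one, so $\varphi_\ast$ identifies effective cones, carrying extremal rays to extremal rays and facet functionals to facet functionals) is correct and in fact more careful than what the text says.

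The gap is in the step you declare ``automatic''. You never carry out the orbit enumeration, and carrying it out does \emph{not} reproduce the printed tables. You yourself record $\varphi_\ast(\mathcal E_1)=H_1-E_L$; since $\mathcal E_1$ is an extremal generator in Theorem \ref{theorem-yl5} (second row of its left table, read up to the $S_5$-action), its image $H_1-E_L$ --- the proper transform of the $\pi_1$-fibre containing $L$, visibly an effective class --- must be an extremal generator of $\Eff(Z_{L,4})$. It is absent from the corollary's left table, and it is not even in the cone spanned by the listed generators: every listed generator with negative $E_L$-coefficient has strictly positive $H_2$-coefficient, and none has negative $H_2$-coefficient, so no nonnegative combination can have $H_2$-coefficient $0$ and $E_L$-coefficient $-1$. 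Likewise $\varphi_\ast\bigl(2H-\mathcal E_L-\sum_{i=1}^5\mathcal E_i\bigr)=H_1+H_2-E_1-E_2-E_3-E_4$ (fifth row of the theorem, effective as the proper transform of a quadric) is missing. On the dual side, transporting the theorem's facet $(6,3,2,2,2,2,1)$ with the special index placed at a multiplicity-$2$ point gives $(3,4,1,2,2,2,1)$, not the printed row $(3,4,2,2,2,2,1)$; indeed the printed inequality $3d_1+4d_2\ge 2m_L+2m_1+2m_2+2m_3+m_4$ is violated by the corollary's own sixth generator $H_1+H_2-E_L-E_1-E_2-E_3$ (it reads $7\ge 8$), so the two printed tables are not even mutually consistent. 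Consequently a faithful execution of your (correct) method proves a corrected version of the statement --- left table augmented by $(1,0,-1,0,0,0,0)$ and $(1,1,0,-1,-1,-1,-1)$, and the row $(3,4,2,2,2,2,1)$ replaced by $(3,4,1,2,2,2,1)$ --- rather than the statement as printed. To turn the proposal into a proof you must actually display the enumeration and confront this discrepancy, instead of asserting that agreement with the tables is automatic.
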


\subsection*{The effective cone of $Y_{L,\, 6}$}
Finally for this section, we will consider $Y_{L,\, 6}$, the blowup of $\PP^3$ in a line and 6 points. The effective cone of this variety will be an important input into the computations of the next section. To compute this effective cone, it will be necessary to use the small modification described in Proposition \ref{proposition-smallmodification} and obtain restrictions on divisors on both $Y_{L,\, 6}$ and $Z_{L,\, 5}$; putting together all the resulting restrictions, we can then find the effective cone.

We start with two more base locus lemmas.

\begin{lemma}\label{lemma-baselocusineqsyl6}
  Given a class in $N^1(Y_{L,6})$ of the form
$
2H- \sum_{i=1}^6 E_i - E_j,
$
there is a unique effective divisor $\mathcal F_j$ with the given class.

For a divisor $D$ on $Y_{L,6}$ with class
\begin{align*}
dH-m_L E_L - \sum_{i=1}^6 m_i E_i,
\end{align*}
 the divisor $\mathcal F_j$ is contained in $\Bs(D)$ with multiplicity at least 
$\max\left\{0,\sum_{i=1}^6 m_i + m_j -4d\right\}.$
\end{lemma}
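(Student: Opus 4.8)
The plan is to prove the two assertions in turn: first the existence and uniqueness of $\mathcal F_j$, and then the base locus bound, which will follow from Lemma \ref{lemma-baselocusgeneral} once I exhibit a suitable sweeping curve class. For the first assertion, I would realise the class $2H-\sum_{i=1}^6\mathcal E_i-\mathcal E_j=2H-2\mathcal E_j-\sum_{i\ne j}\mathcal E_i$ as the proper transform of a quadric surface in $\PP^3$ that is singular at $p_j$ and passes through the other five points. The clean way to see such a quadric exists and is unique is to project from $p_j$: a quadric cone with vertex $p_j$ is exactly the preimage $\pi_{p_j}^{-1}(\gamma)$ of a conic $\gamma\subset\PP^2$ under the projection $\pi_{p_j}\colon\PP^3\dashrightarrow\PP^2$, and it passes through $p_i$ precisely when $\gamma$ contains $\pi_{p_j}(p_i)$. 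Since $p_1,\dots,p_6$ are general, the five image points $\pi_{p_j}(p_i)$ ($i\ne j$) are general in $\PP^2$, so there is a unique (and smooth) conic through them. Hence there is a unique quadric cone $\mathcal F_j$ through the six points with an ordinary node at $p_j$; it is irreducible and reduced, and its proper transform has exactly the stated class.

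For the base locus bound I would apply Lemma \ref{lemma-baselocusgeneral} with $F=\mathcal F_j$ and the curve class
\begin{align*}
C=4l-2e_j-\sum_{i\ne j}e_i,
\end{align*}
the class of the proper transform of a degree-$4$ space curve with a node at $p_j$, passing simply through the other five points, and disjoint from $L$. A direct computation using Proposition \ref{intersection-table} gives
\begin{align*}
\mathcal F_j\cdot C&=2\cdot4-2\cdot2-5=-1,\\
D\cdot C&=4d-2m_j-\sum_{i\ne j}m_i=4d-\left(\sum_{i=1}^6 m_i+m_j\right).
\end{align*}
Thus, once I know that irreducible curves of class $C$ sweep out a dense subset of $\mathcal F_j$, Lemma \ref{lemma-baselocusgeneral} yields containment of $\mathcal F_j$ in $\Bs(D)$ with multiplicity at least $\lceil (D\cdot C)/(\mathcal F_j\cdot C)\rceil=\sum_{i=1}^6 m_i+m_j-4d$ whenever this quantity is positive, which is exactly the asserted bound.

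It therefore remains to produce the sweeping family, which I expect to be the main obstacle. My construction is to set $C=\mathcal F_j\cap Q'$, where $Q'$ ranges over the $\PP^3$ of quadrics through $p_1,\dots,p_6$. Since $\mathcal F_j$ has a node at $p_j$ and a general such $Q'$ is smooth there, the complete intersection $\mathcal F_j\cap Q'$ is a degree-$4$ curve with a node at $p_j$ passing through the other five points; as $Q'$ does not contain the two points of $L\cap\mathcal F_j$, the curve is disjoint from $L$, so its proper transform on $Y_{L,6}$ has class $C$. To check that the general member is irreducible and that these curves sweep out $\mathcal F_j$, I would pass to the minimal resolution $\rho\colon\widetilde{\mathcal F_j}\cong\mathbb{F}_2\to\mathcal F_j$, whose exceptional $(-2)$-curve $\sigma$ lies over $p_j$ and for which $\rho^\ast\mathcal O(1)=\sigma+2f$. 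Because $C$ is cut out on $\mathcal F_j$ by a quadric, $\rho^\ast C=2(\sigma+2f)$, and subtracting the contribution of $\sigma$ (the node forces $\widetilde C\cdot\sigma=2$) gives that the proper transform has class $\widetilde C=\sigma+4f$. The system $|\sigma+4f|$ is base-point free and big, hence has irreducible general member, so imposing passage through the five general points $\rho^{-1}(p_i)$ still leaves a subsystem of dimension at least $2$ whose general member is irreducible and which sweeps out $\mathbb{F}_2$, and therefore $\mathcal F_j$.

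The only genuinely delicate points are the irreducibility of the general member and the claim that these curves cover $\mathcal F_j$; everything else is routine intersection theory via Proposition \ref{intersection-table}. Carrying out the $\mathbb{F}_2$ computation above verifies both hypotheses of Lemma \ref{lemma-baselocusgeneral}, and the lemma then follows immediately.
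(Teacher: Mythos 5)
Your proposal is correct, and its skeleton coincides with the paper's own proof: $\mathcal F_j$ is identified as the proper transform of the unique quadric cone with vertex $p_j$ (uniqueness seen by projecting from $p_j$), the sweeping curves are the sections of $\mathcal F_j$ by quadrics through the six points, with class $4l-\sum_{i=1}^6 e_i-e_j$, and the intersection numbers $\mathcal F_j\cdot C=-1$ and $D\cdot C=4d-\sum_{i=1}^6 m_i-m_j$ feed into Lemma \ref{lemma-baselocusgeneral} exactly as you describe. The one place you genuinely diverge is the verification that the general member of this family is irreducible and that the family covers $\mathcal F_j$: the paper argues directly in $\PP^3$, asserting that the only reducible members are $R\cup L_j$ with $R$ the twisted cubic through the six points and $L_j$ a ruling through $p_j$, a $1$-parameter family inside the $2$-parameter family of quadric sections; you instead pass to the minimal resolution $\mathbb{F}_2$ and work with the class $\sigma+4f$. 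Your route is slightly longer but more robust: on $\mathbb{F}_2$ the dimension counts are transparent (e.g.\ $\dim|\sigma+4f|=7$, so the subsystem through the five points has dimension $2$), and the reducible members can be listed exhaustively --- $\sigma$ plus fibres, an irreducible member of $|\sigma+2f|$ plus two fibres, or an irreducible member of $|\sigma+3f|$ plus a fibre --- each contributing at most a $1$-parameter family through the five points; this list in fact contains configurations (a cubic on the cone through only four of the points, plus the ruling through the fifth) that the paper's enumeration omits, though without affecting its conclusion. Two small remarks: you should state this enumeration rather than just assert that the general member of the $5$-point subsystem is irreducible, since that is the crux; and you can strengthen your own argument by noting that $\sigma+4f$ is actually ample on $\mathbb{F}_2$ (because $4>1\cdot 2$), not merely basepoint-free and big, which makes irreducibility of the general member of the full system immediate.
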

\begin{proof}
  To prove the first claim, we observe that the class $2H-\sum_{i=1}^6 E_i -E_j$ is represented by the proper transform of a quadric surface in $\PP^3$ with a singular point at $p_j$. By projection away from $p_j$ there is a unique such quadric $\mathcal F_j$.

  The second claim follows from \cite[Lemma 4.1]{BDP16}; one can also see it directly as follows. Let $Q$ be the proper transform of a smooth quadric in $\PP^3$ passing through the points $p_1,\ldots,p_6$. Then the intersection $\mathcal F_j \cap Q$ is a curve $C_j$ of class $4l-\sum_i e_i - e_j$. There is a 2-parameter family of such curves $C_j$, and the union of all such curves equals $\mathcal F_j$. 
  The only possibility for a reducible such curve is $C_j = R \cup L_j$, where $R$ is the proper transform of the twisted cubic through $p_1,\ldots,p_6$ and $L_j$ is the proper transform of a line on $F_j$ through $p_j$. So there is a 1-parameter family of such reducible curves, and therefore the general such $C_j$ is irreducible.

  For a divisor $D$ on $Y_{L,6}$ with class $dH-m_L \mathcal E_L -\sum_{i=1}^5 m_i \mathcal E_i$, the intersection number of this divisor with $C_j$ equals
  \begin{align*}
    D \cdot C_j &= 4d - \sum_{i=1}^6 m_i - m_j.
  \end{align*}
  Finally we compute that $\mathcal F_j \cdot C_j =-1$, so by Lemma \ref{lemma-baselocusgeneral}, the divisor $F_j$ is contained in the base locus $\Bs(D)$ with multiplicity at least
  \begin{align*}
    -D \cdot C_j &= \sum_{i=1}^6m_i+m_j-4d.  
  \end{align*}
\end{proof}

\begin{lemma}\label{lemma-baselocuszl5}
  Given a class in $N^1(Z_{L,\, 5})$ of the form
$
    H_1+H_2-E_L-E_i-E_j-E_k
$
  there is a unique effective divisor $G_{ijk}$ with the given class. 
  
For a divisor $D$ on $Z_{L,\, 5}$ with class
$
d_1H_1+d_2H_2-m_L E_L - \sum_{i=1}^5 m_i E_i
$
 the divisor $G_{ijk}$ is contained in $\Bs(D)$ with multiplicity at least $$\max\left\{0,m_i+m_j+m_k -d_1-2d_2\right\}.$$
\end{lemma}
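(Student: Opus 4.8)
The plan is to follow the template of Lemmas \ref{baselocus 1,1} and \ref{baselocus 0,2}: realise $G_{ijk}$ as a del Pezzo surface sitting inside $Z_{L,5}$, restrict a general divisor class to it, and then apply the moving-curve criterion of Lemma \ref{lemma-baselocusgeneral}. For existence and uniqueness, I would represent $H_1+H_2-E_L-E_i-E_j-E_k$ by the proper transform of a bidegree $(1,1)$ surface $V \subset \PP^1 \times \PP^2$ containing $L$ and passing through $p_i, p_j, p_k$: bidegree $(1,1)$ forms span a six-dimensional space, containment of $L$ imposes two conditions and the three general points three more, leaving a single member. More cleanly, pushing the class forward under the small modification $\varphi$ of Proposition \ref{proposition-smallmodification} and using (\ref{formula-pushforward}), one finds $\varphi^*(H_1+H_2-E_L-E_i-E_j-E_k)=H-\mathcal E_a-\mathcal E_b-\mathcal E_c$, the class on $Y_{L,6}$ of the proper transform of the plane through three of the points. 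Three general points span a unique plane, and since $\varphi$ is an isomorphism in codimension one, $G_{ijk}$ is the unique effective divisor in its class and is irreducible.

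Next I would pin down the geometry and the restrictions I need. Since $L$ is a Cartier divisor on the smooth surface $V$, its proper transform maps isomorphically to $V$, so $G_{ijk}$ is isomorphic to the blowup of $V$ at $p_i, p_j, p_k$; by Lemma \ref{lemma-divisor1} (with $n=2$) the surface $V$ is the blowup of $\PP^2$ at the point $\Lambda = q$, whence $G_{ijk} \cong \operatorname{Bl}_{q,p_i,p_j,p_k}\PP^2$ is a degree-$5$ del Pezzo with $\Pic$ generated by the line class $\bar h$ and the exceptional classes $\bar e_q, e_i, e_j, e_k$. Repeating the intersection-number computation of Lemma \ref{baselocus 1,1} gives $H_2|_{G_{ijk}} = \bar h$ and $H_1|_{G_{ijk}} = \bar h - \bar e_q$; and since $E_L$ restricts to the class of the curve $L$, which $\pi_2$ sends to a line through $q$ (so that $H_1 \cdot L = 0$, $H_2 \cdot L = 1$), one also gets $E_L|_{G_{ijk}} = \bar h - \bar e_q$. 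Hence $D|_{G_{ijk}} = (d_1+d_2-m_L)\bar h - (d_1-m_L)\bar e_q - m_i e_i - m_j e_j - m_k e_k$.

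Finally I would invoke Lemma \ref{lemma-baselocusgeneral} with the class $C = 2\bar h - \bar e_q - e_i - e_j - e_k$ of the pencil of conics through $q, p_i, p_j, p_k$, whose irreducible members sweep out $G_{ijk}$ (here one needs $q, p_i, p_j, p_k$ in general position in $\PP^2$, which holds by generality). Since $G_{ijk}|_{G_{ijk}} = \bar h - e_i - e_j - e_k$, one computes $G_{ijk} \cdot C = -1$ and $D|_{G_{ijk}} \cdot C = d_1 + 2d_2 - m_L - m_i - m_j - m_k$, so Lemma \ref{lemma-baselocusgeneral} places $G_{ijk}$ in $\Bs(D)$ with multiplicity at least $m_L + m_i + m_j + m_k - d_1 - 2d_2$. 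As $E_L$ is a fixed divisor we may assume $m_L \geq 0$ (exactly the reduction used in the earlier base locus lemmas), and then this bound is at least $m_i + m_j + m_k - d_1 - 2d_2$, as claimed.

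I expect the main obstacle to be the restriction $E_L|_{G_{ijk}} = \bar h - \bar e_q$: one must recognise that the line blowup is an isomorphism on $V$ (because $L$ is Cartier there) and that $L$ meets the center $\Lambda = q$. A related subtlety is that the moving-curve bound honestly carries an extra $+m_L$: any curve avoiding $E_L$ satisfies $(\bar h - \bar e_q) \cdot C = 0$ and hence has $C^2 \leq 0$, so it cannot both sweep out $G_{ijk}$ and meet it negatively; the $m_L$ term is therefore unavoidable from this argument and is discarded only at the end using $m_L \geq 0$.
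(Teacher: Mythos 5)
Your proof is correct, and it reaches the conclusion by a genuinely different (more intrinsic) route than the paper. The paper produces the moving curves extrinsically: it takes a general divisor $H$ of class $H_1+H_2-E_i-E_j-E_k$ and uses the complete-intersection curves $G_{ijk}\cap H$, of class $C_{ijk}=l_1+2l_2-e_L-e_i-e_j-e_k$ in $N_1(Z_{L,5})$, then computes $G_{ijk}\cdot C_{ijk}=-1$ and $D\cdot C_{ijk}$ and applies Lemma \ref{lemma-baselocusgeneral}; the uniqueness claim is proved by the same parameter count you give (containing $L$ is $2$ conditions, each point $1$ more). Your curves are in fact the same ones: the conic class $2\bar h-\bar e_q-e_i-e_j-e_k$ pushes forward to exactly $C_{ijk}$, so both arguments hinge on the same class and the same key lemma. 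What your version buys is transparency: identifying $G_{ijk}$ with $\operatorname{Bl}_{q,p_i,p_j,p_k}\PP^2$ makes it evident that the curves form a pencil whose general member is irreducible and sweeps out $G_{ijk}$ (a point the paper leaves implicit), and your explicit restriction computation, especially $E_L|_{G_{ijk}}=\bar h-\bar e_q$, forces you to track the $m_L$ term honestly. Indeed, the paper's own proof has a small slip here: it computes $D\cdot C_{ijk}=d_1+2d_2-m_L-m_i-m_j-m_k$ but then states the bound $-D\cdot C_{ijk}=m_i+m_j+m_k-d_1-2d_2$, silently dropping $m_L$; your explicit reduction to $m_L\ge 0$ (legitimate, since subtracting the fixed divisor $(-m_L)E_L$ does not change the multiplicity of $G_{ijk}$ in the fixed part) is the correct way to discard it. What the paper's route buys is brevity: it never needs to identify the surface $G_{ijk}$ or the restriction map at all, in the style of Lemma \ref{lemma-baselocus3pts} rather than of Lemmas \ref{baselocus 1,1} and \ref{baselocus 0,2}. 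Two small points you should make explicit: the surface $V$ containing $L$ and the three points must be checked to be smooth and irreducible before invoking Lemma \ref{lemma-divisor1} (this follows since a reducible $V$ would force the three points to be fibral or to have collinear $\pi_2$-images), and since $q$ is determined by the configuration, the claim that $q,\,\pi_2(p_i),\,\pi_2(p_j),\,\pi_2(p_k)$ are in general position needs the one-line verification that $q$ does not lie on the lines joining pairs of the images; both are routine and at the paper's own level of rigor. Your alternative uniqueness argument via the small modification of Proposition \ref{proposition-smallmodification} and the matrix \eqref{formula-pushforward}, transporting the class to the plane class $H-\mathcal E_a-\mathcal E_b-\mathcal E_c$ on $Y_{L,6}$, is also valid and arguably cleaner than the parameter count.
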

\begin{proof}
  For the first claim, the given class is represented by the proper transform of a divisor of bidegree $(1,1)$ containing the line $L$ and the points $p_i$, $p_j$, $p_k$. Containing the line $L$ imposes 2 conditions on such divisors, and passing through a point imposes 1 condition. So the corresponding linear system on $Z_{L,\ 5}$ has dimension $0$, meaning that it contains there is a unique effective divisor $G_{ijk}$.

  For the second claim, consider a general divisor $H$ with class $H_1+H_2-E_i-E_j-E_k$. The intersection $G_{ijk} \cap H$ will then be a curve of class $C_{ijk} = l_1+2l_2-e_L-e_i-e_j-e_k$.

  For a divisor $D$ on $Z_{L, \, 5}$ with class $d_1H_1+d_2H_2-m_LE_L-\sum_{i=1}^5 m_i E_i$, the intersection number of $D$ with the curve $C_{ijk}$ equals
  \begin{align*}
    D \cdot C_{ijk} &= d_1+2d_2-m_L - m_i-m_j-m_k.
  \end{align*}
  Finally we compute that $G_{ijk} \cdot C_{ijk} = -1$, so by Lemma \ref{lemma-baselocusgeneral} the divisor $G_{ijk}$ is contained in the base locus $Bs(D)$ with multiplicity at least
  \begin{align*}
    -D \cdot C_{ijk} &= m_i+m_j+m_k -d_1-2d_2.
  \end{align*}
\end{proof}

Now we return to consider divisors on $Y_{L,\, 6}$. As in the computation of $\Eff(Y_{L,5})$, we will need to use a restriction map to obtain extra inequalities on divisors on $\Eff(Y_{L,6})$. For a general line $L$ and $6$ points in general position in $\PP^3$, there is a unique smooth quadric $Q$ containing all of them. Let $\widetilde{Q}$ be the proper transform of $Q$ on $Y_{L,6}$. If $D$ is any irreducible effective divisor on $Y_{L,6}$ other than $\widetilde{Q}$ itself, then the restriction $D|_{\widetilde{Q}}$ must be effective.

The divisor $\widetilde{Q}$ is isomorphic to $X_{1,1,6}$, so the inequalities from Proposition \ref{proposition-x116} will give us restrictions on the class of $D|_{\widetilde{Q}}$. We can label classes on $\widetilde{Q}$ so that $\mathcal E_L$ restricts to $h_1$. The restriction map $N^1(Y_{L,6}) \arrow N^1(X_{1,1,6})$ is then evidently given as follows:
\begin{lemma}\label{lemma-restrictions3}
  The restriction map $N^1(Y_{L,6}) \arrow N^1(\widetilde{Q})$ is given by
  \begin{align*}
    H &\mapsto h_1+h_2\\
    \mathcal E_L &\mapsto h_1\\
    \mathcal E_i &\mapsto e_i
  \end{align*}
\end{lemma}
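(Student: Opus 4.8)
The plan is to read off each of the three restrictions directly on the surface $\widetilde{Q}$, in exactly the manner of the proof of Lemma \ref{lemma-restrictions}. Recall from the discussion preceding the lemma that $Q \subset \PP^3$ is the unique smooth quadric through $L$ and the six points, that $L$ is a line on $Q$ and hence lies in one of the two rulings of $Q \cong \PP^1 \times \PP^1$, and that $\widetilde{Q} \cong X_{1,1,6}$ with $h_1, h_2$ the pullbacks of the two rulings and $e_i$ the exceptional curve over $p_i$. I would fix the labeling so that $h_1$ is the ruling containing $L$, as announced in the statement; the three formulas then reduce to short geometric observations.

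For $H$, I would restrict the hyperplane class to $Q$: a hyperplane meets the smooth quadric in a conic, that is a curve of bidegree $(1,1)$, and since a general such hyperplane avoids the six points its total transform on $\widetilde{Q}$ is $h_1+h_2$ (consistently, $(h_1+h_2)^2=2$ recovers the degree of $Q$). For each $\mathcal E_i$, since $p_i \in Q$ and $Q$ is smooth there, the proper transform $\widetilde{Q}$ is the blowup of $Q$ at $p_i$ and $\mathcal E_i \cap \widetilde{Q}$ is exactly the exceptional curve $e_i$, giving $\mathcal E_i \mapsto e_i$. For $\mathcal E_L$, the exceptional divisor meets $\widetilde{Q}$ in a curve $C$, and the blowdown $\widetilde{Q} \to Q$ is an isomorphism near $C$ carrying it onto $L$; since $L$ is a ruling, $C$ has class $h_1$ by our choice of labeling.

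The only place any care is needed is the computation of $\mathcal E_L|_{\widetilde{Q}}$, namely the claim that $\mathcal E_L \cap \widetilde{Q}$ maps isomorphically onto $L$. This rests on $L$ being a Cartier divisor on the smooth surface $Q$ and disjoint from the six points (both of which hold in general position), so that blowing up $\PP^3$ along $L$ leaves $Q$ unchanged near $L$ and the blowups of the $p_i$ do not interfere there. Granting this, $\mathcal E_L \cap \widetilde{Q}$ is a copy of the ruling $L$ and hence has class $h_1$. I do not expect any genuine obstacle: the lemma is essentially a bookkeeping calculation of restriction classes on $\widetilde{Q} \cong X_{1,1,6}$, directly parallel to Lemma \ref{lemma-restrictions}.
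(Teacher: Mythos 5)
Your proposal is correct and matches the paper's intent: the paper states this lemma without a formal proof, declaring the restriction map ``evidently given'' once the labelling is fixed so that $\mathcal E_L$ restricts to $h_1$, and the justification it relies on is exactly the one you supply, modelled on the proof of Lemma \ref{lemma-restrictions}. Your verifications (the hyperplane class restricting to the conic class $h_1+h_2$, the transverse intersection $\mathcal E_i \cap \widetilde{Q} = e_i$, and $\mathcal E_L \cap \widetilde{Q}$ mapping isomorphically onto the ruling line $L$ because $L$ is Cartier on the smooth quadric $Q$ and misses the six points) are precisely the routine checks the paper leaves implicit.
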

We can now compute the effective cone of this threefold.
\begin{theorem}\label{theorem-yl6}
  The effective cone $\Eff(Y_{L,6})$ is given by the generators below left and the inequalities below right.

  \begin{minipage}[t]{0.4\textwidth}
  \begin{align*}
    \rowcolors{2}{white}{gray!15}
\begin{array}{cccccccc}
H & \mathcal{E}_L & \mathcal{E}_1 & \mathcal{E}_2 & \mathcal{E}_3 & \mathcal{E}_4 & \mathcal{E}_5 & \mathcal{E}_6\\
\hline\hline
0 & 1 & 0 & 0 & 0 & 0 & 0 & 0\\
0 & 0 & 1 & 0 & 0 & 0 & 0 & 0\\
1 & -1 & -1 & 0 & 0 & 0 & 0 & 0\\
1 & 0 & -1 & -1 & -1 & 0 & 0 & 0\\
2 & 0 & -2 & -1 & -1 & -1 & -1 & -1\\
2 & -1 & -1 & -1 & -1 & -1 & -1 & -1\\
5 & -1 & -3 & -3 & -3 & -3 & -3 & 0\\
\end{array}
  \end{align*}
    \end{minipage} \quad
    \begin{minipage}[t]{0.4\textwidth}
      \begin{align*}
            \rowcolors{2}{white}{gray!15}
  \begin{array}{cccccccc}
    d & m_L & m_1 & m_2 & m_3 & m_4 & m_5 & m_6\\
\hline\hline
1 & 0 & 0 & 0 & 0 & 0 & 0 & 0\\
1 & 1 & 0 & 0 & 0 & 0 & 0 & 0\\
1 & 0 & 1 & 0 & 0 & 0 & 0 & 0\\
2 & 1 & 1 & 1 & 0 & 0 & 0 & 0\\
3 & 0 & 1 & 1 & 1 & 1 & 0 & 0\\
3 & 0 & 1 & 1 & 1 & 1 & 1 & 0\\
3 & 2 & 1 & 1 & 1 & 0 & 0 & 0\\
3 & 2 & 1 & 1 & 1 & 1 & 0 & 0\\
4 & 2 & 2 & 1 & 1 & 1 & 1 & 0\\
4 & 3 & 1 & 1 & 1 & 1 & 1 & 0\\
5 & 0 & 2 & 2 & 1 & 1 & 1 & 1\\
5 & 2 & 2 & 2 & 1 & 1 & 1 & 1\\
5 & 3 & 2 & 2 & 1 & 1 & 1 & 0\\
5 & 4 & 1 & 1 & 1 & 1 & 1 & 1\\
6 & 3 & 2 & 2 & 2 & 2 & 1 & 0\\
6 & 3 & 3 & 2 & 1 & 1 & 1 & 1\\
7 & 0 & 2 & 2 & 2 & 2 & 2 & 2\\
7 & 2 & 2 & 2 & 2 & 2 & 2 & 2\\
7 & 4 & 3 & 3 & 1 & 1 & 1 & 1\\
9 & 3 & 3 & 3 & 3 & 3 & 2 & 1\\
11 & 4 & 4 & 4 & 3 & 3 & 3 & 1\\
16 & 5 & 5 & 5 & 5 & 5 & 5 & 2
\end{array}
  \end{align*}
\end{minipage}
  \end{theorem}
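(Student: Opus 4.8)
The plan is to run the cone method of Section \ref{effconemethod} exactly as in the proof of Theorem \ref{theorem-yl5}, but now exploiting the small modification $\varphi \colon Y_{L,6} \rat Z_{L,5}$ of Proposition \ref{proposition-smallmodification} to harvest necessary conditions from \emph{both} birational models. The point is that $\varphi$ identifies $N^1(Y_{L,6})$ with $N^1(Z_{L,5})$ via the matrices in \eqref{formula-pushforward}, so any base locus lemma or restriction inequality proved on one side can be transported to the other. First I would assemble the starting cone on the $Y_{L,6}$ side from: the base locus inequalities of Lemma \ref{lemma-baselocusexceptionals} for $\mathcal E_i$ and $\mathcal E_L$; the (straightforward six-point generalisations of the) inequalities of Lemma \ref{lemma-baselocusineqsyl5} for the planes $\Pi(i,j,k)$ and $\Pi(L,i)$; the inequality of Lemma \ref{lemma-baselocusineqsyl6} for the quadrics $\mathcal F_j$; the pullback of the inequalities cutting out $\Eff(Y_{L,5})$ (Theorem \ref{theorem-yl5}) under the blowdown $Y_{L,6} \arrow Y_{L,5}$; and the pullback of the inequalities cutting out $\Eff(X_{0,3,6})$ (Theorem \ref{theorem-x036}) under the map contracting $\mathcal E_L$.

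Next I would add the conditions visible only on the $Z_{L,5}$ side, transporting them back via the dual of $\varphi_*$: namely the base locus inequalities of Lemma \ref{lemma-baselocuszl5} for the divisors $G_{ijk}$, the pullback of the inequalities describing $\Eff(Z_{L,4})$ (Corollary \ref{corollary-zl4}) under the blowdown of one point $Z_{L,5} \arrow Z_{L,4}$, and finally the restriction conditions coming from the unique smooth quadric $\widetilde Q \iso X_{1,1,6}$ through $L$ and the six points, obtained by pulling back the inequalities of Proposition \ref{proposition-x116} along the restriction map of Lemma \ref{lemma-restrictions3}. With all these inequalities collected I would run Normaliz (the files \texttt{YL6-ineqs}) to compute the ray generators of the cut-out cone, then following \textbf{Step 3} add back the rays spanned by the fixed divisors $\mathcal E_i$, $\mathcal E_L$, $\Pi(L,i)$, $\Pi(i,j,k)$, $\mathcal F_j$ and recompute (the files \texttt{YL6-gens}) to produce the candidate lists of generators and inequalities in the statement.

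It then remains to certify that every extremal ray in the left-hand table is effective. Rows $1$--$4$ are the fixed divisors $\mathcal E_L$, $\mathcal E_i$, $\Pi(L,i)$ and $\Pi(i,j,k)$; Row $5$ is the quadric $\mathcal F_1$ produced in Lemma \ref{lemma-baselocusineqsyl6}; Row $6$, the class $2H-\mathcal E_L-\sum_{i=1}^6 \mathcal E_i$, is effective because a dimension count ($\PP^9$ of quadrics, $3$ conditions to contain $L$, one per point) shows there is a (unique) quadric through $L$ and the six general points; and Row $7$, the class $5H-\mathcal E_L-3\sum_{i=1}^5\mathcal E_i$ with vanishing $\mathcal E_6$-coefficient, is simply the pullback under $Y_{L,6}\arrow Y_{L,5}$ of the extremal quintic already shown effective in Theorem \ref{theorem-yl5}, hence effective. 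The main obstacle I anticipate is not any single effectivity check -- since, pleasantly, the delicate quintic of $Y_{L,5}$ reappears here for free by pullback -- but rather the bookkeeping of passing inequalities back and forth through $\varphi_*$ and its dual, and ensuring that the combined restrictions from $X_{0,3,6}$, $Z_{L,4}$, $Y_{L,5}$ and $\widetilde Q$ are together sharp enough to cut the starting cone down exactly to the stated cone with no spurious extremal rays surviving.
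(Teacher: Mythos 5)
Your proposal is correct, and it is essentially the paper's own proof: the same base-locus inputs (Lemmas \ref{lemma-baselocusexceptionals}, \ref{lemma-baselocusineqsyl5}, \ref{lemma-baselocusineqsyl6}, \ref{lemma-baselocuszl5}), the same restriction to the rigid quadric $\widetilde{Q}\cong X_{1,1,6}$ via Lemma \ref{lemma-restrictions3}, the same pullbacks from $Y_{L,5}$ and $X_{0,3,6}$, the same Normaliz workflow, and the same effectivity certificates for the seven generators. The one deviation is your final source of inequalities: the paper pulls back $\Eff(X_{1,2,5})$ (Theorem \ref{Eff 125}) along the morphism $Z_{L,5}\arrow X_{1,2,5}$ contracting $E_L$, whereas you pull back $\Eff(Z_{L,4})$ (Corollary \ref{corollary-zl4}) along the blowdown $Z_{L,5}\arrow Z_{L,4}$ of a point. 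Your substitute is in fact redundant: using the matrices (\ref{formula-pushforward}) one checks that the composite $N^1(Y_{L,6})\arrow N^1(Z_{L,5})\arrow N^1(Z_{L,4})\cong N^1(Y_{L,5})$ is exactly the pushforward of the blowdown $Y_{L,6}\arrow Y_{L,5}$, so those inequalities are already among your $Y_{L,5}$ pullbacks (up to permutation of the points). What saves the proof is that the paper's sixth input is equally redundant given the five inputs you share with it: each inequality of Theorem \ref{Eff 125}, transported to $Y_{L,6}$ via $d_1=d-m_1$, $d_2=d-m_L$, is either literally one of those shared inputs or a nonnegative combination of two of them. The only two nontrivial cases are $(1,4\,|\,1,1,1,1,1)$, which transports to $5d\geq 4m_L+\sum_{i=1}^6 m_i$ and is the sum of the $\widetilde{Q}$-pullback $4d\geq 3m_L+\sum_i m_i$ of row $(1,3\,|\,1,1,1,1,1,1)$ of Proposition \ref{proposition-x116} and the inequality $d\geq m_L$; and $(3,4\,|\,3,1,1,1,1)$, which transports to $7d\geq 4m_L+3m_1+3m_2+m_3+m_4+m_5+m_6$ (row 19 of the stated table) and is the sum of $d\geq m_L+m_1$ from Lemma \ref{lemma-baselocusineqsyl5}(c) and the $\widetilde{Q}$-pullback $6d\geq 3m_L+2m_1+3m_2+m_3+m_4+m_5+m_6$ of row $(3,3\,|\,3,2,1,1,1,1)$. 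Hence your inequality system and the paper's cut out exactly the same cone, the computation returns the same two tables, and your effectivity checks close the argument; in particular the worry in your final paragraph is resolved affirmatively. One incidental point in your favour: justifying Row 5 via $\mathcal{F}_1$ from Lemma \ref{lemma-baselocusineqsyl6} is genuinely necessary, since that class involves all six points and so, contrary to the gloss in the paper's own proof, is not a pullback of an effective class from $Y_{L,5}$.
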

\begin{proof}
  We apply the method of Section \ref{effconemethod} with the following inputs:
  \begin{itemize}
  \item the base locus inequalities from Lemma \ref{lemma-baselocusexceptionals}, Lemma \ref{lemma-baselocusineqsyl5}, Lemma \ref{lemma-baselocusineqsyl6}, corresponding to the fixed divisors $\mathcal E_i$, $\mathcal E_L$, $\Pi(i,j,k)$, $\Pi(L,i)$, and $\mathcal F_i$ on $Y_{L,\, 6}$;
    \item the base locus inequality from Lemma \ref{lemma-baselocuszl5}, corresponding to the fixed divisors $G_{ijk}$ on $Z_{L,\, 5}$;
  \item the pullback via the restriction map $N^1(Y_{L,6}) \arrow N^1(\widetilde{Q})$ from Lemma \ref{lemma-restrictions3} of the inequalities from Proposition \ref{proposition-x116};
  \item the pullback via the morphism $Y_{L,6} \arrow Y_{L,5}$ of the inequalities from Theorem \ref{theorem-yl5};
 \item the pullback via the morphism $Y_{L,6} \arrow X_{0,3,6}$ of the inequalities from Theorem \ref{theorem-x036};
    \item the pullback via the morphism $Z_{L,5} \arrow X_{1,2,5}$ of the inequalities from Theorem \ref{Eff 125}.
  \end{itemize}

Again, in cases where we obtain an inequality on divisors on $Z_{L,\, 5}$, we use the dual of the isomorphism $\varphi \colon N^1(Y) \arrow N^1(Z)$ given in (\ref{formula-pushforward}) to convert these into inequalities on divisors on $Y_{L,\, 6}$. 

The Normaliz files {\tt YL6-ineqs} compute the cone defined by these inequalities. We then add the classes of the fixed divisors listed in the first bullet point above, using the files \texttt{YL6-gens} to obtain the list of extremal rays and inequalities  shown in the tables above. It remains to show that all the extremal rays in the left table are effective.

All the classes in the table on the left-hand side are pullbacks of effective classes on $Y_{L,5}$, with the exception of the class $2H-\mathcal E_L - \sum_{i=1}^6 \mathcal E_i$.
This class is represented by the proper transform of the unique quadric in $\PP^3$ containing the line $L$ and the points $p_i$, hence it is effective.
\end{proof}

\section{Blowups of $\PP^1 \times \PP^3$}\label{section-dim4}
In this section we use the computations of the previous section as inputs into our method, in order to compute the effective cone of divisors of the blowup of $\PP^1 \times \PP^3$ in up to 6  points in general position.

\subsection*{4 points}
First we compute the effective cone of the variety $X_{1,3,4}$, which by definition is the blowup of $\PP^1 \times \PP^3$ in a  set of 4 points in general position. Again this result follows from Hausen--S\"u{\ss} \cite[Theorem 1.2]{HS10}. It could also be proved using the method of Section \ref{effconemethod}, starting from the toric case $X_{1,3,2}$ and iterating, but for brevity we give a more direct proof. 

\begin{theorem}\label{theorem-x134}
  The effective cone $\Eff(X_{1,3,4})$ is given by the list of generators below left and inequalities below right:
  
\begin{minipage}[t]{0.4\textwidth}
  \begin{align*}
    \rowcolors{2}{white}{gray!15}
\begin{array}{cccccc}
H_1 & H_2 & E_1 & E_2 & E_3 & E_4\\
\hline\hline
0 & 0 & 1 & 0 & 0 & 0\\
1 & 0 & -1 & 0 & 0 & 0\\
0 & 1 & -1 & -1 & -1 & 0\\
\end{array}  
  \end{align*}
  \end{minipage}\quad
  \begin{minipage}[t]{0.4\textwidth}
    \begin{align*}
      \rowcolors{2}{white}{gray!15}
\begin{array}{ccccccc}
d_1 & d_2 & m_1 & m_2 & m_3 & m_4 \\
\hline\hline
1 & 0 & 0 & 0 & 0 & 0 \\
0 & 1 & 0 & 0 & 0 & 0 \\
1 & 1 & 1 & 0 & 0 & 0 \\
1 & 2 & 1 & 1 & 0 & 0 \\
1 & 3 & 1 & 1 & 1 & 0 \\
1 & 3 & 1 & 1 & 1 & 1 \\
\end{array}
  \end{align*}
  \end{minipage}  
\end{theorem}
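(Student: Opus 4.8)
The plan is to establish the two tables separately -- effectivity of every generator in the left-hand table, and necessity of every inequality in the right-hand table -- and then reconcile them by a finite convex-geometry check. First I would exhibit each listed generator as an explicit effective divisor. The class $E_i$ is the exceptional divisor of the blowup. Since $H_1=\pi_1^\ast\mathcal O_{\PP^1}(1)$ is the class of a fibre $\{s\}\times\PP^3$ of $\pi_1$, the class $H_1-E_i$ is the proper transform of the fibre through $p_i$, which is effective (and indeed fixed, by Lemma \ref{lemma-baselocusexceptionals}(b)). Finally, three points in general position in $\PP^3$ span a unique plane $\Pi$, and $\PP^1\times\Pi$ passes through $p_i,p_j,p_k$; its proper transform has class $H_2-E_i-E_j-E_k$, so this is effective as well. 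Hence the cone spanned by the left-hand table is contained in $\Eff(X_{1,3,4})$.

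Next I would show that every effective class satisfies the inequalities on the right by producing, for each one, a family of irreducible curves that sweeps out $X_{1,3,4}$: if $C$ moves in such a covering family then $D\cdot C\ge 0$ for every effective $D$, since a general member of the family is not contained in $\operatorname{supp}(D)$. For $d_1\ge0$ and $d_2\ge0$ I take the fibre classes $l_1$ and $l_2$. For the remaining inequalities I take proper transforms of bidegree $(1,k)$ curves, namely graphs $\{(t,\gamma(t))\}$ of degree-$k$ morphisms $\gamma\colon\PP^1\to\PP^3$ (lines, conics, and twisted cubics for $k=1,2,3$) passing through the relevant subset of the $p_i$. By Corollary \ref{corollary-curveclass} such a curve has class $l_1+kl_2-\sum_t e_{i_t}$, and by the pairing of Proposition \ref{intersection-table} it satisfies $D\cdot C=d_1+kd_2-\sum_t m_{i_t}$, which is exactly the difference of the two sides of the corresponding inequality $(k=1,2,3)$; the top inequality $d_1+3d_2\ge\sum_{i=1}^4 m_i$ comes from the twisted-cubic graph through all four points.

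The main obstacle is to confirm that each of these curve families genuinely covers $X_{1,3,4}$, which I would settle by a parameter count. A degree-$k$ morphism $\PP^1\to\PP^3$ depends on $4k+3$ parameters, and requiring its graph to pass through a prescribed point of $\PP^1\times\PP^3$ imposes $3$ conditions. Hence the graphs through $r$ of the fixed points form a family of dimension $4k+3-3r$, sweeping out a subvariety of dimension $4k+4-3r$ in $X_{1,3,4}$. For the relevant cases $(k,r)\in\{(1,1),(2,2),(3,3),(3,4)\}$ this is $5,6,7,4$ respectively, in each case at least $\dim X_{1,3,4}=4$, so after checking nonemptiness and that the general member is an irreducible graph (both consequences of the generality of the points) these curves cover $X_{1,3,4}$. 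The delicate step is the top inequality, $(k,r)=(3,4)$, where the count is exactly tight: here one must verify that a general point of $X_{1,3,4}$ really lies on an irreducible twisted-cubic graph through all four points, rather than only on a degenerate or reducible curve.

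Finally, having shown that every generator on the left is effective and that every effective class satisfies all the inequalities on the right, it remains only to check that the cone spanned by the left-hand table coincides with the cone cut out by the right-hand table. This is a finite convex-geometry verification (by duality, or with Normaliz as elsewhere in the paper), and it sandwiches $\Eff(X_{1,3,4})$ between two equal cones, completing the proof.
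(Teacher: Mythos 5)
Your proposal is correct and follows essentially the same route as the paper's proof: effectivity of the listed generators, a finite convex-geometry (Normaliz) check that the two tables define the same cone, and covering families of curves of classes $l_1$, $l_2$, and $l_1+kl_2-\sum_t e_{i_t}$ (graphs of degree-$k$ maps $\PP^1\to\PP^3$) giving the inequalities. The one step you flag but leave unfinished---existence of an irreducible twisted-cubic graph through the four blown-up points and a general fifth point---is exactly what the paper settles by direct computation, which amounts to observing that passage through $5$ points imposes $15$ linear conditions on the $\PP^{15}$ of $4$-tuples of binary cubics, so a solution always exists and is an irreducible graph by generality.
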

\begin{proof}
  The classes listed in the table on the left are clearly effective, so they span a subcone $K$ of the cone $\Eff(X_{1,3,4})$. The Normaliz files {\tt X134-gens} shows that the subcone $K$ is cut out by the inequalities listed in the table on the right. We will prove that any effective divisor on $X_{1,3,4}$ must satisfy all the inequalities listed in the table on the right, which gives the reverse inclusion $\Eff(X_{1,3,4}) \subset K$.

  To prove this, it is enough to show that for each inequality in the table on the right, there is a curve class $C_i \in N_1(X)$ such that the given inequality is of the form $D \cdot C_i \geq 0$ and such that irreducible curves in $C_i$ cover a Zariski-dense open subset of $X_{1,3,4}$.

  For the first two inequalities in the table, the curve classes we need are $C_1=l_1$ and $C_2=l_2$, which clearly have the required properties.

  For the remaining rows we can use the following argument. We give the details for the last row; the other rows are similar but easier. For any embedding $f \colon \PP^1 \arrow \PP^3$ whose image is a twisted cubic, its graph $\Gamma_f$ is an irreducible smooth curve of bidegree $(1,3)$ in $\PP^1 \times \PP^3$. Such a morphism $f$ is given by a choice of 4 linearly independent cubic forms in 2 variables, and direct computation shows that for 5  points $p_1, \ldots, p_5$ in general position in $\PP^1 \times \PP^3$, cubic forms can be chosen appropriately to make $\Gamma_f$ pass through all the $p_i$. Therefore, blowing up at $p_1,\ldots,p_4$ and taking proper transforms, we get a family of irreducible curves with class $l_1+3l_2-e_1-\cdots-e_4$ which cover a Zariski-dense open set, as required.
\end{proof}

\subsection*{5 points}
Next we consider the variety $X_{1,3,5}$, which by definition is the blowup of $\PP^1 \times \PP^3$ in a set of 5 points in general position. To compute the effective cone, we will again use base locus inequalities coming from fixed divisors, together with inequalities pulled back from blowups in fewer points. Additionally, we will iterate the strategy that we used in the previous section: we restrict divisors on $X_{1,3,5}$ to a subvariety whose effective cone we computed in Section \ref{section-dim3}, in order to obtain extra effectivity conditions, following \textbf{Step 4} of the cone method of Section \ref{effconemethod}.

To do this, let $s$ be a set of points in general position in $\PP^1 \times \PP^3$, let $V$ be a smooth divisor of bidegree $(1,1)$ containing all the points, and let $\widetilde{V_s}$ be the proper transform of $V$ on $X_{1,3,s}$. Then by Lemma \ref{lemma-divisor1} we have that $\widetilde{V_s}$ is isomorphic to $Y_{L,s}$. 

\begin{lemma}\label{lemma-restriction2}
 The restriction map $N^1(X_{1,3,s}) \arrow N^1(\widetilde{V_s})$ is given by  
 \begin{align*} 
   H_1 & \mapsto H-\mathcal E_L\\
   H_2 & \mapsto H\\
   E_i &\mapsto \mathcal E_i
 \end{align*}
\end{lemma}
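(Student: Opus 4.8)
The plan is to use the explicit description of the $(1,1)$-divisor $V$ from the proof of Lemma \ref{lemma-divisor1}, together with the fact that $\widetilde{V_s}$ is nothing but the blowup of $V$ at the $s$ points $p_i$. Because blowing up the points is compatible on both sides, it suffices to compute the restriction of each basis class $H_1,H_2,E_i$ of $N^1(X_{1,3,s})$ one at a time, and in fact $H_1|_V$ and $H_2|_V$ can be computed on $V \cong \operatorname{Bl}_L(\PP^3)$ itself, while each $E_i$ restricts to the exceptional divisor of the induced point blowup. I will write $\rho \colon V \arrow \PP^3$ for the blowdown, which the proof of Lemma \ref{lemma-divisor1} identifies with $\pi_n|_V$, and recall that $L$ is the codimension-2 linear space $\{L_1 = L_2 = 0\}$.

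The two easy restrictions are those of $H_2$ and of the $E_i$. Since $H_2 = \pi_n^\ast(\text{hyperplane})$ and $\pi_n|_V = \rho$, we get immediately $H_2|_V = \rho^\ast(\text{hyperplane}) = H$. For the exceptional divisors, the morphism $\widetilde{V_s} \arrow V$ is the blowup at the points $p_i$, and the restriction to $\widetilde{V_s}$ of the ambient exceptional divisor $E_i$ over $p_i$ is precisely the exceptional divisor $\mathcal E_i$ of this blowup (the proper transform of a smooth subvariety through the centre meets the ambient exceptional divisor in its own exceptional divisor); hence $E_i \mapsto \mathcal E_i$.

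The main computation is $H_1|_V = H - \mathcal E_L$. Here I would use the defining equation $uL_1 + vL_2 = 0$ of $V$. For a general point $[u_0,v_0] \in \PP^1$, the fibre $\pi_m^{-1}([u_0,v_0])$ meets $V$ in the hyperplane $\Pi_0 = \{u_0 L_1 + v_0 L_2 = 0\} \subset \PP^3$, which contains $L$. The key observation is that the divisor $D_0 = V \cap \pi_m^{-1}([u_0,v_0])$ meets each fibre of $\mathcal E_L \arrow L$ in the single point whose first coordinate is $[u_0,v_0]$, so it does not contain $\mathcal E_L$: it is the \emph{proper} transform of $\Pi_0$ on $\operatorname{Bl}_L(\PP^3)$, whose class is $\rho^\ast(\text{hyperplane}) - \mathcal E_L = H - \mathcal E_L$. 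Since $H_1 = \pi_m^\ast(\text{point})$ restricts to $[D_0]$, this gives $H_1|_V = H - \mathcal E_L$. As a check, one can instead pin down the class by intersecting against the curve classes $\ell$ (proper transform of a general line) and $e_L$ (a fibre of $\mathcal E_L \arrow L$): writing $H_1|_V = aH + b\mathcal E_L$, the equalities $H_1|_V \cdot \ell = 1$ and $H_1|_V \cdot e_L = 1$ force $a = 1$ and $b = -1$.

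The only genuinely delicate point is distinguishing the proper transform from the total transform in the computation of $H_1|_V$: one must verify that $D_0$ meets the fibres of $\mathcal E_L$ transversally in one point, rather than containing $\mathcal E_L$. Everything else is a direct consequence of the identification of $\pi_n|_V$ with the blowdown $\rho$ established in Lemma \ref{lemma-divisor1}.
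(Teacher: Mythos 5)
Your proof is correct and follows essentially the same route as the paper: the paper's entire proof of this lemma is the single observation that it suffices to treat the case $s=0$, leaving the computation on $V \cong \operatorname{Bl}_L(\PP^3)$ implicit, and your argument supplies exactly that reduction together with the omitted computation, using the defining equation $uL_1+vL_2=0$ from Lemma \ref{lemma-divisor1} to identify $V \cap \pi_m^{-1}([u_0,v_0])$ as the proper transform of a hyperplane through $L$, of class $H-\mathcal E_L$. Your care in checking that this divisor meets $\mathcal E_L$ only in a section of $\mathcal E_L \arrow L$ (so it is the proper, not total, transform) is precisely the detail the paper elides.
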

\begin{proof}
It suffices to prove the case $s=0$.   
\end{proof}

We also need one final base locus lemma. For this, let $i, \, j, \, k$ be any set of 3 distinct indices in $\{1,\ldots,6\}$. Since there is a unique plane in $\PP^3$ containing 3  points in general position, there is a unique effective divisor on $X_{1,3,6}$ in the class $H_2-E_i-E_j-E_k$. We denote this divisor by $\Pi(i,j,k)$. 
\begin{lemma}\label{lemma-baselocus3pts}
Fix $s \geq 3.$ For a divisor $D$ on $X_{1,3,s}$ with class $d_1H_1+d_2H_2-\sum_{i=1}^s m_i E_i$. The divisor $\Pi(i,j,k)$ is contained in $\Bs(D)$ with multiplicity at least
  \begin{align*}
    \max\{0,m_i+m_j+m_k - d_1 - 2d_2\}. 
  \end{align*}
\end{lemma}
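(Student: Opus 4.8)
The plan is to deduce this directly from the general base locus criterion of Lemma \ref{lemma-baselocusgeneral}, exactly as in the proofs of Lemmas \ref{bsh2-ei-ej} and \ref{baselocus 0,2}. The fixed divisor is $F=\Pi(i,j,k)$, whose class is $H_2-E_i-E_j-E_k$ and which is irreducible and reduced as recorded just above the statement. So what I need is a curve class $C$ with $F\cdot C<0$ whose irreducible members sweep out a dense subset of $\Pi(i,j,k)$, and for which $-D\cdot C$ recovers the claimed bound.

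First I would reduce to the essential case. Arguing as in Lemma \ref{lemma-baselocusexceptionals}(a), if some $m_l<0$ then $E_l$ splits off as a fixed component and may be removed without changing the multiplicity of $\Pi(i,j,k)$ in $\Bs(D)$, while only weakening the claimed bound; so I may assume $m_l\ge 0$ for all $l$. Since the remaining points lie off $\Pi(i,j,k)$ by general position, it further suffices to treat the case $X_{1,3,3}$, i.e. $D=d_1H_1+d_2H_2-m_iE_i-m_jE_j-m_kE_k$, where $\Pi(i,j,k)\cong X_{1,2,3}$ is the blowup of $\PP^1\times\Pi$ (with $\Pi\cong\PP^2$ the plane through the three points) at $p_i,p_j,p_k$.

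The curve class I would use is $C=l_1+2l_2-e_i-e_j-e_k$, the proper transform of a bidegree $(1,2)$ curve in $\PP^1\times\PP^3$ lying on $\PP^1\times\Pi$ and passing through the three points. Using Proposition \ref{intersection-table}(c) one computes
\begin{align*}
  F\cdot C &= H_2\cdot C - \sum_{l\in\{i,j,k\}} E_l\cdot C = 2-3 = -1,\\
  D\cdot C &= d_1+2d_2-m_i-m_j-m_k,
\end{align*}
so Lemma \ref{lemma-baselocusgeneral} yields containment of $\Pi(i,j,k)$ in $\Bs(D)$ with multiplicity at least $\lceil (D\cdot C)/(F\cdot C)\rceil = m_i+m_j+m_k-d_1-2d_2$ whenever this quantity is positive, the $\max\{0,\cdot\}$ absorbing the trivial case.

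The one step requiring genuine geometry — and the main obstacle — is verifying that irreducible curves of class $C$ sweep out a dense subset of the threefold $\Pi(i,j,k)$. An irreducible bidegree $(1,2)$ curve projecting isomorphically to $\PP^1$ is the graph of a degree-$2$ parametrization $\PP^1\to\Pi\cong\PP^2$, that is, a parametrized conic in the plane $\Pi$. Such graphs form a $\PP^8$, and requiring a graph to pass through a prescribed point of $\PP^1\times\Pi$ imposes two linear conditions; hence, after imposing passage through the three fixed points, a positive-dimensional family remains, and through one further general point of $\Pi(i,j,k)$ there is still such a (generically irreducible) member. This exhibits the family as dominating $\Pi(i,j,k)$, which is precisely the covering hypothesis of Lemma \ref{lemma-baselocusgeneral}. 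Carrying out this density argument cleanly — in particular confirming nonemptiness and irreducibility of the relevant system after the four incidence conditions — is the only non-formal point, and I would handle it by the same parameter count used for the twisted-cubic families in Theorem \ref{theorem-x134} and Lemma \ref{baselocus 1,4}.
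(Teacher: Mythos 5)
Your proposal is correct, and its skeleton is the same as the paper's: both proofs apply Lemma \ref{lemma-baselocusgeneral} to the fixed divisor $F=\Pi(i,j,k)$ with the curve class $C=l_1+2l_2-e_i-e_j-e_k$, computing $F\cdot C=-1$ and $D\cdot C=d_1+2d_2-m_i-m_j-m_k$ exactly as you do (your preliminary reductions, splitting off exceptional components and passing to $X_{1,3,3}$, are harmless but not needed, since the curve class makes sense and the computation is unchanged on $X_{1,3,s}$). Where you genuinely diverge is in the one non-formal step: producing the sweeping family of irreducible curves of class $C$. The paper takes $C=\Pi(i,j,k)\cap D_1\cap D_2$ for two general bidegree $(1,1)$ hypersurfaces $D_1,D_2$ through $p_i,p_j,p_k$, identifies its class by intersection theory via Proposition \ref{intersection-table}, and notes that varying $D_1,D_2$ covers $\Pi(i,j,k)$; the covering is then essentially automatic, because the $(1,1)$ divisors through the three points form a large linear system and can be chosen to pass through any prescribed point of $\Pi(i,j,k)$. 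You instead parametrize the curves directly as graphs of degree-$2$ maps $\PP^1\to\Pi$, which is more explicit but carries the subtlety you yourself flag: the incidence conditions are linear on $\PP^8$, so the solution space through the three points and a general fourth point is automatically nonempty, but nonemptiness alone does not produce an \emph{irreducible} member --- you must still rule out that every solution is degenerate (the three binary quadrics sharing a common factor, so the ``graph'' degenerates to a lower-bidegree curve plus fibre components). This can be completed, for instance by exhibiting one nondegenerate solution or by a cross-ratio argument on the pencil of conics through the four image points, but it is precisely the step that the paper's complete-intersection construction sidesteps. In short: same key lemma and same curve class, with your route trading the paper's soft intersection-theoretic covering argument for an explicit parametrization whose nondegeneracy still needs to be nailed down.
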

\begin{proof}
Choose 2 general hypersurfaces of bidegree $(1,1)$ passing through the 3 points $p_i, \, p_j, \, p_k$, and let $D_1, \, D_2$ be their proper transforms on $X_{1,3,s}$. Let $C$ be the curve $\Pi(i,j,k) \cap D_1 \cap D_2$. By Proposition \ref{intersection-table} we compute
\begin{align*}
  C \cdot H_1 &= (H_2-E_i-E_j-E_k)\cdot(H_1+H_2-E_i-E_j-E_k)^2 \cdot H_1= 1,\\
  C \cdot H_2 &=  (H_2-E_i-E_j-E_k)\cdot(H_1+H_2-E_i-E_j-E_k)^2 \cdot H_2= 2,
\end{align*}
 so $C$ is a curve of class $l_1+2l_2-e_i-e_j-e_k$ and therefore we have $C \cdot \Pi(i,j,k)=-1$.

 By choosing the divisors $D_1$ and $D_2$ appropriately, we can cover the divisor $\Pi(i,j,k)$ by such curves $C$, so by Lemma \ref{lemma-baselocusgeneral} the claim follows. 
\end{proof}
Now we have the ingredients we need for our computation of the effective cone of $X_{1,3,5}$.
\begin{theorem}\label{theorem-x135}
  The effective cone $\Eff(X_{1,3,5})$ is given by the list of generators below left and inequalities below right:
  
\begin{minipage}[t]{0.4\textwidth}
  \begin{align*}
    \rowcolors{2}{white}{gray!15}
\begin{array}{ccccccc}
H_1 & H_2 & E_1 & E_2 & E_3 & E_4 & E_5\\
\hline\hline
0 & 0 & 1 & 0 & 0 & 0 & 0\\
1 & 0 & -1 & 0 & 0 & 0 & 0\\
0 & 1 & -1 & -1 & -1 & 0 & 0\\
1 & 1 & -1 & -1 & -1 & -1 & -1\\
1 & 4 & -3 & -3 & -3 & -3 & -3
\end{array}  
  \end{align*}
  \end{minipage}\quad
  \begin{minipage}[t]{0.4\textwidth}
    \begin{align*}
      \rowcolors{2}{white}{gray!15}
\begin{array}{ccccccc}
d_1 & d_2 & m_1 & m_2 & m_3 & m_4 & m_5\\
\hline\hline
1 & 0 & 0 & 0 & 0 & 0 & 0\\
0 & 1 & 0 & 0 & 0 & 0 & 0\\
1 & 1 & 1 & 0 & 0 & 0 & 0\\
1 & 2 & 1 & 1 & 0 & 0 & 0\\
1 & 3 & 1 & 1 & 1 & 0 & 0\\
1 & 3 & 1 & 1 & 1 & 1 & 0\\
1 & 4 & 1 & 1 & 1 & 1 & 1\\
2 & 4 & 2 & 1 & 1 & 1 & 1\\
2 & 5 & 2 & 2 & 1 & 1 & 1\\
3 & 3 & 1 & 1 & 1 & 1 & 1\\
3 & 6 & 2 & 2 & 2 & 2 & 1
\end{array}
  \end{align*}
  \end{minipage}  
\end{theorem}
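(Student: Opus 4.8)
The plan is to run the cone method of Section \ref{effconemethod} essentially verbatim, delegating the polyhedral bookkeeping to Normaliz, and then to isolate by hand the one genuinely new point: the effectivity of the extremal generator $H_1+4H_2-3\sum_{i=1}^5E_i$. Everything else should be a formal consequence of the inputs listed below, exactly as in Theorems \ref{Eff 125} and \ref{theorem-yl5}.

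First I would assemble necessary conditions for a class $D=d_1H_1+d_2H_2-\sum_{i=1}^5 m_iE_i$ to be effective (\textbf{Steps 1, 2, 4}). These come from three sources: the pullback under the blowdown $X_{1,3,5}\to X_{1,3,4}$ of the inequalities cutting out $\Eff(X_{1,3,4})$ from Theorem \ref{theorem-x134}; the base locus inequalities for the fixed divisors $E_i$, $H_1-E_i$ and $\Pi(i,j,k)=H_2-E_i-E_j-E_k$ from Lemmas \ref{lemma-baselocusexceptionals} and \ref{lemma-baselocus3pts}; and the restriction conditions of \textbf{Step 4}. For the last of these I would use that the proper transform of a smooth bidegree $(1,1)$ divisor through the five points is $\widetilde V_5\cong Y_{L,5}$ by Lemma \ref{lemma-divisor1}, and pull back the inequalities defining $\Eff(Y_{L,5})$ (Theorem \ref{theorem-yl5}) along the restriction map of Lemma \ref{lemma-restriction2}. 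Feeding all of these to Normaliz (files \texttt{X135-ineqs}) yields a cone containing $\Eff(X_{1,3,5})$; adjoining the classes of the listed fixed divisors and recomputing with \texttt{X135-gens} (\textbf{Step 3}) then produces the two tables in the statement.

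It remains to verify that every extremal ray produced is effective. The exceptional divisors $E_i$, together with $H_1-E_i$ (proper transform of the fibre of $\pi_1\colon\PP^1\times\PP^3\to\PP^1$ through $p_i$) and $\Pi(1,2,3)=H_2-E_1-E_2-E_3$, are manifestly effective, and $H_1+H_2-\sum_{i=1}^5 E_i$ is effective since the bidegree $(1,1)$ forms through five general points form a positive-dimensional system (so $\widetilde V_5$ in fact moves). The only problematic generator is
\[
D=H_1+4H_2-3\sum_{i=1}^5 E_i,
\]
whose virtual dimension $\vdim|D|=2\binom{7}{3}-5\binom{6}{4}-1=-6$ is negative, so effectivity does \emph{not} follow from Lemma \ref{vdim lower bound}.

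This effectivity statement is the hard part, and my approach would mirror the treatment of the extremal quintic on $Y_{L,5}$ in Theorem \ref{theorem-yl5}. Restricting $D$ to $\widetilde V_5\cong Y_{L,5}$ via Lemma \ref{lemma-restriction2} gives the class $5H-\mathcal E_L-3\sum_{i=1}^5\mathcal E_i$, which is exactly the extremal (irreducible quintic) effective class of Theorem \ref{theorem-yl5}. The natural finish is the restriction sequence
\[
0\to \mathcal O_X(D-\widetilde V_5)\to \mathcal O_X(D)\to \mathcal O_{\widetilde V_5}(D|_{\widetilde V_5})\to 0,
\]
with residual class $D-\widetilde V_5=3H_2-2\sum_{i=1}^5 E_i$, together with an attempt to lift the nonzero section from $\widetilde V_5$ to $X$. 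The obstacle is that, unlike the favourable case of Theorem \ref{theorem-yl5} where the residual had both $h^0$ and $h^1$ vanishing, here one has $h^0(X,\mathcal O_X(3H_2-2\sum E_i))=0$ (no cubic surface in $\PP^3$ has double points at five general points) but $\chi$ and $h^1$ are \emph{nonzero}, so the connecting map need not vanish; equivalently the naive parameter count for $|D|$ is exactly borderline. To push through I would exploit the fibral base locus: since $D\cdot(l_1-e_i)=-2$, each fibral line $l_1-e_i$ lies in $\Bs(D)$ and forces $\PP^1$-direction derivatives to vanish identically, which is precisely the fibre-expected-dimension correction of Remark \ref{Laface-Moraga0}. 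The cleanest route is probably geometric, following Lemma \ref{baselocus 1,4} and the remark after Theorem \ref{theorem-yl5}: the quintic on $\widetilde V_5$ is swept out by twisted cubics incident to the relevant line through the five points, whose proper transforms on $X_{1,3,5}$ have class $2l_1+3l_2-\sum_{i=1}^5 e_i$; against this curve $D$ has degree $-1$, so by Lemma \ref{lemma-baselocusgeneral} one concludes effectivity provided these curves sweep out a genuine divisor. Verifying that they cover a threefold, rather than a lower-dimensional locus, as $\widetilde V_5$ varies in its linear system is the delicate point on which the whole argument turns.
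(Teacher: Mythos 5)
Your assembly of the cone via \textbf{Steps 1--4} (pullback from $X_{1,3,4}$, the base locus inequalities for $E_i$, $H_1-E_i$, $\Pi(i,j,k)$, and restriction to $\widetilde V_5\cong Y_{L,5}$ via Lemma \ref{lemma-restriction2}) matches the paper exactly, and your diagnosis of why the naive restriction sequence fails is correct: since $h^1$ of the residual class $3H_2-2\sum_{i=1}^5 E_i$ does not vanish, the section of $5H-\mathcal E_L-3\sum\mathcal E_i$ on $\widetilde V_5$ need not lift (indeed the paper only uses that sequence \emph{after} effectivity is known, in Proposition \ref{divisor 1433333 is fixed}, to get the upper bound $h^0\le 1$). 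The gap is in your fallback. Lemma \ref{lemma-baselocusgeneral} is a base locus lemma: it presupposes that $D$ is an \emph{effective} class and concludes that the swept divisor sits in $\Bs(D)$; it can never be used to conclude effectivity. So even granting the delicate point you flag — that the curves of class $C=2l_1+3l_2-\sum_{i=1}^5 e_i$ sweep out a threefold $F'$ rather than something dense in $X_{1,3,5}$ — the inequality $D\cdot C=-1<0$ tells you nothing about $h^0(D)$. You would additionally have to identify the class of $F'$ and show it equals $D$ (or that $D-[F']$ is effective), which is a separate intersection-theoretic computation (e.g.\ computing $F'\cdot l_1$, $F'\cdot l_2$, $F'\cdot e_i$ via incidence counts of twisted-cubic graphs) that your proposal does not address. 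Worse, the only cheap way to see that the swept locus is a divisor at all is to note that every such curve lies in the unique effective divisor of class $D$ — which is circular.

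The paper closes this step in a completely different, elementary way: it writes down explicit coordinates for five general points, constructs three bidegree $(0,3)$ forms $C_j$ (each with a double point at four of the points) and two bidegree $(1,1)$ forms $L_i$ through all five, and exhibits an explicit linear combination $F_0$ of the six products $L_iC_j$ whose vanishing locus has multiplicity $3$ at every $p_i$. This direct construction is what you are missing; alternatively you could try to make the fibre-dimension count of Remark \ref{Laface-Moraga0} rigorous (showing the triple point at $p_1$ imposes only $9$ conditions on $|H_1+4H_2-3\sum_{i\ge 2}E_i|$, in the spirit of Lemma \ref{baselocus 1,4}), but as written your argument does not establish the effectivity of $H_1+4H_2-3\sum_{i=1}^5E_i$, and the whole theorem turns on that point.
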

\begin{proof}
  We apply the method of Section \ref{effconemethod} with the following inputs:
  \begin{itemize}
  \item the base locus inequalities from Lemma \ref{lemma-baselocusexceptionals} and Lemma \ref{lemma-baselocus3pts} corresponding to the fixed divisors $E_i$, $H_1-E_i$, and $H_2-E_i-E_j-E_k$;
  \item the pullback via the morphism $X_{1,3,5} \arrow X_{1,3,4}$ of the inequalities from Theorem \ref{theorem-x134} cutting out the cone $\Eff(X_{1,3,4})$;
  \item the pullback via the restriction map $N^1(X_{1,3,5}) \arrow N^1(\widetilde{V_5})$ from Lemma \ref{lemma-restriction2} of the inequalities from Theorem \ref{theorem-yl5} cutting out the cone $\Eff(\widetilde{V_5})$.
    \end{itemize}
  The Normaliz files \texttt{X135-ineqs}
compute the restricted cone defined by these inequalities. Adding the fixed divisors $E_i$, $H_1-E_i$, and $H_2-E_i-E_j-E_k$, the files \texttt{X135-gens} then compute the list of extremal rays and inequalities shown in the tables above. Lemma \ref{vdim lower bound} shows that all generators in the table on the left except the last are represented by effective divisors. It remains to the prove that the last generator $H_1+4H_2-3 \sum_{i=1}^5 E_i$ is represented by an effective divisor.

  We will show by direct computation that for 5  points $p_1,\ldots,p_5$ in general position in $\PP^1 \times \PP^3$, there is a hypersurface $D$ of bidegree $(1,4)$ with multiplicity 3 at each of the $p_i$. The proper transform of $D$ on $X_{1,3,5}$ will then be an effective divisor with the required class. Denote the homogeneous coordinates on $\PP^1$ by $s, \, t$ and those on $\PP^3$ by $w, \, x, \, y, \, z$. Using projective transformations we may assume that the points $p_i$ have homogeneous coordinates
  \begin{align*}
    p_1 = \left([1,0], \, [1,0,0,0] \right), \quad      p_2 = \left([0,1], \, [0,1,0,0] \right)&, \quad    p_3 = \left([1,1], \, [0,0,1,0] \right),\\  p_4 = \left([1,a], \, [0,0,0,1] \right), \quad   p_5 = &\left([1,b], \, [1,1,1,1] \right),
  \end{align*}
  where $a$ and $b$ are distinct complex numbers different from $0$ and $1$.

  Define
    \begin{minipage}[t]{0.4\textwidth}
  \begin{align*}
    C_1&=(w-x)(w-y)z\\
    C_2&=(w-x)(w-z)y\\
    C_3&=(w-y)(w-z)x\\
  \end{align*}
    \end{minipage}
    \begin{minipage}[t]{0.4\textwidth}
  \begin{align*}
    L_1&=(b-1)sx+(s-t)y\\
    L_2&=(a-b)sx+(t-as)z\\
  \end{align*}
    \end{minipage}
    
  Then each $C_i$ is a form of bidegree $(0,3)$ with multiplicity $1$ at $p_1$ and multiplicity 2 at the other $p_i$, while $L_1$ and $L_2$ are forms of bidegree $(1,1)$ with multiplicity 1 at each $p_i$. So for any coefficients $a_{ij} \in \CC$ the form
  \begin{align*}
  F&=  \sum_{i=1}^2 \sum_{j=1}^3 a_{ij} L_i C_j
  \end{align*}
 has bidegree $(1,4)$ and multiplicity at least $2$ at $p_1$ and $3$ at the other $p_i$. We can then attempt to choose the coefficients $a_{ij}$ such that the form $F$ has multiplicity 3 at $p_1$ also. Computing, one finds that this occurs if and only if $F$ is a multiple of the form
  \begin{align*}
   F_0 = aL_1C_1 +(b-a)L_1C_3 + L_2C_2+(b-1)L_2C_3.
  \end{align*}
So the hypersurface $D=\left\{F_0=0\right\}$ has bidegree $(1,4)$ and multiplicity $3$ at each point $p_i$ as required.
  \end{proof}

We conclude this section with the following discussion on the ray generator of bidegree $(1,4)$.
\begin{proposition}\label{divisor 1433333 is fixed}
The divisor $H_1+4H_2-3 \sum_{i=1}^5 E_i$ on $X_{1,3,5}$ is fixed.
\end{proposition}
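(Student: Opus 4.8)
The plan is to prove that the complete linear system $|D|$ of the class $D = H_1+4H_2-3\sum_{i=1}^5 E_i$ consists of a single divisor, i.e.\ that $h^0(X_{1,3,5},\mathcal O_X(D))=1$. Since $X_{1,3,5}$ is rational (a blowup of $\PP^1\times\PP^3$, so $H^1(\mathcal O_X)=0$ and numerical and linear equivalence of divisors coincide), this is exactly the assertion that $D$ is fixed. Effectivity is already established in Theorem \ref{theorem-x135}, giving $h^0\ge 1$, so the whole task is to show $h^0\le 1$. The idea is to restrict to a suitable divisor whose effective cone we already understand, exploiting the work of Section \ref{section-dim3-extra}.

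First I would take a smooth bidegree $(1,1)$ divisor $V\subset\PP^1\times\PP^3$ through the five points, with proper transform $\widetilde{V}_5\subset X_{1,3,5}$; by Lemma \ref{lemma-divisor1} we have $\widetilde{V}_5\cong Y_{L,5}$. Using the restriction map of Lemma \ref{lemma-restriction2}, under which $H_1\mapsto H-\mathcal E_L$, $H_2\mapsto H$ and $E_i\mapsto\mathcal E_i$, the class $D$ restricts to $5H-\mathcal E_L-3\sum_{i=1}^5\mathcal E_i$, which is precisely the extremal generator $S$ of $\Eff(Y_{L,5})$ appearing in the last row of Theorem \ref{theorem-yl5}. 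I would then record that $S$ is the \emph{unique} effective divisor in its class on $Y_{L,5}$, so that $h^0(\widetilde{V}_5,\mathcal O(S))=1$: indeed, by the Remark following Theorem \ref{theorem-yl5} the twisted cubics $R$ incident to $L$ and through $p_1,\dots,p_5$ satisfy $S\cdot R=-1$ and sweep out $S$, so by Lemma \ref{lemma-baselocusgeneral} any effective divisor numerically equivalent to $S$ contains all these curves, hence contains $S$, and comparing classes forces equality.

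Next I would use the restriction exact sequence
\[
0 \arrow \mathcal O_X(D-\widetilde{V}_5) \arrow \mathcal O_X(D) \arrow \mathcal O_{\widetilde{V}_5}\!\left(D|_{\widetilde{V}_5}\right) \arrow 0,
\]
whose cohomology gives $h^0(X,D)\le h^0(X,D-\widetilde{V}_5)+h^0(\widetilde{V}_5,S)$. Since $\widetilde{V}_5$ has class $H_1+H_2-\sum E_i$, one computes $D-\widetilde{V}_5=3H_2-2\sum_{i=1}^5 E_i$. Any effective divisor $D'$ in this class meets a general fibre of $\pi_3$ in $D'\cdot l_1=0$ points, so $D'$ is vertical, i.e.\ the pullback $\pi_2^\ast C$ of a cubic surface $C\subset\PP^3$ having a double point at each of the five general points $\pi_2(p_i)$.

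The main obstacle, and really the crux of the argument, is to rule out such a cubic: I would invoke the classical fact that no cubic surface in $\PP^3$ can be singular at five points in general position. An irreducible cubic surface has at most four isolated singularities (the extremal case being the Cayley cubic), while a reducible or non‑normal cubic is singular only along lines, plane conics, or planes, none of which can contain five points in general position. Hence $3H_2-2\sum E_i$ is not effective, so $h^0(X,D-\widetilde{V}_5)=0$ and the restriction map $H^0(X,D)\arrow H^0(\widetilde{V}_5,S)$ is injective. Combining this with the previous step yields $h^0(X,D)\le h^0(\widetilde{V}_5,S)=1$, and together with effectivity we conclude $h^0(X,D)=1$, so $D$ is fixed.
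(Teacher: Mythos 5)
Your proof is correct and follows essentially the same route as the paper: restrict $D$ to $\widetilde{V}_5 \cong Y_{L,5}$ via the ideal sheaf sequence, kill the kernel $\mathcal O_X\bigl(3H_2-2\sum_{i=1}^5 E_i\bigr)$ using the classical fact that no cubic surface in $\PP^3$ can be singular at five points in general position, and conclude $h^0(X,\mathcal O_X(D)) \le h^0\bigl(\widetilde{V}_5, D|_{\widetilde{V}_5}\bigr) = 1$. The one place you diverge is in justifying that the restricted class $5H-\mathcal E_L-3\sum_{i=1}^5\mathcal E_i$ is fixed on $Y_{L,5}$: the paper simply points back to its proof of Theorem \ref{theorem-yl5}, where $H^0$ of that class is identified with $H^0$ of the class $4h_1+5h_2-3\sum_i e_i$ on the del Pezzo surface $\widetilde{Q}$, whereas you argue directly from the Remark following that theorem, applying Lemma \ref{lemma-baselocusgeneral} to the one-parameter family of twisted cubics $R$ (incident to $L$, through the five points) which sweep out $S$ and satisfy $S\cdot R=-1$; this forces every member of the linear system to contain $S$, hence to equal $S$. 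Both justifications are valid, and yours is arguably more self-contained. Incidentally, you compute the restricted class correctly: the paper's own proof misprints it as $3H-\mathcal E_L-\sum_{i=1}^5\mathcal E_i$, which is not what Lemma \ref{lemma-restriction2} gives and is not the class treated in the proof of Theorem \ref{theorem-yl5}. (A trivial slip on your side: the projection to $\PP^3$ and the preimage of the cubic should both be written with the same projection, $\pi_3$ in the paper's notation, not $\pi_2$.)
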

\begin{proof}
  As before let $\widetilde{V}_5$ be the proper transform on $X_{1,3,5}$ of a general hypersurface of bidegree $(1,1)$ passing through the 5 points. Recall that  $\widetilde{V}_5$ is isomorphic to $Y_{L,5}$, the blowup of $\PP^3$ in a line and 5 points.

  Twist the ideal sheaf sequence for $\widetilde{V}_5$ by $D=H_1+4H_2-3\sum_{i=1}^5 E_i$ to get the short exact sequence
  \begin{align*}\tag{$\ast$}
    0 \arrow O_X\left(3H_2-2\sum_{i=1}^5 E_i\right) \arrow O_X(D) \arrow O_{\widetilde{V}_5}\left(D|_{\widetilde{V}_5}\right) \arrow 0
  \end{align*}
  By Lemma \ref{lemma-restriction2} the restriction of $D$ to  $\widetilde{V}_5$ gives the class $3H-\mathcal E_L -\sum_{i=1}^5 \mathcal E_i$, which is fixed as we saw in the proof of Theorem \ref{theorem-yl5}. So $H^0(O_{\widetilde{V}_5}(D))=1$. On the other hand, any effective divisor on $X_{1,3,5}$ with class $3H_2-2\sum_{i=1}^5 E_i$ must come from a cubic in $\PP^3$ with singularities at 5 points in general position. No such cubic exists, so we have $H^0(O_X(3H_2-2\sum_{i=1}^5 E_i))=0$. So the long exact sequence of cohomology associated to the short exact sequence ($\ast$) shows that $H^0(O_X(D))=1$ as required.
\end{proof}

\begin{remark}\label{Laface-Moraga1}
  As observed in Remark \ref{Laface-Moraga0}, a triple point imposes a number of conditions to the family of bidegree $(1,4)$-hypersurfaces of $\PP^1\times \PP^n$ that is one less than the expected one obtained by a parameter count. This is geometrically justified by the presence of the fixed line
$C_i=l_1-e_i$ in the base locus of $D$ at least $-D\cdot C_i$ times. 
If, for instance, we consider $D=H_1+4H_2-3\sum_{i=1}^5E_i$ on $X_{1,2,5}$, we have  $D\cdot C_i=-2$, for $i=1,\dots,5$, and each  $C_i$ contributes by $1$ to the fibre-expected dimension formula: 
$$\textrm{fibre-dim}|D|=\vdim|D|+5=-1.$$

Furthermore, we know from Proposition \ref{divisor 1433333 is fixed} that $\dim|D|=0$, so there is a gap of $1$ between the dimension and the fibre-expected dimension.
Our expectation is that the fixed curve $C=l_1+3l_2-\sum_i^5 e_i$, that satisfies $D \cdot C =-2$, contributes by $1$ to the dimension count, so that 
$$\dim|D|= \textrm{fibre-dim}|D|+1.$$
\end{remark}

\subsection*{6 points in $\PP^1 \times \PP^3$}
Now we come to our final effective cone computation. Fix a set of 6 points in general position in  $\PP^1 \times \PP^3$ and let $X_{1,3,6}$ be the corresponding blowup. We will compute the effective cone $\Eff(X_{1,3,6})$ in a similar way to the previous case.

A parameter count shows that there is a 1-parameter family of divisors of bidegree $(1,1)$ passing through the 6 points. Let $V_6$ be a smooth such divisor, and let $\widetilde{V_6} \cong Y_{L,6}$ be the proper transform of $V_6$ on $X_{1,3,6}$. Then as before, using the restriction map $N^1(X_{1,3,6}) \arrow N^1(\widetilde{V_6})$  given in Lemma \ref{lemma-restriction2}, we can pull back the inequalities cutting out $\Eff(\widetilde{V_6})$ to get inequalities cutting out $\Eff(X_{1,3,6})$.

Together with our existing base locus lemmas and pulling back from $X_{1,3,5}$, this gives us enough information to compute our final effective cone.
\begin{theorem}\label{theorem-x136}
  The effective cone $\Eff(X_{1,3,6})$ is given by the generators below left and the inequalities below right.

  \begin{minipage}[t]{0.4\textwidth}
  \begin{align*}
\rowcolors{2}{white}{gray!15}
\begin{array}{cccccccc}
H_1 & H_2 & E_1 & E_2 & E_3 & E_4 & E_5 & E_6\\
\hline\hline
0 & 0 & 1 & 0 & 0 & 0 & 0 & 0\\
1 & 0 & -1 & 0 & 0 & 0 & 0 & 0\\
0 & 1 & -1 & -1 & -1 & 0 & 0 & 0\\
0 & 2 & -2 & -1 & -1 & -1 & -1 & -1\\
1 & 1 & -1 & -1 & -1 & -1 & -1 & -1\\
1 & 4 & -3 & -3 & -3 & -3 & -3 & 0
\end{array}
  \end{align*}
    \end{minipage} \quad
    \begin{minipage}[t]{0.4\textwidth}
      \begin{align*}
            \rowcolors{2}{white}{gray!15}
\begin{array}{cccccccc}
d_1 & d_2 & m_1 & m_2 & m_3 & m_4 & m_5 & m_6\\
\hline\hline
1 & 0 & 0 & 0 & 0 & 0 & 0 & 0\\
0 & 1 & 0 & 0 & 0 & 0 & 0 & 0\\
1 & 1 & 1 & 0 & 0 & 0 & 0 & 0\\
1 & 2 & 1 & 1 & 0 & 0 & 0 & 0\\
1 & 3 & 1 & 1 & 1 & 0 & 0 & 0\\
1 & 3 & 1 & 1 & 1 & 1 & 0 & 0\\
1 & 4 & 1 & 1 & 1 & 1 & 1 & 0\\
1 & 5 & 1 & 1 & 1 & 1 & 1 & 1\\
2 & 4 & 2 & 1 & 1 & 1 & 1 & 0\\
2 & 5 & 2 & 2 & 1 & 1 & 1 & 0\\
3 & 3 & 1 & 1 & 1 & 1 & 1 & 0\\
3 & 5 & 2 & 2 & 1 & 1 & 1 & 1\\
3 & 6 & 2 & 2 & 2 & 2 & 1 & 0\\
3 & 6 & 3 & 2 & 1 & 1 & 1 & 1\\
3 & 7 & 3 & 3 & 1 & 1 & 1 & 1\\
5 & 7 & 2 & 2 & 2 & 2 & 2 & 2\\
6 & 9 & 3 & 3 & 3 & 3 & 2 & 1\\
7 & 11 & 4 & 4 & 3 & 3 & 3 & 1\\
11 & 16 & 5 & 5 & 5 & 5 & 5 & 2
\end{array}
  \end{align*}
    \end{minipage}
\end{theorem}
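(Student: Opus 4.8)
The plan is to run the cone method of Section~\ref{effconemethod}, feeding Normaliz every necessary inequality we have assembled for a class $D = d_1 H_1 + d_2 H_2 - \sum_{i=1}^6 m_i E_i$ to be effective. First I would input the base locus inequalities of Lemma~\ref{lemma-baselocusexceptionals} and Lemma~\ref{lemma-baselocus3pts}, associated to the fixed divisors $E_i$, $H_1 - E_i$, and $\Pi(i,j,k) = H_2 - E_i - E_j - E_k$, taken over all index choices (read up to permutation). Next, for each index $i$ in turn, I would add the inequalities of Theorem~\ref{theorem-x135} applied to the class $D + m_i E_i$ regarded as a divisor on $X_{1,3,5}$, as in Step~1. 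Finally, and most importantly, I would pull back the inequalities cutting out $\Eff(\widetilde{V_6})$ along the restriction map of Lemma~\ref{lemma-restriction2}, where $\widetilde{V_6} \cong Y_{L,6}$ is the proper transform of a general bidegree $(1,1)$ divisor through the six points; here $\Eff(Y_{L,6})$ is supplied by Theorem~\ref{theorem-yl6}.

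Running Normaliz on this list (files \texttt{X136-ineqs}) yields a cone containing $\Eff(X_{1,3,6})$. Following Step~3, I would then enlarge the generator list by all known fixed-divisor classes --- the classes $E_i$, $H_1 - E_i$, $\Pi(i,j,k)$, the quadric-cone class $2H_2 - 2E_i - \sum_{j \neq i} E_j$, and $H_1 + 4H_2 - 3\sum_{k=1}^5 E_{i_k}$ --- and recompute (files \texttt{X136-gens}) to obtain the extremal rays and their dual inequalities, which should match the two tables in the statement.

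It remains only to exhibit each generator type as effective. Rows~1--3 are the standard fixed divisors $E_1$, $H_1 - E_1$, and $\Pi(1,2,3)$. For Row~5, the class $H_1 + H_2 - \sum_{i=1}^6 E_i$ has $\vdim = 2 \cdot 4 - 6 - 1 = 1 \geq 0$, so it is effective by Lemma~\ref{vdim lower bound}; in fact it is the class of the family $V_6$ used for the restriction. For Row~6, the class $H_1 + 4H_2 - 3\sum_{i=1}^5 E_i$ does not involve $E_6$, so it is the pullback under $X_{1,3,6} \arrow X_{1,3,5}$ of the effective class constructed in Theorem~\ref{theorem-x135}. For Row~4, the class $2H_2 - 2E_1 - \sum_{i=2}^6 E_i$ has virtual dimension $-1$, so Lemma~\ref{vdim lower bound} does not apply directly; instead I would realise it as the proper transform of $\pi_2^{-1}(Q)$, where $Q \subset \PP^3$ is the unique quadric cone with vertex $\pi_2(p_1)$ through $\pi_2(p_2), \ldots, \pi_2(p_6)$, which is equivalently the pullback along $\pi_2$ of the generator $2H - 2\mathcal E_1 - \sum_{i=2}^6 \mathcal E_i$ of $\Eff(X_{0,3,6})$ from Theorem~\ref{theorem-x036}. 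Since every extremal ray is effective, the computed cone is exactly $\Eff(X_{1,3,6})$.

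The genuinely delicate point is not this effectivity check --- which reduces to fixed divisors, a parameter count, and pullbacks --- but rather the guarantee that the inequality set above is strong enough that, after adjoining the fixed divisors, no spurious non-effective extremal ray survives. This is precisely what the restriction to $\widetilde{V_6} \cong Y_{L,6}$ secures, so the argument rests on the full force of Theorem~\ref{theorem-yl6}, whose own proof required the small modification $Y_{L,6} \rat Z_{L,5}$ of Proposition~\ref{proposition-smallmodification} and several auxiliary restrictions. Should Normaliz instead return an extremal ray we could not exhibit as effective, we would have to return to Step~4 and find a further movable divisor producing new necessary conditions; the assertion of the theorem is that, for six points, the inputs above already suffice.
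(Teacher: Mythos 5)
Your proposal is correct and follows essentially the same route as the paper's proof: the same three families of input inequalities (base locus lemmas for $E_i$, $H_1-E_i$, $\Pi(i,j,k)$; pullback from $\Eff(X_{1,3,5})$; restriction to $\widetilde{V_6}\cong Y_{L,6}$ via Lemma~\ref{lemma-restriction2} and Theorem~\ref{theorem-yl6}), the same Normaliz computation, and the same effectivity checks (Row~5 by virtual dimension, Row~6 by pullback from $X_{1,3,5}$, Row~4 as the unique quadric cone with vertex at the image of $p_1$). The only deviations are harmless: you add the quadric-cone and bidegree-$(1,4)$ classes explicitly to the generator list in Step~3, and you write $\pi_2$ where the paper's convention is $\pi_3$ for the projection to $\PP^3$.
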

\begin{proof}
  We apply the method of Section \ref{effconemethod} with the following inputs:
  \begin{itemize}
    \item the base locus inequalities from Lemma \ref{lemma-baselocusexceptionals} and Lemma \ref{lemma-baselocus3pts} corresponding to the fixed divisors $E_i$, $H_1-E_i$, and $H_2-E_i-E_j-E_k$;
  \item the pullback via the morphism $X_{1,3,6} \arrow X_{1,3,5}$ of the inequalities from Theorem \ref{theorem-x135};
  \item the pullback via the restriction map $N^1(X_{1,3,6}) \arrow N^1(\widetilde{V_6})$ from Lemma \ref{lemma-restriction2} of the inequalities from Theorem \ref{theorem-yl6} cutting out $\Eff(Y_{L,6})$.
  \end{itemize}
  The Normaliz files \texttt{X136-ineqs} compute the restricted cone defined by these inequalities. Adding the fixed divisors $E_i$, $H_1-E_i$, and $H_2-E_i-E_j-E_k$ to the restricted cone and computing the extremal rays of the resulting cone, the files \texttt{X136-gens} then compute the list of extremal rays and inequalities shown in the tables above.

  The generators in Rows 1, 2, 3 and 6 are pulled back from effective classes on $X_{1,3,5}$, hence are effective.

  The generator in Row 4 is represented by the proper transform on $X_{1,3,6}$ of a hypersurface of the form $\pi_3^{-1}(Q)$ where $Q$ is a singular quadric in $\PP^3$ with vertex at $\pi_3(p_1)$ and passing through the other points $\pi_3(p_i)$. There is a unique such quadric in $\PP^3$, so this generator is represented by a fixed effective divisor.

  Finally, the generator in Row 5 is effective by Lemma \ref{vdim lower bound}.
  \end{proof}

\section{Weak Fano and log Fano varieties}\label{weak-log}
For terminology used in this section, we refer to \cite[Notation 0.4, Section 2.3]{KM98}.

A $\QQ$-factorial projective variety with finitely generated Picard group is a {\it Mori dream space} if its Cox ring
\begin{align*}
  \operatorname{Cox}(X) = \bigoplus_{L \in \Pic(X)} H^0(X,L)
\end{align*}
is a finitely generated $\CC$-algebra. Blowups of products of copies of $\PP^n$ at points that are Mori dream spaces were classified in \cite{Mukai05,CT06}. A similar question can be asked for blowups of mixed products such as the varieties $X_{m,n,s}$ and the answer is unknown in general.

Birkar--Cascini--Hacon--McKernan \cite[Corollary 1.3.2]{BCHM} showed that $X$ is a Mori dream space if it is log Fano. In particular, if $X$ is weak Fano then it is log Fano and therefore a Mori dream space. It is therefore natural to ask which of the varieties $X_{m,n,s}$ are weak Fano or log Fano. 

In this section we discuss our progress in this direction for varieties $X_{1,n,s}$ with $n=2, \, 3$.

\begin{definition}\label{definition-kltpair}
  Let $X$ be a $\QQ$-factorial variety and $\Delta$ a $\QQ$-divisor on $X$. The pair $(X,\Delta)$ is \emph{klt} if the coefficients of $\Delta$ are in the set $[0,1)$ and for any log resolution $f \colon Y \arrow X$ we have
    \begin{align*}
      K_Y+\Delta_Y &= f^*(K_X + \Delta) + \sum_i a_i E_i,
    \end{align*}
    where $\Delta_Y$ is the proper transform of $\Delta$ on $Y$, the $E_i$ are prime exceptional divisors, and $a_i >-1$ for all $i$. 
\end{definition}
\begin{definition}
  A $\QQ$-factorial projective variety $X$ is
  \begin{itemize}
  \item  \emph{log Fano} if there is a $\QQ$-divisor $\Delta$ on $X$ such that the pair $(X,\Delta)$ is klt and $-K_X-\Delta$ is ample;
    \item \emph{weak Fano} if $-K_X$ is nef and big.
  \end{itemize}
 \end{definition}
Every weak Fano variety is log Fano; a reference is \cite[Lemma 2.5]{AM16}.

Now we move on to our results. First we consider the case of threefolds. For ease of notation we will denote the anticanonical divisor simply by $-K_X$ where the variety $X_{m,n,s}$ in question is understood. We will use the fact \cite[Proposition 2.61]{KM98} that a nef divisor $D$ is big if and only if its top self-intersection satisfies $D^{\operatorname{dim} X}>0$.  

\begin{theorem}\label{theorem-weakfanox126}
The variety $X_{1,2,s}$ is weak Fano if and only if $s\le6$.
\end{theorem}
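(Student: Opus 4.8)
The plan is to write down $-K_{X_{1,2,s}}$ explicitly and reduce both implications to a single self-intersection number, leaving the genuine work for the nefness half of the ``if'' direction.

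First I would record that $-K_{\PP^1\times\PP^2}=2H_1+3H_2$ and that blowing up a point on a threefold subtracts $2E_i$, so that $-K_{X_{1,2,s}}=2H_1+3H_2-2\sum_{i=1}^s E_i$. Corollary~\ref{corollary-topselfint} (with $m=1$, $n=2$, $\binom{3}{2}=3$) then gives $(-K)^3=2\cdot 3^2\cdot 3-\sum_i 2^3=54-8s$. For the ``only if'' direction this is all that is needed: a nef class on a projective threefold has nonnegative top self-intersection, and by \cite[Proposition 2.61]{KM98} a nef class is big precisely when that number is positive. Hence if $X_{1,2,s}$ is weak Fano then $54-8s>0$, i.e. $s\le 6$. (For $s\ge 7$ one even has $(-K)^3<0$, so $-K$ fails to be nef outright.)

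For the ``if'' direction bigness is automatic once nefness is established, since $(-K)^3=54-8s>0$ for $s\le 6$. So the whole point is to prove $-K$ nef, i.e. $-K\cdot C\ge 0$ for every irreducible curve $C=d_1l_1+d_2l_2-\sum m_ie_i$, where $-K\cdot C=2d_1+3d_2-2\sum m_i$. The degenerate cases are immediate: a curve inside an exceptional $E_i\cong\PP^2$ is a positive multiple of $e_i$ with $-K\cdot e_i=2$; a curve contracted by $\pi_2$ lies in a fibre line, where $-K\cdot(l_1-e_i)=0$ (so $-K$ is nef but never ample, matching ``weak Fano but not Fano''); and a curve contracted by $\pi_1$ sits in a single $\PP^2$-fibre, meets at most one $p_i$ by general position, and satisfies $3d_2-2m_i\ge 0$ since $d_2\ge m_i$.

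The hard case is $d_1,d_2\ge 1$ with $C$ in no exceptional divisor, where one must establish $2\sum m_i\le 2d_1+3d_2$. Projecting to $\PP^1$ gives $m_i\le d_1$ for each $i$ (the length-$d_1$ fibre of $\pi_1|_C$ over the distinct point $\pi_1(p_i)$ carries the multiplicity-$m_i$ point), but this bound alone is too weak when $d_1$ is large. The mechanism that forces the correct inequality is the covering family of movable surfaces $M=2H_1+H_2-\sum_{i=1}^6E_i\cong X_{1,1,6}$ from Corollary~\ref{mov(2,1)}: using $H_1|_M=h_1$, $H_2|_M=h_1+h_2$, $E_i|_M=e_i$ one finds $-K|_M=5h_1+3h_2-2\sum_{i=1}^6 e_i$, and this class is nef on the degree-$2$ del Pezzo surface $M$ because it pairs nonnegatively with every $(-1)$-curve (whose classes are those recorded in Proposition~\ref{proposition-x116}). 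This settles all curves contained in a member of $|M|$, and the $-K$-trivial fibre lines $l_1-e_i$ are checked directly. The main obstacle is precisely to guarantee that no irreducible curve escapes this analysis: this amounts to pinning down the finitely many extremal rays of $\overline{\operatorname{NE}}(X_{1,2,s})$, which is cleanest to obtain by dualizing the effective-cone computations (Theorem~\ref{Eff 126} and its lower analogues) in Normaliz and then verifying $-K\cdot C\ge 0$ on each generator. The cases $s\le 5$ run identically, with $(-K)^3=54-8s>0$ and the analogous smaller cones.
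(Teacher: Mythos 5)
Your computation of $-K_X=2H_1+3H_2-2\sum_i E_i$ and $(-K_X)^3=54-8s$, the ``only if'' direction, and the treatment of the degenerate curve classes (curves inside an $E_i$, the fibral curves $l_1-e_i$, curves in $\PP^2$-fibres) are all correct and agree with the paper. The gap lies in the heart of the ``if'' direction: nefness of $-K_X$ against irreducible curves with $d_1,d_2\ge 1$. Two problems. First, your restriction argument via the movable surface $M=2H_1+H_2-\sum_{i=1}^6E_i$ only controls curves that actually lie in some member of $|M|$; since $\dim|M|=2$, an irreducible curve $C$ is guaranteed to lie in a member only when $M\cdot C\le 1$ (the sections vanishing on $C$ form a subspace of codimension at most $M\cdot C+1$ in $H^0(X,M)$), so irreducible curves with $M\cdot C\ge 2$ can escape the analysis entirely, as you yourself note. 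Second, and fatally, your proposed completion --- obtaining the extremal rays of $\overline{\operatorname{NE}}(X_{1,2,s})$ by ``dualizing the effective-cone computations'' --- confuses two different cones: the dual of $\Eff(X)$ is the cone of \emph{movable} curves (this is exactly how the paper interprets its inequality tables: the inequalities cutting out $\Eff$ are extremal rays of the cone of moving curves), not the Mori cone $\overline{\operatorname{NE}}(X)$. Verifying $-K_X\cdot C\ge 0$ on generators of the movable curve cone proves only that $-K_X$ is pseudoeffective, which is far weaker than nef: nefness must also be tested against rigid curves, such as curves lying inside the fixed divisors $H_1+H_2-\sum_{j=1}^5E_{i_j}$ or $H_1+4H_2-3E_1-2\sum_{i\ge 2}E_i$, and these are invisible to the effective-cone data. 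On a surface (your check that $-K_X|_M=5h_1+3h_2-2\sum e_i$ is nef on $X_{1,1,6}$) this duality is harmless because there $\overline{\operatorname{NE}}$ and the effective cone coincide, but on the threefold it is not.

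The paper closes exactly this gap with a base-locus argument instead of a cone-duality argument: it writes $-K_X$ in several ways as a sum of three effective divisors,
\begin{align*}
-K_X=\Bigl(H_1+H_2-\sum_{j=1}^{s-1}E_{i_j}\Bigr)+\Bigl(H_1+H_2-\sum_{j=1}^{s-1}E_{k_j}\Bigr)+\bigl(H_2-E_a-E_b\bigr),
\end{align*}
where each index appears precisely twice and effectivity of the summands follows from Lemma \ref{vdim lower bound}; intersecting these decompositions over the various choices of indices shows $\Bs(-K_X)\subseteq\bigcup_j E_j$. Any irreducible curve with $-K_X\cdot C<0$ would have to lie in this base locus, hence in a single $E_j$, where $-K_X\cdot C=-2E_j\cdot C>0$, a contradiction. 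If you replace your dualization step by a bound on $\Bs(-K_X)$ of this kind, the restriction-to-$M$ computation becomes unnecessary and the proof closes; as written, the nefness claim for $s\le 6$ is not established.
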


\begin{proof} 
We will show that for $s \leq 6$ the anticanonical divisor $-K_X=2H_1+3H_2-2\sum_{j=1}^sE_j$ is nef and has positive top self-intersection, while for $s \geq 7$ the top self-intersection is negative. 

Using Corollary \ref{corollary-topselfint}, we compute the top self-intersection number
\begin{align*}
  (-K_X)^3&=\left( 2H_1+3H_2 - 2 \sum_{i=1}^s E_i \right)^3\\ 
&= 2^1 \cdot 3^2 \cdot {3 \choose 1} - \sum_{i=1}^s 2^3\\
  &= 54-8s,
\end{align*}
which is positive if and only if $s\le6$. 

In order to show that $-K_X$ is nef for $s \leq 6$, we find a set which is an upper bound for its base locus, then show that it has positive degree on all curves in that set. We consider the following  unions of effective divisors, all of which belong to the anticanonical linear system:
\[
\left(H_1+H_2-\sum_{j=1}^{s-1}E_{i_j}\right)+\left(H_1+H_2-\sum_{j=1}^{s-1}E_{k_j}\right)+\left(H_2-E_a-E_b\right),
\]
where $a,b,i_j,k_j\in\{1,\dots,s\}$, the $i_j$ are distinct, as are the $k_j$, $a\ne b$, and each index appears precisely twice. That the linear systems corresponding to the three summands are nonempty follows from Lemma \ref{vdim lower bound}, indeed $\vdim|H_1+H_2-\sum_{j=1}^{s-1}E_{i_j}|=6-s$ and $\vdim|H_2-E_a-E_b|=0$. 
Therefore, the base locus of $-K_X$ is contained in the intersection of these unions of divisors, which is a subset of $\bigcup_{j=1}^sE_j$.  

Now, assume that there is an irreducible curve $C$ such that $-K_X\cdot C<0$. Then this curve must be contained in the base locus $\Bs(-K_X)\subseteq\bigcup_{j=1}^sE_j$. Now, since the $E_j$'s are disjoint, the curve $C$ must be contained in one of the exceptional divisors, say $E_j$. But for any curve $C \subset E_j$ we have $E_j \cdot C<0$, while $E_i \cdot C = 0$ for $i \neq j$ and $H_1 \cdot C = H_2 \cdot C =0$. This gives $-K_X \cdot C >0$, contrary to our assumption.  
\end{proof}
From \cite[Corollary 1.3.2]{BCHM} it follows that, for $s\le 6$, the variety $X_{1,2,s}$ is a Mori dream space. This gives a conceptual explanation for the finitely generated cones of effective divisors that we computed in Section \ref{section-dim3}.

We now turn to the case of fourfolds $X_{1,3,s}$. Here our varieties are never weak Fano, since $-K_X=2H_1+4H_2-3\sum_{i=1}^s E_i$ has negative degree on curves of the form $l_1-e_i$. We will show, however, that for up to 6 points, our varieties are still log Fano, and therefore Mori dream spaces.

Fujino--Gongyo \cite[Theorem 5.1]{FG12} showed that if $f \colon X \arrow Y$ is a surjective morphism of projective varieties and $X$ is log Fano, then $Y$ is also log Fano. Therefore it would suffice to prove our result in the case $s=6$. However, this approach would not give the log Fano structure explicitly in the case $s<6$. Instead, we give explicit log Fano structures on both $X_{1,3,5}$ and $X_{1,3,6}$. 

Since the log Fano condition requires an ample class, we start with a lemma verifying that a particular class is ample.
\begin{lemma}\label{lemma-ample}
For $s \leq 6$ the divisor class $D=2H_1+2H_2-\sum_{i=1}^s E_i$ is ample on $X_{1,3,s}$.
\end{lemma}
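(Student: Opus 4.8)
The plan is to prove ampleness by exhibiting $D$ as a sum of two base-point-free divisors and then showing that the morphism it defines contracts no curve; recall that a base-point-free (semiample) divisor is ample precisely when the associated morphism is finite. First I would split the point set: since $s\le 6$, I can partition $\{1,\dots,s\}$ into two subsets $S_1\sqcup S_2$ with $|S_1|,|S_2|\le 3$, and set
\[
M_j=H_1+H_2-\sum_{i\in S_j}E_i\qquad(j=1,2),
\]
so that $D=M_1+M_2$. The heart of the first step is to show that each $M_j$ is base-point-free, equivalently that for at most $3$ points in general position the linear system of bidegree $(1,1)$ divisors through them has base locus equal to exactly those points. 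I would check this by a direct computation: a $(1,1)$-form is $uL_1(x)+vL_2(x)$ with $L_1,L_2$ linear on $\PP^3$, so a point $([u:v],x)$ is a base point if and only if the vector $(ux,vx)$ lies in the span of the vectors $(u_kx_k,v_kx_k)$ attached to the chosen points $p_k=([u_k:v_k],x_k)$; for points in general position this forces $([u:v],x)$ to coincide with one of the $p_k$. Since the base points are exactly the reduced centres of the blow-up, the proper transforms are base-point-free, so each $M_j$ is nef and $D=M_1+M_2$ is base-point-free.

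For the second step I would identify the curves contracted by $\phi_{|M_j|}$. This morphism factors as the Segre embedding $\PP^1\times\PP^3\hookrightarrow\PP^7$ followed by linear projection away from the linear span $\Lambda_j$ of the images of the points $p_i$, $i\in S_j$. Because the Segre fourfold has degree $4$ and $|S_j|\le 3$, general position guarantees that $\Lambda_j$ meets the Segre fourfold in exactly the points $\{p_i:i\in S_j\}$; hence the only lines of the Segre variety meeting $\Lambda_j$ are the ruling lines through these points, whose proper transforms have classes $l_1-e_i$ and $l_2-e_i$ with $i\in S_j$. Using Proposition \ref{intersection-table} one verifies directly that these are precisely the irreducible curves $C$ with $M_j\cdot C=0$, and that no curve contained in an exceptional divisor is contracted (since $M_j$ restricts to a hyperplane class on each $E_i\cong\PP^3$).

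Finally, since $D\cdot C=M_1\cdot C+M_2\cdot C$ with both summands nonnegative, any curve $C$ with $D\cdot C=0$ would have to be contracted by both $\phi_{|M_1|}$ and $\phi_{|M_2|}$. But the contracted curves of $\phi_{|M_1|}$ lie over the points indexed by $S_1$ and those of $\phi_{|M_2|}$ over $S_2$, and $S_1\cap S_2=\varnothing$; hence no curve is contracted by both, the morphism defined by $|D|$ is finite, and $D$ is ample. I expect the main obstacle to be the second step, namely making precise that $\Lambda_j$ meets the Segre fourfold only in the chosen points and that the contracted locus therefore consists solely of ruling lines through those points. This is exactly where general position of the points enters, and it is also what forces us to keep each part of the partition of size at most $3$, which is possible precisely when $s\le 6$.
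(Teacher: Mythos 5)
Your overall strategy---writing $D=M_1+M_2$ with each $M_j=H_1+H_2-\sum_{i\in S_j}E_i$ semiample and then showing the associated morphism is finite---is genuinely different from the paper's proof, which establishes nefness via explicit effective decompositions, bigness via $D^4>0$, and then deduces ampleness by playing $kD-G$ off against each generator $G$ of the cone $\Eff(X_{1,3,6})$ computed in Theorem \ref{theorem-x136}. Your route would be more self-contained if it worked, but your identification of the contracted curves contains two concrete errors that break the final step. First, $M_j$ does \emph{not} restrict to a hyperplane class on every exceptional divisor: for $i\notin S_j$ one has $M_j|_{E_i}\equiv 0$ in $\Pic(E_i)$ (indeed $M_j$ is pulled back from the blowup at the points of $S_j$ only), so $\phi_{|M_j|}$ contracts the whole divisor $E_i\cong\PP^3$ for each $i\notin S_j$. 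Second, the ruling lines $l_1-e_i$, $l_2-e_i$ are not the only non-exceptional contracted curves: the proper transform of a bidegree $(1,1)$ curve through two points $p_a,p_b$ with $a,b\in S_j$ (the graph of a parametrization of the line through $\pi_3(p_a)$ and $\pi_3(p_b)$ matching the $\PP^1$-coordinates; a $1$-parameter family of such curves exists) has class $l_1+l_2-e_a-e_b$ and satisfies $M_j\cdot C=0$, and likewise bidegree $(1,2)$ curves through all three points of $S_j$ are contracted. So the exceptional locus of $\phi_{|M_j|}$ is strictly larger than a union of ruling lines, and your concluding claim---that the contracted curves of $\phi_{|M_1|}$ ``lie over $S_1$'' and those of $\phi_{|M_2|}$ ``lie over $S_2$,'' hence no curve is contracted by both---is false as stated, since $\phi_{|M_1|}$ contracts all of $E_i$ for every $i\in S_2$.

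The conclusion you want can still be reached along these lines, but it requires the arguments you skipped. For a curve inside an exceptional divisor $E_i$ one uses that $S_1\cup S_2=\{1,\dots,s\}$: its class is a positive multiple of $e_i$, so it has positive degree against $M_j$ for the $j$ with $i\in S_j$, hence is not contracted by both. For a non-exceptional irreducible curve $C$ with $M_1\cdot C=M_2\cdot C=0$ one must argue geometrically: being contracted by the projection from $\Lambda_j$ forces the image $\bar C$ under the Segre embedding to lie in a linear space of the form $\langle\Lambda_j,x\rangle$; since $\Lambda_1\cap\Lambda_2=\emptyset$ for points in general position, lying in such spaces for both $j=1,2$ forces $\bar C$ to be a line of the Segre variety meeting both $\Lambda_1$ and $\Lambda_2$, hence (as $\Lambda_j$ meets the Segre fourfold only in the points indexed by $S_j$) a ruling line through one point of $S_1$ and one point of $S_2$---which does not exist for points in general position, since the two rulings would require equal first or second projections. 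There is also a smaller gap in your first step: knowing that the base locus of the $(1,1)$-forms through the points of $S_j$ is set-theoretically the reduced points does not by itself make the proper-transform system base-point-free; you must also check that there is no base point on the exceptional divisors $E_i$, $i\in S_j$, i.e.\ that the differentials at $p_i$ of the forms $uL_1+vL_2$ vanishing at the three points span the cotangent space (a short computation does this). Until these repairs are made, the proof has a genuine gap.
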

\begin{proof} It is sufficient to prove the case $s=6$.
 
First we claim that for any 3 distinct indices $i, \, j, \, k$ from $\{1,\ldots,6\}$, the class $H_1+H_2-E_i-E_j-E_k$ is basepoint free, hence nef. To see this, we can write the class as
  \begin{align*}
    H_1+H_2-E_i-E_j-E_k &= \left(H_1-E_i\right)+\left(H_2-E_j-E_k\right)
  \end{align*}
and by permuting indices we get 2 more such decompositions. The classes $H_1-E_i$ are disjoint for distinct $i$, while the class $H_2-E_j-E_k$ has base locus the preimage of a line in $\PP^3$ through 2 of the 3 points. Permuting indices we get 3 such lines, which are disjoint by generality. So the base locus of $H_1+H_2-E_i-E_j-E_k$ is empty as claimed.

  In particular this implies that $D$ is nef, since it can be written for example as
  \begin{align*}
  D &= \left(H_1+H_2-E_1-E_2-E_3 \right)+ \left(H_1+H_2-E_4-E_5-E_6 \right).
  \end{align*}
  By Corollary \ref{corollary-topselfint} we compute $D^4 = 4 \cdot 4 \cdot 6 - 6 >0$, so $D$ is big. Therefore it lies in the interior of the effective cone $\Eff(X_{1,3,6})$. 

 If $D$ is nef but not ample, it lies on a codimension-1 face $F$ of the nef cone $\Nef(X_{1,3,6})$. By the previous paragraph, the face $F$ intersects the interior of $\Eff(X_{1,3,6})$. So there are generators of $\Eff(X_{1,3,6})$ on both sides of $F$, and in particular we can choose a generator $G$ such that for every natural number $k$ the class $kD-G$ is not nef. We will show that for every generator $G$ of $\Eff(X_{1,3,6})$ the class $kD-G$ is nef for some $k$. By the argument above, this implies that $D$ is ample, as required.
By symmetry, it is enough to find a $k_i$ such that $k_iD-G_i$ is nef for the 6 generators $G_1,\ldots,G_6$  listed in the left-hand table of Theorem \ref{theorem-x136}. We compute:
  \begin{itemize}
  \item $D-G_1 = 2H_1+2H_2-2E_1-E_2-\cdots-E_6$: this class can be decomposed into effective classes as
    \begin{align*}
      D-G_1 &= (H_2-E_1-E_2-E_3)+(H_2-E_1-E_4-E_5)+(H_1-E_6)+H_1. 
    \end{align*}
    By permuting indices in this decomposition one can see that the base locus of $D-G_1$ consists of at most the two curves of class $l_1-e_1$ and $l_2-e_1$. But $(D-G_1) \cdot (l_1-e_1) = 0$ and $(D-G_1) \cdot (l_2-e_1)=1$, so $D-G_1$ is nef.
  \item $D-G_2 = H_1+2H_2-E_2-\cdots-E_6$: this can be decomposed into effective classes as
    \begin{align*}
      D-G_2 &= (H_1-E_2)+(H_2-E_3-E_4)+(H_2-E_5-E_6).
    \end{align*}
    By permuting indices we see that $D-G_2$ is basepoint-free, hence nef.
  \item $D-G_3 = 2H_1+H_2-E_4-E_5-E_6$: this can be decomposed into effective classes as
    \begin{align*}
      D-G_3 &= (H_1-E_4)+(H_1-E_5)+(H_2-E_6).
    \end{align*}
    By permuting indices, we see that $D-G_3$ is basepoint-free, hence nef. 
  \item  $2D-G_4 = 4H_1+2H_2-E_2-\cdots-E_6$: this class can be decomposed as $2D-G_4= 2H_1+(D+E_1)$. The proof that $D$ is nef can easily be modified to show that $D+E_1$ is nef, so $2D-G_4$ is a sum of nef classes, hence nef.
 \item $D-G_5 = H_1+H_2$ is nef.
 \item $4D-G_6= 7H_1+4H_2-E_1-\cdots-E_5-4E_6$: this class can be written as
  \begin{align*}
    4D-G_6 &= 2H_1+(H_1-E_1)+\cdots+(H_1-E_5)+4(H_2-E_6).
  \end{align*}
  The base locus of $4D-G_6$ is therefore contained in the union of the divisors $H_1-E_i$ for $i=1,\ldots,5$, together with the unique curve of class $l_1-e_6$. Each divisor $H_1-E_i$ is isomorphic to $\PP^3$ blown up in 1 point, with cone of curves spanned by $e_i$ and $l_2-e_i$. For $i=1,\ldots5$ we have $(4D-G_6) \cdot e_i =1$ and $(4D-G_6) \cdot (l_2-e_i)=3$; we also compute $(4D-G_6) \cdot (l_1-e_6) = 3$. So $4D-G_6$ is nef as required.
  \end{itemize}
\end{proof}
Now we are ready to describe the log Fano structures on our examples.
\begin{theorem}\label{theorem-logfanox135}
$X_{1,3,5}$ is log Fano.  
\end{theorem}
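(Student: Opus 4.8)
The plan is to exhibit an explicit boundary $\Delta\ge 0$ with all coefficients in $[0,1)$ such that $-K_X-\Delta$ is ample and $(X,\Delta)$ is klt in the sense of Definition \ref{definition-kltpair}. Recall that on $X=X_{1,3,5}$ we have $-K_X=2H_1+4H_2-3\sum_{i=1}^5 E_i$, and that by Lemma \ref{lemma-ample} the class $D=2H_1+2H_2-\sum_{i=1}^5 E_i$ is ample. I would build $\Delta$ from the two effective generators of $\Eff(X_{1,3,5})$ that carry the $H_2$-heavy part of $-K_X$: the fixed divisor $F$ of class $H_1+4H_2-3\sum_{i=1}^5 E_i$ from Proposition \ref{divisor 1433333 is fixed}, and the bidegree-$(1,1)$ divisor $\widetilde{V}$ of class $H_1+H_2-\sum_{i=1}^5 E_i$, which by Lemma \ref{lemma-divisor1} is isomorphic to $Y_{L,5}$.

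The key identity, immediate from the classes above, is
\begin{align*}
-K_X=\tfrac13 D+\tfrac23 F+\tfrac23\widetilde{V}.
\end{align*}
Thus setting $\Delta=\tfrac23 F+\tfrac23\widetilde{V}$ gives $-K_X-\Delta=\tfrac13 D$, which is ample. Since $F$ and $\widetilde{V}$ are distinct prime divisors (they have different bidegrees, so share no component) and both coefficients equal $\tfrac23\in[0,1)$, the boundary $\Delta$ is of the required shape, and it only remains to check that the pair $(X,\Delta)$ is klt.

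This klt verification is the main obstacle. I would first choose $\widetilde{V}$ to be a general smooth bidegree-$(1,1)$ divisor through the five points; for general such $V$ the transform $\widetilde{V}$ is smooth and meets $F$ transversally wherever $F$ is smooth, so the locus where the support $F\cup\widetilde{V}$ fails to be simple normal crossings is contained in the singular locus of $F$. Away from that locus the pair is snc with coefficients $<1$, hence klt. To control $\operatorname{Sing}(F)$ I would use the explicit equation from the proof of Theorem \ref{theorem-x135}, which factors as $F_0=L_1M_1+L_2M_2$ with $L_i$ of bidegree $(1,1)$ and $M_i$ of bidegree $(0,3)$; this determinantal shape should force $\operatorname{Sing}(F)$ to have dimension at most $1$ and multiplicity at most $4$. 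Finally I would run a log resolution and bound discrepancies: blowing up a smooth centre $Z$ of codimension $c$ with $\mu=\operatorname{mult}_Z(F+\widetilde{V})$ produces an exceptional divisor of discrepancy $(c-1)-\tfrac23\mu$, which exceeds $-1$ exactly when $\mu<\tfrac32 c$. Because the non-snc locus lies over the low-dimensional set $\operatorname{Sing}(F)$, the first centres have codimension $c\ge 3$, and since $\mu\le 4$ we get $\mu<\tfrac32 c$; as multiplicities drop under blowing up while discrepancies accumulate, this inequality should persist through the resolution, giving klt. The genuinely technical step — and the one I expect to require the most care — is confirming the dimension and multiplicity bounds on $\operatorname{Sing}(F)$ and verifying that $\mu<\tfrac32 c$ is maintained at every stage of the log resolution.
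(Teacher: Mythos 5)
Your numerical decomposition is correct and genuinely different from the paper's: indeed $-K_X = \tfrac13 D + \tfrac23 F + \tfrac23 \widetilde V$ with $D = 2H_1+2H_2-\sum_{i=1}^5 E_i$ ample by Lemma \ref{lemma-ample}, so $-K_X-\Delta=\tfrac13 D$ is ample for $\Delta=\tfrac23 F+\tfrac23\widetilde V$, and both coefficients lie in $[0,1)$. The genuine gap is the klt verification, in two respects. First, the principle you invoke --- first-blowup discrepancy $(c-1)-\tfrac23\mu>-1$, ``maintained through the resolution'' because multiplicities drop --- is false. Discrepancies of later exceptional divisors are not governed by the multiplicity of the proper transform of $\Delta$ alone: when a centre lies on earlier exceptional divisors, their own (possibly small or negative) discrepancies enter the formula and accumulate. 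The standard example is a deep tangency such as $u^2+v^k=0$: every centre in its resolution has multiplicity $2$ and codimension $2$, so your inequality $\mu<\tfrac32 c$ holds at every stage, yet the log canonical threshold is $\tfrac12+\tfrac1k$, which is $\le\tfrac23$ once $k\ge 6$; a pair with coefficient $\tfrac23$ along such a divisor is not klt. So multiplicity and codimension bounds cannot, even in principle, establish klt-ness at coefficient $\tfrac23$; one must control the actual analytic type of the singularities of $F$.

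Second, those singularities are worse, and more structured, than your sketch allows. Since $F\cdot(l_1-e_i)=1-3=-2$, the divisor $F$ contains each of the five fibral curves $C_i$ of class $l_1-e_i$ with multiplicity $2$ (this is exactly the phenomenon of Remarks \ref{Laface-Moraga0} and \ref{Laface-Moraga1}), so $\operatorname{Sing}(F)$ contains five curves, not just the low-dimensional locus you expect from the determinantal shape $F_0=L_1M_1+L_2M_2$; klt-ness of $(X,\tfrac23 F+\tfrac23\widetilde V)$ then hinges on the transverse singularity type of $F$ along each $C_i$ (a Du Val transverse type has lct $1$ and is harmless, but e.g.\ a transverse $u^2+v^6$ has lct exactly $\tfrac23$ and is fatal), as well as on the singular points of $F$ on the $E_i$ and at $\{L_1=L_2=M_1=M_2=0\}$. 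None of this is computed, and it is exactly the hard part. This is why the paper takes a different route: its boundary $\Delta=\tfrac15\sum_{m}P_m+(1-\epsilon)(D_1+D_2)+(\tfrac15-\epsilon)\sum_i E_i$ is supported entirely on smooth divisors with small coefficients, so the only failures of simple normal crossings are transverse crossings of smooth components along the curves $C_i$ and the surfaces $S_{ij}$; a two-step blowup is then visibly a log resolution, and the discrepancies come out explicitly as $\tfrac45$ and $\tfrac25$. To salvage your decomposition you would have to carry out the singularity analysis of $F$ in full; with the paper's choice of boundary, no such analysis is needed.
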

\begin{proof}
  There are exactly 10 planes in $\PP^3$ containing the projection images of 3 of the 5 points. Let $P_1, \ldots, P_{10}$ be the proper transforms on $X_{1,3,5}$ of the preimages of these planes. For each $P_m$, its divisor class on $X_{1,3,5}$ is of the form $H_2-E_i-E_j-E_k$. 
  Let $D_1$ and $D_2$ be the proper transforms on $X_{1,3,5}$ of two general hypersurfaces of bidegree $(1,1)$ in $\PP^1 \times \PP^3$ containing all 5 points. Each of the $D_i$ has divisor class $H_1+H_2-\sum_i E_i$.

  Now for a rational number $\epsilon$, consider the $\QQ$-divisor
  \begin{align*}
    \Delta &= \frac15 \left( \sum_{m=1}^{10} P_m \right) + (1-\epsilon)\left( D_1 + D_2 \right) + \left( \frac15 - \epsilon \right) \sum_{i=1}^5 E_i.
  \end{align*}
  For $0 < \epsilon < \frac15$ this is an effective divisor. We compute that the class of $\Delta$ in $N^1(X)$ equals
  \begin{align*}
  &  \frac15 \left( 10H_2- 6 \sum_{i=1}^5 E_i \right) + 2(1-\epsilon) \left( H_1+H_2-\sum_{i=1}^5 E_i \right) + \left( \frac15 - \epsilon \right) \sum_{i=1}^5 E_i\\
    &= (2-2 \epsilon) H_1+ (4-2\epsilon) H_2  + (-3+\epsilon) \left( \sum_{i=1}^5 E_i \right),
  \end{align*}
and so
\begin{align*}
  -K_X-\Delta &= 2\epsilon H_1 + 2 \epsilon H_2 - \epsilon \sum_{i=1}^5 E_i \\
  &= \epsilon \left( 2H_1+2H_2 - \sum_{i=1}^5 E_i \right).
\end{align*}
The class $2H_1+2H_2-\sum_i E_i$ is ample by Lemma \ref{lemma-ample}, so it remains ample when multiplied by $\epsilon >0$.

It remains to prove that $(X,\Delta)$ is a klt pair. Since we have chosen $0 < \epsilon < \frac15$, the divisor $\Delta$ is a $\QQ$-linear combination of prime divisors with all coefficients in the set $[0,1)$.
  
To compute discrepancies of the pair $(X,\Delta)$, first let us analyse the intersections among components of $\Delta$. For each point $\pi_3(p_i)$ there are 6 planes containing this point and 2 more of the points $\pi_3(p_j)$, so on $X$ there are 6 divisors $P_m$ meeting pairwise transversely along the curve $C_i = \pi_3^{-1} \pi_3 (p_i)$. Next, for two points $\pi_3(p_i)$ and $\pi_3(p_j)$, let $L_{ij} \subset \PP^3$ be the line joining them. Then there are 3 planes in $\PP^3$ containing the line $L_{ij}$ and one other point, and therefore 3 divisors $P_m$ meeting pairwise transversely along the surface $S_{ij} = \pi_3^{-1}(L_{ij})$.

Now consider the morphism
\begin{center}
  \begin{tikzcd}
    Z \arrow{r}{g} \arrow[bend right=30,swap]{rr}{h}  &Y \arrow{r}{f} &X
  \end{tikzcd}
\end{center}
where $f$ is the blowup of the curves $C_i$ and $g$ is the blowup of the proper transforms of the surfaces $S_{ij}$. One can check that $H$ is indeed a log resolution of the pair $(X,\Delta)$, so we can compute discrepancies of the pair by comparing $K_X+\Delta$ to $K_Z+\Delta_Z$. (Here $\Delta_Z$ denotes the proper transform of $\Delta$ on $Z$.)

Let $F_i$ denote the exceptional divisors of $f$ and $G_{ij}$ the exceptional divisors of $g$. Then
\begin{align*}
  K_Z = h^*K_X + 2 \sum_i F_i + \sum_{i,\, j} G_{ij}.
\end{align*}
Moreover since there are 6 planes $P_m$ through the each of the points $\pi_3(p_i)$ and 3 planes containing each of the lines $L_{ij}$, we get
\begin{align*}
   h^* \left( \sum_{m=1}^{10} P_m \right) =  \sum_{m=1}^{10} \left(P_m \right)_Z + 6 \sum_i F_i + 3 \sum_{i,j} G_{ij}.
\end{align*}
On the other hand, the divisors $D_i$ and $E_i$ do not contain the centres of the blowups in $f$ and $g$, so we have
\begin{align*}
  h^* D_i &= (D_i)_Z \quad (i=1,\, 2),\\
  h^* E_j &= (E_j)_Z \quad (j=1,\ldots,5).
\end{align*}
Putting everything together we get
\begin{align*}
  K_Z+\Delta_z = h^*(K_X+\Delta) + \frac45 \sum_i F_i + \frac25 \sum_{i,j} G_{i,j},
\end{align*}
so by Definition \ref{definition-kltpair} the pair $(X,\Delta)$ is klt.
\end{proof}
The proof in the case $s=6$ is similar. To find the required boundary divisor this time, instead of planes through 3 points we use quadric cones through all the points and with vertex at one point.
\begin{theorem}\label{theorem-logfanox136}
$X_{1,3,6}$ is log Fano.  
\end{theorem}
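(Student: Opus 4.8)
The plan is to follow the proof of Theorem~\ref{theorem-logfanox135} almost verbatim, keeping the ample class $2H_1+2H_2-\sum_{i=1}^6E_i$ supplied by Lemma~\ref{lemma-ample} but replacing the planes through triples of points by the six quadric cones through all six points. For each $i$, let $Q_i\subset\PP^3$ be the unique quadric cone with vertex $\pi_3(p_i)$ passing through the other five image points (cf.\ the quadric underlying $\mathcal F_i$ in Lemma~\ref{lemma-baselocusineqsyl6}), and let $P_i$ be the proper transform on $X_{1,3,6}$ of $\pi_3^{-1}(Q_i)$, of class $2H_2-2E_i-\sum_{j\neq i}E_j$, which is Row~4 of Theorem~\ref{theorem-x136}. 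Let $D_1,D_2$ be the proper transforms of two general bidegree $(1,1)$ hypersurfaces through the six points, each of class $H_1+H_2-\sum_{i=1}^6E_i$. For a rational $\epsilon$ I set
\[
\Delta=\frac16\sum_{i=1}^6 P_i+(1-\epsilon)(D_1+D_2)+\left(\frac16-\epsilon\right)\sum_{i=1}^6 E_i,
\]
which is effective with all coefficients in $[0,1)$ once $0<\epsilon<\tfrac16$.

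Since $\sum_{i=1}^6 P_i$ has class $12H_2-7\sum_i E_i$, a direct computation using $-K_X=2H_1+4H_2-3\sum_i E_i$ gives
\[
-K_X-\Delta=\epsilon\left(2H_1+2H_2-\sum_{i=1}^6 E_i\right),
\]
which is ample by Lemma~\ref{lemma-ample}. This exactly mirrors the $s=5$ computation: the coefficients $\tfrac1s$ on the distinguished divisors and $\tfrac1s-\epsilon$ on the $E_i$ produce the same cancellation. It then remains to check that $(X_{1,3,6},\Delta)$ is klt, and this is where the real work lies.

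The genuinely new feature compared with $X_{1,3,5}$ is that each $P_i$ is singular: $Q_i$ has an ordinary double point at its vertex, so $P_i$ carries a transverse $A_1$ singularity along the fibre $\Phi_i$, the proper transform of $\PP^1\times\{\pi_3(p_i)\}$ (of class $l_1-e_i$). Moreover every cone $Q_j$ passes through every vertex, so all six divisors $P_1,\dots,P_6$ contain $\Phi_i$ --- five of them smoothly and $P_i$ with multiplicity $2$ --- playing the role that the six planes meeting along $C_i$ played for $s=5$. I would therefore begin the log resolution by blowing up the six curves $\Phi_i$; this simultaneously resolves the $A_1$ singularities and separates the six sheets. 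Since $\Phi_i$ has codimension $3$ and $\operatorname{mult}_{\Phi_i}(\Delta)=2\cdot\tfrac16+5\cdot\tfrac16=\tfrac76$, the exceptional divisor over $\Phi_i$ has discrepancy $3-1-\tfrac76=\tfrac56>-1$.

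After this first step I expect the remaining components to cross in simple normal crossings, possibly after a small number of further blow-ups of the surfaces along which two cones, or a cone and a $D_k$, fail to meet transversally (in the spirit of the surfaces $S_{ij}$ for $s=5$). The main obstacle is thus the organisational one of confirming that some explicit such sequence of blow-ups is a genuine log resolution and that every discrepancy it produces exceeds $-1$. This last point should hold comfortably: since the $P_i$ and $E_i$ carry coefficients at most $\tfrac16$ while only the two divisors $D_k$ carry coefficient close to $1$, the multiplicity of $\Delta$ along each centre stays strictly below its codimension once $\epsilon$ is small, and the klt condition of Definition~\ref{definition-kltpair} follows.
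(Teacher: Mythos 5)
Your construction of $\Delta$, the computation $-K_X-\Delta=\epsilon\left(2H_1+2H_2-\sum_{i=1}^6E_i\right)$, the appeal to Lemma \ref{lemma-ample}, and the first round of blowups (the six fibres $\Phi_i$, with discrepancy $\tfrac56$) coincide exactly with the paper's proof. The gap is in what you propose afterwards. You expect the remaining failures of simple normal crossings to lie along surfaces where \emph{two} cones, or a cone and a $D_k$, meet non-transversally; this misidentifies the geometry. For two of the cones one has $Q_j\cap Q_k=L_{jk}\cup R$, where $L_{jk}$ is the line through $\pi_3(p_j),\pi_3(p_k)$ and $R$ is the \emph{unique twisted cubic through all six points} $\pi_3(p_1),\dots,\pi_3(p_6)$. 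The surfaces $\pi_3^{-1}(L_{jk})$ are harmless: only the two components $P_j,P_k$ contain them and they cross transversally there, so (unlike the $s=5$ case, where three planes contained each line $L_{ij}$) they force no blowup. The genuine problem is the surface $S=\pi_3^{-1}(R)$: it is contained in \emph{all six} divisors $P_i$, and since $R$ passes through every $\pi_3(p_i)$, the surface $S$ contains every curve $\Phi_i$, so its proper transform survives your first blowup and still lies on all six components. A codimension-$2$ locus on six boundary components is very far from SNC and is invisible to any list of pairwise intersection loci, so the sequence of blowups you sketch is not a log resolution.

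The paper's resolution blows up precisely the proper transform of $S$ after the curve blowups, and this is where the discrepancy computation is tightest: writing $h\colon Z\to X$ for the composite, one has $h^*\left(\sum_iP_i\right)=\sum_i(P_i)_Z+7\sum_iF_i+6G$ and $K_Z=h^*K_X+2\sum_iF_i+G$, so that
\begin{align*}
K_Z+\Delta_Z=h^*(K_X+\Delta)+\frac56\sum_iF_i+0\cdot G,
\end{align*}
i.e.\ the exceptional divisor $G$ over $S$ has discrepancy exactly $0$ --- on the klt side of the bound, but with no margin at all. This also undercuts your closing heuristic: it is precisely the piling up of six small-coefficient components along the common surface $S$ (total multiplicity $6\cdot\tfrac16=1$, against codimension $2$) that produces the extremal discrepancy, so klt-ness does not hold ``comfortably'' but only because the coefficient $\tfrac16$ was chosen to make this borderline case work out. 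Your multiplicity-versus-codimension criterion does give the right answer here, but you never verify it at the one centre where it is sharp, because you have not identified that centre; supplying the twisted-cubic fact and the computation above is exactly what is needed to close the proof.
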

\begin{proof}
For $i=1,\ldots,6$, define $Q_i$ to be the proper transform of the preimage of a cone in $\PP^3$ with vertex at $\pi_3(p_i)$ and passing through the other 5 points $\pi_3(p_j)$. Now define
  \begin{align*}
    \Delta &= \frac16 \sum_i Q_i + (1-\epsilon)(D_1+D_2) +\left( \frac16 - \epsilon \right) \sum_i E_i
  \end{align*}
  For $0 < \epsilon < \frac16$ this is an effective divisor. We compute that the class of $\Delta$ in $N^1(X)$ equals
  \begin{align*}
 &\frac 16 \left( 12 H_2 - 7 \sum_{i=1}^6 E_i \right) +2(1-\epsilon)\left( H_1+H_2-\sum_{i=1}^6 E_i \right) + \left( \frac 16 - \epsilon \right) \sum_{i=1}^6 E_i\\
    &= (2-2\epsilon) H_1 +(4-2\epsilon)H_2 +(-3+\epsilon) \sum_{i=1}^6 E_i,
  \end{align*}
  so as before we have
  \begin{align*}
    -K_X-\Delta &= \epsilon \left( 2H_1 + 2 H_2 - \sum_{i=1}^6 E_i \right),
  \end{align*}
  which is again an ample class.

It remains to prove that $(X,\Delta)$ is a klt pair. Again, since we have chosen $0<\epsilon<\frac16$, the coefficients of $\Delta$ are in the set $[0,1)$. As before, let $C_i=\pi_3^{-1} \pi (p_i)$, and now let $S= \pi_3^{-1}(R)$ where $R$ is the unique twisted cubic through the 6 points $\pi_3(p_i)$. Then for any two of the quadrics, say $Q_j$ and $Q_k$, their intersection is $Q_j \cap Q_k = L_{jk} \cup R$, and moreover none of the other quadrics contains $L_{jk}$. So our log resolution is given as before by a sequence of blowups
  \begin{center}
  \begin{tikzcd}
    Z \arrow{r}{g} \arrow[bend right=30,swap]{rr}{h}  &Y \arrow{r}{f} &X
  \end{tikzcd}
  \end{center}
  where as before $f$ is the blowup of the 6 curves $C_i$ but now $g$ is the blowup of a single surface, namely the proper transform of $S$ on $Y$.

  Let $F_i$ denote the exceptional divisors of $f$ and $G$ the exceptional divisor of $g$. Then one computes that for each quadric cones $Q_i$ we have
  \begin{align*}
    h^*(Q_i) &= (Q_i)_Z + \sum_{j=1}^6 F_j + F_i + G, \quad \quad \text{so} \\
    h^*\left( \sum_i Q_i \right) &= \sum_i (Q_i)_Z + 7 \sum_i F_i + 6 G .
  \end{align*}
Putting everything together we find  
\begin{align*}
K_Z + \Delta_Z = h^*(K_X+\Delta) +\frac56 \sum_i F_i.
\end{align*}
(Note that the coefficient of the exceptional divisor $G$ on the right hand side equals 0.) So again $(X,\Delta)$ is a klt pair. 
\end{proof}
Using for example the theorem of Fujino--Gongyo mentioned above we deduce the remaining cases:
\begin{corollary}\label{corollary-logfanox13s}
For $s\leq 6$ the variety $X_{1,3,s}$ is log Fano.  
\end{corollary}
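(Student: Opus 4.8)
The plan is to reduce the statement to the two explicit constructions already in hand rather than to produce a new boundary divisor for each $s$. The cases $s=5$ and $s=6$ are precisely the content of Theorems \ref{theorem-logfanox135} and \ref{theorem-logfanox136}, so the only work remaining is to handle $s\le 4$, and for these I would argue by descent.

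First I would record the elementary geometric fact that for every $s\le 6$ there is a surjective morphism $X_{1,3,6}\arrow X_{1,3,s}$. Concretely, this is the blowdown contracting the exceptional divisors over any $6-s$ of the six points: forgetting points of a general-position configuration leaves a general-position configuration, so the target of this contraction is indeed $X_{1,3,s}$. Being a projective birational morphism between projective varieties, it is in particular surjective, so the hypotheses of the Fujino--Gongyo descent theorem are met.

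Then I would invoke \cite[Theorem 5.1]{FG12}: if $f\colon X\arrow Y$ is a surjective morphism of projective varieties and $X$ is log Fano, then $Y$ is log Fano. Applying this with $X=X_{1,3,6}$, which is log Fano by Theorem \ref{theorem-logfanox136}, and $Y=X_{1,3,s}$ yields the claim for every $s\le 6$.

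I do not anticipate a genuine obstacle here, since the substantial work --- producing explicit klt boundaries $\Delta$ and checking that $-K_X-\Delta$ is ample via Lemma \ref{lemma-ample} --- has already been carried out for the extremal cases $s=5,6$. The only points requiring care are the verification that the contraction $X_{1,3,6}\arrow X_{1,3,s}$ is genuinely surjective and that no additional hypotheses of \cite[Theorem 5.1]{FG12} intervene, both of which are immediate. One could instead descend from $X_{1,3,5}$ for $s\le 4$, but I would descend from $X_{1,3,6}$ uniformly for cleanliness.
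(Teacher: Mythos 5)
Your proposal is correct and is essentially identical to the paper's own argument: the paper also treats $s=5,6$ via the explicit klt boundaries of Theorems \ref{theorem-logfanox135} and \ref{theorem-logfanox136}, and then deduces the remaining cases $s\le 4$ by applying the Fujino--Gongyo theorem \cite[Theorem 5.1]{FG12} to the surjective blowdown morphisms. Your additional remarks on surjectivity of the contraction and on general position being preserved under forgetting points are exactly the (routine) checks the paper leaves implicit.
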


It follows that, for $s\le 6$, the variety $X_{1,3,s}$ is a Mori dream space. Again, this gives a conceptual explanation for the rational polyhedral effective cones that we found in Section \ref{section-dim4}.

\subsection*{Infinite cones of divisors}
We finish with some upper bounds on the number of points that we can blowup before cones of effective divisors cease to be finitely generated. These are based on the corresponding bounds for a single projective space: we can lift divisors from a single projective space to a product to translate these bounds to the product setting.

The main input is the following theorem of Mukai and Castravet--Tevelev \cite{Mukai05,CT06}.
\begin{theorem}\label{theorem-mukai}
  For $n>1$, let $X_{0,n,s}$ be the blowup of $\PP^n$ in a set of $s$ points in very general position. Then $\Eff(X_{0,n,s})$ is finitely generated if and only if one of the following holds:
  \begin{itemize}
  \item $n=2$ and $s \leq 8$;
  \item $n=3$ and $s \leq 7$;
  \item $n=4$ and $s \leq 8$;
  \item $n \geq 5$ and $s \leq n+3$.
  \end{itemize}
\end{theorem}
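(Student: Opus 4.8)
The plan is to place $\Eff(X_{0,n,s})$ inside the root-theoretic framework of Mukai and Castravet--Tevelev, and then reduce the statement to the classical finiteness criterion for Coxeter groups. First I would equip $\Pic(X_{0,n,s}) = \ZZ H \oplus \bigoplus_{i=1}^s \ZZ E_i$ with the symmetric bilinear form determined by $(H,H)=n-1$, $(H,E_i)=0$, $(E_i,E_j)=-\delta_{ij}$ (the Mukai pairing), under which the roots will lie in the orthogonal complement of $K_X = -(n+1)H+(n-1)\sum_i E_i$. I would then single out the vectors
$$\alpha_0 = H - E_1 - \cdots - E_{n+1}, \qquad \alpha_i = E_i - E_{i+1} \quad (1 \le i \le s-1),$$
each of self-pairing $-2$, and compute their mutual pairings. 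The classes $\alpha_1,\ldots,\alpha_{s-1}$ form an $A_{s-1}$ chain, and a short computation shows $\alpha_0$ is orthogonal to every $\alpha_i$ except $\alpha_{n+1}$. Hence the associated Dynkin diagram is the $T$-shaped graph $T_{2,\,n+1,\,s-n-1}$, whose three arms (of $2$, $n+1$, and $s-n-1$ nodes, each counting the branch node $\alpha_{n+1}$) are $\alpha_0$, the segment $\alpha_{n+1},\ldots,\alpha_1$, and the segment $\alpha_{n+1},\ldots,\alpha_{s-1}$. The reflections $s_{\alpha_i}$ (permutations of the points) and $s_{\alpha_0}$ (the standard Cremona involution based at $p_1,\ldots,p_{n+1}$) generate a Weyl group $W$ acting on $\Pic(X_{0,n,s})$ and preserving the Mukai pairing.

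With this in place, the numerical trichotomy becomes pure Coxeter combinatorics: $W=W(T_{2,n+1,s-n-1})$ is finite precisely when $\tfrac12 + \tfrac{1}{n+1} + \tfrac{1}{s-n-1} > 1$, which is the $r=1$ case of the inequality in the introduction. The finite $T_{p,q,r}$ are exactly the $ADE$ graphs, with equality giving the affine diagrams; unwinding $\tfrac12 + \tfrac{1}{n+1} + \tfrac{1}{s-n-1} > 1$ into integer constraints produces precisely the four listed cases. So all the real content lies in linking finiteness of $W$ to finite generation of $\Eff$.

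For the forward implication (finite $W$ implies $\Eff$ finitely generated) I would follow the Mori-dream-space argument of Castravet--Tevelev: one shows that $\Mov(X_{0,n,s})$ is tiled by the $\Nef$ cones of the finitely many small $\QQ$-factorial modifications of $X$, that $W$ together with the symmetric group on the points permutes these chambers, and that finiteness of $W$ therefore forces finitely many chambers; together with the bounding contributed by the exceptional divisors this yields a rational polyhedral $\Eff$ and a finitely generated Cox ring. Equivalently, one exhibits finitely many $W$-orbits of effective classes (the $E_i$ and their Weyl transforms) whose convex hull is all of $\Eff$. For the reverse implication I would argue by contraposition: when $W$ is infinite, the orbit $W \cdot E_1$ is infinite, each of its members has Mukai self-pairing $-1$ and is represented by an irreducible rigid divisor (the strict transform of an exceptional divisor under a composition of standard Cremona maps), and each such class spans an extremal ray of $\Eff$ — the higher-dimensional analogue of the infinitely many $(-1)$-curves on the blowup of $\PP^2$ in $\geq 9$ general points. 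Infinitely many extremal rays preclude finite generation.

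The main obstacle is the forward direction. The Coxeter-theoretic finiteness of $W$ is elementary, but converting it into finite generation of $\Eff$ genuinely requires the Mori-theoretic chamber-counting (or the explicit Cox ring presentation) of Mukai and Castravet--Tevelev, and in particular the verification that the ``obvious'' $W$-orbits of divisors really do generate the entire effective cone. In the reverse direction the remaining subtlety is to confirm that the infinite $W$-orbit produces infinitely many \emph{distinct} extremal rays of effective classes, rather than merely an infinite set of lattice points accumulating on finitely many rays; this is where one uses the preservation of the negative self-pairing and the rigidity it forces.
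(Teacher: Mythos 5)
The first thing to note is that the paper does not prove this statement at all: Theorem \ref{theorem-mukai} is imported as a black box from Mukai \cite{Mukai05} and Castravet--Tevelev \cite{CT06} (it is introduced with ``The main input is the following theorem of Mukai and Castravet--Tevelev''), so there is no internal proof to compare yours against. Judged on its own terms, your outline correctly reproduces the architecture of the proofs in those references, and the lattice-theoretic reduction is verifiably right: with the pairing you define, the classes $\alpha_0,\alpha_1,\ldots,\alpha_{s-1}$ have self-pairing $-2$, $\alpha_0$ pairs nontrivially only with $\alpha_{n+1}$, the diagram is $T_{2,\,n+1,\,s-n-1}$ on $s$ nodes, $W$ is finite exactly when $\tfrac12+\tfrac{1}{n+1}+\tfrac{1}{s-n-1}>1$, and unwinding that inequality gives precisely the four listed cases ($T_{2,3,5}=E_8$ for $n=2,\,s=8$; $T_{2,4,3}=E_7$ for $n=3,\,s=7$; $T_{2,5,3}=E_8$ for $n=4,\,s=8$; $A$- or $D$-type for $s\le n+3$; the first excluded cases being affine or hyperbolic).

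However, you should be explicit that what you have written is a road map rather than a proof: both hard implications are deferred to exactly the theorems being cited. In the forward direction, the passage from finiteness of $W$ to finite generation of the Cox ring and of $\Eff$ is the main theorem of \cite{CT06}. In the reverse direction, three facts carry the weight: (i) every class in $W\cdot E_1$ is effective --- this needs the very general position hypothesis, so that the standard Cremona maps realizing the reflections carry the point configuration to another very general one and effectivity propagates along the whole orbit; (ii) the orbit $W\cdot E_1$ is infinite --- infiniteness of $W$ alone does not formally give this; one needs that the stabilizer of $E_1$ is a proper parabolic subgroup (of type $T_{2,\,n+1,\,s-n-2}$), hence of infinite index in an infinite irreducible Coxeter group; and (iii) distinct orbit classes span distinct extremal rays, via rigidity. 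You flag (iii) yourself, which is the right instinct, but (i) and (ii) deserve the same flag. In the context of this paper none of this is a defect, since the theorem is used purely as a citation; but as a standalone proof your text establishes only the Coxeter combinatorics and correctly locates, without supplying, the geometric content.
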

By pulling back divisors from a single projective space to a product, we get the following corollary.
\begin{corollary}\label{corollary not MDS}
  Consider a product of projective spaces
  \begin{align*}
   \PP &= \PP^{n_1} \times \cdots \times \PP^{n_k}.
  \end{align*}
Let $Y$ be the blowup of $\PP$ in a  set of $s$ points in very general position. If there exists an $n_i \neq 1$ such that $n_i$ and $s$ do not satisfy one of the conditions of Theorem \ref{theorem-mukai}, then $\Eff(Y)$ is not rational polyhedral. 
\end{corollary}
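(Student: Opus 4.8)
The plan is to realise $\Eff(X_{0,n_i,s})$ as a \emph{face} of $\Eff(Y)$, and then to invoke the elementary fact that a face of a rational polyhedral cone is again rational polyhedral. Write $\pi_i \colon \PP \arrow \PP^{n_i}$ for the projection onto the $i$-th factor, and let $q_j = \pi_i(p_j)$ be the images of the blown-up points. Since the $p_j$ are in very general position and $\pi_i$ is a dominant morphism, the preimage of a countable union of proper closed subsets of $(\PP^{n_i})^s$ is again such a union, so $q_1,\ldots,q_s$ are in very general position in $\PP^{n_i}$. Thus $X_{0,n_i,s}$ is well defined and Theorem \ref{theorem-mukai} applies to it; by hypothesis $(n_i,s)$ violates its conditions, so $\Eff(X_{0,n_i,s})$ is not finitely generated, hence not rational polyhedral.

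Next I would compare Néron--Severi groups. Let $H_1,\ldots,H_k$ denote the pullbacks to $Y$ of the hyperplane classes of the factors, $E_1,\ldots,E_s$ the exceptional divisors, and define the injective linear map $\Phi \colon N^1(X_{0,n_i,s}) \arrow N^1(Y)$ by $H \mapsto H_i$ and $\mathcal E_j \mapsto E_j$. For the inclusion $\Phi(\Eff(X_{0,n_i,s})) \subseteq \Eff(Y)$, note that an effective class $dH - \sum_j m_j \mathcal E_j$ is represented by a degree-$d$ hypersurface in $\PP^{n_i}$ with multiplicity $m_j$ at $q_j$; its $\pi_i$-pullback is a hypersurface of the corresponding multidegree with multiplicity $m_j$ at each $p_j$, whose proper transform on $Y$ has class $dH_i - \sum_j m_j E_j = \Phi(dH - \sum_j m_j \mathcal E_j)$.

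For the face structure I would use covering curve classes. For each index $l$ let $\ell_l \in N_1(Y)$ be the class of the proper transform of a general line moving only in the $l$-th factor; such lines cover $Y$, so $\ell_l$ is a movable class and $D \cdot \ell_l \geq 0$ for every effective $D$. A direct computation with Proposition \ref{intersection-table} gives $D \cdot \ell_l = d_l$ for $D = \sum_l d_l H_l - \sum_j m_j E_j$. Hence
\begin{align*}
\mathcal F := \Eff(Y) \cap \bigcap_{l \neq i} \{\, D : D \cdot \ell_l = 0 \,\}
\end{align*}
is a face of $\Eff(Y)$, consisting of the effective classes with $d_l = 0$ for all $l \neq i$. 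To identify $\mathcal F$ with $\Phi(\Eff(X_{0,n_i,s}))$, I would argue that an effective divisor on $Y$ with these vanishing multidegrees pushes forward to an effective divisor on $\PP$ of multidegree $(0,\ldots,d_i,\ldots,0)$; by the Künneth formula $H^0(\PP,\mathcal O(0,\ldots,d_i,\ldots,0)) = H^0(\PP^{n_i},\mathcal O(d_i))$, so it is the $\pi_i$-pullback of a degree-$d_i$ hypersurface $D'$ in $\PP^{n_i}$, and since $\pi_i$ is smooth the multiplicity of this pullback at $p_j$ equals the multiplicity of $D'$ at $q_j$. Thus $\mathcal F = \Phi(\Eff(X_{0,n_i,s}))$.

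Finally, suppose for contradiction that $\Eff(Y)$ were rational polyhedral; then it is closed and its face $\mathcal F$ is again rational polyhedral, being spanned by a subset of the extremal rays. Since $\Phi$ is a linear isomorphism onto its image, $\Eff(X_{0,n_i,s}) = \Phi^{-1}(\mathcal F)$ would then be rational polyhedral, contradicting the first paragraph; hence $\Eff(Y)$ is not rational polyhedral. I expect the main obstacle to be the precise identification $\mathcal F = \Phi(\Eff(X_{0,n_i,s}))$: one must check carefully that effective classes of the special multidegree arise \emph{exactly} from the single factor $\PP^{n_i}$ with the correct multiplicities at the $p_j$, and that the projected points genuinely remain in very general position. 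The remaining ingredients, namely that $\ell_l$ is a movable (covering) class and that a face of a rational polyhedral cone is rational polyhedral, are standard.
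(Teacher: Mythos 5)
Your proposal is correct and follows essentially the same route as the paper: both pull back divisors from the factor $\PP^{n_i}$, identify the resulting copy of $\Eff(X_{0,n_i,s})$ with the face of $\Eff(Y)$ orthogonal to the covering curve classes of the other factors, and conclude via Theorem \ref{theorem-mukai} because a face of a rational polyhedral cone is rational polyhedral. Your write-up is marginally more careful on two points the paper leaves implicit, namely checking that the projected points remain in very general position, and identifying the face with $\Phi(\Eff(X_{0,n_i,s}))$ unconditionally via pushforward and K\"unneth rather than only under the rational polyhedral hypothesis, but the substance is the same.
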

\begin{proof}
Suppose that such an $n_i$ exists; for simplicity let us denote it by $n$. Choose a $\PP^n$ factor of $\PP$ and consider the projection map $\pi \colon \PP \arrow \PP^n$. Let $p_1,\ldots,p_s$ be the points in $\PP$ and let $q_1,\ldots,q_s$ be their images in $\PP^n$.

Let $\Delta$ be an effective divisor on $X_{0,n,s}$ with class $dh-\sum_i m_i e_i$. If $d>0$ and the $m_i$ are nonnegative, then $\Delta$ corresponds to a divisor $D \subset \PP^n$ with degree $d$ and multiplicity $m_i$ at the point $q_i$. For such a divisor $D$, its preimage $\pi^{-1}(D)$ is the product of $D$ with a product of projective spaces, and therefore it has multiplicity $m_i$ at the point $p_i$. So the proper transform of $\pi^{-1}(D)$ on $Y$ has class $dH_n-\sum_i m_i E_i$ for some nonnegative integers $d$ and $m_i$. Conversely, any irreducible effective divisor on $Y$ whose class is of this form must either be the exceptional divisor over one of the points, or else pulled back from $\PP^n$ in this way. The classes of these divisors span a subcone $E_Y$ of $\Eff(Y)$ which is isomorphic to $\Eff(X_{0,n,s})$.

The subcone $E_Y$ lies in a proper face $F$ of $\Eff(Y)$, namely the face orthogonal to all curve classes in factors of $\PP$ other than $\PP^n$. No other effective divisor on $Y$ has class in the face $F$. Therefore if $\Eff(Y)$ is rational polyhedral, hence spanned by effective divisors, the subcone $E_Y$ must be a face of $\Eff(Y)$. Every face of a rational polyhedral cone is rational polyhedral, so this implies $E_Y$, equivalently $\Eff(X_{0,n,s})$, is rational polyhedral. Therefore $n$ and $s$ must satisfy one of the conditions of Theorem \ref{theorem-mukai}.
\end{proof}
The following statement summarises our knowledge of which varieties $X_{1,n,s}$ are Mori dream spaces. 

\begin{theorem} \label{theorem-summary}
  Let $X_{1,n,s}$ be the blowup of $\PP^1 \times \PP^n$ in $s$ points in very general postions. Then $X_{1,n,s}$ is a Mori dream space in the following cases:
  \begin{enumerate}
  \item[(a)] $n=2$ or $n=3$ and $s \leq 6$;
    \item[(b)] $n$ arbitrary and $s \leq n+1$.
  \end{enumerate}
  On the other hand, $X_{1,n,s}$ is not a Mori dream space in the following cases:
  \begin{enumerate}
  \item[(c)] $n=2$ or $n=4$ and $s \geq 9$;
  \item[(d)] $n=3$ and $s \geq 8$;
    \item[(e)] $n \geq 5$ and $s \geq n+4$.
  \end{enumerate}
\end{theorem}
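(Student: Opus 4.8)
The plan is to assemble the statement from results already established in the paper, treating the Mori dream space cases and the non--Mori dream space cases separately, and within the former distinguishing the ``small dimension'' cases (a) from the ``few points'' case (b).

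First I would dispatch the positive cases (a). For $n=2$ and $s \leq 6$, Theorem \ref{theorem-weakfanox126} shows that $X_{1,2,s}$ is weak Fano; since every weak Fano variety is log Fano, and log Fano varieties are Mori dream spaces by \cite[Corollary 1.3.2]{BCHM}, this case follows. For $n=3$ and $s \leq 6$, Corollary \ref{corollary-logfanox13s} shows directly that $X_{1,3,s}$ is log Fano, so the same implication gives the Mori dream space property.

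For case (b) the idea is to realise $X_{1,n,s}$ as a variety carrying a torus action of complexity one and to invoke the structure theory of Hausen--S\"u{\ss} \cite{HS10}. Since $s \leq n+1$, the images of the $s$ points under the projection to $\PP^n$ are in general position and can be moved by an element of $\operatorname{PGL}_{n+1}$ to $s$ of the coordinate points of $\PP^n$; as this induces an isomorphism of the blowups, we may assume the points have this form. The torus $(\CC^\ast)^n$ acting on $\PP^n$, and trivially on the $\PP^1$ factor, then fixes each of the blown-up points, so the action lifts to $X_{1,n,s}$. Because the generic orbit has dimension $n = \dim X_{1,n,s} - 1$, this exhibits $X_{1,n,s}$ as a complexity-one $T$-variety, and \cite[Theorem 1.3]{HS10} gives finite generation of its Cox ring, hence the Mori dream space property. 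This generalises the remark following Theorem \ref{Eff 123}, which treats the case $s = n+1$.

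Finally, the negative cases (c), (d), (e) all follow from Corollary \ref{corollary not MDS} applied to $\PP = \PP^1 \times \PP^n$, taking the distinguished factor to be $\PP^n$ with $n \neq 1$. By Theorem \ref{theorem-mukai}, the cone $\Eff(X_{0,n,s})$ fails to be finitely generated precisely when $n=2$ and $s \geq 9$, or $n=4$ and $s \geq 9$, or $n=3$ and $s \geq 8$, or $n \geq 5$ and $s \geq n+4$; these are exactly the hypotheses of (c), (d), and (e). In each such case Corollary \ref{corollary not MDS} shows that $\Eff(X_{1,n,s})$ is not rational polyhedral, so $X_{1,n,s}$ is not a Mori dream space. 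I expect the only genuinely delicate point to be case (b): one must check that moving the points into torus-fixed position is harmless and that the lifted action really has complexity one, after which the argument reduces to citing \cite{HS10}; all the remaining cases are immediate consequences of results proved earlier.
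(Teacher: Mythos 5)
Your proposal is correct and follows exactly the paper's own route: case (a) from Theorem \ref{theorem-weakfanox126} and Corollary \ref{corollary-logfanox13s} via \cite[Corollary 1.3.2]{BCHM}, case (b) from \cite[Theorem 1.3]{HS10}, and cases (c)--(e) from Theorem \ref{theorem-mukai} combined with Corollary \ref{corollary not MDS} applied to $\PP^1\times\PP^n$. The only difference is that you spell out the complexity-one torus action underlying the citation of Hausen--S\"u{\ss} (moving the $s\le n+1$ projected points to coordinate points and lifting the $(\CC^\ast)^n$-action to the blowup), a detail the paper leaves implicit; this is a faithful and correct elaboration, not a different argument.
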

\begin{proof}
  Statement $(a)$ was proved in Theorem \ref{theorem-weakfanox126} and Corollary \ref{corollary-logfanox13s}. Statement $(b)$ follows from the theorem of Hausen--S{\"u}{\ss} \cite[Theorem 1.3]{HS10}.

  Statements $(c)$, $(d)$, $(e)$ follow from Theorem \ref{theorem-mukai} and Corollary \ref{corollary not MDS} by taking $k=2$ and $n_1=1$.
\end{proof}



These results leave only a small number of open cases in each dimension, which we address in the following questions.

\begin{question}\label{question1}
Are the varieties $X_{1,2,7}$, $X_{1,2,8}$, $X_{1,3,7}$ log Fano or Mori dream spaces?
\end{question}

\begin{question}\label{question2} For $s=6,\, 7,\, 8$, are the varieties $X_{1,4,s}$ log Fano or Mori dream spaces?
\end{question}
The methods of this paper could in principle be applied to study these two questions. It would be interesting to know if our methods can be applied successfully in these cases.

Finally, in higher dimensions, the remaining open cases are the following:
\begin{question}\label{question3}
For $n \geq 5$, are $X_{1,n,n+2}$ and $X_{1,n,n+3}$ log Fano or Mori dream spaces?
\end{question}

\bibliographystyle{alpha}
\bibliography{biblio}




\end{document}